\newtheorem{theorem}{Theorem}[section]
\newtheorem{prop}[theorem]{Proposition}
\newtheorem{lem}[theorem]{Lemma}
\newtheorem{cor}[theorem]{Corollary}
\theoremstyle{definition}
\newtheorem{defn}[theorem]{Definition}
\newtheorem{ass}[theorem]{Assumption}
\theoremstyle{remark}
\newtheorem{remark}[theorem]{Remark}
\numberwithin{equation}{section}
\newcommand{\be}{\begin{equation}}
\newcommand{\ee}{\end{equation}}
\newcommand{\lb}{\left(}
\newcommand{\rb}{\right)}
\newcommand{\lsb}{\left[}
\newcommand{\rsb}{\right]}
\newcommand{\lcb}{\left\{}
\newcommand{\rcb}{\right\}}
\newcommand{\bbr}{\bar{r}}
\newcommand{\ip}[1]{\langle#1\rangle}
\newcommand{\norm}[1]{\lVert#1\rVert}
\newcommand{\abs}[1]{\left|#1\right|}
\newcommand{\wt}{\widetilde}
\newcommand{\wh}{\widehat}
\newcommand{\ve}{\varepsilon}
\newcommand{\x}{f}
\newcommand{\y}{g}
\newcommand{\z}{h}
\newcommand{\A}{\xi}
\newcommand{\B}{\mathcal{B}}
\newcommand{\C}{\mathcal{C}}
\newcommand{\E}{\mathbb{E}}
\newcommand{\F}{\mathcal{F}}
\renewcommand{\L}{L}
\newcommand{\M}{\mathbb{M}}
\newcommand{\N}{\mathbb{N}}
\renewcommand{\P}{\mathbb{P}}
\newcommand{\R}{\mathbb{R}}
\newcommand{\U}{\mathcal{N}}
\newcommand{\X}{X}
\newcommand{\Y}{Y}
\newcommand{\Z}{Z}
\newcommand{\bm}{W}
\newcommand{\phib}{\mathcal{J}}
\newcommand{\etab}{\mathcal{K}}
\newcommand{\psib}{\mathcal{H}}
\newcommand{\Xib}{{\Xi}}
\newcommand{\lyap}{M}
\newcommand{\allN}{\mathcal{I}}
\newcommand{\lip}{\kappa}
\newcommand{\spaan}{\text{span}}
\newcommand{\conv}{{\text{cone}}}
\newcommand{\dom}{\text{dom}}
\newcommand{\cts}{\mathbb{C}}
\newcommand{\dlr}{\mathbb{D}_{\text{l,r}}}
\newcommand{\dr}{\mathbb{D}}
\newcommand{\state}{\mathbb{X}}
\newcommand{\sphere}{\mathbb{S}^{J-1}}
\newcommand{\hyper}{\mathbb{H}}
\newcommand{\sm}{{\Gamma}}
\newcommand{\dm}{\Lambda}
\newcommand{\proj}{\mathcal{L}}
\begin{document}

\title[Sensitivity analysis for RBM]{Sensitivity analysis for the stationary distribution of reflected Brownian motion in a convex polyhedral cone*}

\date{\today}

\author[David Lipshutz]{David Lipshutz}
\address{Faculty of Electrical Engineering \\ Technion --- Israel Institute of Technology \\ Haifa, Israel}
\email{lipshutz@technion.ac.il}

\author[Kavita Ramanan]{Kavita Ramanan}
\address{Division of Applied Mathematics \\ 
                Brown University \\ 
                Providence, Rhode Island, USA}
\email{kavita{\_}ramanan@brown.edu}

\keywords{reflected Brownian motion, stationary distribution, sensitivity analysis, pathwise derivatives, derivative process, Skorokhod reflection problem, derivative problem, asymptotic coupling method}

\subjclass[2010]{Primary: 60G17, 90C31, 93B35. Secondary: 60H07, 60H10, 65C30}

\thanks{*Research of the second author was supported in part by NSF grant DMS-1713032 and a Simons Fellowship. The first author was also supported in part at the Technion by a Zuckerman fellowship.}

\dedicatory{Technion --- Israel Institute of Technology and Brown University}

\begin{abstract}
Reflected Brownian motion (RBM) in a convex polyhedral cone arises in a variety of applications ranging from the theory of stochastic networks to mathematical finance, and under general stability conditions, it has a unique stationary distribution.  
In such applications, to implement a stochastic optimization algorithm or quantify robustness of a model, it is useful to characterize the dependence of stationary  performance measures on model parameters. 
In this work we characterize parametric sensitivities of the stationary distribution of an RBM in a simple convex polyhedral cone; that is, sensitivities to perturbations of the parameters that define the RBM --- namely, the covariance matrix, drift vector and directions of reflection along the boundary of the polyhedral cone. 
In order to characterize these sensitivities we study the long time behavior of the joint process consisting of an RBM along with its so-called derivative process, which characterizes pathwise derivatives of RBMs on finite time intervals. 
We show that the joint process is positive recurrent, has a unique stationary distribution, and parametric sensitivities of the stationary distribution of an RBM can be expressed in terms of the stationary distribution of the joint process. 
This can be thought of as establishing an interchange of the differential operator and the limit in time.
The  analysis of  ergodicity of the joint process is significantly more complicated than that of the RBM due to its degeneracy and the fact that the derivative process exhibits jumps that are modulated by the RBM.
The proofs of our results rely on path properties of coupled RBMs and contraction properties related to the geometry of the polyhedral cone and directions of reflection along the boundary.
Our results are potentially useful for developing efficient numerical algorithms for  computing sensitivities of functionals of stationary RBMs.
\end{abstract}

\maketitle

\section{Introduction}\label{sec:intro}

\subsection{Overview}

Reflected Brownian motions (RBMs) in convex polyhedral cones (with oblique directions of reflection) arise in a variety of applications, including as heavy traffic limits in queueing theory \cite{Harrison2013,Kelly1996,Kushner2013,Peterson1991,Reiman1984} and in the study of rank-based diffusion models used in mathematical finance \cite{Banner2005,Ichiba2011}. 
The stationary distribution of such an RBM is of central importance in applications due to its use in approximating equilibrium properties such as queue lengths, market capitalizations, etc.
Under suitable stability conditions (see \cite{Budhiraja1999,Dupuis1994}), an RBM in a convex polyhedral cone has a unique stationary distribution, which is completely characterized by its covariance matrix, drift vector and the directions of reflection along the boundary of the polyhedral cone. 
For applications in uncertainty quantification and stochastic optimization, among other areas, it is of interest to characterize sensitivities of the stationary distribution with respect to the parameters that describe an RBM.

In \cite{Lipshutz2017}, pathwise differentiability over a finite time interval was established for RBMs in convex polyhedral cones with oblique directions of reflection that satisfy a geometric condition that ensures the associated deterministic Skorokhod map (SM) is Lipschitz continuous (see Assumption \ref{ass:setB}).
In addition, the pathwise derivative of an RBM was characterized in terms of a so-called derivative process, which is governed by a linear constrained stochastic differential equation with jumps whose drift and diffusion coefficients, domain and directions of reflection are modulated by the RBM (see Theorem \ref{thm:pathwise}). 
In this work, we first show that the joint process consisting of the RBM and the derivative process form a Feller continuous Markov process (we henceforth simply refer to this pair of processes as the \emph{joint process}).  
Next, under standard stability conditions that guarantee that the RBM is positive recurrent (see \cite{Budhiraja1999,Dupuis1994}),  we show that the joint process is also ergodic and positive recurrent. 
We then show that sensitivities of the stationary distribution of the RBM can be characterized in terms of the stationary distribution of the joint process.
In particular, our result implies that derivatives of expectations of certain functionals of a stationary RBM can be expressed as the expectation of an associated functional of the stationary joint process (see Theorem \ref{thm:sensitivity}). 
This is potentially useful for developing efficient numerical algorithms for computing sensitivities of such functionals.
For example, in related work \cite{Lipshutz2017a}, the joint process was shown to be useful for estimating sensitivities of functionals of RBMs on finite time intervals.

The proofs of these results entail several novel arguments.
The proof of the Feller Markov property relies on continuity properties of the so-called derivative map, a deterministic map on path space that is useful in the analysis of the derivative process, which were established in \cite{Lipshutz2017a,Lipshutz2018}. 
The proof of ergodicity of the joint process (under the stability condition on the RBM) entails a careful analysis of the derivative process, whose dynamics are unusual due to the fact that the process jumps at the singular set of times when the RBM hits the boundary of the cone.  
To show that the derivative process is stable we prove the RBM repeatedly ``visits'' every face of the cone and use contraction properties of so-called derivative projection operators, which are related to the jumps of the derivative process.
Establishing uniqueness of the stationary distribution of the joint process is also nonstandard due to the fact that the RBM and its derivative process are driven by a common Brownian motion, and hence the pair is degenerate. 
We use an asymptotic coupling method (see \cite{Hairer2011}), which also relies on the aforementioned contraction properties.
The final step is to justify an interchange of the differential operator and the limit in time, which implies that sensitivities of the stationary distribution can be expressed in terms of the stationary distribution of the joint process.

In summary, the main contributions of this work are as follows:
\begin{itemize}
	\item Feller Markov property of the joint process (Section \ref{sec:joint}); 
	\item Ergodicity and positive recurrence of the joint process (Section \ref{sec:jointstable}); 
	\item Sensitivity analysis for the stationary distribution of an RBM (Section \ref{sec:sensitivityinvariant}).
\end{itemize}{
There are many interesting directions for future work.  
Firstly, it would be natural to generalize these results to more general (state-dependendent) reflected diffusions in more general domains with more general reflection vector fields.  
Pathwise differentiability has been established for reflected diffusions in polyhedral domains with state-dependent drift and dispersion coefficients in \cite{Lipshutz2017}, but the proof of ergodicity of the joint process in this case would entail more technical challenges.
Secondly, it would be of interest to use these results to develop numerically efficient algorithms for computing sensitivities of reflected diffusions.

\subsection{Prior results}

There are relatively few results on sensitivities of the stationary distribution of an RBM in a convex polyhedral cone. 
One exception is the work by Dieker and Gao \cite{Dieker2014}, which considers a particular sub-class of reflected diffusions and perturbations, and in that setting provides a characterization of the stationary distribution of the joint process (assuming it exists) in terms of a basic adjoint relation.
However, they do not prove existence or uniqueness of a stationary distribution, they only allow for perturbations of the drift in the specific direction $-{\bf 1}$ (the vector with negative one in each component), and they restrict to a sub-class of reflected diffusions (namely those in the orthant whose reflection matrices are so-called $\mathcal{M}$-matrices, which satisfy useful monotonicity properties and are described in Lemma \ref{lem-setB}).  
While they consider reflected \textit{diffusions}, they avoid many of the complications that arise from a state-dependent drift and covariance matrix by only allowing perturbations of the drift in the direction $-{\bf 1}$.
Their work is motivated in part by their conjecture that sensitivities of the stationary distribution can be expressed in terms of the stationary distribution of the joint process; however, that is not the focus of their work and they only demonstrate that the conjecture holds in the one-dimensional setting using techniques that do not extend to the multidimensional setting.  
Our main result affirms their conjecture for RBMs in a more general multidimensional setting than they consider, where we also allow for perturbations to the covariance matrix and directions of reflection, and treat a more general class of polyhedral domains and directions of reflection. 
Sensitivities of RBMs with respect to the covariance matrix and directions of reflection are relevant in applications and reflection matrices that do not fall into the class considered in \cite{Dieker2014} arise in applications such as multiclass queueing networks (see, e.g., \cite{Peterson1991}).
An interesting future direction along the lines of \cite{Dieker2014} is to characterize the stationary distribution of the joint process (in our much more general setting) in terms of a basic adjoint relation.

We also mention the work by Kushner and Yang \cite{Kushner1992}, which focuses on Monte Carlo methods for computing sensitivities of stationary distributions of \emph{unconstrained} diffusions to perturbations of the drift. 
The numerical methods developed there can likely be adapted to the constrained setting; however, the proof of their result employs a change of measure argument to relate perturbations of the drift to perturbations of the underlying measure.
Aside from some specific cases, the change of measure argument cannot be applied to perturbations of the covariance matrix or directions of reflection, which are relevant in applications. 
Their method, which involves computing perturbations to the underlying measure, is commonly referred to as a likelihood ratio method for computing sensitivities, whereas our approach of using pathwise derivatives of the RBM is referred to as an infinitesimal perturbation analysis method.
While infinitesimal perturbation analysis is often more difficult to justify in practice, when it is applicable, infinitesimal perturbation analysis estimators of sensitivities typically have much lower variance than likelihood ratio estimators (see \cite[Chapter VII]{Asmussen2007} for further details), and also allow one to more efficiently evaluate sensitivities to multiple parameters simultaneously.

\subsection{Outline of the paper}

This paper is organized as follows. 
In Section \ref{sec:RBM} we introduce an RBM and pathwise derivatives along an RBM. 
In Section \ref{sec:main} we present our main results. 
In Section \ref{sec:sdm} we state the deterministic Skorokhod problem and derivative problem, and prove a useful contraction property for solutions to the derivative problem. 
In Section \ref{sec:Feller} we prove the joint process is a Feller continuous Markov process. 
In Section \ref{sec:rbmestimates} we prove some useful estimates for the probability an RBM visits every face in a compact time interval. 
In Section \ref{sec:stable} we prove positive recurrence and stability properties for the joint process.
In Section \ref{sec:ergodic} we prove the joint process has a unique stationary distribution and in Section \ref{sec:interchange} we show that derivatives of expectations of certain functionals of a stationary RBM can be expressed in terms of the expectation of a functional of the stationary joint process.
The Appendix contains the proof of a technical lemma.

\subsection{Notation}

We now collect some notation that shall be used throughout this work. 
We let $\N=\{1,2,\dots\}$ denote the set of positive integers. 
For $J\in\N$, let $\R^J$ denote $J$-dimensional Euclidean space and $\R_+^J$ denote the non-negative orthant. 
When $J=1$, we suppress $J$ and simply write $\R$ for $(-\infty,\infty)$. 
For $r,s\in\R$ we let $r^+=\max(r,0)$, $r^-=\max(-r,0)$ and $r\wedge s=\min(r,s)$.
For a column vector $x\in\R^J$, let $x^j$ denote the $j$th component of $x$. 
We let $\bf 0$ (resp.\ $\bf 1$) denote the vector in $\R^J$ with 0 (resp.\ 1) in each component.
We write $\ip{\cdot,\cdot}$ and $|\cdot|$ for the usual inner product and norm, respectively, on $\R^J$. 
We use $\sphere:=\{x\in\R^J:|x|=1\}$ to denote the unit sphere in $\R^J$. 
For $J,K\in\N$, let $\R^{J\times K}$ denote the set of real-valued matrices with $J$ rows and $K$ columns. 
We write $M^T$ to denote the transpose of $M$. 
We let $\norm{\cdot}$ denote the Frobenius norm on $\R^{J\times M}$.
We let $\M^{J\times J}$ denote the open set of positive-definite symmetric matrices in $\R^{J\times J}$, and let $E_J\in\M^{J\times J}$ denote the $J\times J$ identity matrix.
Given a topological space $S$, we let $\B(S)$ denote the $\sigma$-algebra of Borel subsets of $S$.

For a subset $A\subseteq\R$ let $\inf A$ and $\sup A$ denote the infimum and supremum, respectively, of $A$. 
We use the convention that the infimum and supremem of the emptyset are respectively defined to be $\infty$ and $-\infty$. 
For a subset $E\subseteq\R^J$ let $\conv(E)$ denote the convex cone generated by $E$; that is,
	$$\conv(E):=\lcb\sum_{k=1}^Kr_kx_k:K\in\N,x_k\in E,r_k\geq0\rcb,$$
with the convention that $\conv(\emptyset):=\{0\}$. 
We let $\spaan(E)$ denote the set of all possible finite linear combinations of vectors in $E$ with the convention that $\spaan(\emptyset):=\{0\}$. 
We let $E^\perp:=\{x\in\R^J:\ip{x,y}=0\;\forall\;y\in E\}$ denote the orthogonal complement of $\spaan(E)$ in $\R^J$.
Given two subsets $E_1,E_2\subset\R^J$, define their Hausdorff distance by
	\be\label{eq:Hausdorff}\rho_H(E_1,E_2):=\max\lcb\sup_{x\in E_1}\inf_{y\in E_2}|x-y|,\sup_{y\in E_2}\inf_{x\in E_1}|x-y|\rcb.\ee

Given a subset $E\subseteq\R^J$, we let $\dr(E)$ denote the set of functions from $[0,\infty)$ to $E$ that are right continuous and have finite left limits (RCLL). 
We let $\cts(E)$ denote the subset of continuous functions in $\dr(E)$. Given a subset $A\subseteq E$, we let $\cts_A(E)$ denote the set of functions $f$ in $\cts(E)$ satisfying $f(0)\in A$. 
If $0\in E$, we let $\cts_0(E)$ denote the set of function $f$ in $\cts(E)$ satisfying $f(0)=0$. 
We endow $\dr(E)$ and its subsets with the Skorokhod $J_1$-topology. For $f\in\dr(E)$ we let $f(t+):=\lim_{s\downarrow t}f(s)$ for all $t\in[0,\infty)$. 
For $s\geq0$, we define the shift operator $\Theta_s$ as follows: for every function $f:[0,\infty)\mapsto\R^J$, the shifted function $\Theta_sf:[0,\infty)\mapsto\R^J$ is defined by 
	\be\label{eq:shift}(\Theta_sf)(t):= f(s+t)-f(s)\qquad\text{for all }t\geq0.\ee
Given a function $f:[0,\infty)\mapsto\R^J$ and $t>0$, let $|f|(t)$ denote the total variation of $f$ on $[0,t]$.
 Let $\iota\in\cts_0(\R_+)$ denote the identity function defined by $\iota(t)=t$ for all $t\ge0$. 

Throughout this paper we fix a filtered probability space $(\Omega,\F,\{\F_t\},\P)$ satisfying the usual conditions (see, e.g., \cite[Chapter II, Definition 67.1]{Rogers2000}). 
We abbreviate ``almost surely'' as ``a.s.''.
We write $\E$ to denote expectation under $\P$. 
By a $J$-dimensional $\{\F_t\}$-Brownian motion $\bm$ on $(\Omega,\F,\P)$ we mean that $(\bm^1,\dots,\bm^J)$ are independent and, for each $j=1,\dots,J$, $\{\bm^j(t),\F_t,t\geq0\}$ is a continuous martingale with quadratic variation $[\bm^j]_t=t$ for $t\geq0$ that starts at the origin.

\section{Pathwise differentiability of reflected Brownian motion}\label{sec:RBM}

In this section we state our main assumptions that ensure existence of pathwise unique RBMs and under which pathwise differentiability of RBMs hold.

\subsection{A family of coupled RBMs}
\label{sec:rbm}

In this section we define the family of coupled RBMs in a fixed polyhedral cone that we consider and state assumptions guaranteeing existence and pathwise uniqueness of the associated RBMs.
We first introduce the polyhedral cone, which will remain fixed throughout this work. 
Let $G$ be a nonempty simple polyhedral cone in $\R^J$ equal to the intersection of $J$ closed half spaces in $\R^J$; that is,
	$$G:=\bigcap_{i=1,\dots,J}\lcb x\in\R^J:\ip{x,n_i}\geq 0\rcb,$$
for linearly independent unit vectors $n_i\in \sphere$, $i=1,\dots,J$. 
The term ``simple'' refers to the facts that the number of faces is equal to the dimension of the space and that the corresponding normal vectors $\{n_1,\dots,n_J\}$ are linearly independent. 
For each $i=1,\dots,J$, we let $F_i:=\{x\in\partial G:\ip{x,n_i}=0\}$ denote the $i$th face. 
For notational convenience, we let $\allN:=\{1,\dots,J\}$, and for $x\in G$, we write 
	\begin{equation}\label{eq:allNx}\allN(x):=\{i\in\allN:x\in F_i\}\end{equation} 
to denote the (possibly empty) set of indices associated with the faces that intersect at $x$.

Let $U$ be an open parameter set. 
For ease of exposition we assume $U\subset\R$ is a one-dimensional set.
For each $i\in\allN$, we fix a continuously differentiable function 
	$$d_i:U\mapsto\R^J$$
that satisfies $\ip{d_i(\alpha),n_i}=1$ for all $\alpha\in U$. 
For $i\in\allN$ and $\alpha\in U$, $d_i(\alpha)$ will denote the (constant) direction of reflection along the face $F_i$ associated with the parameter $\alpha$. 
Since the directions of reflection can always be renormalized, our assumption that $\ip{d_i(\alpha),n_i}=1$ for all $\alpha\in U$ is equivalent to the necessary condition that the reflection directions point into the domain: $\langle d_i(\alpha), n_i \rangle > 0$, and hence, is without loss of generality. 
For $\alpha\in U$ and $x\in\partial G$, we let $d(\alpha,x)$ denote the cone generated by the admissible directions of reflection at $x$; that is,
	\be\label{eq:dux}d(\alpha,x):=\conv\lb\lcb d_i(\alpha),i\in\allN(x)\rcb\rb.\ee
For convenience, we extend the definition of $d(\alpha,\cdot)$ to all of $G$ by setting $d(\alpha,x):=\{0\}$ for $x\in G^\circ$. 
Recall that $\M^{J\times J}$ denotes the open set of positive-definite symmetric matrices in $\R^{J\times J}$. 
We fix continuously differentiable functions 
\begin{align*}
	a:U\mapsto\M^{J\times J},\qquad b:U\mapsto\R^J,
\end{align*}
and denote their respective Jacobians by $a':U\mapsto\R^{J\times J}$ and $b':U\mapsto\R^J$. 
For $\alpha\in U$, $a(\alpha)$ and $b(\alpha)$ will respectively be the covariance and drift for the RBM associated with $\alpha$. 
For each $\alpha\in U$, we let $\sigma(\alpha)\in\M^{J\times J}$ denote the unique square root of $a(\alpha)$ so that $a(\alpha)=\sigma(\alpha)(\sigma(\alpha))^T$. 
Since $a$ is continuously differentiable,
	$$\sigma:U\mapsto\M^{J\times J}$$
is also continuously differentiable and we denote its Jacobian by $\sigma':U\mapsto\R^{J\times J}$.

We can now define an RBM in $G$ associated with $\alpha\in U$.

\begin{defn}\label{def:rbm}
Given $\{(d_i(\cdot),n_i),i\in\allN\}$, $b(\cdot)$, $\sigma(\cdot)$, $\alpha\in U$ and a $J$-dimensional $\{\F_t\}$-Brownian motion $\bm$ on $(\Omega,\F,\P)$, an RBM associated with parameter $\alpha$ and driving Brownian motion $\bm$ is a $J$-dimensional continuous $\{\F_t\}$-adapted process $\Z^\alpha=\{\Z^\alpha(t),t\geq0\}$ on $(\Omega,\F,\P)$ such that a.s.\ for all $t\geq0$, $\Z^\alpha(t)\in G$ and
	\be\label{eq:Z}\Z^\alpha(t)=\Z^\alpha(0)+b(\alpha)t+\sigma(\alpha)\bm(t)+\Y^\alpha(t),\ee
where $\Y^\alpha=\{\Y^\alpha(t),t\geq0\}$ is a $J$-dimensional continuous $\{\F_t\}$-adapted process that a.s.\ satisfies, for all $0\leq s<t<\infty$,
	$$\Y^\alpha(t)-\Y^\alpha(s)\in\conv\lsb\bigcup_{u\in(s,t]}d(\alpha,\Z^\alpha(u))\rsb.$$
\end{defn}

We refer to $\Y^\alpha$ as the \emph{constraining process} associated with the RBM $\Z^\alpha$. 
The following assumption imposes a linear independence condition on the directions of reflection, which will be used to decompose the constraining process $\Y^\alpha$ into its action along each face.

\begin{ass}\label{ass:independent}
For each $\alpha\in U$, $\{d_i(\alpha),i\in\allN\}$ is a set of linearly independent vectors.
\end{ass}

Define $R:U\mapsto\R^{J\times J}$ by
	\be\label{eq:R}R(\alpha):=\begin{pmatrix}d_1(\alpha)&\cdots&d_J(\alpha)\end{pmatrix},\ee
and let $R':U\mapsto\R^{J\times J}$ denote the Jacobian of $R$. Under Assumption \ref{ass:independent}, given a reflected Brownian motion $\Z^\alpha$ with associated constraining process $\Y^\alpha$, define the $J$-dimensional $\{\F_t\}$-adapted continuous process $\L^\alpha=\{\L^\alpha(t),t\geq0\}$ by 
\begin{equation}\label{eq:Lalpha}
	\L^\alpha(t):=(R(\alpha))^{-1}\Y^\alpha(t),\qquad t\geq0.
\end{equation}
We refer to $\L^\alpha$ as the \emph{local time process} corresponding to the RBM $\Z^\alpha$ because the $i$th component of $\L^\alpha(t)$ equals the local time that the RBM $\Z^\alpha$ spends in the $i$th face $F_i$ on the interval $[0,t]$.
We will make use of the following property of the local time process.

\begin{lem}
	[{\cite[Lemma 2.5]{Lipshutz2017}}]
	\label{lem:L}
	Suppose Assumption \ref{ass:independent} holds and $\Z^\alpha$ is an RBM with associated constraining process $\Y^\alpha$. Let $\L^\alpha$ be defined as in \eqref{eq:Lalpha}. 
	Then a.s.\ for each $i\in\allN$, the $i$th component of $\L^\alpha$ satisfies $\L^{\alpha,i}(0)=0$, $\L^{\alpha,i}$ is nondecreasing and $\L^{\alpha,i}$ can only increase when $\Z^\alpha$ lies in face $F_i$; that is,
		$$\int_0^\infty 1{\{\Z^\alpha(s)\in F_i\}}d\L^{\alpha,i}(s)=0.$$	
	Consequently, $\Y^\alpha$ is of bounded variation and so $\Z^\alpha$ is an $\{\F_t\}$-semimartingale.
\end{lem}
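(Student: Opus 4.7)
The plan is to prove each item by unpacking the identity $\Y^\alpha(t)=R(\alpha)\L^\alpha(t)=\sum_{i\in\allN}\L^{\alpha,i}(t)d_i(\alpha)$ and using Assumption \ref{ass:independent} to convert the cone-inclusion constraint on the increments of $\Y^\alpha$ from Definition \ref{def:rbm} into coordinatewise information on $\L^\alpha$. All assertions are a.s., so I would work pathwise on the full-measure event where the increment inclusion and continuity of $\Z^\alpha$ both hold.

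That $\L^{\alpha,i}(0)=0$ is immediate: evaluating \eqref{eq:Z} at $t=0$ gives $\Y^\alpha(0)=0$, hence $\L^\alpha(0)=R(\alpha)^{-1}\Y^\alpha(0)=0$. For monotonicity, fix $0\leq s<t$. Since $d(\alpha,x)\subseteq\conv(\{d_i(\alpha):i\in\allN\})$ for every $x\in G$ (with $d(\alpha,x)=\{0\}$ on $G^\circ$), Definition \ref{def:rbm} gives $\Y^\alpha(t)-\Y^\alpha(s)=\sum_{i\in\allN}c_i(s,t)d_i(\alpha)$ for some $c_i(s,t)\geq0$. By Assumption \ref{ass:independent} the $\{d_i(\alpha)\}_{i\in\allN}$ form a basis of $\R^J$, so these coefficients are unique; comparing with $\Y^\alpha(t)-\Y^\alpha(s)=R(\alpha)(\L^\alpha(t)-\L^\alpha(s))$ yields $\L^{\alpha,i}(t)-\L^{\alpha,i}(s)=c_i(s,t)\geq0$.

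For the support property, I would show that $\L^{\alpha,i}$ is constant on every connected component of the open set $O_i:=\{u\geq0:\Z^\alpha(u)\notin F_i\}$ (openness follows from continuity of $\Z^\alpha$). On any closed subinterval $[s,t]$ of such a component, $i\notin\allN(\Z^\alpha(u))$ for every $u\in(s,t]$, so $d(\alpha,\Z^\alpha(u))\subseteq\conv(\{d_j(\alpha):j\in\allN,\,j\neq i\})$; the same basis-uniqueness argument then forces the $i$th coordinate of $\Y^\alpha(t)-\Y^\alpha(s)$ in the basis $\{d_j(\alpha)\}_{j\in\allN}$ to vanish, i.e., $\L^{\alpha,i}(s)=\L^{\alpha,i}(t)$. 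Letting the endpoints approach those of the component and invoking continuity of $\L^{\alpha,i}$ (inherited from continuity of $\Y^\alpha$), I conclude that $\L^{\alpha,i}$ is constant on each component of $O_i$. Hence the Stieltjes measure $d\L^{\alpha,i}$ is supported on $\{u\geq0:\Z^\alpha(u)\in F_i\}$, which gives the stated integral identity.

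The final assertion is then routine: each $\L^{\alpha,i}$ is continuous and nondecreasing, so it has finite variation on every bounded interval, and $\Y^\alpha=\sum_{i\in\allN}\L^{\alpha,i}d_i(\alpha)$ is consequently of bounded variation on every bounded interval. The decomposition $\Z^\alpha=\Z^\alpha(0)+b(\alpha)\iota+\sigma(\alpha)\bm+\Y^\alpha$ then displays $\Z^\alpha$ as a sum of a continuous local martingale and a continuous $\{\F_t\}$-adapted finite variation process, hence as a semimartingale. The only real obstacle is the third step: once one recognizes that linear independence of $\{d_i(\alpha)\}_{i\in\allN}$ allows one to read off componentwise information from the seemingly weak cone-inclusion in Definition \ref{def:rbm}, both the monotonicity and the support property follow by the same uniqueness-of-basis-representation argument applied on different time sets.
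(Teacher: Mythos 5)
Your proof is correct and uses the natural argument: read off coordinatewise information about $\L^\alpha$ from the cone-inclusion constraint on the increments of $\Y^\alpha$ by appealing to uniqueness of representations in the basis $\{d_i(\alpha)\}_{i\in\allN}$ guaranteed by Assumption~\ref{ass:independent}. Since the paper cites this lemma from \cite{Lipshutz2017} without reproducing the proof, there is no argument in this document to compare against, but your approach is surely the intended one. One small remark: as displayed, the integral identity in the statement reads $\int_0^\infty 1\{\Z^\alpha(s)\in F_i\}\,d\L^{\alpha,i}(s)=0$, which contradicts the surrounding prose (``can only increase when $\Z^\alpha$ lies in $F_i$'') and your correct conclusion that $d\L^{\alpha,i}$ is supported on $\{s:\Z^\alpha(s)\in F_i\}$; the indicator should be of the complementary event $\{\Z^\alpha(s)\notin F_i\}$, and your phrase ``which gives the stated integral identity'' should be read as proving this corrected identity rather than the one literally displayed.
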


The next assumption states that for each $\alpha\in U$ the data $\{(d_i(\alpha),n_i),i\in\allN\}$ satisfies the geometric conditions introduced in \cite{Dupuis1991} to ensure Lipschitz continuity of the associated Skorokhod map. 
Given a convex set $B$, we let
	$$\nu_B(z):=\{\nu\in\sphere:\ip{\nu,y-z}\geq0\text{ for all }y\in B\}$$
denote the set of inward normal vectors to the set $B$ at $z\in\partial B$.

\begin{ass}\label{ass:setB}
For each $\alpha\in U$ there exists $\theta(\alpha)>0$ and a compact, convex, symmetric set $B^\alpha$ in $\R^J$ with $0\in(B^\alpha)^\circ$ such that for $i\in\allN$,
	\be\label{eq:setB}\lcb
	\begin{array}{c}
		z\in\partial B^\alpha\\
		|\ip{z,n_i}|<\theta(\alpha)
	\end{array}\rcb\qquad\Rightarrow\qquad\ip{\nu,d_i(\alpha)}=0\qquad\text{for all }\;\nu\in\nu_{B^\alpha}(z).\ee
Furthermore, $\alpha\mapsto B^\alpha$ is continuous in the Hausdorff metric defined in \eqref{eq:Hausdorff}.
\end{ass}

For existence, we will impose the following assumption, which requires that for each $\alpha\in U$ there exists a projection from $\R^J$ to $G$ that satisfies a certain condition with respect to the directions of reflection.

\begin{ass}\label{ass:projection}
For each $\alpha\in U$ there is a function $\pi^\alpha:\R^J\mapsto G$ satisfying $\pi^\alpha(x)=x$ for all $x\in G$ and $\pi^\alpha(x)-x\in d(\alpha,\pi^\alpha(x))$ for all $x\not\in G$.
\end{ass}

See the appendix of \cite{Lipshutz2017a} for an explicit expression for $\pi^\alpha$ in the case Assumptions \ref{ass:independent}, \ref{ass:setB} and \ref{ass:projection} hold.
 
We now state an easily verifiable algebraic sufficient condition on the reflection directions under which Assumptions \ref{ass:independent}, \ref{ass:setB} and \ref{ass:projection} hold.
The proof is deferred to the Appendix.
Let $N:=\begin{pmatrix}n_1&\cdots&n_J\end{pmatrix}$ denote the $J \times J$  matrix whose $i$th column is $n_i$, and recall that $E_J$ denotes the $J\times J$ identity matrix.

\begin{lem}
\label{lem-setB}
Suppose $N^TR(\alpha)$ is a (non-singular) $\mathcal{M}$-matrix for each $\alpha\in U$; that is, for each $\alpha\in U$, the matrix $Q(\alpha):=E_J-N^TR(\alpha)$ is non-negative whose spectral radius, denoted $\varrho(Q(\alpha))$, satisfies $\varrho(Q(\alpha))<1$.
Then Assumptions \ref{ass:independent}, \ref{ass:setB} and \ref{ass:projection} hold.
\end{lem}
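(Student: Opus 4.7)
The plan is to verify Assumptions \ref{ass:independent}, \ref{ass:projection}, and \ref{ass:setB} in turn, with the last being the main step. Set $M(\alpha) := N^T R(\alpha)$ for brevity. Assumption \ref{ass:independent} is immediate: since any $\mathcal{M}$-matrix is non-singular, $M(\alpha)$ is invertible, which combined with the non-singularity of $N$ (from linear independence of the $n_i$) gives invertibility of $R(\alpha)$, so its columns $d_1(\alpha), \dots, d_J(\alpha)$ are linearly independent. For Assumption \ref{ass:projection}, I will define $\pi^\alpha(x)$ through the linear complementarity problem: find $s = s(x, \alpha) \in \R^J$ with $s \geq 0$, $M(\alpha) s + N^T x \geq 0$, and $s^T(M(\alpha) s + N^T x) = 0$. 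Since every $\mathcal{M}$-matrix is a $P$-matrix, this LCP has a unique solution (e.g., by Cottle--Pang--Stone). Setting $\pi^\alpha(x) := x + R(\alpha) s(x, \alpha)$, one checks $\pi^\alpha(x) \in G$; uniqueness forces $s = 0$ whenever $x \in G$ so $\pi^\alpha(x) = x$ there, while for $x \notin G$ the complementarity $s^i > 0 \Rightarrow \langle \pi^\alpha(x), n_i\rangle = 0$ yields $\pi^\alpha(x) - x = \sum_i s^i d_i(\alpha) \in d(\alpha, \pi^\alpha(x))$.

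For Assumption \ref{ass:setB}, the key idea is to take $B^\alpha$ to be the parallelepiped whose face-normals are aligned with the vectors dual to $\{d_i(\alpha)\}$. Let $v_i^\alpha \in \R^J$ be characterized by $\langle v_i^\alpha, d_j(\alpha) \rangle = \delta_{ij}$ (equivalently, the $i$th column of $R(\alpha)^{-T}$), let $w^\alpha := M(\alpha)^{-1} {\bf 1}$ which has strictly positive entries because $M(\alpha)^{-1} = \sum_{k \geq 0}(E_J - M(\alpha))^k$ is a non-singular matrix with non-negative entries, and define
\[
B^\alpha := \lcb x \in \R^J : |\langle x, v_i^\alpha \rangle| \leq w^{\alpha, i}\ \text{for all}\ i \in \allN \rcb = R(\alpha)\prod_{i \in \allN}[-w^{\alpha, i}, w^{\alpha, i}].
\]
This is a compact, convex, symmetric parallelepiped containing $0$ in its interior, and continuity of $\alpha \mapsto B^\alpha$ in the Hausdorff metric follows from the $C^1$ dependence of $R(\alpha)$ and $w^\alpha$ on $\alpha$.

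To verify the implication in \eqref{eq:setB}, observe that at any $z \in \partial B^\alpha$ the inward normal cone is generated by $\{-\text{sgn}(\langle z, v_j^\alpha \rangle)\, v_j^\alpha : j \in A(z)\}$ with $A(z) := \{j : |\langle z, v_j^\alpha \rangle| = w^{\alpha, j}\}$; the biorthogonality $\langle v_j^\alpha, d_i(\alpha)\rangle = \delta_{ji}$ then reduces the conclusion $\langle \nu, d_i(\alpha)\rangle = 0$ for all $\nu \in \nu_{B^\alpha}(z)$ to the requirement $i \notin A(z)$. Thus it suffices to exhibit $\theta(\alpha) > 0$ such that $|\langle z, v_i^\alpha \rangle| = w^{\alpha, i}$ implies $|\langle z, n_i \rangle| \geq \theta(\alpha)$. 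Setting $y := R(\alpha)^{-1} z$, so that $\langle z, n_i \rangle = (M(\alpha) y)^i$, and treating the case $y^i = w^{\alpha, i}$ (the case $y^i = -w^{\alpha, i}$ is symmetric), the sign structure $M(\alpha)_{ij} \leq 0$ for $j \neq i$ combined with $|y^j| \leq w^{\alpha, j}$ yields
\[
(M(\alpha) y)^i \geq M(\alpha)_{ii} w^{\alpha, i} + \sum_{j \neq i} M(\alpha)_{ij} w^{\alpha, j} = (M(\alpha) w^\alpha)^i = 1,
\]
hence $|\langle z, n_i \rangle| \geq 1$ and one can take $\theta(\alpha) := 1$. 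This last estimate is the main obstacle; it is the only place where the $\mathcal{M}$-matrix hypothesis is used in its full strength, supplying both a positive vector $w^\alpha$ with $M(\alpha) w^\alpha > 0$ and the off-diagonal sign pattern that makes the bound go through.
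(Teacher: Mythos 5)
Your proof is correct and follows the same high-level strategy as the paper (a parallelepiped for $B^\alpha$ equal to $R(\alpha)$ applied to a coordinate box, plus a linear complementarity argument for $\pi^\alpha$), but you make it considerably more self-contained in the $B^\alpha$ step, which is where the two proofs genuinely diverge. The paper first invokes a Plemmons characterization of $\mathcal{M}$-matrices (the condition $N_{40}$) to obtain a diagonal similarity that makes $Q(\alpha)$ strictly substochastic, equivalently a strictly positive vector $v(\alpha)$ with $N^TR(\alpha)v(\alpha)>\mathbf{0}$, and then cites Dupuis--Ramanan (Section 2.4) for the fact that the resulting parallelepiped satisfies the implication \eqref{eq:setB}. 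You instead take the concrete vector $w^\alpha=(N^TR(\alpha))^{-1}\mathbf{1}$, whose strict positivity is immediate from the Neumann series $\sum_{k\ge0}Q(\alpha)^k$ (indeed $w^\alpha\ge\mathbf{1}$), and you verify \eqref{eq:setB} directly: the biorthogonality $\langle v_j^\alpha,d_i(\alpha)\rangle=\delta_{ji}$ reduces the normal-cone condition to $i\notin A(z)$, and the off-diagonal sign pattern $M(\alpha)_{ij}\le0$ together with $M(\alpha)w^\alpha=\mathbf{1}$ yields the bound $|\langle z,n_i\rangle|\ge1$ on the relevant faces, so $\theta(\alpha)=1$ works uniformly. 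Your choice of $w^\alpha$ thus removes the appeal to both external references for this step, and also gives a cleaner continuity argument (Hausdorff continuity of $\alpha\mapsto B^\alpha$ follows at once from continuity of $R(\cdot)$ and matrix inversion). For Assumption \ref{ass:projection} both proofs pass through the LCP for $N^TR(\alpha)$; the paper cites Cottle--Veinnot directly while you route it through the $\mathcal{M}$-matrix $\Rightarrow$ $P$-matrix implication and existence/uniqueness for $P$-matrix LCPs, which gives the same conclusion and additionally provides the uniqueness you use to show $\pi^\alpha(x)=x$ on $G$. The argument for Assumption \ref{ass:independent} is identical in both. In short: same architecture, but your treatment of Assumption \ref{ass:setB} is more elementary and explicit, at the modest cost of not being the ``least'' parallelepiped (your $w^\alpha\ge\mathbf{1}$ versus the paper's $m(\alpha)<\mathbf{1}$, which is immaterial since only convexity, symmetry, compactness, and \eqref{eq:setB} are required).
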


\begin{remark}
RBMs in the non-negative orthant with reflection matrices that are $\mathcal{M}$-matrices arise as heavy traffic limits of single class open queueing networks \cite{Reiman1984}. 
Upon setting $N=E_J$ in Lemma \ref{lem-setB}, we see that if $G=\R_+^J$ and $R(\alpha)$ is an $\mathcal{M}$-matrix for each $\alpha\in U$, then Assumptions \ref{ass:independent}, \ref{ass:setB} and \ref{ass:projection} hold.
\end{remark}

We have the following theorem on the existence and pathwise uniqueness of an RBM. 

\begin{theorem}
	[{\cite[Theorem 4.3]{Ramanan2006}}]
	\label{thm:rbm}
	Suppose the data $\{(d_i(\cdot),n_i),i\in\allN\}$ satisfies Assumptions \ref{ass:setB} and \ref{ass:projection}. 
	Then given $\alpha\in U$, $x\in G$, and a $J$-dimensional $\{\F_t\}$-adapted Brownian motion $\bm$ on $(\Omega,\F,\P)$, there exists a pathwise unique RBM associated with $\alpha$ starting at $x$, which we denote by $\Z^{\alpha,x}$. 
	In other words, if $\Z^{\alpha,x}$ and $\wt\Z^{\alpha,x}$ are both RBMs associated with $\alpha$ and driving Brownian motion $\bm$ that almost surely satisfy $\Z^{\alpha,x}(0)=\wt\Z^{\alpha,x}(0)=x$, then almost surely $\Z^{\alpha,x}(t)=\wt\Z^{\alpha,x}(t)$ for all $t\ge0$. 
	Moreover, $\Z^{\alpha,x}$ is a strong Markov process.
\end{theorem}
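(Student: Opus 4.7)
The plan is to reduce both existence and pathwise uniqueness to the deterministic Skorokhod problem (SP) on $G$ with reflection directions $\{d_i(\alpha)\}$. Under Assumption \ref{ass:setB}, the SP admits a Lipschitz continuous Skorokhod map $\Gamma^\alpha$ on the space of continuous paths with initial value in $G$ (this is recalled in Section \ref{sec:sdm}); Assumption \ref{ass:projection} supplies the projection $\pi^\alpha$ needed to make sense of initial and boundary pushing, and underpins the fixed-point construction of $\Gamma^\alpha$ carried out in \cite{Ramanan2006}.

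For existence, given $x \in G$ and the $\{\F_t\}$-Brownian motion $\bm$, I define the continuous $\{\F_t\}$-adapted input path $\psi^{\alpha,x}(t) := x + b(\alpha)t + \sigma(\alpha)\bm(t)$ for $t\ge 0$, and set $\Z^{\alpha,x} := \Gamma^\alpha[\psi^{\alpha,x}]$ and $\Y^{\alpha,x} := \Z^{\alpha,x} - \psi^{\alpha,x}$. The defining properties of the SP give immediately that $\Z^{\alpha,x}(t)\in G$, the decomposition \eqref{eq:Z}, and the cone condition on the increments of $\Y^{\alpha,x}$ required by Definition \ref{def:rbm}. Adaptedness follows because $\Gamma^\alpha$ is causal (its value at time $t$ depends only on the input restricted to $[0,t]$) and Borel measurable as a Lipschitz map. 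For pathwise uniqueness, if $\wt\Z^{\alpha,x}$ is any other RBM associated with $\alpha$ and $\bm$ starting at $x$, with constraining process $\wt\Y^{\alpha,x}$, then $(\wt\Z^{\alpha,x},\wt\Y^{\alpha,x})$ also solves the SP for the input $\psi^{\alpha,x}$, so the Lipschitz property of $\Gamma^\alpha$ forces $\wt\Z^{\alpha,x}\equiv \Z^{\alpha,x}$ almost surely on each compact interval, and hence for all $t\ge 0$.

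The strong Markov property then follows from pathwise uniqueness combined with the strong Markov property of $\bm$ and the flow property of the SP. For any a.s.\ finite $\{\F_t\}$-stopping time $\tau$, $\Theta_\tau \bm$ is an $\{\F_{\tau+t}\}$-Brownian motion independent of $\F_\tau$, and $\Gamma^\alpha$ commutes with time shifts in the sense that
\begin{equation*}
\Z^{\alpha,x}(\tau+\cdot) \;=\; \Gamma^\alpha\bigl[\Z^{\alpha,x}(\tau) + b(\alpha)\,\iota + \sigma(\alpha)\,\Theta_\tau \bm\bigr].
\end{equation*}
By existence and pathwise uniqueness applied conditionally on $\F_\tau$, the right side is an RBM starting at $\Z^{\alpha,x}(\tau)$ and driven by $\Theta_\tau \bm$, which yields the strong Markov property.

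The main obstacle sits upstream of this reduction: constructing $\Gamma^\alpha$ and proving its Lipschitz continuity under Assumption \ref{ass:setB}. This is the deep Dupuis--Ishii/Dupuis--Ramanan type result carried out in \cite{Ramanan2006}, where Assumption \ref{ass:projection} is used to handle the boundary pushing and the discrete-time approximation scheme. A secondary, more routine, technical point is verifying that the weak cone-on-increments formulation in Definition \ref{def:rbm} is equivalent to the standard SP formulation; this is handled by a Riemann-sum decomposition of $\Y^{\alpha,x}$ over short intervals, together with the closedness of $z\mapsto d(\alpha,z)$ (a consequence of Assumption \ref{ass:independent} and the finite number of faces) and the continuity of $\Z^{\alpha,x}$, which promote the local cone constraint on increments to the pointwise form required by the SP.
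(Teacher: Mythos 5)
Your reduction to the deterministic Skorokhod map is exactly the route the cited reference \cite{Ramanan2006} takes, and since the paper imports Theorem \ref{thm:rbm} wholesale rather than reproving it, your sketch correctly reconstructs the expected argument: existence by applying $\sm^\alpha$ to the adapted input $x + b(\alpha)\iota + \sigma(\alpha)\bm$, pathwise uniqueness from uniqueness of SP solutions, and the strong Markov property from the time-shift property (Lemma \ref{lem:SPtimeshift}) of the SM together with the strong Markov property of $\bm$.

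The one place you drift is the final paragraph. There is no gap to fill between the ``weak cone-on-increments formulation'' of Definition \ref{def:rbm} and the paper's SP: Definition \ref{def:SP} (and \cite[Definition 1.1--1.2]{Ramanan2006}) defines the \emph{extended} Skorokhod problem with exactly the same condition on increments of $\y$, so the pair $(\Z^{\alpha,x},\Y^{\alpha,x})$ produced by $\sm^\alpha$ automatically satisfies Definition \ref{def:rbm} with nothing to translate. The Riemann-sum/closedness argument you propose would only be needed to pass from the extended SP to the older bounded-variation SP formulation, which is not required for this theorem. Moreover, you invoke Assumption \ref{ass:independent} for the closedness of $z \mapsto d(\alpha,z)$, but that assumption is not among the hypotheses of Theorem \ref{thm:rbm}; the upper semicontinuity of $\allN(\cdot)$ (and hence of $d(\alpha,\cdot)$) follows just from the polyhedral structure of $G$. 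Deleting that paragraph leaves a correct and complete sketch, modulo citing \cite[Theorem 3.3]{Ramanan2006} (Proposition \ref{prop:SP} here) for existence, uniqueness and Lipschitz continuity of $\sm^\alpha$ under Assumptions \ref{ass:setB} and \ref{ass:projection}.
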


\begin{remark}
	\label{rmk:pathwiseunique}
	The pathwise uniqueness of the RBM $\Z^{\alpha,x}$, along with \eqref{eq:Z} and \eqref{eq:Lalpha}, implies that the processes $\Y^{\alpha,x}$ and $\L^{\alpha,x}$ are also pathwise unique.
\end{remark}

Throughout this work, under Assumptions \ref{ass:setB} and \ref{ass:projection}, given $\alpha\in U$ and $x\in G$, we write $\Z^{\alpha,x}$ to denote the pathwise unique RBM associated with $\alpha$ starting at $x$; and we use $\Y^{\alpha,x}$ and $\L^{\alpha,x}$ (provided Assumption \ref{ass:independent} holds) to denote the corresponding constraining process and local time process, respectively. 

\subsection{The derivative process}\label{sec:derivativeprocess}

In this section we introduce the notion of a derivative process along an RBM which will be used in the next section to characterize pathwise derivatives of the RBM. 
The derivative process was introduced in \cite{Lipshutz2017}, where it was shown that the right continuous regularization of pathwise derivatives of reflected diffusions satisfy a linear constrained stochastic differential equation (with jumps) whose coefficients and directions of reflection depend on the state of the reflected diffusion. 
Solutions of the linear constrained stochastic differential equation are referred to as derivative processes.

In order to specify the domain of the derivative process, for each $x\in\partial G$, we define the intersection of hyperplanes
	\be\label{eq:Hx}\hyper_x:=\bigcap_{i\in\allN(x)}\lcb y\in\R^J:\ip{y,n_i}=0, \rcb\ee
where we recall the definition of $\allN(x)$ given in \eqref{eq:allNx}, and for each $x\in G^\circ$, we set $\hyper_x:=\R^J$.

\begin{defn}\label{def:de}
Suppose Assumption \ref{ass:independent} holds. 
Given $\alpha\in U$ and a $J$-dimensional $\{\F_t\}$-Brownian motion $\bm$ on $(\Omega,\F,\P)$, suppose $\Z^\alpha$ is an RBM associated with $\alpha$ and driving Brownian motion $\bm$. 
A derivative process along $\Z^\alpha$ is a $J$-dimensional RCLL $\{\F_t\}$-adapted process $\phib^\alpha=\{\phib^\alpha(t),t\geq0\}$ taking values in $\R^J$ such that a.s.\ for all $t\geq0$, $\phib^\alpha(t)\in \hyper_{\Z^\alpha(t)}$ and
	\be\label{eq:J}\phib^\alpha(t)=\phib^\alpha(0)+b'(\alpha) t+\sigma'(\alpha)\bm(t)+R'(\alpha)\L^\alpha(t)+\etab^\alpha(t),\ee
where $\etab^\alpha=\{\etab^\alpha(t),t\geq0\}$ is a $J$-dimensional RCLL $\{\F_t\}$-adapted process such that a.s.\ $\etab^\alpha(0)=0$ and for all $0\leq s<t<\infty$,
	$$\etab^\alpha(t)-\etab^\alpha(s)\in\spaan\lsb\bigcup_{u\in(s,t]}d(\alpha,\Z^\alpha(u))\rsb.$$
\end{defn}

\begin{remark}
In \cite{Lipshutz2017} (see also Theorem \ref{thm:pathwise} below) it was shown that a.s.\ the RBM $\Z^\alpha(\cdot)$ is differentiable with respect to $\alpha$ at each $t\ge0$, and the right-continuous regularization of its pathwise derivative is equal the derivative process $\phib^\alpha$ along $\Z^\alpha$.
From this perspective, it is natural to view  \eqref{eq:J} as a formal linearization of \eqref{eq:Z}, with $\phib^\alpha(t)$ and $R'(\alpha)\L^\alpha(t)+\etab^\alpha(t)$ serving as the appropriate linearizations of $\Z^\alpha(t)$ and $R(\alpha)\L^\alpha(t)$, respectively.
\end{remark}

Under the following assumption there exists a pathwise unique derivative process along an RBM.

\begin{ass}\label{ass:holder}
	There exists $\lip'<\infty$ and $\gamma\in(0,1]$ such that for all $\alpha,\wt\alpha\in U$,
	$$|b'(\alpha)-b'(\wt\alpha)|+\norm{\sigma'(\alpha)-\sigma'(\wt\alpha)}+\norm{R'(\alpha)-R'(\wt\alpha)}\le\lip'|\alpha-\wt\alpha|^\gamma.$$
\end{ass}

\begin{theorem}
[{\cite[Corollary 3.15]{Lipshutz2017}}]
\label{thm:derivativeprocess}
Suppose the data $\{(d_i(\cdot),n_i),i\in\allN\}$ satisfies Assumptions \ref{ass:independent}, \ref{ass:setB} and \ref{ass:projection} and $b(\cdot)$, $\sigma(\cdot)$ and $R(\cdot)$ satisfy Assumption \ref{ass:holder}. Then given $\alpha\in U$, $x\in G$, $y\in \hyper_x$ and a $J$-dimensional $\{\F_t\}$-adapted Brownian motion $\bm$ on $(\Omega,\F,\P)$, there exists a pathwise unique derivative process along {the RBM} $\Z^{\alpha,x}$ starting at $y$, which we denote by $\phib^{\alpha,\xi}$, where $\xi:=(x,y)$. 
In other words, if $\phib^{\alpha,\xi}$ and $\wt\phib^{\alpha,\xi}$ are both derivative processes along the RBM $\Z^{\alpha,x}$ that satisfy $\phib^{\alpha,\xi}(0)=\wt\phib^{\alpha,\xi}(0)=y$, then almost surely $\phib^{\alpha,\xi}(t)=\wt\phib^{\alpha,\xi}(t)$ for all $t\ge0$. 
\end{theorem}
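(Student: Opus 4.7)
The plan is to reduce the stochastic problem (2.7) to a deterministic \emph{derivative problem} (DP) on path space and invoke its well-posedness. Since Assumptions \ref{ass:setB} and \ref{ass:projection} are in force, Theorem \ref{thm:rbm} and Remark \ref{rmk:pathwiseunique} yield a pathwise unique RBM $\Z^{\alpha,x}$ and its (continuous, bounded-variation) local time process $\L^{\alpha,x}$. Freezing a sample path, define the continuous $\{\F_t\}$-adapted input
$$\psi^{\alpha,\A}(t) := y + b'(\alpha)t + \sigma'(\alpha)\bm(t) + R'(\alpha)\L^{\alpha,x}(t),\qquad t\geq 0,$$
and reformulate \eqref{eq:J} as the following pathwise problem: given the continuous trajectory $\Z^{\alpha,x}(\cdot)\in G$ and the continuous input $\psi^{\alpha,\A}$, find an RCLL path $\phib^\alpha$ with $\phib^\alpha(0)=y$ and $\phib^\alpha(t)\in\hyper_{\Z^{\alpha,x}(t)}$ for all $t$, together with an RCLL correction $\etab^\alpha$ with $\etab^\alpha(0)=0$ whose increments on $(s,t]$ lie in $\spaan(\bigcup_{u\in(s,t]}d(\alpha,\Z^{\alpha,x}(u)))$, such that $\phib^\alpha=\psi^{\alpha,\A}+\etab^\alpha$.

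The next step is to invoke well-posedness of this DP along the (full-measure) set of RBM sample paths. One constructs $\phib^\alpha$ excursion by excursion: on intervals during which the set $\allN(\Z^{\alpha,x}(\cdot))$ of active faces is constant, the constraint hyperplane $\hyper_{\Z^{\alpha,x}(\cdot)}$ is fixed and $\phib^\alpha$ evolves continuously as the appropriate oblique projection of $\psi^{\alpha,\A}$ onto that hyperplane along $\spaan\{d_i(\alpha):i\in\allN(\Z^{\alpha,x})\}$; at instants when the active set changes, $\phib^\alpha$ is forced to jump to the new hyperplane along the span of the newly active reflection directions, via a \emph{derivative projection operator} whose existence and single-valuedness are guaranteed by the linear independence of $\{d_i(\alpha)\}$ (Assumption \ref{ass:independent}) and the geometric condition (Assumption \ref{ass:setB}). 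Uniqueness of the DP solution follows from the contraction property of these derivative projection operators (the analog, in the linearized setting, of the Lipschitz property of the SM guaranteed by Assumption \ref{ass:setB}), applied to the difference of two candidate solutions. Assumption \ref{ass:holder} enters only through the constants governing this contraction when later comparing derivative processes for different parameter values.

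Finally, adaptedness and measurability of $\phib^{\alpha,\A}$ follow because the DP solution map is measurable in the input data $(\Z^{\alpha,x},\psi^{\alpha,\A})$ (indeed continuous in a Skorokhod sense, as established via the derivative map of \cite{Lipshutz2017a,Lipshutz2018}), and each of $\Z^{\alpha,x}$, $\bm$ and $\L^{\alpha,x}$ is continuous and $\{\F_t\}$-adapted, so $\psi^{\alpha,\A}$ is as well. The main obstacle is the middle step: justifying the pathwise construction at the singular, random set of times when $\Z^{\alpha,x}$ enters or leaves faces. The difficulty is that $\hyper_{\Z^{\alpha,x}(t)}$ can shrink discontinuously whenever new faces become active, so $\phib^\alpha$ must jump; one must show that the induced derivative projections are well-defined, that their jumps cannot accumulate, and that the contraction estimates propagate across them. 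These facts rely jointly on the linear independence of $\{d_i(\alpha)\}$, the $B^\alpha$-geometric condition of Assumption \ref{ass:setB}, and path-regularity properties of the RBM that rule out pathological cornering behavior; all of this is exactly the content of the DP theory developed in \cite{Lipshutz2017}, from which the theorem then follows as a corollary.
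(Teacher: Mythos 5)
The paper does not supply its own proof of this result: it is imported verbatim from \cite{Lipshutz2017} (Corollary 3.15), and the body of the paper only reviews the surrounding machinery (Sections \ref{sec:sdm}--\ref{sec:Feller}). Your sketch is a reasonable reconstruction of the architecture that the cited work uses and that this paper relies on: reformulate \eqref{eq:J} as the deterministic derivative problem of Definition \ref{def:dp} along the RBM sample path with input $\psi^{\alpha,\xi}(\cdot)=y+b'(\alpha)\iota(\cdot)+\sigma'(\alpha)\bm(\cdot)+R'(\alpha)\L^{\alpha,x}(\cdot)$ (Remark \ref{rmk:derivativeprocessbeta}); appeal to well-posedness of the derivative problem along paths satisfying the boundary jitter property, which holds almost surely (Propositions \ref{prop:jitter} and \ref{prop:dpexistence}); and obtain adaptedness from the measurability/continuity of the derivative map (Proposition \ref{prop:dmcontinuous}).

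There is one naming slip worth correcting. You attribute uniqueness of the derivative-problem solution to ``the contraction property of these derivative projection operators.'' In this paper's vocabulary that phrase designates the strict contraction $\delta_0<1$ of Lemma \ref{lem:delta}, which is a geometric input to the \emph{stability and ergodicity} analysis and plays no role in uniqueness. What uniqueness actually uses is the Lipschitz continuity of the derivative map $\dm_\z^\alpha$ (Proposition \ref{prop:dmlip}): taking $\psi_1=\psi_2$ there forces $\phi_1=\phi_2$. Your parenthetical gloss ``(the analog of the Lipschitz property of the SM)'' correctly identifies the mechanism, but the headline name points to the wrong lemma. Relatedly, the claim that Assumption \ref{ass:holder} enters ``only through the constants\ldots when later comparing derivative processes for different parameter values'' overstates what can be read off this paper; it is simply part of the hypothesis set of the cited corollary (and is used there in the pathwise-derivative construction), so it is safer not to assign it a narrowed role.
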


\subsection{Pathwise derivatives of RBMs}\label{sec:pathwise}

{We now introduce} the notion of a pathwise (directional) derivative of an RBM and present results on their existence and characterization. 
Roughly speaking, a pathwise derivative of an RBM is a process that characterizes sensitivities of sample paths of an RBM on finite time intervals. 
They were shown to exist and were characterized in terms of the derivative process in \cite{Lipshutz2017}. 
To be precise, given $\alpha\in U$, $x\in G$, $y\in \hyper_x$ and $\ve>0$ such that $\alpha+\ve\in U$ and $x+\ve y\in G$, define the $J$-dimensional continuous process $\nabla_y^\ve\Z^{\alpha,x}=\{\nabla_y^\ve\Z^{\alpha,x}(t),t\geq0\}$ on $(\Omega,\F,\P)$ by
	$$\nabla_y^\ve\Z^{\alpha,x}(t):=\frac{\Z^{\alpha+\ve,x+\ve y}(t)-\Z^{\alpha,x}(t)}{\ve},\qquad t\geq0.$$
The pathwise derivative of $\Z^{\alpha,x}$ in the direction $y$ evaluated at time $t\geq0$ is defined to be the pointwise limit of $\nabla_y^\ve\Z^{\alpha,x}(t)$ as $\ve\downarrow0$.

For the following theorem, let $\dlr(\R^J)$ denote the set of functions $f:[0,\infty)\mapsto\R^J$ that have finite left limits at each $t>0$ and finite right limits at each $t\geq0$. 
Let
	$$\U:=\{x\in\partial G:|\allN(x)|\geq2\}$$
denote the nonsmooth part of the boundary of $G$.

\begin{theorem}[{\cite[Theorem 3.18]{Lipshutz2017}}]
\label{thm:pathwise}
Suppose the data $\{(d_i(\cdot),n_i),i\in\allN\}$ satisfies Assumptions \ref{ass:independent}, \ref{ass:setB} and \ref{ass:projection} and $b(\cdot)$, $\sigma(\cdot)$ and $R(\cdot)$ satisfy Assumption \ref{ass:holder}. Let $\alpha\in U$ and $x\in G$. Then a.s.\ the following hold for all $y\in \hyper_x$:
\begin{itemize}
	\item[(i)] For each $t\geq0$, the following limit exists: $\nabla_{y}\Z^{\alpha,x}(t):=\lim_{\ve\downarrow0}\nabla_{y}^\ve\Z^{\alpha,x}(t)$.
	\item[(ii)] The process $\nabla_{y}\Z^{\alpha,x}=\{\nabla_{y}\Z^{\alpha,x}(t),t\geq0\}$ takes values in $\dlr(\R^J)$ and is continuous at all $t>0$ such that $\Z^{\alpha,x}(t)\in G^\circ\cup\U$.
	\item[(iii)] The right continuous regularization of $\nabla_y\Z^{\alpha,x}$ is equal to the derivative process along $\Z^{\alpha,x}$ starting at $y$; that is, $\nabla_{y}\Z^{\alpha,x}(t+)=\phib^{\alpha,\xi}(t)$ for all $t\geq0$, where $\xi:=(x,y)$.
\end{itemize}
\end{theorem}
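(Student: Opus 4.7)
The plan is to use the Skorokhod map framework to recast each RBM as the image of a continuous input path, and then invoke a deterministic ``derivative map'' on path space to extract the pointwise limit. By Theorem \ref{thm:rbm} and Assumptions \ref{ass:setB}--\ref{ass:projection}, there is (pathwise) a Lipschitz continuous Skorokhod map $\sm^\alpha$ (depending on the cone $G$ and directions $\{d_i(\alpha)\}$) such that
$$\Z^{\alpha,x}(t) = \sm^\alpha\bigl[x + b(\alpha)\iota + \sigma(\alpha)\bm\bigr](t),$$
and analogously for $\Z^{\alpha+\ve,x+\ve y}$ with parameter $\alpha+\ve$. Writing $\psi^{\alpha,x} := x + b(\alpha)\iota + \sigma(\alpha)\bm$, the input for the perturbed RBM is $\psi^{\alpha,x} + \ve\chi^\ve$, where $\chi^\ve \to \chi := y + b'(\alpha)\iota + \sigma'(\alpha)\bm$ uniformly on compacts as $\ve \downarrow 0$, so the difference quotient becomes a joint perturbation of the input path and of the reflection data of the Skorokhod map.

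The core step is to establish pointwise convergence of the rescaled difference of Skorokhod maps (with simultaneously perturbed input and reflection directions) to a limiting deterministic derivative map. Concretely, I would introduce the \emph{derivative problem}: given the RBM path $\Z^{\alpha,x}$, its local time process $\L^{\alpha,x}$, and a continuous input perturbation $\chi$, the derivative map produces an RCLL path satisfying the deterministic analog of~\eqref{eq:J}, whose jumps arise from an instantaneous linear projection of the incoming derivative onto $\hyper_{\Z^{\alpha,x}(t)}$ along $\spaan(d(\alpha,\Z^{\alpha,x}(t)))$. Existence, uniqueness and continuity of this deterministic derivative map can be obtained by reducing the analysis to a fixed-point argument on each ``excursion'' interval between successive boundary hits of $\Z^{\alpha,x}$, using the Lipschitz continuity of $\sm^\alpha$ from Assumption \ref{ass:setB} and the linear independence of the reflection vectors from Assumption \ref{ass:independent} to identify the projection operators on the singular set.

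Once the deterministic derivative map is in hand, parts (i) and (iii) follow by verifying that for a.e.\ $\omega$ the path $\Z^{\alpha,x}(\omega,\cdot)$ satisfies the path regularity required for the derivative map to apply (in particular, transversal hitting of each face and no accumulation of boundary excursions in finite time, which uses standard properties of the Brownian driver $\bm$). Pathwise uniqueness from Theorem \ref{thm:derivativeprocess} then identifies the right-continuous regularization of the limit with $\phib^{\alpha,\xi}$. For part (ii), continuity at $t$ with $\Z^{\alpha,x}(t) \in G^\circ \cup \U$ would come from inspecting the jump structure of $\etab^{\alpha}$: the constraining process only jumps when the RBM transitions from a single smooth face to $G^\circ$, whereas at times in $G^\circ$ the dynamics are smooth, and at times in $\U$ any instantaneous adjustment is already forced to lie in $\hyper_{\Z^{\alpha,x}(t)}$ and hence leaves $\phib^{\alpha,\xi}$ continuous.

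The main obstacle is proving the deterministic convergence of the difference quotients of Skorokhod maps while simultaneously perturbing both the input path and the reflection directions; uniform convergence cannot hold since the limit is only RCLL while the prelimit paths $\nabla_y^\ve\Z^{\alpha,x}$ are continuous. Pointwise convergence must therefore be extracted from a careful case analysis according to whether $\Z^{\alpha,x}(t)$ lies in $G^\circ$ (smooth dynamics), on $\U$ (rigid geometric constraints from Assumption \ref{ass:setB} pin down the limit), or on a single smooth face (which generates jump discontinuities in the limit that only become visible after right-continuous regularization). This last case is the subtlest, since a small perturbation can cause the perturbed RBM to leave or stay on the face relative to the unperturbed one, and the limit of $\nabla_y^\ve\Z^{\alpha,x}(t)$ must be shown to coincide with the left limit of $\phib^{\alpha,\xi}$ at such times.
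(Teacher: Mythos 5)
This theorem is imported as a citation (\cite[Theorem 3.18]{Lipshutz2017}); the present paper gives no proof of it, so I will evaluate your sketch against the framework the paper does set up around it (Sections \ref{sec:SP}--\ref{sec:dp}), which reflects the machinery used in the cited proof.

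Your overall architecture is consonant with that machinery --- rewrite the RBM via the (extended) Skorokhod map, introduce a deterministic \emph{derivative problem} along the reflected path, and transfer pathwise uniqueness from Theorem \ref{thm:derivativeprocess} to identify the limit with $\phib^{\alpha,\xi}$. But there is a genuine gap in your step that claims to ``reduce the analysis to a fixed-point argument on each excursion interval between successive boundary hits of $\Z^{\alpha,x}$'' together with ``no accumulation of boundary excursions in finite time.'' For an RBM with nondegenerate diffusion, the random set $\{t : \Z^{\alpha,x}(t) \in \partial G\}$ is, like the zero set of a one-dimensional Brownian motion, a.s.\ a perfect set with no isolated points; there is no ordering of ``successive'' boundary hits and accumulation is the rule rather than the exception. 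An excursion-by-excursion fixed-point scheme therefore cannot be set up as described. The pathwise regularity actually used is the \emph{boundary jitter property} (Definition \ref{def:jitter}, proved for the RBM in Proposition \ref{prop:jitter}), which accommodates accumulating boundary times and instead controls the local oscillatory behavior of the constraining term near $\partial G$ (in particular conditions 3--4 on how $\U$ is approached through smooth faces); it is exactly this property that makes the derivative map well defined (Proposition \ref{prop:dpexistence}) and continuous (Proposition \ref{prop:dmcontinuous}). Your explanation of part (ii) is also imprecise: continuity at times $t$ with $\Z^{\alpha,x}(t)\in\U$ is not a consequence of ``the adjustment already lying in $\hyper_{\Z^{\alpha,x}(t)}$,'' but follows from jitter conditions 3--4, which force the pre-$t$ path to visit each constituent smooth face so that the accumulated projections already agree with the corner projection as $s\uparrow t$; the jump discontinuities of $\nabla_y\Z^{\alpha,x}$ occur precisely at times when $\Z^{\alpha,x}$ is on a single smooth face. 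To repair the proposal, replace the excursion/transversality argument with the boundary jitter property and the derivative projection operators $\proj_x^\alpha$ (Lemma \ref{lem:projx}), and derive parts (i)--(iii) from the resulting deterministic directional-derivative theory for the SM.
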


\begin{remark}
\label{rem-dirder}
It should be noted that the existence of pathwise directional derivatives of RBMs for a sub-class of reflection matrices  $R(\alpha)$ that are  so-called  ${\mathcal M}$-matrices (or of Harrison-Reiman type) was also established in \cite{ManRam00}.  However, the characterization given above of the right-continuous regularizations of these pathwise derivatives, besides being applicable to a more general class of reflection matrices, also  has several useful linearity and continuity properties that will be particularly useful in the ergodicity analysis.
\end{remark}

	As a corollary of Theorem \ref{thm:pathwise}, we have the following result.
	Let $\zeta_1:G\mapsto\R$ and $\zeta_2:G\mapsto\R$ be continuously differentiable functions with bounded first partial derivatives.
	For $t>0$ define $\Theta_t:U\mapsto\R$ by
		$$\Theta_t(\alpha):=\E\lsb\int_0^t\zeta_1(\Z^{\alpha,x}(s))ds+\zeta_2(\Z^{\alpha,x}(t))\rsb,\qquad \alpha\in U.$$

\begin{cor}[{\cite[Corollary 3.16]{Lipshutz2017}}]
\label{cor:pathwise}
Suppose the assumptions stated in Theorem \ref{thm:pathwise} hold. 
Then for each $t>0$ and $\alpha\in U$, $\Theta_t$ is differentiable at $\alpha$ and its derivative at $\alpha$ satisfies
	$$\Theta_t'(\alpha)=\E\lsb\int_0^t\zeta_1'(\Z^{\alpha,x}(s))\phib^{\alpha,\xi}(s)ds+\zeta_2'(\Z^{\alpha,x}(t))\phib^{\alpha,\xi}(t)\rsb.$$
\end{cor}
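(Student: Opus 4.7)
The plan is to differentiate under the expectation by taking the limit $\ve \downarrow 0$ in the difference quotient
$$\frac{\Theta_t(\alpha+\ve) - \Theta_t(\alpha)}{\ve} = \E\left[\int_0^t \frac{\zeta_1(\Z^{\alpha+\ve,x}(s)) - \zeta_1(\Z^{\alpha,x}(s))}{\ve}\,ds + \frac{\zeta_2(\Z^{\alpha+\ve,x}(t)) - \zeta_2(\Z^{\alpha,x}(t))}{\ve}\right].$$
Since the initial condition $x$ is held fixed while $\alpha$ is perturbed, the relevant derivative process is $\phib^{\alpha,\xi}$ with $\xi := (x,0)$, and this is what should appear in the limit.

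For fixed $\omega$, the mean value theorem applied to the continuously differentiable $\zeta_j$ (using convexity of $G$ to keep the evaluation points in $G$) gives, for some $\theta_{j,\ve,s} \in [0,1]$,
$$\frac{\zeta_j(\Z^{\alpha+\ve,x}(s)) - \zeta_j(\Z^{\alpha,x}(s))}{\ve} = \zeta_j'\bigl(\Z^{\alpha,x}(s) + \theta_{j,\ve,s}(\Z^{\alpha+\ve,x}(s) - \Z^{\alpha,x}(s))\bigr) \nabla_0^\ve \Z^{\alpha,x}(s).$$
By Theorem~\ref{thm:pathwise}(i) and continuity of $\zeta_j'$, the right-hand side converges pointwise almost surely to $\zeta_j'(\Z^{\alpha,x}(s))\nabla_0\Z^{\alpha,x}(s)$ for every $s \geq 0$. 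By Theorem~\ref{thm:pathwise}(iii), $\nabla_0\Z^{\alpha,x}$ and $\phib^{\alpha,\xi}$ coincide outside the at most countable jump set of the latter, so substituting $\phib^{\alpha,\xi}$ for $\nabla_0\Z^{\alpha,x}$ does not alter the time integral.

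Next, to interchange the limit and the expectation, I would establish a uniform bound $|\nabla_0^\ve \Z^{\alpha,x}(s)| \le C(1 + \sup_{u \le t} |\bm(u)|)$, valid for all $s \in [0,t]$ and all $\ve$ in a neighborhood of $0$. This follows from Lipschitz continuity of the Skorokhod map (under Assumption~\ref{ass:setB}) combined with continuous differentiability of $\alpha \mapsto (b(\alpha), \sigma(\alpha), R(\alpha))$ on $U$: applied to \eqref{eq:Z}, these yield $|\Z^{\alpha+\ve,x}(s) - \Z^{\alpha,x}(s)| \le C \ve (1 + \sup_{u \le t}|\bm(u)|)$ for small $\ve$. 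Since $\zeta_j'$ is bounded and $\sup_{u \le t}|\bm(u)|$ is integrable, dominated convergence justifies the interchange for both summands and gives
$$\Theta_t'(\alpha) = \E\left[\int_0^t \zeta_1'(\Z^{\alpha,x}(s))\,\nabla_0\Z^{\alpha,x}(s)\,ds + \zeta_2'(\Z^{\alpha,x}(t))\,\nabla_0\Z^{\alpha,x}(t)\right].$$

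The remaining and most delicate step concerns the endpoint term: Theorem~\ref{thm:pathwise}(iii) identifies $\phib^{\alpha,\xi}(t)$ with the right limit $\nabla_0\Z^{\alpha,x}(t+)$, while the converged difference quotient produces $\nabla_0\Z^{\alpha,x}(t)$. By Theorem~\ref{thm:pathwise}(ii) these agree whenever $\Z^{\alpha,x}(t) \in G^\circ \cup \U$, so the corollary reduces to showing $\P(\Z^{\alpha,x}(t) \in \partial G \setminus \U) = 0$ for the fixed $t > 0$. This is the main technical obstacle and I expect to handle it using the non-degeneracy of $\sigma(\alpha)$, which ensures that for $t>0$ the law of $\Z^{\alpha,x}(t)$ is sufficiently smooth (for instance via heat-kernel estimates near the smooth portions of the boundary, or by exploiting the codimension-one nature of each single face $F_i$ together with the non-degenerate diffusive component). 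Once the endpoint identification is established, the stated formula for $\Theta_t'(\alpha)$ follows immediately.
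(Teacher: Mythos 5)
This statement is imported from \cite[Corollary 3.16]{Lipshutz2017} rather than proved in the present paper, so there is no in-paper proof to compare against; I will assess the proposal on its own merits. Your overall strategy — mean value theorem to express the difference quotient, pointwise a.s.\ convergence via Theorem~\ref{thm:pathwise}(i), identification with $\phib^{\alpha,\xi}$ via (iii), and dominated convergence — is the right one, and you have correctly isolated the two subtle points: the domination bound and the endpoint regularization mismatch. Two details deserve more care. First, the claimed bound $|\Z^{\alpha+\ve,x}(s)-\Z^{\alpha,x}(s)|\le C\ve(1+\sup_{u\le t}|\bm(u)|)$ does not follow directly from Lipschitz continuity of the Skorokhod map, since the two RBMs use different reflection matrices $R(\alpha)$ and $R(\alpha+\ve)$; the SM Lipschitz estimate is only with respect to the \emph{input path} for a fixed reflection matrix. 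One must first rewrite $\Z^{\alpha+\ve,x}$ as the $\alpha$-SM applied to the shifted input $\X^{\alpha+\ve,x}+(R(\alpha+\ve)-R(\alpha))\L^{\alpha+\ve,x}$ (the same device used in the proof of Lemma~\ref{lem:lipcont}) and then invoke the linear growth bound $\norm{\L^{\alpha,x}}_t\le C_L(t+\norm{\bm}_t)$ of Lemma~\ref{lem:Lbound}; only then does the stated $O(\ve)$ bound follow. Second, the reduction of the endpoint term to the statement $\P(\Z^{\alpha,x}(t)\in\partial G\setminus\U)=0$ is correct, and indeed the stronger fact $\P(\Z^{\alpha,x}(t)\in\partial G)=0$ for each $t>0$ is already available (the paper itself cites \cite[Lemma 5.7]{Budhiraja2007} for exactly this), so that part is not a genuine obstacle but is left as a pointer in your write-up rather than completed. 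Once these two points are filled in, the argument is complete.
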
 

\section{Main results}
\label{sec:main}

In this section we present our main results. 
In Section \ref{sec:joint} we {establish} the Feller Markov property for the joint process consisting of an RBM and its derivative process. 
In Section \ref{sec:jointstable} we {identify a sufficient condition under which} the joint process is stable and has a unique stationary distribution. 
In Section \ref{sec:sensitivityinvariant} we show that sensitivities of the stationary distribution of the RBM can be expressed in terms of the stationary distribution of the joint process.

\subsection{Feller Markov property of the joint process}
\label{sec:joint}

Given $\alpha\in U$ an RBM $\Z^\alpha$ associated with $\alpha$ and a derivative process $\phib^\alpha$ along $\Z^\alpha$, we refer to the $\{\F_t\}$-adapted process $\Xib^\alpha=\{\Xib^\alpha(t),t\geq0\}$ on $(\Omega,\F,\P)$, defined by
	$$\Xib^\alpha(t):=(\Z^\alpha(t),\phib^\alpha(t)),\qquad t\geq0,$$
as the joint RBM-derivative process, or joint process for short, associated with $\alpha$. Define
	\be\label{eq:state}\state:=\bigcup_{x\in G}\lb\{x\}\times \hyper_x\rb\subseteq G\times\R^J.\ee
It follows from Definitions \ref{def:rbm} and \ref{def:de} that a.s.\ the joint process $\Xib^\alpha$ takes values in $\state$. 
By Theorems \ref{thm:rbm} and \ref{thm:derivativeprocess}, given $\xi=(x,y)\in\state$, there exists a pathwise unique joint process associated with $\alpha$ {that has initial condition $\xi$}, which we denote by $\Xib^{\alpha,\xi}=(\Z^{\alpha,x},\phib^{\alpha,\xi})$.

\begin{remark}
Since $\state$ is a subset of the Polish space $G\times\R^{J}$, $\state$ is a separable metric space; however, $\state$ is not closed and hence is not a Polish space. In particular, the closure of $\state$ is equal to $G\times\R^{J}$. 
This follows because $G$ is closed and convex with nonempty interior, and $\hyper_x=\R^{J}$ for all $x\in G^\circ$.
\end{remark}

For $\alpha\in U$ we can define a family 
	$$\lcb P_t^\alpha\rcb=\lcb P_t^\alpha(\xi,A),t\geq0,\xi\in\state,A\in\B(\state)\rcb$$ 
of transition functions on $\state$ associated with $\{\Xib^{\alpha,\xi}(t),t\geq0,\xi\in\state\}$ by
\be\label{eq:Pt}P_t^\alpha(\xi,A):=\P(\Xib^{\alpha,\xi}(t)\in A),\qquad \xi\in\state,\; A\in\B(\state).\ee

Recall that the family $\{P_t^\alpha\}$ is Markovian if for every bounded measurable function $\zeta:\state\mapsto\R$ and $s,t>0$, $\E[\zeta(\Xi(s+t))|\F_s]=\E[\zeta(\Xi(s+t))|\Xi(s)]$ holds, and the family $\{P_t^\alpha\}$ is Feller continuous if for every bounded continuous function $\zeta:\state\mapsto\R$ and $t>0$, the mapping $\xi\mapsto(P_t^\alpha\zeta)(\xi):=\int_\state\zeta(\tilde\xi)P_t^\alpha(\xi,d\tilde\xi)$ is continuous. 

\begin{theorem}\label{thm:jointmarkov}
Suppose the data $\{(d_i(\cdot),n_i),i\in\allN\}$ satisfies Assumptions \ref{ass:independent}, \ref{ass:setB} and \ref{ass:projection}, and the coefficients $b(\cdot)$, $\sigma(\cdot)$ and $R(\cdot)$ satisfy Assumption \ref{ass:holder}. 
For each $\alpha\in U$, the family of transition functions $\{P_t^\alpha\}$ is Markovian and Feller continuous.
\end{theorem}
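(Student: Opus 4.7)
The plan is to prove the Markov property and Feller continuity separately, both by leveraging pathwise uniqueness of the joint process together with continuity properties of the underlying deterministic Skorokhod and derivative maps.

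For the Markov property, fix $s\ge0$ and set $\wt\bm:=\Theta_s\bm$, which is an $\{\F_{s+\cdot}\}$-Brownian motion independent of $\F_s$. Define $\wt\Z(t):=\Z^{\alpha,x}(s+t)$ and $\wt\phib(t):=\phib^{\alpha,\xi}(s+t)$. Subtracting \eqref{eq:Z} and \eqref{eq:J} at time $s$ from the corresponding equations at time $s+t$, and using Lemma \ref{lem:L} to identify the increments $\L^{\alpha,x}(s+\cdot)-\L^{\alpha,x}(s)$ as the local time process of $\wt\Z$, one verifies directly that $(\wt\Z,\wt\phib)$ is a joint process associated with $\alpha$, driven by $\wt\bm$, and with initial condition $\Xib^{\alpha,\xi}(s)\in\state$. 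Pathwise uniqueness from Theorems \ref{thm:rbm} and \ref{thm:derivativeprocess} therefore forces $(\wt\Z(t),\wt\phib(t))=\Xib^{\alpha,\Xib^{\alpha,\xi}(s)}(t)$ a.s., where the right-hand side is the joint process built from $\wt\bm$. Since $\wt\bm$ is independent of $\F_s$ while $\Xib^{\alpha,\xi}(s)$ is $\F_s$-measurable, a standard regular-conditional-probability argument yields $\E[\zeta(\Xib^{\alpha,\xi}(s+t))\mid\F_s]=(P_t^\alpha\zeta)(\Xib^{\alpha,\xi}(s))$ a.s., which is the Markov property.

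For Feller continuity, let $\xi_n=(x_n,y_n)\to\xi=(x,y)$ in $\state$ and let $\zeta:\state\to\R$ be bounded and continuous. The goal is to show $\Xib^{\alpha,\xi_n}(t)\to\Xib^{\alpha,\xi}(t)$ almost surely and then apply bounded convergence. Lipschitz continuity of the Skorokhod map $\sm^\alpha$ (guaranteed by Assumption \ref{ass:setB}) gives $\Z^{\alpha,x_n}\to\Z^{\alpha,x}$ uniformly on $[0,t]$ almost surely, and hence also $\L^{\alpha,x_n}\to\L^{\alpha,x}$ uniformly. The derivative process arises as the output of a deterministic derivative map $\dm^\alpha$ taking as inputs the initial derivative, the RBM path, the Brownian motion, and the local time; continuity of $\dm^\alpha$ in the Skorokhod $J_1$-topology was established in \cite{Lipshutz2017a,Lipshutz2018}, yielding $\phib^{\alpha,\xi_n}\to\phib^{\alpha,\xi}$ almost surely in $\dr(\R^J)$.

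The main obstacle is to upgrade this $J_1$-path convergence to pointwise convergence at the fixed time $t$, since $J_1$-convergence only guarantees convergence at continuity points of the limit. I would argue that $t$ is almost surely a continuity point of $\phib^{\alpha,\xi}$: its jump times are contained in the random set of times at which $\Z^{\alpha,x}$ visits $\partial G$, and since $\sigma(\alpha)$ is positive definite the marginal law of $\Z^{\alpha,x}(t)$ is absolutely continuous on $G$, so $\P(\Z^{\alpha,x}(t)\in\partial G)=0$; by continuity of $\Z^{\alpha,x}$ there is then a random open neighborhood of $t$ on which $\phib^{\alpha,\xi}$ has no jumps. Combining with the uniform convergence of the $\Z$-components yields $\Xib^{\alpha,\xi_n}(t)\to\Xib^{\alpha,\xi}(t)$ a.s., and bounded convergence finishes the argument. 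I expect the most delicate step to be pinning down the precise continuity statement for $\dm^\alpha$ needed here—because the derivative problem is intrinsically discontinuous, continuity of its solution map holds only with respect to a carefully chosen topology, and verifying that the hypotheses of the continuity results of \cite{Lipshutz2017a,Lipshutz2018} are met for the particular jointly coupled perturbation $(\xi_n,\bm)\to(\xi,\bm)$ is the principal technical step.
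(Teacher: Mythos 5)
Your proposal follows essentially the same route as the paper: express the joint process as the output of a deterministic map applied to $(\xi,\bm)$ (the paper's $\Pi$), exploit pathwise uniqueness and the time-shift structure of the SP/derivative problem for the Markov property, and combine a.s.\ continuity of this map with the fact that $\P(\Z^{\alpha,x}(t)\in\partial G)=0$ to get Feller continuity via bounded convergence. The ``principal technical step'' you leave pending is precisely Proposition \ref{prop:jitter} --- that $(\Z^\alpha,\Y^\alpha)$ a.s.\ satisfies the boundary jitter property --- which is exactly the hypothesis under which the continuity result for the derivative map (Proposition \ref{prop:dmcontinuous}) applies, and the paper packages this by restricting $\Pi$ to $\dom(\Pi)$.
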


The Feller continuity of the joint process is used in the proof of existence of a stationary distribution for the joint process (see Theorem \ref{thm:stable} below).
We establish Theorem \ref{thm:jointmarkov} in Section \ref{sec:Feller}.
Its proof relies on properties of solutions to the so-called SP and derivative problem introduced in Sections \ref{sec:SP} and \ref{sec:dp}, respectively, which are useful for the construction and analysis of the RBM and derivative process.  Key ingredients of the proof include continuity results of a certain map called the derivative map (see Propositions \ref{prop:dmlip} and \ref{prop:dpexistence}).

\subsection{Ergodicity of the joint process}
\label{sec:jointstable}

Given a probability measure $\mu$ on $(\state,\B(\state))$, define, for all $t\geq0$ and $A\in\B(\state)$,
	$$(\mu P_t^\alpha)(A):=\int_{\state} P_t^\alpha(\xi,A)\mu(d\xi).$$
For a measurable and integrable (with respect to $\mu$) function $g:\state\mapsto\R$, we write
	$$\mu(g):=\int_\state g(\xi)\mu(d\xi).$$

\begin{defn}\label{def:invariant}
	A stationary distribution of the joint process associated with $\alpha$ is a probability measure $\mu^\alpha$ on $(\state,\B(\state))$ such that $\mu^\alpha(A)=(\mu^\alpha P_t^\alpha)(A)$ for all $A\in\B(\state)$ and $t\geq0$.
\end{defn}

The following assumption, introduced by Budhiraja and Dupuis in \cite{Budhiraja1999}, states that for each $\alpha\in U$, the drift vector $b(\alpha)$ lies in the interior of a certain cone determined by the directions of reflection. 
The condition implies that all trajectories of a related deterministic model are attracted to the origin, which was shown by Dupuis and Williams \cite{Dupuis1994} to be a sufficient condition for the RBM to be positive recurrent and have a unique stationary distribution.
We also mention the work of Atar, Budhiraja and Dupuis \cite{Atar2001} who show that a related condition is sufficient for positive recurrence and existence of a unique stationary distribution for reflected diffusions; and the work of Budhiraja and Lee \cite{Budhiraja2007} who prove geometric ergodicity of the RBM under the following assumption (and of a reflected diffusion under a related condition).
We now state the assumption and the ergodicity result. 
Let $\C^\alpha$ denote the cone in $\R^J$ with vertex at the origin defined by
	\be\label{eq:Calpha}\C^\alpha:=\conv\lb\{-d_i(\alpha),i\in\allN\}\rb.\ee

\begin{ass}\label{ass:stable}
	For each $\alpha\in U$, $b(\alpha)\in(\C^\alpha)^\circ$.
\end{ass}

\begin{theorem}[{\cite[Theorem 3.8]{Budhiraja1999} \& \cite[Theorem 2.6]{Dupuis1994}}]
	\label{thm:RBMstable}
 Suppose the data $\{(d_i(\cdot),n_i),i\in\allN\}$ satisfies Assumptions \ref{ass:independent}, \ref{ass:setB} and \ref{ass:projection}, and the coefficients $b(\cdot)$, $\sigma(\cdot)$ and $R(\cdot)$ satisfy Assumptions \ref{ass:holder} and \ref{ass:stable}. 
	Then for each $\alpha\in U$, $\Z^\alpha$ is positive recurrent and has a unique stationary distribution.
\end{theorem}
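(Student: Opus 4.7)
The plan is to establish positive recurrence via a Foster--Lyapunov drift argument and uniqueness via irreducibility, essentially the strategy of Dupuis--Williams \cite{Dupuis1994} combined with the stability condition of Budhiraja--Dupuis \cite{Budhiraja1999}.

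For the Lyapunov step, I would build a function from the set $B^\alpha$ of Assumption \ref{ass:setB}. Concretely, let $V_\alpha(z)$ be (a $C^2$ smoothing of) the Minkowski functional of $B^\alpha$, so that its sublevel sets are dilates of $B^\alpha$. The critical geometric input is that whenever $z\in F_i$ has $|\ip{z,n_i}|$ small, Assumption \ref{ass:setB} forces every outward normal of $B^\alpha$ at $z/V_\alpha(z)$ to be orthogonal to $d_i(\alpha)$, so the boundary-pushing terms cancel at leading order in the drift of $V_\alpha(\Z^\alpha)$. Since $\Y^\alpha$ is of bounded variation by Lemma \ref{lem:L}, Ito's formula applied to $V_\alpha(\Z^\alpha)$ gives a drift consisting of a bulk term $\ip{\nabla V_\alpha(\Z^\alpha),b(\alpha)}$, a bounded diffusion correction $\tfrac{1}{2}\operatorname{tr}(a(\alpha)\nabla^2 V_\alpha(\Z^\alpha))$, and a residual reflection term that vanishes to first order near each face. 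Assumption \ref{ass:stable}, i.e.\ $b(\alpha)\in(\C^\alpha)^\circ$ with $\C^\alpha=\conv(\{-d_i(\alpha):i\in\allN\})$, is precisely the condition ensuring that $\ip{\nabla V_\alpha(z),b(\alpha)}$ is strictly negative at every $z\in\partial B^\alpha$; after rescaling this yields a Foster drift inequality $\mathcal{A}^\alpha V_\alpha\leq -\kappa_0+c_0\mathbf{1}_K$ on $G$ for some compact $K\subset G$ and positive constants $\kappa_0,c_0$.

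For the second step I would invoke the Meyn--Tweedie machinery. The drift inequality above, combined with the fact that $\Z^\alpha$ is a Feller strong Markov process (Theorem \ref{thm:rbm}), reduces positive Harris recurrence and the existence of a unique stationary distribution to verifying that some compact set is petite. Petiteness of compact subsets of $G^\circ$ follows from the non-degeneracy $\sigma(\alpha)\in\M^{J\times J}$, which provides a strictly positive transition density on interior compacts (by standard parabolic estimates on an excursion that stays away from $\partial G$, which has positive probability by the polyhedral structure and Assumption \ref{ass:projection}). Combined with the Lyapunov inequality, $\psi$-irreducibility follows, and uniqueness of the stationary distribution is then standard.

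The main obstacle I anticipate is the lack of regularity of $V_\alpha$ at corners of $\partial B^\alpha$ and at points of $\partial G$ where several faces meet. Assumption \ref{ass:setB} only controls normals on the portion of $\partial B^\alpha$ near a single face $F_i$, so near a multi-face intersection one must patch local estimates together and smooth $V_\alpha$ without destroying the orthogonality used to kill the reflection terms. This is exactly what makes the deterministic stability statement of Dupuis--Williams delicate; Budhiraja--Dupuis's Assumption \ref{ass:stable} is designed to yield a linear-functional witness of fluid stability, which one then upgrades to a stochastic Lyapunov function by convolution smoothing. I would follow that blueprint rather than attempt to work directly with the raw Minkowski functional.
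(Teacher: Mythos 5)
This statement is a citation: the paper does not give its own proof of Theorem \ref{thm:RBMstable} but rather invokes Budhiraja--Dupuis \cite{Budhiraja1999} and Dupuis--Williams \cite{Dupuis1994}. So there is no internal argument in the paper to compare against. With that caveat, a few comments on the merits of your proposal are in order.

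Your high-level plan (stochastic Lyapunov drift plus a petiteness/irreducibility argument in the Meyn--Tweedie spirit) is a sound route to positive Harris recurrence and is broadly consistent with how such results are proved. However, the concrete Lyapunov construction you propose is not the one used by the cited references and, as written, has a real gap. You take $V_\alpha$ to be (a smoothing of) the Minkowski functional of the set $B^\alpha$ from Assumption \ref{ass:setB}, and then assert that Assumption \ref{ass:stable} (i.e., $b(\alpha)\in(\C^\alpha)^\circ$) is ``precisely'' the condition ensuring $\ip{\nabla V_\alpha(z),b(\alpha)}<0$ on $\partial B^\alpha$. This does not follow from the stated hypotheses. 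Assumption \ref{ass:setB} only constrains the normals of $B^\alpha$ near the hyperplanes $\{z:\ip{z,n_i}=0\}$ (where it forces orthogonality to $d_i(\alpha)$, which is what kills the boundary term); it places no constraint on $\nu_{B^\alpha}(z)$ at points of $\partial B^\alpha$ well inside the cone $G$. For such $z$, nothing in the assumptions relates $\nu_{B^\alpha}(z)$ to the cone $\C^\alpha$, so the sign of $\ip{\nabla V_\alpha(z),b(\alpha)}$ is a priori undetermined. The set $B^\alpha$ is designed to yield Lipschitz continuity of the Skorokhod map, not a drift Lyapunov function, and these two roles require genuinely different geometric inputs. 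If you want to proceed along this line, you must either show (using the freedom in choosing $B^\alpha$) that one can select a $B^\alpha$ whose normals are simultaneously compatible with Assumption \ref{ass:setB} and dual to $\C^\alpha$ on $G$, or abandon the Minkowski functional in favor of a fluid-model-based construction.

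For reference, the actual route taken by \cite{Budhiraja1999} and \cite{Dupuis1994} (and the one the present paper relies on in Section \ref{sec:stable}) is built around the deterministic Skorokhod problem rather than $B^\alpha$. Budhiraja--Dupuis show that the cone condition $b(\alpha)\in(\C^\alpha)^\circ$ is equivalent to stability of the fluid model --- every trajectory of the deterministic SP driven by $x+b(\alpha)t$ reaches the origin in finite time --- and Dupuis--Williams show that such deterministic stability yields positive recurrence and uniqueness of the stationary distribution for the SRBM. The quantitative stochastic Lyapunov function that the present paper actually employs in its own arguments (see \eqref{eq:RBMlyap} and Lemma \ref{lem:TLyapunov}, after Atar--Budhiraja--Dupuis \cite{Atar2001} and Budhiraja--Lee \cite{Budhiraja2007}) is $\lyap^\alpha(x)=\inf\{t\ge0:\z^{\alpha,x}(t)=0\}$, the time for the fluid trajectory from $x$ to reach the origin, together with its exponential $e^{r\lyap^\alpha}$. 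This function sidesteps the regularity problems you flag (corners of $B^\alpha$, multi-face intersections of $G$) because the requisite smoothness is replaced by the Lipschitz estimate of Lemma \ref{lem:TLyapunov}(i) and the pathwise drift estimate of Lemma \ref{lem:TLyapunov}(iii), neither of which invokes It\^o's formula. You would do better to adopt that construction than to try to patch the Minkowski-functional approach at the corners.
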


The following is the main result of this section.

\begin{theorem}\label{thm:stable}
Suppose the data $\{(d_i(\cdot),n_i),i\in\allN\}$ satisfies Assumptions \ref{ass:independent}, \ref{ass:setB} and \ref{ass:projection}, and the coefficients $b(\cdot)$, $\sigma(\cdot)$ and $R(\cdot)$ satisfy Assumptions \ref{ass:holder} and \ref{ass:stable}. 
	For each $\alpha\in U$ there exists a unique stationary distribution of the joint process $\Xib^\alpha=(\Z^\alpha,\phib^\alpha)$ associated with $\alpha$.
\end{theorem}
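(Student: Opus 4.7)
The plan is to prove existence and uniqueness of the stationary distribution of the joint process separately, exploiting (i) the Feller continuity established in Theorem \ref{thm:jointmarkov}, (ii) the positive recurrence of the RBM marginal from Theorem \ref{thm:RBMstable}, and (iii) a pair of geometric contraction and visit-frequency properties that are intrinsic to the derivative process.

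For existence, I would apply the Krylov–Bogoliubov theorem: since $\{P_t^\alpha\}$ is Feller continuous, it suffices to exhibit a single $\xi \in \state$ for which the family of time averages
\[
\mu_T^\alpha(\cdot) := \frac{1}{T}\int_0^T P_t^\alpha(\xi, \cdot)\,dt, \qquad T>0,
\]
is tight on $\state$, and then pass to a weak subsequential limit, invariance of which follows from Feller continuity. Tightness of the $\Z^\alpha$-marginal of $\mu_T^\alpha$ is immediate from Theorem \ref{thm:RBMstable}, so the real task is to control the $\phib^\alpha$-marginal. I would aim to prove a Lyapunov-type estimate $\sup_{t\ge 0}\E|\phib^{\alpha,\xi}(t)|^p < \infty$ for some $p>0$. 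Inspecting \eqref{eq:J}, between visits of $\Z^\alpha$ to $\partial G$ the process $\phib^\alpha$ is simply a Brownian-plus-drift motion with bounded coefficients $b'(\alpha)$ and $\sigma'(\alpha)$, whose moments grow sublinearly in the length of the excursion from the boundary; each visit to a face contributes a jump $\Delta\etab^\alpha \in \spaan\{d_i(\alpha):i\in\allN(\Z^\alpha(t))\}$ that has the effect of applying a derivative projection operator onto $\hyper_{\Z^\alpha(t)}$, which under Assumption \ref{ass:setB} is a strict contraction in the Minkowski gauge of $B^\alpha$ (this is the ``contraction property for solutions to the derivative problem'' promised by Section \ref{sec:sdm}). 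Combining this with the RBM-visit estimates of Section \ref{sec:rbmestimates}, which give uniform-in-compact lower bounds on the frequency with which $\Z^\alpha$ visits every face, composing the contractions over many visits dominates the between-visit growth and yields the desired moment bound, and hence tightness.

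For uniqueness, the standard strong-Feller/Hörmander route is unavailable because $\Z^\alpha$ and $\phib^\alpha$ are driven by a common Brownian motion and the joint diffusion matrix is degenerate. Instead I would follow the asymptotic coupling method of Hairer–Mattingly \cite{Hairer2011}: it suffices to prove that for every pair of initial states $\xi_1 = (x_1,y_1)$ and $\xi_2=(x_2,y_2)$ in $\state$ there is a coupling $(\Xib^{\alpha,\xi_1},\Xib^{\alpha,\xi_2})$ on a common filtered probability space, with each marginal equivalent in law to the correct process law on path space, along which the two paths merge asymptotically in a suitable weak sense. I would first couple the RBM components: by a Girsanov change of measure on one driving Brownian motion, steer $\Z^{\alpha,x_2}$ to coincide with $\Z^{\alpha,x_1}$ after a finite random time $\tau$; such a coupling is feasible because positive recurrence of $\Z^\alpha$ under Assumption \ref{ass:stable} gives repeated joint visits of $(\Z^{\alpha,x_1},\Z^{\alpha,x_2})$ to any neighborhood of the origin. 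From time $\tau$ onward, the two RBMs are identical, so linearity of the derivative problem in the initial condition implies that $\Delta(t) := \phib^{\alpha,\xi_1}(t) - \phib^{\alpha,\xi_2}(t)$ for $t\ge \tau$ solves the derivative problem along the common RBM with zero drift, zero diffusion and initial data $\Delta(\tau)$, so its entire evolution after $\tau$ is driven by derivative projections at face visits. The contraction property of these projections and the face-visit frequency estimates then force $\Delta(t) \to 0$ in probability, yielding asymptotic coupling and hence uniqueness of the stationary distribution.

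The main obstacle is the quantitative coupling between ``visit frequency'' and ``contraction strength''. Qualitative recurrence of each face is not enough; what is needed is a uniform lower bound on how often all faces are visited from states in compact subsets of $\state$, together with a strictly contractive derivative-projection bound in a norm that is common to all face visits. Assumption \ref{ass:setB} is what makes both simultaneously possible, via the adapted gauge associated with $B^\alpha$. A further delicate point is that the jumps of $\phib^\alpha$ are modulated by the singular local time $\L^\alpha$, so the dissipation mechanism is triggered only on a set of Lebesgue-null times and must be extracted via pathwise, rather than infinitesimal, arguments; managing this carefully is the technical heart of Sections \ref{sec:sdm}, \ref{sec:rbmestimates} and \ref{sec:stable}.
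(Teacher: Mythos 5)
Your existence argument is essentially the paper's: Krylov--Bogoliubov via Feller continuity, with tightness of the $\phib^\alpha$-marginal obtained from a Lyapunov bound that exploits contraction of the derivative projection operators (Proposition~\ref{prop:dpcontract}) on the $\delta_0$-timescale of full-face visits (Proposition~\ref{prop:taujfinite}). The paper sharpens this to an exponential Lyapunov function $V^\alpha(x,y) = \exp(r_1\lyap^\alpha(x)+r_2\norm{y}_{B^\alpha})$ with a geometric drift condition (Proposition~\ref{prop:lyapunov}), rather than the moment bound $\sup_t \E|\phib^{\alpha,\xi}(t)|^p<\infty$ you propose, but either yields tightness. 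One small inaccuracy: between face visits the drift of $\phib^\alpha$ is not just $b'(\alpha)$ and $\sigma'(\alpha)$; there is also the continuous term $R'(\alpha)\L^\alpha(t)$ driven by the RBM's local time (see \eqref{eq:J}), which must be controlled as well (Lemma~\ref{lem:Lbound} handles this). You also need to handle the fact that the weak limit must be supported on $\state$ (not just its closure $G\times\R^J$), which the paper handles by invoking the fact that the RBM's stationary distribution charges only $G^\circ$.

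Your uniqueness argument takes a genuinely different route within the same asymptotic-coupling framework, and it is needlessly heavy. You propose to couple $(\Xib^{\alpha,\xi_1},\Xib^{\alpha,\xi_2})$ from \emph{arbitrary} initial states by a Girsanov drift to steer the two RBMs together in finite random time. This (a) requires the \emph{equivalent} asymptotic coupling version of Hairer--Mattingly--Scheutzow (since the Girsanov-tilted marginal is only absolutely continuous with respect to the target law), and (b) requires a nontrivial finite-time exact coupling of two RBMs, which involves its own work because of the reflection term. The paper sidesteps both: since the Skorokhod-map/RBM already has a \emph{unique} stationary distribution (Theorem~\ref{thm:RBMstable}), the $\Z^\alpha$-marginals of any two stationary distributions $\mu_1,\mu_2$ of the joint process coincide; one can therefore realize $\Xib_1$ and $\Xib_2$ with the \emph{same} driving Brownian motion and the \emph{same} initial RBM value, so that $\Z_1\equiv\Z_2$ and $\L_1\equiv\L_2$ a.s.\ from time zero by pathwise uniqueness. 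No change of measure is needed, and the plain asymptotic coupling theorem with exact marginals suffices. The difference $\phib_1-\phib_2$ then solves the derivative problem along the common RBM for the \emph{constant} input $\phib_1(0)-\phib_2(0)$ (the common $W$ and $\L$ cancel in $\psib_1-\psib_2$), and the contraction $\delta_0<1$ at each full-face visit, together with the a.s.\ infinite sequence of such visits, forces $\norm{\phib_1(t)-\phib_2(t)}_{B^\alpha}\to 0$. Both routes establish uniqueness, but note the tradeoff: yours proves the stronger statement that trajectories from arbitrary initial conditions can be asymptotically coupled (useful if one wanted, say, convergence to equilibrium from any starting point), at the price of substantially more technical machinery, whereas the paper's leverages the already-available uniqueness for the RBM marginal to give a much shorter proof of exactly what Theorem~\ref{thm:stable} asserts.
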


\begin{remark}
	Throughout the remainder of this work we let $\Xib^\alpha(\infty)=(\Z^\alpha(\infty),\phib^\alpha(\infty))$ denote a random variable on $(\Omega,\F,\P)$ taking values in $\state$ that is independent of the Brownian motion $\bm$ and equal in distribution to the unique stationary distribution of the joint process.
\end{remark}

The proof of Theorem \ref{thm:stable}, which is presented  in Section \ref{sec:proofstable},  
incorporates several novel features.   It relies on certain contraction properties for
solutions of the derivative problem that are established in Section \ref{sec:projection}.
In turn, these properties are showed in Section \ref{sec:dpstable} to imply  corresponding contraction properties for the derivative process that hold after the related RBM visits every face of the cone, and the
latter event is shown to happen infinitely often in Section \ref{sec:rbmestimates}.
Existence of the stationary distribution is then proved in Section \ref{sec:stable} by constructing
a Lyapunov function that  involves the norm (associated with the set $B$ of Assumption \ref{ass:setB}) of the derivative process, and is used to show stability of the joint process.
Furthermore, the stability is shown to be uniform, in a sense, over $\alpha$ in compact subsets of $U$, which will be useful for proving our next main result.
The proof of uniqueness of the stationary distribution (Theorem \ref{thm:unique}) is somewhat tricky due to the
degeneracy of the $2J$-dimensional joint process (driven by a $J$-dimensional Brownian motion).
The proof uses an asymptotic coupling argument and relies on the linearity of the derivative process as well as the above contraction properties of the derivative process.

\subsection{Sensitivity analysis for the stationary distribution of an RBM}\label{sec:sensitivityinvariant}

In this section we present our main result on sensitivities of the stationary distribution of an RBM. 
Let $f:G\mapsto\R$ be continuous differentiable with bounded and continuous Jacobian $f':G\mapsto\R^{1\times J}$, and define $F:U\mapsto\R$ by
	$$F(\alpha):=\E\lsb f(\Z^\alpha(\infty))\rsb,\qquad\alpha\in U.$$

\begin{theorem}\label{thm:sensitivity}
 Suppose the data $\{(d_i(\cdot),n_i),i\in\allN\}$ satisfies Assumptions \ref{ass:independent}, \ref{ass:setB} and \ref{ass:projection}, and the coefficients $b(\cdot)$, $\sigma(\cdot)$ and $R(\cdot)$ satisfy Assumptions \ref{ass:holder} and \ref{ass:stable}. 
	Then for almost every $\alpha\in U$ the function $F(\cdot)$ is differentiable and its Jacobian satisfies
	\be\label{eq:sensitivity}F'(\alpha)=\E\lsb f'(\Z^\alpha(\infty))\phib^\alpha(\infty)\rsb.\ee
\end{theorem}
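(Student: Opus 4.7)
The plan is to combine Corollary \ref{cor:pathwise} at finite times with the joint ergodicity of Theorem \ref{thm:stable}, and then interchange the $t\to\infty$ limit with the differential operator via the fundamental theorem of calculus and dominated convergence. Fix $x\in G$, set $\xi:=(x,0)\in\state$, and for $t>0$ define $F_t(\alpha):=\E[f(\Z^{\alpha,x}(t))]$. By Corollary \ref{cor:pathwise} applied with $\zeta_1\equiv 0$ and $\zeta_2=f$, $F_t$ is differentiable on $U$ with
$$F_t'(\alpha)=\E\sbr{f'(\Z^{\alpha,x}(t))\phib^{\alpha,\xi}(t)}.$$

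The first step is to obtain pointwise convergence $F_t(\alpha)\to F(\alpha)$ and $F_t'(\alpha)\to G(\alpha):=\E[f'(\Z^\alpha(\infty))\phib^\alpha(\infty)]$ for each $\alpha\in U$. Since $f$ has bounded Jacobian it is of at most linear growth, and standard moment estimates for positive-recurrent RBMs (a by-product of the Lyapunov analysis underlying Theorem \ref{thm:RBMstable}) render $\{f(\Z^{\alpha,x}(t))\}_{t\ge0}$ uniformly integrable; together with convergence in distribution of $\Z^{\alpha,x}(t)$ to $\Z^\alpha(\infty)$, this yields $F_t(\alpha)\to F(\alpha)$. For the derivatives, Theorem \ref{thm:stable} provides convergence in distribution of the joint process $\Xib^{\alpha,\xi}(t)=(\Z^{\alpha,x}(t),\phib^{\alpha,\xi}(t))$ to $\Xib^\alpha(\infty)$. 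Since the test function $(z,y)\mapsto f'(z)y$ is continuous with $f'$ bounded, the desired convergence $F_t'(\alpha)\to G(\alpha)$ follows as soon as the family $\{|\phib^{\alpha,\xi}(t)|\}_{t\ge0}$ is uniformly integrable.

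The second step is an integrated version. For any closed interval $[\alpha_0,\alpha_1]\subset U$, the fundamental theorem of calculus gives
$$F_t(\alpha_1)-F_t(\alpha_0)=\int_{\alpha_0}^{\alpha_1}F_t'(\alpha)\,d\alpha.$$
Provided the uniform bound $\sup_{t\ge0,\,\alpha\in[\alpha_0,\alpha_1]}\E[|\phib^{\alpha,\xi}(t)|]\le C$ holds, the integrands are dominated by the constant $\norm{f'}_\infty C$, so dominated convergence combined with the pointwise convergence of the first step yields, upon letting $t\to\infty$,
$$F(\alpha_1)-F(\alpha_0)=\int_{\alpha_0}^{\alpha_1}G(\alpha)\,d\alpha.$$
Since this holds on every compact subinterval of $U$, $F$ is locally absolutely continuous on $U$ with weak derivative $G$, and the Lebesgue differentiation theorem then gives $F'(\alpha)=G(\alpha)$ at almost every $\alpha\in U$, which is \eqref{eq:sensitivity}.

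The principal obstacle is the uniform moment bound $\sup_{t\ge0,\,\alpha\in K}\E[|\phib^{\alpha,\xi}(t)|]<\infty$ for every compact $K\subset U$, which simultaneously supplies the uniform integrability needed in the first step and the dominating function needed in the second. This is precisely the ``uniformity in $\alpha$ on compact subsets of $U$'' of the stability assertion that the authors emphasize when announcing Theorem \ref{thm:stable}, and it must be extracted from the Lyapunov construction of Section \ref{sec:stable}: the Lyapunov function built from the $B^\alpha$-norm of $\phib^{\alpha,\xi}$ produces an a priori bound on $\E[|\phib^{\alpha,\xi}(t)|]$ whose constants depend continuously on $\alpha$ (through the continuity of $\alpha\mapsto B^\alpha$, $\sigma(\alpha)$, $b(\alpha)$ and $R(\alpha)$), and hence are uniform in $\alpha$ over compact subsets of $U$. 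Once this ingredient is in place, the remainder of the argument is a soft interchange of differentiation and the $t\to\infty$ limit as sketched above.
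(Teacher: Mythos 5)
Your argument has the right skeleton — finite-time differentiation via Corollary~\ref{cor:pathwise}, followed by the fundamental theorem of calculus and dominated convergence with a dominating function supplied by the uniform-in-$\alpha$ Lyapunov bound of Corollary~\ref{cor:uniform} — and that skeleton matches the paper's. But there is a genuine gap in the first step, and it is precisely the issue the paper is designed to avoid.

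You take $F_t(\alpha):=\E[f(\Z^{\alpha,x}(t))]$ and assert that $\Xib^{\alpha,\xi}(t)\Rightarrow\Xib^\alpha(\infty)$ as $t\to\infty$, citing Theorem~\ref{thm:stable}. That theorem only asserts existence and uniqueness of a stationary distribution for the joint process; it does \emph{not} assert that the one-dimensional marginals $P_t^\alpha(\xi,\cdot)$ converge weakly to $\mu^\alpha$. Existence and uniqueness of an invariant measure plus Feller continuity do not, in general, imply marginal convergence, and the asymptotic-coupling proof of Theorem~\ref{thm:unique} gives you pathwise contraction only between two joint processes with the \emph{same} RBM marginal — it does not directly yield $P_t^\alpha(\xi,\cdot)\to\mu^\alpha$. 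Filling this would require proving ergodicity of the joint process in a stronger sense (e.g.\ total-variation or geometric ergodicity), which the paper never does. The paper instead works with the Ces\`aro averages
\[
\theta_t(\alpha):=\frac{1}{t}\int_0^t\E\lsb f(\Z^{\alpha,x}(s))\rsb\,ds,
\qquad
\theta_t'(\alpha)=\frac{1}{t}\int_0^t\E\lsb f'(\Z^{\alpha,x}(s))\phib^{\alpha,\xi}(s)\rsb\,ds,
\]
whose convergence to $\E[f(\Z^\alpha(\infty))]$ and $\E[f'(\Z^\alpha(\infty))\phib^\alpha(\infty)]$ does follow from Theorem~\ref{thm:stable} and Corollary~\ref{cor:uniform}: the tightness-plus-uniqueness argument in the proof of Theorem~\ref{thm:stable} yields weak convergence of the occupation measures $Q_t^{\alpha,\xi}$ to $\mu^\alpha$, and the moment bound from Corollary~\ref{cor:uniform} supplies the uniform integrability needed to pass the (possibly unbounded) integrands through the weak limit. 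Since $\theta_t$ inherits differentiability from Corollary~\ref{cor:pathwise} just as $F_t$ does, the rest of your argument (FTC on $[\alpha_0,\alpha_1]$, domination by the locally integrable bound, a.e.\ differentiability of $F$) goes through unchanged with $\theta_t$ in place of $F_t$. So the fix is simply to replace $F_t$ by the Ces\`aro average $\theta_t$; as written, the appeal to marginal convergence of the joint process is unjustified.
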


The proof of Theorem \ref{thm:sensitivity}, which uses uniform stability properties for the joint process established in Section \ref{sec:stable} along with standard real analysis arguments, is given in Section \ref{sec:interchange}. 

\section{The Skorokhod problem, derivative problem and contraction properties}\label{sec:sdm}

In this section we carry out a deterministic analysis of solutions to the SP and of solutions to the derivative problem. 
The results in this section are used in Sections \ref{sec:Feller}--\ref{sec:stable} to prove our main results.
In Sections \ref{sec:SP} and \ref{sec:dp} we state the SP, the derivative problem and review some relevant properties. 
In the remaining sections we prove new stability properties related to solutions of the SP and derivative problem.

Throughout this section we assume, without restatement, that the data $\{(d_i(\cdot),n_i),i\in\allN\}$ satisfies Assumptions \ref{ass:independent}, \ref{ass:setB} and \ref{ass:projection}. 

\subsection{The Skorokhod reflection problem}\label{sec:SP}

The SP (in a polyhedral cone) provides an axiomatic framework to constrain a path taking values in Euclidean space to a polyhedral cone.
Throughout this section we fix $\alpha\in U$.

\begin{defn}\label{def:SP}
Given $\x\in\cts_G(\R^J)$, a pair $(\z,\y)\in\cts(\R^J)\times\cts(\R^J)$ is a solution to the SP $\{(d_i(\alpha),n_i),i\in\allN\}$ for $\x$ if $\z(0)=\x(0)$, and if for all $t\geq0$, the following properties hold:
\begin{itemize}
	\item[1.] $\z(t)=\x(t)+\y(t)$;
	\item[2.] $\z(t)\in G$;
	\item[3.] for every $s\in[0,t)$,
		$$\y(t)-\y(s)\in\conv\lsb\bigcup_{u\in(s,t]}d(\alpha,\z(u))\rsb.$$
\end{itemize}
If there exists a unique solution $(\z,\y)$ to the SP $\{(d_i(\alpha),n_i),i\in\allN\}$ for $\x$, we write $\z=\sm^\alpha(\x)$ and refer to $\sm^\alpha$ as the SM associated with the SP $\{(d_i(\alpha),n_i),i\in\allN\}$.
\end{defn}

\begin{remark}
In the standard formulation of the SP (see, e.g., \cite[Definition 1.1]{Ramanan2006}), instead of condition 3 of Definition \ref{def:SP}, the constraining processes $\y$ is assumed to have bounded variation (i.e., $|\y|(t)<\infty$ for all $t\ge0$) and satisfy the following conditions for all $t\ge0$:
	$$|\y|(t)=\int_{[0,t]}1\{\z(s)\in\partial G\}d|\y|(s),$$
and there exists a measurable function $\gamma:[0,\infty)\mapsto\sphere$ such that $\gamma(s)\in d(\alpha,\z(s))$ ($d\abs{\y}$-almost everywhere) and
	$$\y(t)=\int_{[0,t]}\gamma(s)d|\y|(s).$$
Condition 3 was introduced in \cite{Ramanan2006} to allow for constraining processes with unbounded variation, and this generalization is referred to as the \textit{extended Skorokhod problem}.
Under the linear independence condition on the directions of {reflection} stated in Assumption \ref{ass:independent}, the constraining term $\y$ in Definition \ref{def:SP} must be of bounded variation, and it follows from \cite[Theorem 1.3]{Ramanan2006} that a pair $(\z,\y)$ satisfying Definition \ref{def:SP} for $\x\in\cts_G(\R^J)$ also satisfies the standard formulation of the SP stated in \cite[Definition 1.1]{Ramanan2006}.
\end{remark}

\begin{remark}\label{rmk:ZsmX}
Given a $J$-dimensional $\{\F_t\}$-Brownian motion $\bm$ on $(\Omega,\F,\P)$ and an RBM $\Z^\alpha$ with driving Brownian motion $\bm$, define the $J$-dimensional continuous $\{\F_t\}$-adapted process $\X^\alpha=\{\X^\alpha(t),t\geq0\}$ on $(\Omega,\F,\P)$ by
	$$\X^\alpha(t):= \Z^\alpha(0)+b(\alpha)t+\sigma(\alpha)\bm(t),\qquad t\geq0.$$
Then it follows from the conditions in Definition \ref{def:rbm} and the definition to the SP that a.s.\ $(\Z^\alpha,\Y^\alpha)$ is a solution to the SP $\{(d_i(\alpha),n_i),i\in\allN\}$ for $\X^\alpha$.
\end{remark}

{ The first result we state is a useful}
 time-shift property of the SP. 
Given $\x\in\cts_G(\R^J)$ suppose $(\z,\y)$ is a solution to the SP $\{(d_i(\alpha),n_i),i\in\allN\}$ for $\x$. 
For $s\geq0$ recall the shift operator defined in \eqref{eq:shift} as $(\Theta_s f)(t):=f(s+t)-f(s)$ for $t\ge0$. 
Define $\z^s\in\cts(G)$, $\y^s\in\cts(\R^J)$ and $\x^s\in\cts_G(\R^J)$ by
\begin{align}\label{eq:zS}
	\z^s(\cdot)&:=\z(s+\cdot),\\ \label{eq:yS}
	\y^s(\cdot)&:=(\Theta_s\y)(\cdot),\\ \label{eq:xS}
	\x^s(\cdot)&:=\z(s)+(\Theta_s\x)(\cdot).
\end{align}

\begin{lem}
[{\cite[Lemma 2.3]{Ramanan2006}}]
\label{lem:SPtimeshift}
Let $(\z,\y)$ be a solution to the SP $\{(d_i(\alpha),n_i),i\in\allN\}$ for $\x\in\cts_G(\R^J)$. 
Then for $s\geq0$, $(\z^s,\y^s)$ is a solution to the SP for $\x^s$.
\end{lem}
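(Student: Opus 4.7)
The plan is to verify directly that the triple $(\z^s,\y^s,\x^s)$ satisfies each of the four defining conditions of Definition \ref{def:SP} (namely the initial condition, condition 1, condition 2, and condition 3), using only the corresponding conditions for the original triple $(\z,\y,\x)$ together with the algebraic definitions \eqref{eq:zS}--\eqref{eq:xS}. This is essentially a mechanical check, so I do not expect any genuine obstacle; the mild subtlety is bookkeeping the base point in $\x^s$, which is deliberately set to $\z(s)$ (rather than $\x(s)$) precisely so that the shifted data is consistent with the shifted constrained path.

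First I would dispatch the initial condition: since $(\Theta_s\x)(0)=\x(s)-\x(s)=0$, the definition \eqref{eq:xS} gives $\x^s(0)=\z(s)$, which matches $\z^s(0)=\z(s)$ by \eqref{eq:zS}. Next, to check condition 1, I would combine \eqref{eq:zS}--\eqref{eq:xS} with the identity $\z=\x+\y$ (applied at time $s$) to compute
\[
\x^s(t)+\y^s(t) \;=\; \z(s) + \bigl(\x(s+t)-\x(s)\bigr) + \bigl(\y(s+t)-\y(s)\bigr) \;=\; \x(s+t)+\y(s+t) \;=\; \z(s+t) \;=\; \z^s(t).
\]
Condition 2 is immediate from $\z(s+t)\in G$, which passes to $\z^s(t)=\z(s+t)$.

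For condition 3, fix $0\le u<t$ and observe that $\y^s(t)-\y^s(u)=\y(s+t)-\y(s+u)$ by \eqref{eq:yS}. Applying condition 3 for $(\z,\y)$ at the pair of times $s+u<s+t$ yields
\[
\y(s+t)-\y(s+u)\;\in\;\conv\!\left[\bigcup_{w\in(s+u,s+t]}d(\alpha,\z(w))\right].
\]
Re-parametrizing via $w=s+v$ with $v\in(u,t]$ and using $\z(s+v)=\z^s(v)$ converts the right-hand side into $\conv\bigl[\bigcup_{v\in(u,t]}d(\alpha,\z^s(v))\bigr]$, which is exactly condition 3 for the shifted triple. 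Continuity of $\z^s$ and $\y^s$ is inherited from that of $\z$ and $\y$. This exhausts all requirements of Definition \ref{def:SP}, so $(\z^s,\y^s)$ solves the SP for $\x^s$, completing the proof.
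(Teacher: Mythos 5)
Your proof is correct and proceeds by the standard direct verification of the four defining conditions of the SP for the shifted triple, handling the one mild subtlety (that $\x^s$ is anchored at $\z(s)$ rather than $\x(s)$) correctly via the identity $\z(s)=\x(s)+\y(s)$. The paper does not supply its own argument here — it simply cites \cite[Lemma 2.3]{Ramanan2006} — and your write-up is exactly the mechanical check one expects to find in that reference, so this is essentially the same approach.
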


The next result { we state concerns} existence and uniqueness of solutions to the SP as well as a Lipschitz continuity property for solutions of the SP.

\begin{prop}
[{\cite[Theorem 2.12]{Lipshutz2018} \& \cite[Theorem 3.3]{Ramanan2006}}]
\label{prop:SP}
Given $\x\in\cts_G(\R^J)$ there exists a unique solution $(\z,\y)$ to the SP $\{(d_i(\alpha),n_i),i\in\allN\}$ for $\x$. 
Furthermore, there exists $\lip_\sm(\alpha)<\infty$ such that if $(\z_k,\y_k)$ is a solution to the SP $\{(d_i(\alpha),n_i),i\in\allN\}$ for $\x_k\in\cts_G(\R^J)$, for $k=1,2$, then for all $t\geq0$,
\begin{align*}
	\sup_{s\in[0,t]}|\z_1(s)-\z_2(s)|+\sup_{s\in[0,t]}|\y_1(s)-\y_2(s)|&\leq\lip_\sm(\alpha)\sup_{s\in[0,t]}|\x_1(s)-\x_2(s)|.
\end{align*}
\end{prop}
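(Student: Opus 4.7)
The plan is to follow the Dupuis--Ishii approach (as refined in \cite{Ramanan2006,Lipshutz2018}) and exploit the three structural assumptions in tandem: Assumption \ref{ass:setB} will drive the Lipschitz estimate, Assumption \ref{ass:projection} will give a concrete construction for existence, and Assumption \ref{ass:independent} will guarantee that the constraining term is of bounded variation, reconciling Definition \ref{def:SP} with the classical SP formulation.

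For the Lipschitz bound, I would first introduce the Minkowski functional of $B^\alpha$, namely $\norm{x}_{B^\alpha}:=\inf\{r>0:x\in rB^\alpha\}$. Since $B^\alpha$ is compact, convex, symmetric, with $0\in(B^\alpha)^\circ$, this is a norm on $\R^J$ equivalent to $|\cdot|$, with equivalence constants depending only on $B^\alpha$. The geometric condition \eqref{eq:setB} is precisely the Dupuis--Ishii condition: whenever $z\in\partial B^\alpha$ is within $\theta(\alpha)$ of $F_i$'s hyperplane, the outward normals to $B^\alpha$ at $z$ are orthogonal to $d_i(\alpha)$. This is the input needed to prove the ``oscillation inequality'': given two solutions $(\z_k,\y_k)$ to the SP for $\x_k$, $k=1,2$, the function $t\mapsto\norm{\z_1(t)-\z_2(t)}_{B^\alpha}$ can be controlled by the $B^\alpha$-oscillation of $\x_1-\x_2$. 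Concretely, following \cite[Theorem 2.2]{Lipshutz2018}, one shows $\norm{\z_1(t)-\z_2(t)}_{B^\alpha}\leq\sup_{s\in[0,t]}\norm{\x_1(s)-\x_2(s)}_{B^\alpha}$ by analyzing intervals on which $\z_1-\z_2$ lies in a level set $\{\norm{\cdot}_{B^\alpha}=r\}$: at boundary contact with $F_i$, each $d\y_k$ points in direction $d_i(\alpha)$, which by \eqref{eq:setB} is tangent to the level set, so the $B^\alpha$-norm cannot grow faster than the $B^\alpha$-norm of $\x_1-\x_2$ drives it. Converting back to the Euclidean norm yields the stated Lipschitz constant $\lip_\sm(\alpha)$. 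The same estimate applied to a pair $(\z_1,\y_1),(\z_2,\y_2)$ with $\x_1=\x_2$ immediately gives uniqueness, and Lipschitz continuity of $\y_k=\z_k-\x_k$ follows by the triangle inequality.

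For existence, I would use Assumption \ref{ass:projection} to construct a discrete-time Euler-type scheme: given $\x\in\cts_G(\R^J)$ and a partition $0=t_0^n<t_1^n<\cdots$ with mesh tending to zero, set $\z_n(0)=\x(0)$ and recursively $\z_n(t_{k+1}^n)=\pi^\alpha(\z_n(t_k^n)+\x(t_{k+1}^n)-\x(t_k^n))$, extending to a RCLL path on each subinterval. The companion process $\y_n(t):=\z_n(t)-\x(t)+\x(0)-\z_n(0)$ has increments lying in $d(\alpha,\z_n(t_{k+1}^n))$ by the defining property of $\pi^\alpha$. Uniform continuity of $\x$ together with the Lipschitz estimate applied to the piecewise-constant approximants shows $\{\z_n\}$ is relatively compact in $\cts(G)$; any subsequential limit $(\z,\y)$ is shown to satisfy conditions~1--3 of Definition \ref{def:SP}, yielding existence.

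The main obstacle will be the careful proof of the oscillation inequality: controlling what happens at times when $\z_1$ and $\z_2$ touch different faces simultaneously, and verifying that the boundary pushing -- which involves cones $\conv\lb\bigcup_{u\in(s,t]}d(\alpha,\z(u))\rb$ rather than a single face direction -- remains compatible with tangency to the $B^\alpha$-level sets. This requires exploiting the full strength of Assumption \ref{ass:setB} and a careful test-function argument in the spirit of \cite[Theorem 2.2]{Dupuis1991}; the details are substantial but follow the template developed in \cite{Lipshutz2018}.
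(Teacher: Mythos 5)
The paper does not prove this proposition; it is stated as a citation to \cite{Lipshutz2018} and \cite{Ramanan2006}, so there is no in-paper proof to compare against. That said, your sketch is an accurate reconstruction of the approach those references take: the Lipschitz estimate (and hence uniqueness) is derived from an oscillation inequality in the $B^\alpha$-Minkowski norm, using \eqref{eq:setB} to show that constraining increments along $d_i(\alpha)$ are tangent to level sets $\partial(rB^\alpha)$ and therefore cannot increase $\norm{\z_1(\cdot)-\z_2(\cdot)}_{B^\alpha}$ beyond what $\x_1-\x_2$ drives; existence is built from the projection $\pi^\alpha$ of Assumption \ref{ass:projection}. One modest divergence in route: \cite{Ramanan2006} obtains existence by first handling inputs of bounded variation (piecewise-linear, say) via repeated application of $\pi^\alpha$, and then extending to all of $\cts_G(\R^J)$ by density together with the Lipschitz estimate, rather than through a mesh-dependent Euler scheme with a passage to the limit in the scheme itself; your Euler scheme is the original Dupuis--Ishii device and is equivalent in substance, since both reduce to the same compactness-plus-stability argument. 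Your identification of the main difficulty — corner behavior when $\z_1$ and $\z_2$ touch different or multiple faces, and the compatibility of cone-valued pushing with tangency to $B^\alpha$-level sets — is exactly where the bulk of the work lies in those references, so the plan is sound.
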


The following lemma will be useful for proving bounds that are uniform over $\alpha$ in compact subsets of $U$. 

\begin{lem}
  \label{lem:lipcont}
	The constant $\lip_\sm(\alpha)$ in Proposition \ref{prop:SP} can be chosen to be continuous in $\alpha\in U$.
\end{lem}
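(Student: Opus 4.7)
The plan is to revisit the construction of the Lipschitz constant inside the proof of Proposition \ref{prop:SP}, extract an explicit expression in terms of the geometric data $(B^\alpha, \theta(\alpha), d_i(\alpha), n_i)$, verify that each input depends on $\alpha$ in a way that ensures upper semicontinuity of the resulting constant, and finally invoke a paracompactness argument to produce a continuous majorant.

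First I would recall from \cite{Ramanan2006,Lipshutz2018} that the Lipschitz bound in Proposition \ref{prop:SP} is obtained by using the Minkowski functional $\norm{\cdot}_{B^\alpha}$ of the symmetric convex set $B^\alpha$ as a Lyapunov-type norm; Assumption \ref{ass:setB} guarantees that $\norm{\cdot}_{B^\alpha}$ is nonincreasing along every admissible reflection direction wherever reflection actually occurs. Unwinding that argument yields a Lipschitz constant of the form
	$$\lip_\sm(\alpha)=\Phi\br{r_*(\alpha),r^*(\alpha),\theta(\alpha),\max_{i\in\allN}|d_i(\alpha)|},$$
where $r_*(\alpha):=\inf_{z\in\partial B^\alpha}|z|$ and $r^*(\alpha):=\sup_{z\in B^\alpha}|z|$ are positive since $0\in(B^\alpha)^\circ$, and $\Phi$ is explicit, jointly continuous on $(0,\infty)^4$, monotone increasing in its second and fourth arguments and monotone decreasing in its first and third.

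Next I would check continuity of each input. Hausdorff continuity of $\alpha\mapsto B^\alpha$ together with $0\in(B^\alpha)^\circ$ yields continuity and positivity of $r_*$ and $r^*$, and continuous differentiability of $d_i(\cdot)$ gives continuity of $\alpha\mapsto\max_i|d_i(\alpha)|$. The only delicate term is $\theta(\alpha)$, which is not uniquely prescribed by Assumption \ref{ass:setB}. I would therefore define
	$$\theta^*(\alpha):=\sup\cbr{\theta>0:\eqref{eq:setB}\text{ holds for }(B^\alpha,d_i(\alpha))\text{ with this threshold}},$$
show $\theta^*(\alpha)>0$, and establish that $\alpha\mapsto\theta^*(\alpha)$ is lower semicontinuous. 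The latter would follow by considering a sequence $\alpha_n\to\alpha$ and, along any violating configuration of points $z_n\in\partial B^{\alpha_n}$ and inward normals $\nu_n\in\nu_{B^{\alpha_n}}(z_n)$, passing to the limit via the upper hemicontinuity of the normal cone map under Hausdorff convergence of convex bodies with uniformly nondegenerate interior. Substituting $\theta^*$ into $\Phi$ then produces an upper semicontinuous, locally bounded function $\alpha\mapsto\Phi(\dots)$ that is a valid Lipschitz constant.

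To finish, I would use the standard fact that every positive, upper semicontinuous, locally bounded function on an open subset of $\R$ admits a continuous majorant: cover $U$ by a locally finite family of bounded open intervals, take the supremum on each closure (finite by local boundedness), and glue these upper bounds together via a continuous partition of unity. The resulting continuous function dominates the above $\Phi$ pointwise and may be taken as the desired continuous choice of $\lip_\sm(\alpha)$. The hard part will be the lower semicontinuity of $\theta^*(\alpha)$, which requires pairing the Hausdorff convergence $B^{\alpha_n}\to B^\alpha$ with the convergence $d_i(\alpha_n)\to d_i(\alpha)$ so that any violating configuration at the limit parameter can be realized as a nearby violating configuration along the sequence.
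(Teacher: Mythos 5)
Your approach is genuinely different from the paper's, and it has a gap that I don't think can be repaired as described. The paper does not unwind the Dupuis--Ishii construction of the Lipschitz constant at all. Instead, it uses a direct perturbation argument: given the solution $(\z_k^\alpha,\y_k^\alpha)$ of the SP with data $\{(d_i(\alpha),n_i)\}$ for input $\x_k$, one sets $\ell_k^\alpha := R(\alpha)^{-1}\y_k^\alpha$ and observes that $(\z_k^\alpha,\,R(\alpha_0)\ell_k^\alpha)$ solves the SP with data $\{(d_i(\alpha_0),n_i)\}$ for the modified input $\x_k^\alpha := \x_k + (R(\alpha)-R(\alpha_0))\ell_k^\alpha$. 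Applying the known Lipschitz bound at $\alpha_0$ and absorbing the (small, by continuity of $R$) perturbation of the input then yields that $\lip_\sm(\alpha_0)+\ve$ is a valid constant for all $\alpha$ in a neighborhood of $\alpha_0$. This is entirely soft and bypasses the geometric internals.

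The gap in your route is exactly at the point you flag as ``the hard part,'' and the sketch you give does not close it. You want lower semicontinuity of $\theta^*(\alpha)$, and you propose to extract a contradiction by passing violating configurations $(z_n,\nu_n,i_n)$ at parameters $\alpha_n\to\alpha$ to a limit configuration $(z,\nu,i)$ at $\alpha$. But the defining condition in \eqref{eq:setB} that a configuration \emph{violates} is the open condition $\ip{\nu_n,d_{i_n}(\alpha_n)}\neq0$, and this is not preserved under limits: it is entirely possible that $\ip{\nu_n,d_i(\alpha_n)}\neq0$ for every $n$ while $\ip{\nu,d_i(\alpha)}=0$. Thus the limit configuration can satisfy \eqref{eq:setB} at $\alpha$, and no contradiction is obtained. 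Without lower semicontinuity of $\theta^*$ — and you have not even established that $\theta^*$ is locally bounded away from zero, which Assumption \ref{ass:setB} does not guarantee — the function $\alpha\mapsto\Phi(r_*(\alpha),r^*(\alpha),\theta^*(\alpha),\max_i|d_i(\alpha)|)$ need not be locally bounded, and then the paracompactness/partition-of-unity step cannot be applied. (Secondarily, the existence of the explicit, monotone, jointly continuous $\Phi$ you posit is asserted but not verified against the actual Dupuis--Ishii estimate, though that is a more routine omission.) I would recommend abandoning the attempt to track $\theta$ and instead using the comparison trick above, which reduces the claim to the Lipschitz continuity of the SM at a \emph{fixed} parameter together with continuity of $R(\cdot)$.
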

\begin{proof}
	Let $\x_1,\x_2\in\cts_G(\R^J)$.
	For $k=1,2$ and $\alpha\in U$ let $(\z_k^\alpha,\y_k^\alpha)$ denote the unique solution to the SP $\{(d_i(\alpha),n_i),i\in\allN\}$ for $\x_k$, and set $\ell_k^\alpha(\cdot):=(R(\alpha))^{-1}\y_k^\alpha(\cdot)$.
	Fix $\alpha_0\in U$.
	For $\alpha\in U$, define
		$$\x_k^\alpha(t):=\x_k(t)+(R(\alpha)-R(\alpha_0))\ell_k^\alpha(t)$$
	and
		$$\tilde\y_k^\alpha(t):=R(\alpha_0)\ell_k^\alpha(t)=R(\alpha_0)(R(\alpha))^{-1}\y_k^\alpha(t).$$
	Then $(\z_k^\alpha,\tilde\y_k^\alpha)$ solves the SP $\{(d_i(\alpha_0),n_i),i\in\allN\}$ for $\x_k^\alpha$.
	It follows from the Lipschitz continuity of the SM (Proposition \ref{prop:SP}) that
	\begin{align*}
		\sup_{s\in[0,t]}|\x_1^\alpha(s)-\x_2^\alpha(s)|&\le\sup_{s\in[0,t]}|\x_1(s)-\x_2(s)|\\
		&\qquad+\norm{R(\alpha)-R(\alpha_0)}\norm{(R(\alpha_0))^{-1}}\sup_{s\in[0,t]}\abs{\tilde\y_1^\alpha(s)-\tilde\y_2^\alpha(s)}\\
		&\le\sup_{s\in[0,t]}|\x_1(s)-\x_2(s)|\\
		&\qquad+\lip_\sm(\alpha_0)\norm{R(\alpha)-R(\alpha_0)}\norm{(R(\alpha_0))^{-1}}\sup_{s\in[0,t]}\abs{\x_1^\alpha(s)-\x_2^\alpha(s)}, 
	\end{align*}
        {which, on rearranging, yields } 
	\begin{align*}
		\sup_{s\in[0,t]}|\x_1^\alpha(s)-\x_2^\alpha(s)|&\le\frac{1}{1-\lip_\sm(\alpha_0)\norm{R(\alpha)-R(\alpha_0)}\norm{(R(\alpha_0))^{-1}}}\sup_{s\in[0,t]}|\x_1(s)-\x_2(s)|.
	\end{align*}
       {By the continuity of $R(\cdot)$, given $\ve  > 0$, there exists
        $\delta > 0$, such that if $|\alpha - \alpha_0| < \delta$, then
        the constant on the right-hand side of the last inequality can be made
        less than $1 + \ve/\kappa_{\Gamma}(\alpha_0)$. 
	Together with  the Lipschitz continuity of the SM,
        this implies for all such $\alpha$, }
	\begin{align*}
	  \sup_{s\in[0,t]}|\z_1^\alpha(s)-\z_2^\alpha(s)|&\le
          (\kappa_{\Gamma}(\alpha_0) + \ve) \sup_{s\in[0,t]}|\x_1(s)-\x_2(s)|.
       \end{align*}   
	Since $\y_k^\alpha=\x_k-\z_k^\alpha$ and $R(\cdot)$ is continuous, we see that $\lip_\sm(\cdot)$ can be chosen to be continuous at $\alpha_0$, thus completing the proof.
\end{proof}

\begin{remark}
\label{rmk:lipcontinuous}
Throughout the remainder of this work we assume $\lip_\sm(\cdot)$ is continuous.
\end{remark}

We close this section by stating a slightly stronger version of the so-called \emph{boundary jitter property} that was introduced in \cite[Definition 3.1]{Lipshutz2018}. 
The boundary jitter property plays a crucial role in characterizing directional derivatives of the SP (see \cite[Theorem 3.11]{Lipshutz2018}). 
The stronger version 2' of condition 2 is used in \cite{Lipshutz2017a} to prove a continuity property of the derivative map stated in Proposition \ref{prop:dmcontinuous} below. { The latter}  is used in the next section to prove the joint process is Feller continuous.  { Recall that ${\mathcal N}$ is the set of non-smooth points in the boundary $\partial G$.} 

\begin{defn}\label{def:jitter}
A pair $(\z,\y)\in\cts(G)\times\cts(\R^J)$ is said to satisfy the boundary jitter property if the following conditions hold:
\begin{itemize}
	\item[1.] If $\z(t)\in\partial G$ for some $t\ge0$, then $\y$ is nonconstant on $(t_1^+,t_2)$ for all $t_1<t<t_2$.
	\item[2.'] $\z$ does not spend positive Lebesgue time in the boundary $\partial G$; that is,
		$$\int_0^\infty1\{\z(s)\in\partial G\}ds=0.$$
	\item[3.] If $\z(t)\in\U$ for some $t>0$, then for each $i\in\allN(\z(t))$ and all $\delta\in(0,t)$, there exists $s\in(t-\delta,t)$ such that $\allN(\z(s))=\{i\}$.
	\item[4.] If $\z(0)\in\U$, then for each $i\in\allN(\z(0))$ and all $\delta>0$, there exists $s\in(0,\delta)$ such that $\allN(\z(s))=\{i\}$.
\end{itemize}
\end{defn}

\begin{remark}
Condition 2 of \cite[Definition 3.1]{Lipshutz2018} only requires that $\z$ does not spend positive Lebesgue time in the nonsmooth part of the boundary $\U$.
\end{remark}

\begin{prop}[{\cite[Proposition 6.8]{Lipshutz2017a}}]
\label{prop:jitter}
A.s.\ $(\Z^\alpha,\Y^\alpha)$ satisfies the boundary jitter property.
\end{prop}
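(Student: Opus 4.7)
My plan is to verify each of the four conditions of Definition \ref{def:jitter} separately, exploiting the non-degeneracy of $a(\alpha)=\sigma(\alpha)(\sigma(\alpha))^T$ and the semimartingale representation of $\Z^\alpha$ established in Lemma \ref{lem:L}.

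For condition 2', I would fix $i\in\allN$ and consider the real-valued continuous semimartingale $\langle \Z^\alpha(\cdot),n_i\rangle$. Its martingale component has quadratic variation density $|\sigma(\alpha)^T n_i|^2 > 0$, since $a(\alpha)$ is positive definite and $n_i\neq 0$. A standard occupation-time density argument (via Tanaka's formula applied at level zero) then yields $\int_0^\infty 1\{\langle \Z^\alpha(s),n_i\rangle = 0\}\,ds = 0$ almost surely. Since $\allN$ is finite and $\partial G=\bigcup_{i\in\allN} F_i$, summing over $i$ produces condition 2'.

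For condition 1, suppose $\Z^\alpha(t)\in F_i$ for some $i\in\allN$. The nonnegative continuous semimartingale $\langle \Z^\alpha(\cdot),n_i\rangle$ vanishes at $t$ and locally behaves like a reflected diffusion with nondegenerate diffusion coefficient. By an excursion-theoretic argument (or by direct comparison to a one-dimensional reflected Brownian motion), its zero set has no isolated points from either side, so any open interval strictly containing $t$ meets at least one excursion of $\langle \Z^\alpha,n_i\rangle$ into $(0,\infty)$. Each such excursion forces $\L^{\alpha,i}$ to increase, by Lemma \ref{lem:L}, and so via \eqref{eq:Lalpha} the constraining process $\Y^\alpha$ is nonconstant on every such interval.

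The main obstacle lies in conditions 3 and 4, which require that whenever $\Z^\alpha$ hits the non-smooth boundary $\U$ at some time $t$, it visits \emph{each} adjacent single face $F_i$ arbitrarily close to $t$. My approach would first treat condition 4: for $x\in\U$, $i\in\allN(x)$, and $\delta>0$, reduce to $b(\alpha)=0$ by a Girsanov change of measure, then invoke the support theorem for the driving Brownian motion together with the Lipschitz continuity of the SM (Proposition \ref{prop:SP}) to exhibit an event of positive probability on which there exists $s\in(0,\delta)$ with $\langle \Z^\alpha(s),n_j\rangle>0$ for every $j\in\allN(x)\setminus\{i\}$ and $\langle \Z^\alpha(s),n_i\rangle=0$. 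Since this event lies in the germ $\sigma$-field $\F_{0+}$, Blumenthal's zero-one law upgrades positive probability to probability one. Condition 3 then follows by applying the strong Markov property of $\Z^\alpha$ (Theorem \ref{thm:rbm}) at the successive hitting times of $\U$, combined with the above analysis to cover both the right and (via a symmetric excursion-theoretic argument) left of $t$. The subtlety is the simultaneous decoupling of the multiple face-components at a corner: Assumption \ref{ass:setB} controls the interaction between each reflection direction and the non-adjacent normal vectors, which together with the non-degeneracy of $\sigma(\alpha)$ supplies the quantitative control needed to rule out the RBM remaining trapped on the intersection of faces over any interval of positive length.
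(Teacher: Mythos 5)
The paper does not supply its own proof of this proposition; it cites \cite[Proposition 6.8]{Lipshutz2017a}. Judging your sketch on its own, the most serious gap is in the Blumenthal step for conditions 3 and 4. For fixed $\delta>0$ the event $A_\delta:=\{\exists\, s\in(0,\delta):\allN(\Z^\alpha(s))=\{i\}\}$ is \emph{not} $\F_{0+}$-measurable, so Blumenthal's zero--one law does not apply to it; what is $\F_{0+}$-measurable is $A:=\bigcap_{\delta>0}A_\delta$. Since $\P(A_\delta)\downarrow\P(A)$ as $\delta\downarrow 0$, showing $\P(A_\delta)>0$ for each $\delta$ does not yield $\P(A)>0$: you need a $\delta$-uniform lower bound $\P(A_\delta)\ge c>0$. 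Brownian scaling can supply such a bound when the starting corner is the origin (the only scale-invariant vertex), but for $x\in\U\setminus\{0\}$ the scaling must be carried out locally about $x$ while controlling the slack constraints $j\notin\allN(x)$ over the shrinking time window, and your sketch does not address this. A second gap: condition 3 demands a visit in $(t-\delta,t)$, i.e.\ to the \emph{left} of $t$, whereas the strong Markov property applied at hitting times of $\U$ produces visits to the right; the ``symmetric excursion-theoretic argument'' you defer to is not routine for an obliquely reflected Brownian motion in a cone, whose time-reversal need not belong to the same class of processes.

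Two smaller points. In condition 1, the claim that ``each such excursion forces $\L^{\alpha,i}$ to increase'' is backwards --- $\L^{\alpha,i}$ increases only on the zero set of $\langle\Z^\alpha,n_i\rangle$, not during excursions away from it --- and $\langle\Z^\alpha(\cdot),n_i\rangle$ is not a one-dimensional RBM because its constraining term $\sum_j\langle d_j(\alpha),n_i\rangle\L^{\alpha,j}$ collects contributions of arbitrary sign from the other faces, so direct comparison to a 1D RBM needs justification. A cleaner route avoids excursion theory: if $\Y^\alpha$ were constant on some $(t_1,t_2)\ni t$ with $\Z^\alpha(t)\in F_i$, then on that interval $\langle\Z^\alpha(\cdot),n_i\rangle$ would coincide with $\langle n_i, b(\alpha)(\cdot-t)+\sigma(\alpha)(\bm(\cdot)-\bm(t))\rangle$, a nondegenerate scalar Brownian motion with drift started from zero, which a.s.\ takes negative values immediately on both sides of $t$, contradicting $\Z^\alpha\in G$. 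Your occupation-time argument for condition 2' is fine.
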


\subsection{The derivative problem}\label{sec:dp}

The derivative problem was first introduced in \cite[Definition 3.4]{Lipshutz2018} as an axiomatic framework for studying directional derivatives of the SM.
Throughout this section we fix $\alpha\in U$.

\begin{defn}
\label{def:dp}
Given $\x\in\cts_G(\R^J)$, suppose $(\z,\y)$ is a solution to the SP $\{(d_i(\alpha),n_i),i\in\allN\}$ for $\x$. 
Let $\psi\in\dr(\R^J)$. 
Then $(\phi,\eta)\in\dr(\R^J)\times\dr(\R^J)$ is a solution to the derivative problem along $\z$ for $\psi$ if $\eta(0)\in\spaan[d(\alpha,\z(0))]$ and if for all $t\geq0$, the following conditions hold:
\begin{itemize}
	\item[1.] $\phi(t)=\psi(t)+\eta(t)$;
	\item[2.] $\phi(t)\in \hyper_{\z(t)}$;
	\item[3.] for all $s\in[0,t)$,
		$$\eta(t)-\eta(s)\in\spaan\lsb\bigcup_{u\in(s,t]}d(\alpha,\z(u))\rsb.$$
\end{itemize}
If there exists a unique solution $(\phi,\eta)$ to the derivative problem along $\z$ for $\psi$, we write $\phi=\dm_\z^\alpha(\psi)$ and refer to $\dm_\z^\alpha$ as the derivative map associated with $\z$.
\end{defn}

\begin{remark}
\label{rmk:derivativeprocessbeta}
Given a derivative process $\phib^\alpha$ along an RBM $\Z^\alpha$, let $\etab^\alpha$ be as in Definition \ref{def:de} and define the $\{\F_t\}$-adapted continuous process $\psib^\alpha=\{\psib^\alpha(t),t\geq0\}$ taking values in $\R^J$ by
	\be\label{eq:psib}\psib^\alpha(t):=\phib^\alpha(0)+b'(\alpha)t+\sigma'(\alpha)\bm(t)+R'(\alpha)\L^\alpha(t),\qquad t\geq0,\ee
where we recall that $\Y^\alpha$ is the constraining process introduced in Definition \ref{def:rbm} and $\L^\alpha(\cdot)=R^{-1}(\alpha)\Y^\alpha(\cdot)$.
It follows from Definition \ref{def:de} and the definition of the derivative problem that a.s.\ the pair $(\phib^\alpha,\etab^\alpha)$ is a solution to the derivative problem along $\Z^\alpha$ for $\psib^\alpha$.
\end{remark}

We now state some useful properties of the derivative problem that were established in \cite{Lipshutz2018}. 
The first result states that the derivative map is linear.

\begin{lem}[{\cite[Lemma 5.1]{Lipshutz2018}}]
\label{lem:dmlinear}
Let $(\z,\y)$ be the solution to the SP $\{(d_i(\alpha),n_i),i\in\allN\}$ for $\x\in\cts_G(\R^J)$. 
Let $\psi_k\in\cts(\R^J)$ and suppose $\dm_\z^\alpha(\psi_k)$ is well defined, for $k=1,2$. 
Then $\dm_\z^\alpha(r_1\psi_1+r_2\psi_2)$ is well defined and equal to $r_1\dm_\z^\alpha(\psi_1)+r_2\dm_\z^\alpha(\psi_2)$ for all $r_1,r_2\in\R$.
\end{lem}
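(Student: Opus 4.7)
The plan is to verify that a linear combination of solutions to the derivative problem is itself a solution to the derivative problem for the corresponding linear combination of inputs. This reduces the proof to checking that each defining condition of the derivative problem is preserved under linear combinations, which it is because the relevant sets (the subspaces $\hyper_{\z(t)}$ and the spans appearing in conditions 2 and 3) are linear.

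In detail, first I would let $(\phi_k,\eta_k)$ denote the (unique) solution pair to the derivative problem along $\z$ for $\psi_k$, so that $\phi_k=\dm_\z^\alpha(\psi_k)$ for $k=1,2$. Then I would define candidate processes $\phi:=r_1\phi_1+r_2\phi_2$, $\eta:=r_1\eta_1+r_2\eta_2$, and $\psi:=r_1\psi_1+r_2\psi_2$, all RCLL since $\dr(\R^J)$ is a vector space. Condition 1 of Definition \ref{def:dp} holds by direct computation: $\phi=r_1(\psi_1+\eta_1)+r_2(\psi_2+\eta_2)=\psi+\eta$. For condition 2, note that by \eqref{eq:Hx} the set $\hyper_{\z(t)}$ is either $\R^J$ (if $\z(t)\in G^\circ$) or a finite intersection of hyperplanes through the origin, and hence is a linear subspace; since $\phi_k(t)\in\hyper_{\z(t)}$ for $k=1,2$, the linear combination $\phi(t)$ also lies in $\hyper_{\z(t)}$. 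For condition 3 (and the initial condition $\eta(0)\in\spaan[d(\alpha,\z(0))]$), I use that $\spaan[E]$ is by definition a linear subspace for any $E$, so the relation $\eta(t)-\eta(s)=r_1(\eta_1(t)-\eta_1(s))+r_2(\eta_2(t)-\eta_2(s))\in\spaan[\bigcup_{u\in(s,t]}d(\alpha,\z(u))]$ follows immediately, and similarly at $t=0$.

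For uniqueness, given that the hypothesis asserts $\dm_\z^\alpha(\psi_k)$ is well defined (i.e., there is a unique solution to the derivative problem for each $\psi_k$), I would argue by contradiction: if $(\phi',\eta')$ were a second solution to the derivative problem for $\psi$, then provided $r_1\ne 0$ I could write $\phi_1=r_1^{-1}(\phi'-r_2\phi_2)$ with a corresponding $\eta_1$ obtained analogously, and by running the linearity verification in reverse obtain a second solution of the derivative problem for $\psi_1$, contradicting well-definedness; the case $r_1=0$ is handled symmetrically (and the degenerate case $r_1=r_2=0$ is trivial, with $\psi\equiv 0$ admitting only the zero solution).

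The main obstacle, such as it is, lies less in the algebra of the three conditions (which is essentially bookkeeping) than in making the uniqueness step airtight, since the derivative problem need not a priori have a unique solution for every input. The argument above sidesteps this by leveraging the fact that a nontrivial nonuniqueness for the linear combination input could be ``transported'' back to nonuniqueness for one of the individual inputs $\psi_1,\psi_2$, violating the hypothesis. Everything else is a one-line observation about the linearity of the constraint sets in the definition of the derivative problem.
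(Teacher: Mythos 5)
The paper cites this lemma from \cite{Lipshutz2018} rather than proving it, so there is no in-paper proof to compare against; your argument is correct and is the standard one. Existence of a solution for $r_1\psi_1+r_2\psi_2$ follows by checking the three conditions of Definition~\ref{def:dp} (plus the initial condition on $\eta(0)$) for the linear combinations $r_1\phi_1+r_2\phi_2$ and $r_1\eta_1+r_2\eta_2$, using that $\hyper_{\z(t)}$ and $\spaan[\,\cdot\,]$ are linear subspaces, and uniqueness follows by transporting any putative second solution back to a second solution for $\psi_1$ (or $\psi_2$).

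Two small tightenings are in order. First, you should not write ``$\phi_1=r_1^{-1}(\phi'-r_2\phi_2)$,'' since $\phi_1$ is already fixed as $\dm_\z^\alpha(\psi_1)$; instead, verify that the pair $\bigl(r_1^{-1}(\phi'-r_2\phi_2),\,r_1^{-1}(\eta'-r_2\eta_2)\bigr)$ solves the derivative problem for $\psi_1$ and invoke the assumed uniqueness to conclude it must coincide with $(\phi_1,\eta_1)$, whence $\phi'=\phi$ and $\eta'=\eta$. Second, the case $r_1=r_2=0$ is not ``trivial'' in the unconditional sense: without the hypothesis, uniqueness of the zero solution to the derivative problem for $\psi\equiv0$ is not automatic. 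It is, however, handled by exactly the same transport: if $(\phi',\eta')$ solved the derivative problem for $\psi\equiv0$, then $(\phi_1+\phi',\eta_1+\eta')$ would also solve it for $\psi_1$, and the assumed uniqueness of the solution for $\psi_1$ forces $\phi'\equiv0$ and $\eta'\equiv0$.
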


Our next result is a time-shift property of the derivative problem. 
Given $\x\in\cts_G$ and $\psi\in\cts$, suppose $(\z,\y)$ is a solution to the SP $\{(d_i(\alpha),n_i),i\in\allN\}$ for $\x$ and $(\phi,\eta)$ is a solution to the derivative problem along $\z$ for $\psi$. 
Let $s\geq0$ and recall the shift operator defined in \eqref{eq:shift}. 
Define $\z^s\in\cts(G)$ as in \eqref{eq:zS} and define $\phi^s,\eta^s,\psi^s\in\cts(\R^J)$ by
\begin{align}\label{eq:phis}
	\phi^s(\cdot)&:=\phi(s+\cdot),\\ \label{eq:etas}
	\eta^s(\cdot)&:=(\Theta_s\eta)(\cdot),\\ \label{eq:psis}
	\psi^s(\cdot)&:=\phi(s)+(\Theta_s\psi)(\cdot).
\end{align}

\begin{lem}[{\cite[Lemma 5.2]{Lipshutz2018}}]
\label{lem:dptimeshift}
Let $(\z,\y)$ be a solution to the SP $\{(d_i(\alpha),n_i),i\in\allN\}$ for $\x\in\cts_G$. 
Suppose $(\phi,\eta)$ is a solution to the derivative problem along $\z$ for $\psi$. 
Then for each $s\geq0$, $(\phi^s,\eta^s)$ is a solution to the derivative problem along $\z^s$ for $\psi^s$.
\end{lem}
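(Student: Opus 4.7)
The plan is to verify the four defining conditions of the derivative problem (Definition \ref{def:dp}) directly for the triple $(\phi^s, \eta^s)$ relative to the shifted path $\z^s$ and driver $\psi^s$. Each condition reduces to an algebraic identity or a substitution, using only the definitions \eqref{eq:zS}, \eqref{eq:phis}--\eqref{eq:psis} together with the corresponding property of $(\phi, \eta)$ along $\z$ for $\psi$. No hard analysis is involved; the only mildly careful step is a relabeling of the parameter inside the union in condition 3.

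First I would check the initial condition $\eta^s(0) \in \spaan[d(\alpha, \z^s(0))]$. By definition $\eta^s(0) = (\Theta_s \eta)(0) = 0$, which trivially lies in the linear span, so nothing further is needed. Next, for condition 1, I compute
$$\psi^s(t) + \eta^s(t) = \phi(s) + \psi(s+t) - \psi(s) + \eta(s+t) - \eta(s) = \phi(s+t) = \phi^s(t),$$
where the middle equality uses condition 1 of the original derivative problem (applied at both $s$ and $s+t$) to combine $\phi(s) = \psi(s) + \eta(s)$ with $\phi(s+t) = \psi(s+t) + \eta(s+t)$. For condition 2, one simply notes that $\phi^s(t) = \phi(s+t) \in \hyper_{\z(s+t)} = \hyper_{\z^s(t)}$.

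For condition 3, fix $0 \le r < t$. Then
$$\eta^s(t) - \eta^s(r) = \bigl(\eta(s+t) - \eta(s)\bigr) - \bigl(\eta(s+r) - \eta(s)\bigr) = \eta(s+t) - \eta(s+r).$$
Applying condition 3 of the original derivative problem to the pair $s+r < s+t$ yields membership in $\spaan\!\left[\bigcup_{u \in (s+r,\, s+t]} d(\alpha, \z(u))\right]$; the substitution $v = u - s$ identifies this with $\spaan\!\left[\bigcup_{v \in (r, t]} d(\alpha, \z^s(v))\right]$, which is exactly what condition 3 requires for the shifted data.

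I do not anticipate a main obstacle: each of the four items is a one-line verification, and the argument is essentially the derivative-problem analogue of Lemma \ref{lem:SPtimeshift}. The only thing to be mindful of is keeping the $\phi(s)$ term in the definition \eqref{eq:psis} of $\psi^s$ visible, since it is exactly what makes the linearization $\phi^s = \psi^s + \eta^s$ close up after the shift.
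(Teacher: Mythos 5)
Your verification is correct, and since the paper cites this result from \cite[Lemma 5.2]{Lipshutz2018} without reproducing a proof, the only meaningful comparison is to the cited source, which takes essentially the same direct verification: $\eta^s(0)=0$ trivially lies in the span, condition 1 follows from applying the original condition 1 at times $s$ and $s+t$, condition 2 is immediate from $\z^s(t)=\z(s+t)$, and condition 3 follows from the change of variable $v=u-s$ in the union. Your identification of the role of the $\phi(s)$ offset in the definition of $\psi^s$ as the term that makes condition 1 close up after the shift is exactly the right thing to highlight.
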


The following proposition states a Lipschitz continuity property of the derivative map.

\begin{prop}[{\cite[Theorem 5.4]{Lipshutz2018}}]
\label{prop:dmlip}
There exists $\lip_\dm(\alpha)<\infty$ such that if $(\z,\y)$ is a solution to the SP $\{(d_i(\alpha),n_i),i\in\allN\}$ for $\x\in\cts_G(\R^J)$, $(\phi_1,\eta_1)$ is a solution to the derivative problem along $\z$ for $\psi_1\in\cts(\R^J)$, and $(\phi_2,\eta_2)$ is a solution to the derivative problem along $\z$ for $\psi_2\in\cts(\R^J)$, then for all $t\geq0$,
	\be\sup_{s\in[0,t]}|\phi_1(s)-\phi_2(s)|\leq\lip_\dm(\alpha)\sup_{s\in[0,t]}|\psi_1(s)-\psi_2(s)|.\ee
\end{prop}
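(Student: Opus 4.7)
The plan is to combine two structural features of the derivative problem: its linearity (Lemma \ref{lem:dmlinear}) and a geometric non-expansivity property induced by the set $B^\alpha$ of Assumption \ref{ass:setB}. First I would reduce to the one-solution case: setting $\psi:=\psi_1-\psi_2$, $\phi:=\phi_1-\phi_2$, $\eta:=\eta_1-\eta_2$, Lemma \ref{lem:dmlinear} gives that $(\phi,\eta)$ is a solution to the derivative problem along $\z$ for $\psi$. It therefore suffices to exhibit a finite constant $\lip_\dm(\alpha)$ such that every solution $(\phi,\eta)$ of the derivative problem along $\z$ for $\psi\in\cts(\R^J)$ satisfies $\sup_{s\in[0,t]}|\phi(s)|\leq\lip_\dm(\alpha)\sup_{s\in[0,t]}|\psi(s)|$ for every $t\geq 0$.

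Next I would work in the Minkowski gauge $\|x\|_{B^\alpha}:=\inf\{r>0:x\in rB^\alpha\}$. Since $B^\alpha$ is compact, convex, symmetric, and contains $0$ in its interior, this is a norm equivalent to the Euclidean norm on $\R^J$, with comparison constants determined by $B^\alpha$. The geometric consequence of \eqref{eq:setB} that I would exploit is that for each $i\in\allN$, whenever $z\in\partial B^\alpha$ satisfies $|\ip{z,n_i}|<\theta(\alpha)$, every inward normal to $B^\alpha$ at $z$ is orthogonal to $d_i(\alpha)$. Using this, for any $x\in\partial G$, I would show that adjusting a vector $\psi\in\R^J$ by an element $\eta\in\spaan[d(\alpha,x)]$ so that $\psi+\eta\in\hyper_x$ does not increase the $B^\alpha$-norm, at least inside a small ball; a rescaling argument then removes the smallness restriction.

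Finally, I would concatenate these step-wise estimates along the trajectory of $\z$. Using the time-shift property (Lemma \ref{lem:dptimeshift}), on any subinterval $[s,s+h]$ on which $\z$ stays in $G^\circ$ the process $\eta$ is constant and $\phi$ tracks $\psi$ affinely, while at boundary interactions the step-wise non-expansivity just described applies to the increments of $\eta$. Partitioning $[0,t]$ more and more finely and passing to the limit yields $\sup_{s\in[0,t]}\|\phi(s)\|_{B^\alpha}\leq C\sup_{s\in[0,t]}\|\psi(s)\|_{B^\alpha}$, and equivalence of norms delivers the Euclidean Lipschitz bound. The hard part will be the geometric step: unlike the Dupuis--Ishii argument for the SM, where the increments of $\y$ lie in the cone $d(\alpha,\z(t))$ generated by the reflection directions at the contact point, here $\eta$ may take increments in the full linear span $\spaan[d(\alpha,\z(t))]$, so the sign-definiteness that underlies the classical contractivity proof is lost. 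To recover it, one must exploit that $\phi(t)$ is constrained to the tangent hyperplane $\hyper_{\z(t)}$ (rather than to the half-space $\{y:\ip{y,n_i}\geq 0\}$) and perform a case analysis on the active index set $\allN(\z(t))$.
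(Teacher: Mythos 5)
This result is imported verbatim from \cite[Theorem 5.4]{Lipshutz2018}; the present paper does not contain a proof, so the comparison must be against that reference. Your proposal has the right ingredients and identifies the right obstacle: reduce by linearity (Lemma~\ref{lem:dmlinear}) to the single-solution case, switch to the Minkowski gauge $\norm{\cdot}_{B^\alpha}$, and replace the sign-definiteness that drives the Dupuis--Ishii argument for the SP (where $\y$ increments lie in the \emph{cone} $d(\alpha,\cdot)$) with the hyperplane constraint $\phi(t)\in\hyper_{\z(t)}$, since $\eta$ increments lie only in $\spaan[d(\alpha,\cdot)]$. That is exactly the mechanism behind the derivative projection operators $\proj_x^\alpha$ of Lemma~\ref{lem:projx} and the cited proof. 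So at the level of strategy you and the reference are aligned.

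Two points in the proposal are shaky. First, the phrase ``inside a small ball; a rescaling argument then removes the smallness restriction'' is a false move: $\proj_x^\alpha$ is linear and the gauge $\norm{\cdot}_{B^\alpha}$ is positively homogeneous, so there is no small-ball version of the nonexpansivity to upgrade. If you find yourself needing a smallness assumption, you have lost the linear structure somewhere; the correct statement (Lemma~\ref{lem:projx}) is an exact, global operator bound $\norm{\proj_x^\alpha}\le 1$. Second, and more seriously, the concatenation step is where the real content lies and is not addressed by ``partition $[0,t]$ finely and pass to the limit.'' Nonexpansivity of each projection alone would suggest $\lip_\dm(\alpha)=1$, which is false (already in one dimension with reflection at the origin one has $\phi(t)=\psi(t)-\psi(\tau)$ for the last hitting time $\tau$, so $\lip_\dm=2$). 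The constant larger than one emerges from the interaction between the projections and the cumulative increments of $\psi$ — the projection is applied to the full vector $\phi(\tau-)=\psi(\tau)+\eta(\tau-)$, not to an increment of $\eta$, and each application ``resets'' $\eta$ in a way that can then be amplified by subsequent increments of $\psi$. A naive telescoping over a mesh does not control this, and one must either run an oscillation argument à la Dupuis--Ishii directly in the $B^\alpha$-gauge, or (as in the reference) argue through an explicit representation of the derivative map. As written, your proposal names the destination but does not close the gap at the one place the proof is actually hard.
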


The next result states that the derivative problem is well defined along $\z$ provided $(\z,\y)$ satisfies the boundary jitter property. 
The proposition follows from \cite[Theorem 3.11]{Lipshutz2018} and the fact that, by Assumption \ref{ass:independent} and \cite[Lemma 8.2]{Lipshutz2018},
	\be\label{eq:W}\mathcal{W}^\alpha:=\{x\in\U:\spaan(d(\alpha,x)\cup \hyper_x)\neq\R^J\}=\emptyset.\ee

\begin{prop}
[{\cite[Theorem 3.11]{Lipshutz2018}}]
\label{prop:dpexistence}
Let $(\z,\y)$ be a solution to the SP $\{(d_i(\alpha),n_i),i\in\allN\}$ for $\x\in\cts_G$. 
Suppose $(\z,\y)$ satisfies the boundary jitter property (Definition \ref{def:jitter}). 
Then for all $\psi\in\cts(\R^J)$ there exists a unique solution $(\phi,\eta)$ to the derivative problem along $\z$ for $\psi$.
\end{prop}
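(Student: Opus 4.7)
The plan is to reduce Proposition \ref{prop:dpexistence} to an existence-and-uniqueness result already established in the literature, namely \cite[Theorem 3.11]{Lipshutz2018}. That theorem asserts exactly what we want to prove, but under the a priori stronger hypothesis that, in addition to $(\z,\y)$ satisfying the boundary jitter property, the path $\z$ avoids the ``degenerate'' subset
\[
\mathcal{W}^\alpha := \lcb x\in\U:\spaan\br{d(\alpha,x)\cup\hyper_x}\neq\R^J\rcb
\]
of the nonsmooth boundary. Thus, once it is shown that $\mathcal{W}^\alpha=\emptyset$ in our setting, the conclusion of Proposition \ref{prop:dpexistence} follows immediately from the cited theorem, since then the avoidance condition is vacuous.

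The bulk of the argument is therefore the verification that $\mathcal{W}^\alpha=\emptyset$. I would fix an arbitrary $x\in\U$ and carry out a dimension count. By linear independence of $\{n_1,\dots,n_J\}$, the subspace $\hyper_x=\spaan\{n_i:i\in\allN(x)\}^\perp$ has dimension $J-\abs{\allN(x)}$. By Assumption \ref{ass:independent}, the subspace $\spaan(d(\alpha,x))=\spaan\{d_i(\alpha):i\in\allN(x)\}$ has dimension $\abs{\allN(x)}$. Since the dimensions sum to $J$, it suffices to show the two subspaces intersect trivially. Suppose $v=\sum_{i\in\allN(x)} c_i d_i(\alpha)$ lies in $\hyper_x$; pairing with $n_j$ for $j\in\allN(x)$ yields the linear system
\[
\sum_{i\in\allN(x)} c_i \ip{d_i(\alpha),n_j}=0,\qquad j\in\allN(x),
\]
whose coefficient matrix is the principal submatrix of $N^T R(\alpha)$ indexed by $\allN(x)$. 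Invertibility of this submatrix is the content of \cite[Lemma 8.2]{Lipshutz2018} under Assumption \ref{ass:independent}; it forces all $c_i=0$, so $v=0$. Hence $\spaan(d(\alpha,x)\cup\hyper_x)=\R^J$ and $x\notin\mathcal{W}^\alpha$, giving $\mathcal{W}^\alpha=\emptyset$ as required.

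The main obstacle in this plan is the invertibility of the principal submatrix above: bare linear independence of $\{d_i(\alpha)\}$ guarantees that the full matrix $N^T R(\alpha)$ is non-singular, but in general one cannot deduce non-singularity of an arbitrary principal submatrix from this. The cleanest route is simply to invoke \cite[Lemma 8.2]{Lipshutz2018}, which handles this subtlety directly using the structural properties of the reflection data (the linear independence combined with the geometric consistency provided by the normals). Once this algebraic fact is in hand, both conditions of \cite[Theorem 3.11]{Lipshutz2018} are satisfied for the input $(\z,\y)$ and arbitrary continuous perturbation $\psi\in\cts(\R^J)$, yielding a unique solution $(\phi,\eta)$ to the derivative problem along $\z$ for $\psi$ and completing the proof.
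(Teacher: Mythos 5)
Your proposal is correct and follows essentially the same route as the paper: the paper likewise observes, immediately before stating the proposition, that the result follows from \cite[Theorem 3.11]{Lipshutz2018} together with the fact that Assumption \ref{ass:independent} and \cite[Lemma 8.2]{Lipshutz2018} imply $\mathcal{W}^\alpha=\emptyset$, which is exactly the reduction you carry out. Your dimension-count reformulation of the emptiness of $\mathcal{W}^\alpha$ is a reasonable unpacking of the cited lemma, but the substance of the argument (and the one subtle point, invertibility of the relevant principal submatrix) is, as you note, outsourced to \cite[Lemma 8.2]{Lipshutz2018}, just as in the paper.
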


The next result states a continuity result for the derivative map that is used in the proof that the joint process is a Feller continuous Markov process (see Section \ref{sec:Feller}). 
The proposition is a version of \cite[Theorem 6.15]{Lipshutz2017a} written for the case that $\x_k$ and $\psi_k$ are continuous for each $k\in\N$.

\begin{prop}
[{\cite[Theorem 6.15]{Lipshutz2017a}}]
\label{prop:dmcontinuous}
Given $f\in\cts_G(\R^J)$ suppose the solution $(h,g)$ to the SP for $f$ satisfies the boundary jitter property (Definition \ref{def:jitter}). 
Let $\{f_k\}_{k\in\N}$ be a sequence of functions in $\cts_G(\R^J)$ such that $f_k$ converges to $f$ in $\cts_G(\R^J)$ as $k\to\infty$, and for each $k\in\N$ let $(h_k,g_k)$ denote the solution to the SP for $f_k$. 
Suppose $\psi\in\cts(\R^J)$ satisfies $\psi(0)\in \hyper_{h(0)}$ and $\{\psi_k\}_{k\in\N}$ is a sequence in $\cts(\R^J)$ converging to $\psi$ in $\cts(\R^J)$ as $k\to\infty$. 
Then $\dm_{h_k}^\alpha(\psi_k)$ converges to $\dm_h^\alpha(\psi)$ in $\dr(\R^J)$ as $k\to\infty$, where we recall that $\dr(\R^J)$ is equipped with the Skorokhod $J_1$-topology.
\end{prop}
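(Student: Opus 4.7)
The plan is to decompose the problem using linearity of the derivative map (Lemma \ref{lem:dmlinear}) and its Lipschitz continuity (Proposition \ref{prop:dmlip}), and then tackle the nontrivial piece by a tightness-plus-uniqueness argument that exploits the boundary jitter property of $(h,g)$. Since $\dm_{h_k}^\alpha$ is linear, write
$$\dm_{h_k}^\alpha(\psi_k) = \dm_{h_k}^\alpha(\psi) + \dm_{h_k}^\alpha(\psi_k-\psi).$$
Proposition \ref{prop:dmlip} applied with $\psi_2=0$ (noting $\dm_{h_k}^\alpha(0)=0$) gives
$$\sup_{s\in[0,t]}|\dm_{h_k}^\alpha(\psi_k-\psi)(s)|\le \kappa_\Lambda(\alpha)\sup_{s\in[0,t]}|\psi_k(s)-\psi(s)|\to 0,$$
and since uniform convergence is compatible with the $J_1$-topology, it suffices to prove $\phi_k:=\dm_{h_k}^\alpha(\psi)\to\phi:=\dm_h^\alpha(\psi)$ in $\dr(\R^J)$. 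By Proposition \ref{prop:SP}, $h_k\to h$ and $g_k\to g$ uniformly on compact intervals; and, since $(h,g)$ satisfies jitter, Proposition \ref{prop:dpexistence} guarantees that $\phi$ exists and is the unique solution to the derivative problem along $h$ for $\psi$.

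Next I would extract subsequential limits. Proposition \ref{prop:dmlip} applied to $\psi_1=\psi$, $\psi_2=0$ yields the uniform bound $\sup_{s\le t}|\phi_k(s)|\le\kappa_\Lambda(\alpha)\sup_{s\le t}|\psi(s)|$. Setting $\eta_k:=\phi_k-\psi$, the constraining property forces each increment of $\eta_k$ to lie in the cone spanned by $\{d_i(\alpha):i\in\allN(h_k(u))\}$; combined with the linear independence Assumption \ref{ass:independent} and the uniform sup bound on $\phi_k$, this yields a uniform-in-$k$ bound on the total variation of $\eta_k$ on compact intervals, and hence $J_1$-tightness of $\{\phi_k\}$. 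Along a convergent subsequence $\phi_{k_j}\to\tilde\phi$ in $\dr(\R^J)$, set $\tilde\eta:=\tilde\phi-\psi$. The aim is to show $(\tilde\phi,\tilde\eta)$ is a solution to the derivative problem along $h$ for $\psi$; uniqueness of $\dm_h^\alpha(\psi)$ then forces $\tilde\phi=\phi$, and full $J_1$-convergence of the original sequence follows.

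The main obstacle, and the step where the boundary jitter property is indispensable, is verifying the state constraint $\tilde\phi(t)\in\hyper_{h(t)}$, because $x\mapsto\hyper_x$ is discontinuous at $\partial G$: when $h(t)\in F_i$ but $h_{k_j}(t)\in G^\circ$, the relation $\phi_{k_j}(t)\in\hyper_{h_{k_j}(t)}=\R^J$ gives no useful information. I would resolve this in two parts. First, condition 2' of the jitter property states that $h$ spends zero Lebesgue time on $\partial G$, so the constraint on $\tilde\phi(t)$ is trivial at almost every $t$, and right-continuity extends it to all $t$ at which $h$ is right-continuously in $G^\circ$. Second, at an exceptional time $t$ with $h(t)\in\partial G$, jitter conditions 3 and 4 supply an approaching sequence $t_m\to t$ (from the left, or from $0$ if $t=0$) along which $h(t_m)\to h(t)$ with $h(t_m)\in F_i$ on a single face $F_i\ni h(t)$; uniform convergence $h_{k_j}\to h$ then implies $h_{k_j}(t_m)\in F_i$ for $k_j$ large along suitable approximating times, transferring the constraint $\ip{\phi_{k_j}(\cdot),n_i}=0$ to $\tilde\phi(t_m)$, and thence to $\tilde\phi(t)$ by right-continuity. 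The spanning condition (Definition \ref{def:dp}(3)) on $\tilde\eta$ is handled analogously by combining $J_1$-convergence of increments with the upper semicontinuity of $x\mapsto d(\alpha,x)$ under $h_{k_j}\to h$.
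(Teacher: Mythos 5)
This proposition is cited from \cite[Theorem 6.15]{Lipshutz2017a} and the paper does not give its own proof, so there is no in-paper argument to compare against; the attempt must be judged on its own merits. Your reduction via Lemma \ref{lem:dmlinear} and Proposition \ref{prop:dmlip} is a sensible first step, but the tightness argument has a genuine gap. You assert that the increments of $\eta_k$ lie in a \emph{cone} generated by $\{d_i(\alpha)\}$, but condition 3 of Definition \ref{def:dp} places them in the \emph{span}. This distinction is material: in the Skorokhod problem the cone constraint together with linear independence yields the local-time decomposition of Lemma \ref{lem:L}, with nondecreasing components, and it is exactly this monotonicity that turns a uniform sup bound into a uniform total-variation bound. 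In the derivative problem the constraining term can oscillate within a span, so a uniform bound on $\sup_s|\eta_k(s)|$ does not control total variation. Without that control you have no uniform $J_1$ modulus, and the subsequence-extraction-plus-uniqueness strategy has no foundation. Note also that your decomposition requires $\dm_{h_k}^\alpha(\psi)$ to be well defined; Lemma \ref{lem:dmlinear} gives linearity only when both summands are known to exist, so existence of $\dm_{h_k}^\alpha(\psi_k)$ does not by itself license the split.

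There is a secondary issue in the constraint-verification step. When $h(t)\in\partial G$, conditions 3 and 4 of the jitter property supply times $t_m$ approaching $t$ from the left (or from $0^+$ if $t=0$) along which $\langle\phi_{k_j}(t_m),n_i\rangle=0$ can be propagated; but since $\tilde\phi$ is merely RCLL, taking limits yields information about $\tilde\phi(t-)$, not $\tilde\phi(t)$, and the two differ precisely when $\tilde\phi$ jumps at $t$, which is the case of interest. To conclude $\tilde\phi(t)\in\hyper_{h(t)}$ you would additionally have to control the jump, e.g.\ show $\tilde\phi(t)-\tilde\phi(t-)\in\spaan[d(\alpha,h(t))]$ and then invoke the uniqueness of the projection (Lemma \ref{lem:projx}) to get $\tilde\phi(t)=\proj_{h(t)}^\alpha\tilde\phi(t-)\in\hyper_{h(t)}$. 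The actual proof in \cite{Lipshutz2017a} almost certainly proceeds through this explicit projection structure of solutions of the derivative problem (compare Lemma \ref{lem:phiTpsiconst} in the present paper), rather than through a soft compactness argument.
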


\subsection{Derivative projection operators and their contraction properties}\label{sec:projection}

In this section we introduce and analyze so-called \emph{derivative projection operators}.
Derivative projection operators were introduced in \cite[Section 8]{Lipshutz2018} where they play an important role in establishing existence of directional derivatives of the SM when the constrained path reaches the nonsmooth part of the boundary $\U$.
In the next section the derivative projection operators are used to prove contraction properties for solutions of the derivative problem.

According to Assumption \ref{ass:setB}, there is a compact, convex, symmetric set $B^\alpha$ with $0\in (B^\alpha)^\circ$ satisfying \eqref{eq:setB}. A useful interpretation of $B^\alpha$ is in terms of an associated norm on $\R^J$ defined as follows:
	\be\label{eq:Bnorm}\norm{y}_{B^\alpha}:=\min\{r\geq0:y\in rB^\alpha\},\qquad y\in\R^J.\ee
The continuity condition in Assumption \ref{ass:setB} ensures that $\alpha\mapsto\norm{y}_{B^\alpha}$ is continuous for each $y\in\R^J$.
	
\begin{lem}[{\cite[Lemma 8.3]{Lipshutz2018}}]
\label{lem:projx}
For each $\alpha\in U$ and $x\in\partial G$ there exists a unique function
	$$\proj_x^\alpha:(\R^J,\norm{\cdot}_{B^\alpha})\mapsto(\R^J,\norm{\cdot}_{B^\alpha})$$
such that for each $y\in\R^J$,
	\be\label{eq:projxHx}\proj_x^\alpha y\in \hyper_x\qquad\text{and}\qquad\proj_x^\alpha y-y\in\spaan[d(\alpha,x)].\ee
Furthermore, $\proj_x^\alpha$ is linear and its operator norm, denoted $\norm{\proj_x^\alpha}$, satisfies
	$$\norm{\proj_x^\alpha}:=\sup_{y\neq 0}\frac{\norm{\proj_x^\alpha y}_{B^\alpha}}{\norm{y}_{B^\alpha}}\leq 1.$$
\end{lem}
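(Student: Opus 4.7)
The plan is to establish the direct sum decomposition
$$\R^J \;=\; \hyper_x \oplus \spaan[d(\alpha,x)],$$
from which existence, uniqueness, and linearity of $\proj_x^\alpha$ follow immediately from elementary linear algebra: for each $y$ one simply decomposes $y = u + v$ with $u \in \hyper_x$ and $v \in \spaan[d(\alpha,x)]$, and sets $\proj_x^\alpha y := u$. The operator $y \mapsto u$ is then linear, and the defining conditions $\proj_x^\alpha y \in \hyper_x$ and $\proj_x^\alpha y - y = -v \in \spaan[d(\alpha,x)]$ are satisfied. So the real content of the lemma lies in (a) verifying the direct-sum decomposition and (b) establishing the norm bound $\norm{\proj_x^\alpha} \le 1$.

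For (a): by Assumption \ref{ass:independent}, $\dim\spaan[d(\alpha,x)] = \abs{\allN(x)}$, while by definition $\hyper_x$ is the intersection of $\abs{\allN(x)}$ hyperplanes $n_i^\perp$ with $\{n_i\}$ linearly independent, so $\dim \hyper_x = J - \abs{\allN(x)}$. Thus the dimensions sum to $J$. To see the two subspaces intersect only at the origin, it suffices to show $\spaan[d(\alpha,x)] + \hyper_x = \R^J$. When $x \in \partial G \setminus \U$ this follows directly from $\ip{d_i(\alpha),n_i} = 1 \ne 0$, whereas when $x \in \U$ this is exactly the content of \eqref{eq:W} (from \cite[Lemma 8.2]{Lipshutz2018}), which asserts $\spaan(d(\alpha,x) \cup \hyper_x) = \R^J$.

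For (b), which I expect to be the main obstacle, the strategy is to exploit Assumption \ref{ass:setB} via a supporting hyperplane argument. Fix $y \in \R^J$ with $y \ne 0$, and let $z := \proj_x^\alpha y$. We may assume $z \ne 0$, and after scaling by $\norm{z}_{B^\alpha}$ by linearity, assume $z \in \partial B^\alpha$, so the goal becomes $\norm{y}_{B^\alpha} \ge 1$. Since $z \in \hyper_x$, $\ip{z,n_i} = 0 < \theta(\alpha)$ for every $i \in \allN(x)$. By the supporting hyperplane theorem applied to the convex body $B^\alpha$ at the boundary point $z$, the set $\nu_{B^\alpha}(z)$ of inward unit normals is nonempty. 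Pick any $\nu \in \nu_{B^\alpha}(z)$; by Assumption \ref{ass:setB}, $\ip{\nu,d_i(\alpha)} = 0$ for all $i \in \allN(x)$, hence $\ip{\nu,w} = 0$ for every $w \in \spaan[d(\alpha,x)]$. Since $y - z \in \spaan[d(\alpha,x)]$, this gives $\ip{\nu,y} = \ip{\nu,z}$.

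To conclude, note that by the inward-normal property, $\ip{\nu,z} = \min_{w \in B^\alpha}\ip{\nu,w}$. If one had $y \in (B^\alpha)^\circ$, then $y - \delta\nu \in B^\alpha$ for sufficiently small $\delta > 0$, whence $\ip{\nu,y} - \delta \ge \min_{w \in B^\alpha}\ip{\nu,w} = \ip{\nu,z}$, contradicting $\ip{\nu,y} = \ip{\nu,z}$. Therefore $y \notin (B^\alpha)^\circ$, so $\norm{y}_{B^\alpha} \ge 1 = \norm{z}_{B^\alpha}$. Undoing the normalization yields $\norm{\proj_x^\alpha y}_{B^\alpha} \le \norm{y}_{B^\alpha}$ for every $y$, and thus $\norm{\proj_x^\alpha} \le 1$. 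The delicate step, and the one on which the entire bound hinges, is the application of the geometric constraint \eqref{eq:setB} to every inward normal at $z$ — this is where the carefully chosen set $B^\alpha$ does all the work.
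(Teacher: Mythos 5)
Your proof is correct. The paper itself does not reprove this lemma---it cites it directly from \cite[Lemma 8.3]{Lipshutz2018}---but your argument is sound and is the natural one: the direct-sum decomposition $\R^J = \hyper_x \oplus \spaan[d(\alpha,x)]$ (via the dimension count from Assumption \ref{ass:independent} and the spanning property \eqref{eq:W}) gives existence, uniqueness and linearity, and the supporting-hyperplane argument, using that $z \in \hyper_x$ forces $|\ip{z,n_i}| = 0 < \theta(\alpha)$ so that Assumption \ref{ass:setB} annihilates the inward normals against $\spaan[d(\alpha,x)]$, is precisely the mechanism the set $B^\alpha$ is designed to enable. The only minor gap is that you should explicitly dispose of the trivial case $\proj_x^\alpha y = 0$ before normalizing by $\norm{\proj_x^\alpha y}_{B^\alpha}$, but this is cosmetic.
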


\begin{remark}
	Let $\alpha\in U$ and $x\in G$. 
	From \eqref{eq:Hx}, \eqref{eq:dux} and \eqref{eq:projxHx} we see that $\hyper_x$ depends only on ${ \allN(x)}$ and $\proj_x^\alpha$ depends only on $\alpha\in U$ and $\allN(x)$.
\end{remark}

The next lemma states a contraction property of the derivative projection operators. The result plays a key role in proving contraction properties for solutions of the derivative problem and coupled solutions of the SP.

\begin{lem}\label{lem:delta}
	Let $U_0$ be a compact subset of $U$.
	There exists $\delta_0\in[0,1)$ such that if $\alpha\in U_0$, $K\in\N$ and $\{x_k\}_{k=1,\dots,K}$ is a finite sequence in $\partial G$ satisfying $\allN=\cup_{k=1,\dots,K}\allN(x_k)$, then for all $y\in\R^J$:
	\be\label{eq:projcontract}\norm{\proj_{x_K}^\alpha\cdots\proj_{x_1}^\alpha y}_{B^\alpha}\leq\delta_0\norm{y}_{B^\alpha}.\ee
\end{lem}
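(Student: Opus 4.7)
The plan is to reduce the claim to finitely many cases via the structure of $\proj_x^\alpha$, prove strict contraction in each, and then deduce uniformity in $\alpha$ from continuity and compactness.

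\textbf{Step 1 (Reduction to minimal cover sequences).} The remark following Lemma \ref{lem:projx} shows that $\proj_x^\alpha$ depends on $x\in\partial G$ only through the index set $\allN(x)$, so the composite $T_{\mathbf{x}}^\alpha:=\proj_{x_K}^\alpha\cdots\proj_{x_1}^\alpha$ depends on $\mathbf{x}=(x_1,\dots,x_K)$ only through the sequence $(\allN(x_1),\dots,\allN(x_K))$. Given such an $\mathbf{x}$ with $\bigcup_k\allN(x_k)=\allN$, let $K^\star\le J$ be the smallest index for which the partial union already equals $\allN$. By Lemma \ref{lem:projx} each $\proj_{x_k}^\alpha$ is $B^\alpha$-non-expansive, so $\norm{T_\mathbf{x}^\alpha y}_{B^\alpha}\le\norm{T_{(x_1,\dots,x_{K^\star})}^\alpha y}_{B^\alpha}$, and it suffices to bound the operator norm uniformly over the finitely many \emph{minimal cover sequences}, namely those of length at most $J$ whose successive index sets each add at least one new element to the running union.

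\textbf{Step 2 (Strict contraction for fixed $(\alpha,\mathbf{x})$).} Fix $\alpha\in U$ and a minimal cover sequence $\mathbf{x}$, and suppose for contradiction that $\norm{T_\mathbf{x}^\alpha y}_{B^\alpha}=\norm{y}_{B^\alpha}=r>0$ for some $y$. Set $y_0:=y$ and $y_k:=\proj_{x_k}^\alpha y_{k-1}$; non-expansiveness combined with the terminal equality forces $\norm{y_k}_{B^\alpha}=r$ for every $k$. After rescaling we take $r=1$, so each $y_k\in\partial B^\alpha$, and $y_k\in\hyper_{x_k}$ gives $\ip{y_k,n_i}=0$ for $i\in\allN(x_k)$. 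The heart of the argument is the inward-normal nesting
\be\label{eq:normalnesting}\nu_{B^\alpha}(y_k)\subseteq\nu_{B^\alpha}(y_{k-1}),\qquad k=1,\dots,K.\ee
To prove \eqref{eq:normalnesting}, fix $\nu\in\nu_{B^\alpha}(y_k)$. Since $y_k\in\partial B^\alpha$ with $|\ip{y_k,n_i}|=0<\theta(\alpha)$ for each $i\in\allN(x_k)$, Assumption \ref{ass:setB} yields $\ip{\nu,d_i(\alpha)}=0$ for such $i$, and $y_{k-1}-y_k\in\spaan[d(\alpha,x_k)]$ then gives $\ip{\nu,y_{k-1}}=\ip{\nu,y_k}$. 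Substituting into $\ip{\nu,y-y_k}\ge0$ for all $y\in B^\alpha$ yields $\ip{\nu,y-y_{k-1}}\ge0$ for all $y\in B^\alpha$, i.e.\ $\nu\in\nu_{B^\alpha}(y_{k-1})$. Iterating \eqref{eq:normalnesting} gives $\nu_{B^\alpha}(y_K)\subseteq\nu_{B^\alpha}(y_k)$ for every $k$, so any $\nu\in\nu_{B^\alpha}(y_K)$ must be orthogonal to $d_i(\alpha)$ for every $i\in\bigcup_k\allN(x_k)=\allN$. By Assumption \ref{ass:independent} the set $\{d_i(\alpha):i\in\allN\}$ spans $\R^J$, forcing $\nu=0$; but $y_K\in\partial B^\alpha$ admits a supporting hyperplane, so $\nu_{B^\alpha}(y_K)$ is a nonempty subset of $\sphere$, a contradiction.

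\textbf{Step 3 (Uniformity over compact $U_0$).} Continuity of $R(\cdot)$ makes each $d_i(\cdot)$, and hence $\proj_x^\alpha$ for fixed $\allN(x)$, continuous in $\alpha$, while the Hausdorff continuity of $\alpha\mapsto B^\alpha$ in Assumption \ref{ass:setB} makes $\alpha\mapsto\norm{\cdot}_{B^\alpha}$ continuous. Thus $\alpha\mapsto\norm{T_\mathbf{x}^\alpha}_{B^\alpha}$ is continuous on $U$ for each of the finitely many minimal cover sequences (represented canonically by their index-set sequences); combining Step 2 with compactness of $U_0$ and this finiteness, set
\be\label{eq:delta0def}\delta_0:=\max_{\mathbf{x}\text{ minimal}}\sup_{\alpha\in U_0}\norm{T_\mathbf{x}^\alpha}_{B^\alpha}\in[0,1),\ee
from which \eqref{eq:projcontract} follows via Step 1. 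The main obstacle is establishing the nesting \eqref{eq:normalnesting}: this is where Assumption \ref{ass:setB} (orthogonality of every inward normal of $B^\alpha$ to $d_i(\alpha)$ near the hyperplane $n_i^\perp$) and the defining structure of $\proj_x^\alpha$ (displacement in $\spaan[d(\alpha,x_k)]$) combine to propagate inward-normal membership backward through the chain of projections, so that the inward-normal set at the terminal point $y_K$ annihilates a spanning family and must be empty.
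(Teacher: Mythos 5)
Your Step 2 is correct and is a genuine departure from the paper's proof: the paper establishes contraction for the adjoint operators $\proj_x^{\alpha,\ast}$ via a subsequence/compactness argument (Lemma \ref{lem:adjdelta}) and then transfers back by duality, whereas you prove strict contraction for the original operators directly, by propagating membership in inward-normal cones backward through the chain of projections and then contradicting the linear independence in Assumption \ref{ass:independent}. For a \emph{fixed} pair $(\alpha,\mathbf{x})$, this is a clean and arguably more transparent route.

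However, Step 1 contains a genuine gap that Step 3 inherits. The claim that $K^\star\le J$ is false: there is no a priori bound on the first index at which the running union of the sets $\allN(x_k)$ equals $\allN$, since the same index set can recur arbitrarily many times before a new one appears. More to the point, the truncated sequence $(x_1,\dots,x_{K^\star})$ is in general not a minimal cover sequence and cannot be pruned to one without changing the composite operator: one has $\proj_{x_{m+1}}^\alpha\proj_{x_m}^\alpha=\proj_{x_{m+1}}^\alpha$ only when $\allN(x_m)\subseteq\allN(x_{m+1})$ (and $\proj_{x_{m+1}}^\alpha\proj_{x_m}^\alpha=\proj_{x_m}^\alpha$ only when $\allN(x_{m+1})\subseteq\allN(x_m)$), so an intermediate $x_m$ with $\allN(x_m)\subseteq\bigcup_{j<m}\allN(x_j)$ generally cannot be discarded. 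Consequently Step 3 is a compactness argument over an \emph{infinite} family of composites of unbounded length; the pointwise strict bound from Step 2 does not yield a uniform bound, because the family of composite operators $T_{\mathbf{x}}^\alpha$ need not be closed and a limit along a sequence of lengthening covers need not itself be of the form $T_{\mathbf{x}}^\alpha$. This unbounded-length issue is precisely what the paper's Lemma \ref{lem:adjdelta} is built to overcome: it fixes no bound on $K_j$, extracts a convergent subsequence of $(\alpha_j,y_j)$, isolates a single critical index $l_j$ at which a definite loss occurs, and stabilizes $\allN(x_{j,l_j})$ by a further pigeonhole subsequence. To salvage your approach you would need an analogous argument handling sequences of arbitrary length, rather than the reduction to finitely many minimal covers as written.
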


The proof of Lemma \ref{lem:delta}, which is given at the end of this subsection, relies on a related contraction property for  associated  adjoint operators.
In order to define the adjoint { operators}, let $B^{\alpha,\ast}$ denote the dual closed convex set given by
	$$B^{\alpha,\ast}:=\lcb y\in\R^J:\sup_{z\in B^\alpha}\ip{y,z}\leq 1\rcb.$$
Then $B^{\alpha,\ast}$ is a compact, convex, symmetric set with $0\in(B^{\alpha,\ast})^\circ$, and so, analogous to \eqref{eq:Bnorm}, $B^{\alpha,\ast}$ defines a norm $\norm{\cdot}_{B^{\alpha,\ast}}$ on $\R^J$ as follows:
	\be\label{eq:normBast}\norm{y}_{B^{\alpha,\ast}}:=\min\{r\geq0:y\in rB^{\alpha,\ast}\},\qquad y\in\R^J.\ee
	
\begin{remark}
Since $\alpha\mapsto B^\alpha$ is continuous in the Hausdorff metric by Assumption \ref{ass:setB}, it follows from the definition of $B^{\alpha,\ast}$ that $\alpha\mapsto B^{\alpha,\ast}$ is also continuous in the Hausdorff metric.
Therefore, the function $(\alpha,y)\mapsto\norm{y}_{B^{\alpha,\ast}}$ from $U\times\R^J$ to $\R_+$ is continuous.
\end{remark}

Let 
	$$\proj_x^{\alpha,\ast}:(\R^J,\norm{\cdot}_{B^{\alpha,\ast}})\mapsto(\R^J,\norm{\cdot}_{B^{\alpha,\ast}})$$
denote the adjoint operator of $\proj_x^\alpha$; that is, given $x\in G$, $\ip{\proj_x^\alpha y,z}=\ip{y,\proj_x^{\alpha,\ast} z}$ holds for all $y,z\in\R^J$.

\begin{lem}[{\cite[Lemmas 8.5 \& 8.6]{Lipshutz2018}}]
\label{lem:projxast}
For each $\alpha\in U$ and $x\in\partial G$
	$$\proj_x^{\alpha,\ast}:(\R^J,\norm{\cdot}_{B^{\alpha,\ast}})\mapsto(\R^J,\norm{\cdot}_{B^{\alpha,\ast}})$$
is the unique linear operator such that for each $y\in\R^J$,
	$$\proj_x^{\alpha,\ast} y\in\spaan[d(\alpha,x)]^\perp\qquad\text{and}\qquad\proj_x^{\alpha,\ast} y-y\in\hyper_x^\perp.$$
Furthermore, for each $y\in\R^J$,
	\begin{equation}
		\proj_x^{\alpha,\ast} y=y\qquad\text{if }y\in\spaan[d(\alpha,x)]^\perp,
	\end{equation}
	and
	\begin{equation}
		\norm{\proj_x^{\alpha,\ast} y}_{B^{\alpha,\ast}}<\norm{y}_{B^{\alpha,\ast}}\qquad\text{if }y\not\in\spaan[d(\alpha,x)]^\perp.
	\end{equation}

\end{lem}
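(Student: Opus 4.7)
The plan is to decompose the statement into three parts. For the characterization and uniqueness, I would invoke standard duality for finite-dimensional linear projections. By Lemma \ref{lem:projx}, $\proj_x^\alpha$ is a linear idempotent with range $\hyper_x$ and kernel $\spaan[d(\alpha,x)]$, so $\R^J=\hyper_x\oplus\spaan[d(\alpha,x)]$. The adjoint identity $\ip{\proj_x^\alpha y,z}=\ip{y,\proj_x^{\alpha,\ast}z}$ then yields $\mathrm{range}(\proj_x^{\alpha,\ast})=(\ker\proj_x^\alpha)^\perp=\spaan[d(\alpha,x)]^\perp$ and $\ker(\proj_x^{\alpha,\ast})=(\mathrm{range}(\proj_x^\alpha))^\perp=\hyper_x^\perp$, which simultaneously establishes both characterizing properties and the dual direct-sum decomposition $\R^J=\spaan[d(\alpha,x)]^\perp\oplus\hyper_x^\perp$; uniqueness of any linear operator enjoying these properties is then automatic from this decomposition. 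The fixed-point identity $\proj_x^{\alpha,\ast}y=y$ for $y\in\spaan[d(\alpha,x)]^\perp$ is an immediate consequence of this uniqueness, applied to the identity operator in place of $\proj_x^{\alpha,\ast}$.

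For the strict contraction (the final assertion), I would first derive the weak inequality by writing the dual norm as a support function and using the adjoint identity:
$$\norm{\proj_x^{\alpha,\ast}y}_{B^{\alpha,\ast}}=\sup_{z\in B^\alpha}\ip{\proj_x^{\alpha,\ast}y,z}=\sup_{z\in B^\alpha}\ip{y,\proj_x^\alpha z}.$$
Since Lemma \ref{lem:projx} gives $\norm{\proj_x^\alpha z}_{B^\alpha}\le\norm{z}_{B^\alpha}\le 1$, the map $\proj_x^\alpha$ sends $B^\alpha$ into $B^\alpha\cap\hyper_x$, so the right-hand side is bounded by $\sup_{z\in B^\alpha}\ip{y,z}=\norm{y}_{B^{\alpha,\ast}}$. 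The main work is then to promote this to a strict inequality under the hypothesis that $y\notin\spaan[d(\alpha,x)]^\perp$.

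To do so, I would argue by contradiction: suppose equality holds. By compactness of $B^\alpha$ and continuity of $z\mapsto\ip{y,\proj_x^\alpha z}$, the supremum is attained at some $z_*\in B^\alpha$, and setting $w_*:=\proj_x^\alpha z_*\in B^\alpha\cap\hyper_x$ gives $\ip{y,w_*}=\norm{y}_{B^{\alpha,\ast}}$. The hypothesis forces $y\ne 0$ (because $0\in\spaan[d(\alpha,x)]^\perp$), whence $\norm{y}_{B^{\alpha,\ast}}>0$ and therefore $w_*\in\partial B^\alpha$. Because $w_*\in\hyper_x$, we have $\ip{w_*,n_i}=0<\theta(\alpha)$ for every $i\in\allN(x)$, so Assumption \ref{ass:setB} applies and forces $\ip{\nu,d_i(\alpha)}=0$ for every $\nu\in\nu_{B^\alpha}(w_*)$ and every $i\in\allN(x)$. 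Since $w_*$ is a maximizer of $\ip{y,\cdot}$ over $B^\alpha$, the first-order condition $\ip{y,w-w_*}\le 0$ for all $w\in B^\alpha$ places $-y/|y|$ in the inward normal cone $\nu_{B^\alpha}(w_*)$; substituting yields $\ip{y,d_i(\alpha)}=0$ for all $i\in\allN(x)$, i.e.\ $y\in\spaan[d(\alpha,x)]^\perp$, contradicting the hypothesis.

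The main obstacle is precisely this last step. The weak inequality holds for any norm making $\proj_x^\alpha$ a contraction, so strict pointwise contraction has to come from the specific boundary geometry of $B^\alpha$ that is encoded by Assumption \ref{ass:setB}. The delicate point is identifying that the dual maximizer $w_*$ must lie in $\hyper_x$, so that the "close to face $F_i$" trigger $|\ip{w_*,n_i}|<\theta(\alpha)$ activates for every $i\in\allN(x)$ \emph{simultaneously}, thereby eliminating every reflection direction $d_i(\alpha)$ at once and collapsing $y$ into $\spaan[d(\alpha,x)]^\perp$.
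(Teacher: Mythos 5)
Your proof is correct. Note that the paper does not prove this lemma itself — it imports it from \cite[Lemmas 8.5 \& 8.6]{Lipshutz2018} — so there is no internal proof to compare against; your argument is a sound self-contained derivation and follows the natural (and, as far as the cited source goes, essentially standard) route: identify $\proj_x^{\alpha,\ast}$ as the oblique projection with range $\spaan[d(\alpha,x)]^\perp$ and kernel $\hyper_x^\perp$ via duality of the splitting $\R^J=\hyper_x\oplus\spaan[d(\alpha,x)]$, express $\norm{\cdot}_{B^{\alpha,\ast}}$ as the support function of $B^\alpha$, and obtain strictness by noting that a maximizer $w_*=\proj_x^\alpha z_*\in\partial B^\alpha\cap\hyper_x$ satisfies $|\ip{w_*,n_i}|=0<\theta(\alpha)$ for every $i\in\allN(x)$, so Assumption \ref{ass:setB} applied to the normal $-y/|y|\in\nu_{B^\alpha}(w_*)$ forces $y\perp d_i(\alpha)$ for all $i\in\allN(x)$, contradicting $y\notin\spaan[d(\alpha,x)]^\perp$.
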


In the following lemma we state a continuity property of the derivative projection operators and the adjoint operators.
Note that because $\proj_x^\alpha$ and $\proj_x^{\alpha,\ast}$ are finite-dimensional linear operators, they have representations as matrices in $\R^{J\times J}$.

\begin{lem}\label{lem:projcontinuous}
	For each $x\in\partial G$, the function $\alpha\mapsto\proj_x^\alpha$ from $U$ to $\R^{J\times J}$ is continuous, where $\R^{J\times J}$ is equipped with any fixed norm that does not depend on $\alpha$.
	Consequently, the function $\alpha\mapsto\proj_x^{\alpha,\ast}$ from $U$ to $\R^{J\times J}$ is also continuous.
\end{lem}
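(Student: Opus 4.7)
The strategy is to write $\proj_x^\alpha$ explicitly as a matrix whose entries depend continuously on $\alpha$, by solving the defining linear system in Lemma \ref{lem:projx}. Fix $x\in\partial G$ and enumerate $\allN(x)=\{i_1,\dots,i_m\}$. Let $D(\alpha)\in\R^{J\times m}$ be the matrix with columns $d_{i_1}(\alpha),\dots,d_{i_m}(\alpha)$, and let $N\in\R^{J\times m}$ be the matrix with columns $n_{i_1},\dots,n_{i_m}$ (note $N$ does not depend on $\alpha$).

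First, I would record that by \eqref{eq:Hx} and \eqref{eq:dux} we have $\hyper_x=\ker(N^T)$ and $\spaan[d(\alpha,x)]=\mathrm{range}(D(\alpha))$. Existence and uniqueness of $\proj_x^\alpha$ for every $y\in\R^J$ (Lemma \ref{lem:projx}) is equivalent to the direct-sum decomposition $\R^J=\hyper_x\oplus\spaan[d(\alpha,x)]$, which forces the $m\times m$ matrix $N^TD(\alpha)$ to be invertible for every $\alpha\in U$ (alternatively, one can quote \eqref{eq:W} combined with a dimension count using Assumption \ref{ass:independent}). Writing $\proj_x^\alpha y=y+D(\alpha)c$ with $c\in\R^m$, the condition $N^T\proj_x^\alpha y=0$ yields $c=-(N^TD(\alpha))^{-1}N^Ty$, and therefore
\begin{equation*}
\proj_x^\alpha = E_J - D(\alpha)\bigl(N^TD(\alpha)\bigr)^{-1}N^T.
\end{equation*}
One checks immediately that this matrix satisfies both defining properties in \eqref{eq:projxHx}, so by uniqueness it equals $\proj_x^\alpha$.

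Second, I would conclude continuity in $\alpha$. Since each $d_i(\cdot)$ is continuously differentiable on $U$, the map $\alpha\mapsto D(\alpha)$ is continuous from $U$ to $\R^{J\times m}$. Because $N^TD(\alpha)$ is invertible for every $\alpha\in U$ and matrix inversion is continuous on $\mathrm{GL}(m,\R)$, the map $\alpha\mapsto (N^TD(\alpha))^{-1}$ is continuous, and hence so is $\alpha\mapsto \proj_x^\alpha$ as a map into $\R^{J\times J}$ (with any fixed norm). For the adjoint statement, note that $\proj_x^{\alpha,\ast}$ is defined via the standard inner product on $\R^J$, which does not depend on $\alpha$, so its matrix is simply the transpose of $\proj_x^\alpha$, namely
\begin{equation*}
\proj_x^{\alpha,\ast} = E_J - N\bigl(D(\alpha)^TN\bigr)^{-1}D(\alpha)^T,
\end{equation*}
and continuity in $\alpha$ follows from continuity of $\alpha\mapsto\proj_x^\alpha$ and of matrix transposition.

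I do not anticipate any serious obstacle; the only point that requires a little care is justifying invertibility of $N^TD(\alpha)$ uniformly in $\alpha\in U$, and this is already implicit in Lemma \ref{lem:projx} (or can be extracted from Assumption \ref{ass:independent} together with \eqref{eq:W}).
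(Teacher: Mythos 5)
Your proof is correct and follows essentially the same approach as the paper: the paper also relies on the explicit matrix formula $\proj_x^\alpha = E_J - R_I(\alpha)\bigl(N_I^T R_I(\alpha)\bigr)^{-1}N_I^T$ (with $R_I(\alpha)$, $N_I$ matching your $D(\alpha)$, $N$), the only difference being that the paper cites it from an appendix lemma of \cite{Lipshutz2017a} rather than re-deriving it. Your derivation, together with the justification of invertibility of $N^TD(\alpha)$ from the direct-sum decomposition guaranteed by Lemma \ref{lem:projx}, is a correct self-contained version of the same argument, and the transpose formula you give for the adjoint is also accurate.
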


\begin{proof}
	To prove continuity of $\alpha\mapsto\proj_x^\alpha$ we use the explicit expression for $\proj_x^\alpha$ as a matrix obtained in \cite{Lipshutz2017a}.
	Fix $x\in\partial G$ and let $I:=\allN(x)$.
	For $\alpha\in U$ let $R_I(\alpha)$ and $N_I$ denote the $J\times\abs{I}$ matrices with column vectors $\{d_i(\alpha),i\in I\}$ and $\{n_i,i\in I\}$, respectively.
	By \cite[Lemma A.3]{Lipshutz2017a}, 
	$$\proj_x^\alpha=E_J-R_I(\alpha)(N_I^TR_I(\alpha))^{-1}N_I^T,$$
	where we recall that $E_J$ denotes the $J\times J$ identity matrix.
	The regularity of $\alpha\mapsto\proj_x^\alpha$ then follows from the fact that $d_i(\cdot)$, $i\in\allN$, and therefore, $R_I(\cdot)$, $I \subset \allN$, are continuous by assumption.
\end{proof}

\begin{lem}\label{lem:adjdelta}
	{For any  compact set} $U_0$ in $U$ there exists $\delta_0\in[0,1)$ such that given any $\alpha\in U_0$, $K\in\N$ and a finite sequence $\{x_k\}_{k=1,\dots,K}$ in $\partial G$ such that
		\be\label{eq:allNbigcup}\allN=\bigcup_{k=1,\dots,K}\allN(x_k),\ee
	then following inequality holds for all $y\in\R^J$:
		\be\label{eq:contract}\norm{\proj_{x_1}^{\alpha,\ast}\cdots\proj_{x_K}^{\alpha,\ast}y}_{B^{\alpha,\ast}}\le\delta_0\norm{y}_{B^{\alpha,\ast}}.\ee
\end{lem}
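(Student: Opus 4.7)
The plan is to prove \eqref{eq:contract} in three coordinated stages: (i) pointwise strict contraction at each fixed $\alpha$ and each single covering sequence; (ii) a uniform constant $\delta(\alpha) < 1$ over all covering sequences at fixed $\alpha$; (iii) a single $\delta_0 \in [0,1)$ uniform across $\alpha \in U_0$.

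For stage (i), set $z_{K+1} := y$ and recursively $z_k := \proj_{x_k}^{\alpha,*} z_{k+1}$, so that $z_1$ is the operator applied to $y$. By Lemma \ref{lem:projxast}, $\|z_k\|_{B^{\alpha,*}} \le \|z_{k+1}\|_{B^{\alpha,*}}$ at each step, with equality forcing $z_{k+1} \in \spaan[d(\alpha,x_k)]^\perp$ and $z_k = z_{k+1}$. If $\|z_1\|_{B^{\alpha,*}} = \|y\|_{B^{\alpha,*}}$, all these inequalities collapse to equalities, yielding $y = z_{K+1} = \cdots = z_1 \in \bigcap_{k=1}^K \spaan[d(\alpha,x_k)]^\perp = \spaan[\bigcup_k d(\alpha,x_k)]^\perp$. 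The covering hypothesis together with Assumption \ref{ass:independent} — under which $\{d_i(\alpha) : i \in \allN\}$ is a basis of $\R^J$ — forces this intersection to be $\{0\}$, so $y = 0$, giving the required strict contraction.

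For stage (ii), note that $\proj_x^{\alpha,*}$ depends only on $\alpha$ and $\allN(x) \in 2^\allN \setminus \{\emptyset\}$, so only finitely many distinct projection operators arise at each $\alpha$. Suppose for contradiction the supremum equals $1$ and, by compactness of matrices with $\|\cdot\|_{B^{\alpha,*}}$-operator norm at most $1$, extract $T_n = P_{I_1^{(n)}}^{\alpha,*} \cdots P_{I_{K_n}^{(n)}}^{\alpha,*}$ converging to some $T^*$ with $\|T^*\|_{op} = 1$. If $K_n$ remains bounded, pass to a constant-length, constant-entries subsequence so that $T^*$ is itself a concrete covering composition of bounded length, contradicting stage (i). If $K_n \to \infty$, apply a diagonal argument to stabilize operators at each fixed position from the left; submultiplicativity then forces $\|Q_1 \cdots Q_L\|_{op} = 1$ for every $L$. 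An analogous stabilization from the right, combined with the observation that the covering of each $T_n$ must be carried either by a stabilized one-sided sequence or by a fixed pattern in the non-stabilized middle extracted by further compactness, produces a finite covering composition in the limit, again contradicting stage (i).

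For stage (iii), Lemma \ref{lem:projcontinuous} gives continuity of $\alpha \mapsto \proj_x^{\alpha,*}$, and the Hausdorff continuity of $\alpha \mapsto B^\alpha$ from Assumption \ref{ass:setB} transfers via polar duality to Hausdorff continuity of $\alpha \mapsto B^{\alpha,*}$, hence to continuity of $\alpha \mapsto \|\cdot\|_{B^{\alpha,*}}$. Consequently the $B^{\alpha,*}$-operator norm of each fixed finite composition is continuous in $\alpha$; compactness of $U_0$ together with the bounded-length reduction from stage (ii) then yields a single $\delta_0 \in [0,1)$. The chief technical obstacle is the unbounded-length branch of stage (ii): a one-sided stabilized prefix or suffix may fail on its own to cover $\allN$, so bilateral stabilization combined with a careful accounting of where the missing indices appear in the non-stabilized middle is required in order to identify a finite covering configuration to which stage (i) can be applied.
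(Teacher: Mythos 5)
Your stage (i) is correct and matches the paper's use of Lemma \ref{lem:projxast}: if the composition preserves the $B^{\alpha,*}$-norm, every factor must fix the vector, forcing it into $\bigcap_k \spaan[d(\alpha,x_k)]^\perp = \{0\}$ by the covering hypothesis and linear independence.

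The genuine gap is in stage (ii), and you flag it yourself in the final sentence. Your proposal to handle unbounded-length covering sequences by bilateral stabilization of the product $T_n = P_{I_1^{(n)}}^{\alpha,*}\cdots P_{I_{K_n}^{(n)}}^{\alpha,*}$ does not obviously close: a stabilized left prefix and right suffix need not cover $\allN$, and the indices that complete the cover can escape into a middle segment whose length and position both grow with $n$. A naive truncation does not work either, because nonexpansiveness only lets you \emph{append} factors without increasing the norm, not \emph{delete} factors from the interior of a product. You acknowledge that "a careful accounting of where the missing indices appear in the non-stabilized middle is required," but no mechanism is given for extracting a finite covering configuration from that middle, and it is not clear that one exists at the level of operator norms alone.

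The paper avoids this difficulty by not reasoning about operator norms of products at all. Instead, it fixes a sequence of triples $(\alpha_j, \{x_{j,k}\}, y_j)$ with $y_j \in \sphere$, passes to a subsequence with $(\alpha_j, y_j) \to (\alpha_0, y_0)$, and then locates, for each $j$, the \emph{last} index $l_j$ (in the order the projections are applied) with $y_0 \notin \spaan[d(\alpha_0, x_{j,l_j})]^\perp$. All later factors fix $y_0$, so the tail of the composition applied to $y_j$ converges to $y_0$ (using nonexpansiveness and $y_j \to y_0$), while the single projection at position $l_j$ strictly contracts $y_0$, and the remaining left factors only help. This pins the strict contraction to a \emph{single} factor identified relative to the limit vector $y_0$, which is exactly what sidesteps the unbounded-product problem your stage (ii) runs into. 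Furthermore, the paper handles the $\alpha$-uniformity within the same subsequence extraction, so your separate stage (iii) is absorbed into the argument. To repair your proposal you would need to replace the operator-norm compactness argument of stage (ii) with this vector-tracking idea; as written the proposal is incomplete.
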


\begin{proof}
	Let ${\mathcal G}$ denote the set of sequences $\{x_k\}_{k=1,\dots,K}$ in $\partial G$, {for some $K \in \N$}, such that $\cup_{k=1,\dots,K}\allN(x_k)=\allN$. 
	Fix a sequence
		\be\label{eq:sequence}\lcb(\alpha_j,\{x_{j,k}\}_{k=1,\dots,K_j},y_j)\rcb_{j\in\N}\;\text{in }U_0\times  {\mathcal G} \times\sphere.\ee
{ Since $U_0 \times \sphere$ is compact}, by taking a subsequence if necessary, we can assume there exists $\alpha_0\in U_0$ and $y_0\in\sphere$ such that $(\alpha_j,y_j)\to(\alpha_0,y_0)$ as $j\to\infty$.
	By the definition of ${\mathcal G}$ and the linear independence of the directions of reflection stated in Assumption \ref{ass:independent}, for each $j\in\N$ there exists $1\le l_j\le K_j$ such that $y_0\not\in\spaan[d(\alpha_0,{ x_{j,l_j}})]^\perp$ and $y_0\in\spaan[d(\alpha_0,{ x_{j,k}})]^\perp$ for all $l_j<k\le K_j$. 
        { Furthermore, since there are only finitely many subsets of $\allN$,
          by choosing a further subsequence if necessary, we can assume that there exists a
          fixed subset $I \subseteq {\mathcal I}$
          such that $I(x_{j,l_j}) = I$ for all $j \in \N$.}  Since 
          the adjoint operator $\proj_x^{\alpha,\ast}$ { and $d(\alpha,x)$ depend} only on $\alpha$ and $\allN(x)$, 
          it follows that { $\proj_{x_{j,l_j}}^{\alpha_j,\ast}=\proj_{\bar{x}}^{\alpha_j,\ast}$
           and $d(\alpha_0,x_{j,l_j}) = d (\alpha_0, \bar{x})$  for 
          any fixed $\bar{x} \in \partial G$ with $I(\bar{x}) = I$. } 
	Then by Lemma \ref{lem:projxast}, we see that
		$$\norm{\proj_{x_{j,1}}^{\alpha_j,\ast}\cdots\proj_{x_{j,K_j}}^{\alpha_j,\ast}y_j}_{B^{\alpha_j,\ast}}\le\norm{\proj_{x_{j,l_j}}^{\alpha_j,\ast}\cdots\proj_{x_{j,K_j}}^{\alpha_j,\ast}y_j}_{B^{\alpha_j,\ast}}=\norm{\proj_{{ \bar{x}}}^{\alpha_j,\ast}z_j}_{B^{\alpha_j,\ast}},$$ 
	where $z_j:=\proj_{x_{j,l_j+1}}^{\alpha_j,\ast}\cdots\proj_{x_{j,K_j}}^{\alpha_j,\ast}y_j$. 
	We claim, and prove below, that $z_j\to y_0$ as $j\to\infty$.
	Assuming the claim holds, we have for each $j\in\N$,
	\begin{align*}
		\norm{\proj_{{ \bar{x}}}^{\alpha_j,\ast}z_j}_{B^{\alpha_j,\ast}}&\le\left|\norm{{ \proj_{\bar{x}}}^{\alpha_j,\ast}z_j}_{B^{\alpha_j,\ast}}-\norm{\proj_{{ \bar{x}}}^{\alpha_0,\ast}y_0}_{B^{\alpha_0,\ast}}\right|+\norm{\proj_{{ \bar{x}}}^{{ \alpha_0},\ast}y_0}_{B^{\alpha_0,\ast}}.
	\end{align*}
	The continuity of the adjoint operators shown in Lemma \ref{lem:projcontinuous} , the convergence of $(\alpha_j,z_j)\to(\alpha_0,y_0)$ as $j\to\infty$, and the continuity of $(\alpha,y)\mapsto\norm{y}_{B^{\alpha,\ast}}$ imply that the first term on the right hand side converges to zero as $j\to\infty$.
	For the second term, recall that $y_0\not\in\spaan[d(\alpha_0,{ \bar{x}})]^\perp$ and so $\norm{\proj_{{ \bar{x}}}^{\alpha_0,\ast}y_0}_{B^{\alpha_0,\ast}}<\norm{y_0}_{B^{\alpha_0,\ast}}$ by Lemma \ref{lem:projxast}.
	Combining the above yields,
		$$\limsup_{j\to\infty}\norm{\proj_{x_{j,1}}^{\alpha_j,\ast}\cdots\proj_{x_{j,K_j}}^{\alpha_j,\ast}y_j}_{B^{\alpha_j,\ast}}\le\norm{\proj_{{ \bar{x}}}^{\alpha_0,\ast}y_0}_{B^{\alpha_0,\ast}}<\norm{y_0}_{B^{\alpha_0,\ast}}=\lim_{j\to\infty}\norm{y_j}_{B^{\alpha_j,\ast}},$$
	where the final equality follows from the convergence of $(\alpha_j,z_j)\to(\alpha_0,y_0)$ as $j\to\infty$ and the continuity of $(\alpha,y)\mapsto\norm{y}_{B^{\alpha,\ast}}$.
	Since this holds for every  such sequence \eqref{eq:sequence},
        it follows that there exists $\delta_0\in[0,1)$ such that \eqref{eq:contract} holds for all $y\in\sphere$.
	The lemma then follows from the homogeneity property of the norm $\norm{\cdot}_{B^{\alpha,\ast}}$.
	
	We are left to prove the claim that $z_j\to y_0$ as $j\to\infty$.
	Recall from our choice of $1\le l_j\le K_j$ that $y_0\in\spaan[d(\alpha,{ x_{j,k}})]^\perp$ for all $l_j<k\le K_j$.
	Thus, by Lemma \ref{lem:projxast}, $y_0=\proj_{x_{j,l_j+1}}^{\alpha,\ast}\cdots\proj_{x_{j,K_j}}^{\alpha,\ast}y_0$ for each $j\in\N$. 
	Along with the definition of $z_j$ and the nonexpansive property of the adjoint operators, this implies that
        \[ { \norm{z_j-y_0}_{B^{\alpha_j,\ast}}\le \norm{\proj_{x_{j,l_j+1}}^{\alpha_j,\ast}\cdots\proj_{x_{j,K_j}}^{\alpha_j,\ast}(y_j-y_0)} \le   \norm{y_j-y_0}_{B^{\alpha_j,\ast}}.} 
        \]
	The claim then follows from the convergence $(\alpha_j,y_j)\to(\alpha_0,y_0)$ as $j\to\infty$ and the continuity of $(\alpha,y)\mapsto\norm{y}_{B^{\alpha,\ast}}$.
\end{proof}

\begin{proof}[Proof of Lemma \ref{lem:delta}]
	Fix a compact set $U_0$ in $U$ and let $\delta_0\in[0,1)$ be as in Lemma \ref{lem:adjdelta}.
By Lemma \ref{lem:delta}, for all $y\in\R^J$,
	\be\label{eq:adjointcontract}\norm{\proj_{x_1}^{\alpha,\ast}\cdots\proj_{x_K}^{\alpha,\ast} y}_{B^{\alpha,\ast}}\leq\delta_0\norm{y}_{B^{\alpha,\ast}}.\ee
Then by the definition of the set $B^{\alpha,\ast}$ norm in \eqref{eq:normBast}, we see that $\proj_{x_1}^{\alpha,\ast}\cdots\proj_{x_K}^{\alpha,\ast} z\in \delta_0 B^{\alpha,\ast}:=\{\delta_0 y:y\in B^{\alpha,\ast}\}$ for all $z\in\R^J$. Thus, given $y\in\R^J$,
\begin{align*}
	\norm{\proj_{x_K}^\alpha\cdots\proj_{x_1}^\alpha y}_{B^\alpha}&=\sup_{z\in\partial B^{\alpha,\ast}}\ip{\proj_{x_K}^\alpha\cdots\proj_{x_1}^\alpha y,z}\leq\sup_{z\in\partial(\delta(\alpha) B^{\alpha,\ast})}\ip{y,z}=\delta_0\norm{y}_{B^\alpha},
\end{align*}
which proves \eqref{eq:projcontract}.
\end{proof}

\subsection{Contractions of solutions to the derivative problem}\label{sec:dpstable}

In this section we a prove a contraction property for solutions of the derivative problem along a path that ``visits'' every face in a finite time interval.
The following is the main result of this section.

\begin{prop}\label{prop:dpcontract}
	Given a compact subset $U_0$ of $U$, let $\delta_0\in[0,1)$ be as in Lemma \ref{lem:delta}.
	Let $\alpha\in U$, $\x\in\cts_G(\R^J)$ and $\psi\in\cts(\R^J)$, and suppose $(\z,\y)$ is the solution to the SP $\{(d_i(\alpha),n_i),i\in\allN\}$ for $\x$ and $(\phi,\eta)$ is the solution to the derivative problem along $\z$ for $\psi$. 
	Then for all $0\leq S<T<\infty$.
	\be\label{eq:dmcontract}\norm{\phi(T)}_{B^\alpha}\leq\delta_0^{1{\{\cup_{t\in[S,T]}\allN(\z(t))=\allN\}}}\norm{\phi(S)}_{B^\alpha}+\lip_\dm(\alpha)\sup_{S\leq u\leq T}\norm{\psi(u)-\psi(S)}_{B^\alpha}.\ee
\end{prop}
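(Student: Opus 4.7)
The plan is to reduce to $S=0$ via the time-shift property and then split the input using linearity of the derivative map. Applying Lemma \ref{lem:dptimeshift}, the shifted pair $(\phi^S,\eta^S)$ solves the derivative problem along $\z^S$ for $\psi^S$, and since $\psi^S(0)=\phi(S)=\phi^S(0)\in \hyper_{\z^S(0)}$, it suffices to prove the bound for $S=0$, $T':=T-S$, and $\psi(0)=\phi(0)$. By Lemma \ref{lem:dmlinear}, decompose $\psi=\psi_c+\psi_r$ where $\psi_c\equiv\psi(0)$ is constant and $\psi_r:=\psi-\psi(0)$ vanishes at $0$, and write $\phi=\bar\phi+\tilde\phi$ with $\bar\phi:=\dm^\alpha_\z(\psi_c)$ and $\tilde\phi:=\dm^\alpha_\z(\psi_r)$.

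Since $\dm^\alpha_\z(0)\equiv 0$, the Lipschitz continuity of the derivative map (Proposition \ref{prop:dmlip}) applied to $\tilde\phi$ and the zero solution yields
\[
\sup_{t\in[0,T']}\norm{\tilde\phi(t)}_{B^\alpha}\le\lip_\dm(\alpha)\sup_{t\in[0,T']}\norm{\psi(t)-\psi(0)}_{B^\alpha},
\]
after re-defining $\lip_\dm(\alpha)$ to accommodate the norm equivalence between $|\cdot|$ and $\norm{\cdot}_{B^\alpha}$ (which can be done continuously in $\alpha$ since $\alpha\mapsto B^\alpha$ is continuous in Hausdorff distance).

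The main analytical step is to identify $\bar\phi$ explicitly. Because $\psi_c$ has no temporal variation, the defining relation $\bar\phi(t)=\psi_c+\bar\eta(t)$ forces $\bar\phi$ to be piecewise constant, jumping only at times $t$ when $\allN(\z(t))$ enlarges and drives $\bar\phi(t-)$ out of $\hyper_{\z(t)}$. Since any admissible jump must lie in $\spaan[d(\alpha,\z(t))]$ and the post-jump value must lie in $\hyper_{\z(t)}$, Lemma \ref{lem:projx} identifies the jump uniquely as $\bar\phi(t)=\proj^\alpha_{\z(t)}\bar\phi(t-)$. By uniqueness for the derivative problem (Proposition \ref{prop:dpexistence}), this candidate is $\bar\phi$. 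Hence $\bar\phi(T')=\proj^\alpha_{x_K}\cdots\proj^\alpha_{x_1}\phi(0)$ for a sequence $\{x_k\}\subset\partial G$ of ``face-change'' points on $[0,T']$. If $\cup_{t\in[0,T']}\allN(\z(t))=\allN$, then $\cup_k\allN(x_k)=\allN$ and Lemma \ref{lem:delta} gives $\norm{\bar\phi(T')}_{B^\alpha}\le\delta_0\norm{\phi(0)}_{B^\alpha}$; otherwise the nonexpansivity of each $\proj^\alpha_{x_k}$ (Lemma \ref{lem:projx}) yields only $\norm{\bar\phi(T')}_{B^\alpha}\le\norm{\phi(0)}_{B^\alpha}$. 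Combining the two bounds via the triangle inequality and un-shifting in time yields the claimed inequality.

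I expect the main obstacle to be the rigorous verification of the projection characterization of $\bar\phi$. The potentially accumulating times of face changes, together with the possibility that $\z$ spends positive time on individual faces, means the sequence of projections may need to be treated through a limiting procedure rather than as a plain finite composition. However, the monotonicity of $t\mapsto\norm{\bar\phi(t)}_{B^\alpha}$ guaranteed by the nonexpansivity of the derivative projections, combined with the uniqueness of solutions to the derivative problem, should make this rigorous: once every face has been visited at some intermediate time $T^*\le T'$, Lemma \ref{lem:delta} gives the contraction factor $\delta_0$ at $T^*$, and no further projection on $[T^*,T']$ can undo it.
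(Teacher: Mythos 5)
Your skeleton coincides with the paper's proof: shift time by $S$ (Lemma \ref{lem:dptimeshift}), split the shifted input into the constant path identically equal to $\phi(S)$ plus the increment $\psi(S+\cdot)-\psi(S)$, use linearity of the derivative map (Lemma \ref{lem:dmlinear}), bound the increment part via Lipschitz continuity (Proposition \ref{prop:dmlip}), and represent the constant-input part as a composition of derivative projection operators, to which Lemma \ref{lem:delta} (or the nonexpansivity in Lemma \ref{lem:projx}) applies. The genuine gap is the step you yourself flag as the main obstacle: the projection representation of $\bar\phi$. Your claim that $\bar\phi$ is ``piecewise constant, jumping only at times when $\allN(\z(t))$ enlarges'' is an assertion rather than a proof, and your proposed repair is circular: the monotonicity of $t\mapsto\norm{\bar\phi(t)}_{B^\alpha}$ that you invoke to tame a possible accumulation of face-change times is itself a consequence of the representation you are trying to establish. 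Moreover, the claim that the base points of the projections have face sets whose union is $\allN$ whenever $\cup_{t\in[S,T]}\allN(\z(t))=\allN$ also requires an argument, since the projections act only at selected times and one must check that no face visited between them is lost.

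The paper closes this gap without any limiting procedure, by a careful choice of times: $t_1:=S$, $\rho_k:=\inf\{t>t_k:\allN(\z(t))\not\subseteq\allN(\z(t_k))\}\wedge T$ and $t_{k+1}:=\sup\{t\geq\rho_k:\allN(\z(s))\subseteq\allN(\z(t))\;\forall\,s\in(t_k,t]\}\wedge T$, and it proves by contradiction --- using the continuity of $\z$ and the upper semicontinuity of $x\mapsto\allN(x)$ --- that $t_K=T$ for some finite $K$, so the composition is finite by construction. Constancy of the constant-input solution on each $[t_{k-1},\rho_{k-1})$ is then proved (Lemma \ref{lem:psiconstrho}) by exhibiting the constant pair as a solution and invoking uniqueness; note that the uniqueness used here follows from Proposition \ref{prop:dmlip} (two solutions along the same $\z$ for the same input coincide), not from Proposition \ref{prop:dpexistence}, whose uniqueness statement is tied to the boundary jitter property, which is not assumed in this deterministic proposition. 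The jump identity $\phi_1(t_k)=\proj^{\alpha}_{\z(t_k)}\phi_1(t_{k-1})$ follows from the characterizing property of $\proj^{\alpha}_x$ in Lemma \ref{lem:projx} together with condition 3 of the derivative problem, and the paper verifies that $\cup_{k}\allN(\z(t_k))=\cup_{t\in[S,T]}\allN(\z(t))$, which is exactly what allows Lemma \ref{lem:delta} to produce the factor $\delta_0$ when every face is visited. If you replace your sketch by this bookkeeping (or an equivalent finiteness argument), the rest of your write-up --- the Lipschitz bound on the increment part and the remark on the equivalence of $\abs{\cdot}$ and $\norm{\cdot}_{B^\alpha}$ --- is sound.
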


The remainder of this section is devoted to the proof of Proposition \ref{prop:dpcontract}. 
Throughout this section we fix a compact subset $U_0$ of $U$ and let $\delta_0\in[0,1)$ be as in Lemma \ref{lem:delta}.  
In addition, we fix $\alpha\in U$, $\x\in\cts_G(\R^J)$ and $\psi\in\cts(\R^J)$, and let $(\z,\y)$ denote the solution to the SP $\{(d_i(\alpha),n_i),i\in\allN\}$ for $\x$ and $(\phi,\eta)$ denote the solution to the derivative problem along $\z$ for $\psi$. 
In addition, fix $0\le S<T<\infty$.    { We start with some preparatory lemmas. } 

\begin{lem}\label{lem:psiconstrho}
	Suppose $\allN(\z(u))\subseteq\allN(\z(S))$ for all $u\in[S,T]$ and $\psi$ is constant on $[S,T]$. 
	Then $(\phi,\eta)$ is constant on $[S,T]$.
\end{lem}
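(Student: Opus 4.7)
The plan is to combine the time-shift property of the derivative problem (Lemma \ref{lem:dptimeshift}) with the Lipschitz continuity of the derivative map (Proposition \ref{prop:dmlip}) to identify $(\phi^S,\eta^S)$ with an explicit constant pair. The one substantive observation is that the hypothesis $\allN(\z(u))\subseteq\allN(\z(S))$ is a monotonicity statement about face membership, which, by the definition \eqref{eq:Hx}, translates into the reverse inclusion of hyperplanes $\hyper_{\z(S)}\subseteq\hyper_{\z(u)}$ for all $u\in[S,T]$.

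First, I would apply Lemma \ref{lem:dptimeshift} with $s=S$ to conclude that the shifted pair $(\phi^S,\eta^S)$ defined in \eqref{eq:phis}--\eqref{eq:etas} solves the derivative problem along $\z^S$ for $\psi^S$ defined in \eqref{eq:psis}. Because $\psi$ is constant on $[S,T]$, the expression in \eqref{eq:psis} reduces to $\psi^S(t)=\phi(S)$ on $[0,T-S]$. Next, I would introduce the candidate pair $(\tilde\phi,\tilde\eta)$ on $[0,T-S]$ given by $\tilde\phi\equiv\phi(S)$ and $\tilde\eta\equiv 0$, and verify it is also a solution to the derivative problem along $\z^S$ for $\psi^S$.

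The verification is direct. Conditions 1 and 3 of Definition \ref{def:dp} are immediate since $\tilde\phi\equiv\psi^S+\tilde\eta$ and $\tilde\eta$ is identically zero, and the initial condition $\tilde\eta(0)=0\in\spaan[d(\alpha,\z(S))]$ is trivial. The only condition invoking the hypothesis is condition 2, which requires $\tilde\phi(t)=\phi(S)\in\hyper_{\z^S(t)}=\hyper_{\z(S+t)}$ for every $t\in[0,T-S]$; this follows from the inclusion $\hyper_{\z(S)}\subseteq\hyper_{\z(S+t)}$ noted above together with $\phi(S)\in\hyper_{\z(S)}$.

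To finish, I would apply Proposition \ref{prop:dmlip} (noting that it does not require the boundary jitter property) with $\psi_1=\psi_2=\psi^S$ to obtain
\[
\sup_{t\in[0,T-S]}|\phi^S(t)-\tilde\phi(t)|\leq \lip_\dm(\alpha)\sup_{t\in[0,T-S]}|\psi^S(t)-\psi^S(t)|=0.
\]
Hence $\phi^S\equiv\phi(S)$ on $[0,T-S]$, i.e., $\phi$ is constant on $[S,T]$, and since $\eta=\phi-\psi$ with $\psi$ also constant on $[S,T]$, $\eta$ is constant on $[S,T]$ as well. The only conceptual point is the use of the Lipschitz bound as a uniqueness mechanism (since jitter is not assumed here); everything else is routine time-shift bookkeeping.
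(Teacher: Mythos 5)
Your proof is correct and follows essentially the same route as the paper: time-shift to $[0,T-S]$, verify that a constant pair solves the shifted derivative problem, and invoke uniqueness. Two points of detail where you improve on the paper's write-up: the paper sets $\wt\eta^S(u):=\eta(S)$, which is inconsistent with $\eta^S(0)=(\Theta_S\eta)(0)=0$ and with condition 1 of Definition \ref{def:dp} (your choice $\tilde\eta\equiv 0$ is the one that actually makes the verification work), and you make explicit that the uniqueness mechanism is the Lipschitz bound of Proposition \ref{prop:dmlip} rather than the existence/uniqueness result Proposition \ref{prop:dpexistence}, which would require the boundary jitter property that is not assumed here.
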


\begin{proof}
	Define $\z^S$, $\phi^S$, $\eta^S$ and $\psi^S$ as in \eqref{eq:zS}, \eqref{eq:phis}--\eqref{eq:psis}, respectively. 
	By \eqref{eq:phis} and \eqref{eq:etas} we need to show that $(\phi^S,\eta^S)$ is constant on $[0,T-S]$. 
	According to the time-shift property of the derivative problem (Lemma \ref{lem:dptimeshift}), $(\phi^S,\eta^S)$ is a solution to the derivative problem along $\z^S$ for $\psi^S$. 
	Define $\wt\phi^S$ and $\wt\eta^S$ by $\wt\phi^S({ u}):=\phi(s)$ and $\wt\eta^S(u):=\eta(s)$ for all $u\geq0$. 
	We claim, and prove below,  that $(\wt\phi^S,\wt\eta^S)$ is a solution to the derivative problem along $\z^S$ for $\psi^S$ on the interval $[0,T-S]$. 
	It then follows from the uniqueness of solutions to the derivative problem that $(\wt\phi^S,\wt\eta^S)=(\phi^S,\eta^S)$, thus completing the proof.

	We are left to show that $(\wt\phi^S,\wt\eta^S)$ satisfies conditions 1--3 of the derivative problem along $\z^S$ for $\psi^S$ on the interval $[0,T-S]$. By the definition of $\wt\phi^S$, condition 1 of the derivative problem, \eqref{eq:psis} and the definition of $\wt\eta^S$, we have 
	$$\wt\phi^S(u)=\phi(s)=\psi(s)+\eta(s)=\psi^S(u)+\wt\eta^S(u),\qquad u\in[0,T-S].$$ 
	Thus $(\wt\phi^S,\wt\eta^S)$ satisfies condition 1 of the derivative problem on $[0,T-S]$. Next, given $u\in[0,T-S]$, observe that $\allN(\z^S(u))=\allN(\z(s+u))\subseteq\allN(\z(s))$ for all $u\in[0,T-S]$ and the definition of $\hyper_x$ in \eqref{eq:Hx} imply that $\hyper_{\z(S)}\subseteq \hyper_{\z^S(u)}$. It follows from condition 2 of the derivative problem that for all $u\in[0,T-S]$,
	$$\wt\phi^S(u)=\phi(s)\in \hyper_{\z(s)}\subseteq \hyper_{\z^S(u)}.$$
	This proves that $\wt\phi^S$ satisfies condition 2 of the derivative problem for all $u\in[0,T-S]$. Lastly, since $\wt\eta^S$ is constant on $[0,T-S]$, $\wt\eta^S$ automatically satisfies condition 3 of the derivative problem on the interval $[0,T-S]$.
\end{proof}

Set $t_1:= S$ and for $k\geq 1$ such that $t_k<T$, recursively define
	\be\label{eq:rhoksequence}\rho_k:=\inf\{t>t_k:\allN(\z(t))\not\subseteq\allN(\z(t_k))\}\wedge T,\ee
to be the first time the constrained path hits a face not contained in $\allN(h(t))$, 
 and
	\be\label{eq:tksequence}t_{k+1}:=\sup\{t\geq\rho_k:\allN(\z(s))\subseteq\allN(\z(t))\;\forall\;s\in(t_k,t]\}\wedge T,\ee
noting that it is possible for $t_{k+1} = \rho_k$.
We claim that $t_K=T$ for some $K\in\N$ and $t_k<T$ for all $1\leq k<K$. For an argument by contradiction, suppose that $t_k<T$ for all $k\in\N$. Then there exists $t_\infty\leq T$ such that $t_k\to t_\infty$ as $k\to\infty$. 
Due to the continuity of $\z$ and the upper semicontinuity of $\allN(\cdot)$ (see, e.g., \cite[Lemma 2.1]{Kang2007}), this implies there exists $k_0\in\N$ such that $\allN(\z(t_k))\subseteq\allN(\z(t_\infty))$ for all $k>k_0$. However, \eqref{eq:tksequence} then implies that $t_{k_0+1}=t_\infty$, which is a contradiction. Thus, the claim holds.

\begin{lem}
	\label{lem:phiTpsiconst}
	Suppose $\psi\in\cts(\R^J)$ is constant on $[S,T]$. Then
		\be\label{eq:phiprojtKt1phi0}\phi(T)=\proj_{\z(t_K)}^\alpha\cdots\proj_{\z(t_1)}^\alpha\phi(S).\ee
\end{lem}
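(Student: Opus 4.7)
The plan is to prove a strengthened version of the identity by finite induction on $k$: for every $1 \le k \le K$,
\[
\phi(t_k) = \proj_{\z(t_k)}^\alpha \cdots \proj_{\z(t_1)}^\alpha \phi(S).
\]
Taking $k = K$ and using $t_K = T$ then yields the claim. For the base case $k = 1$, condition 2 of the derivative problem gives $\phi(S) \in \hyper_{\z(S)}$, so $y := \phi(S)$ itself is a vector in $\hyper_{\z(S)}$ with $y - \phi(S) = 0 \in \spaan[d(\alpha,\z(S))]$, and the uniqueness characterization in Lemma \ref{lem:projx} yields $\proj_{\z(t_1)}^\alpha \phi(S) = \phi(S) = \phi(t_1)$.

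For the inductive step I would show $\phi(t_{k+1}) = \proj_{\z(t_{k+1})}^\alpha \phi(t_k)$ and then compose with the inductive hypothesis. The crucial observation is that since $\psi$ is constant on $[S,T]$, condition 1 of the derivative problem yields $\phi(t_{k+1}) - \phi(t_k) = \eta(t_{k+1}) - \eta(t_k)$, and applying condition 3 with $s = t_k$, $t = t_{k+1}$ places this increment in $\spaan\bigl[\bigcup_{u \in (t_k, t_{k+1}]} d(\alpha,\z(u))\bigr]$. The defining property \eqref{eq:tksequence} of $t_{k+1}$ guarantees that $\allN(\z(u)) \subseteq \allN(\z(t_{k+1}))$ for every $u \in (t_k, t_{k+1}]$, and in view of the definition \eqref{eq:dux} of $d(\alpha,\cdot)$ this upgrades the inclusion to $\phi(t_{k+1}) - \phi(t_k) \in \spaan[d(\alpha,\z(t_{k+1}))]$. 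Combined with $\phi(t_{k+1}) \in \hyper_{\z(t_{k+1})}$ from condition 2, the uniqueness assertion in Lemma \ref{lem:projx} forces $\phi(t_{k+1}) = \proj_{\z(t_{k+1})}^\alpha \phi(t_k)$, closing the induction.

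The argument is essentially mechanical once the sequence $\{t_k\}$ has been constructed, since each step reduces to a one-line uniqueness statement for the derivative projection operator. The only bookkeeping point I would flag as requiring care is that the supremum defining $t_{k+1}$ must actually be attained, so that the inclusion $\allN(\z(u)) \subseteq \allN(\z(t_{k+1}))$ persists at $u = t_{k+1}$ itself. This follows from the upper semicontinuity of $\allN \circ \z$ already invoked to show that $K$ is finite: any hypothetical $s^\star < t_{k+1}$ violating the inclusion would, by upper semicontinuity at $t_{k+1}$, cause the inclusion to fail for $t$ throughout a full neighborhood of $t_{k+1}$, contradicting the fact that $t_{k+1}$ is approached from below by admissible $t$.
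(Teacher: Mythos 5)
Your proof is correct and takes a genuinely more direct route than the paper's. The paper reduces to showing $\phi(t_k)=\proj_{\z(t_k)}^\alpha\phi(t_{k-1})$ for $k=2,\dots,K$ just as you do, but it gets there in two pieces: it invokes the preparatory Lemma~\ref{lem:psiconstrho} to conclude that $(\phi,\eta)$ is constant on $[t_{k-1},\rho_{k-1})$, introduces an auxiliary time $s_{k-1}\in[t_{k-1},\rho_{k-1})$ chosen by upper semicontinuity so that $\allN(\z(t))\subseteq\allN(\z(\rho_{k-1}))$ on $[s_{k-1},\rho_{k-1}]$, combines this with the inclusion \eqref{eq:rhok1tk} on $[\rho_{k-1},t_k]$ that it reads off from \eqref{eq:tksequence}, and then applies condition~3 of the derivative problem over $(s_{k-1},t_k]$, finally replacing $\phi(s_{k-1})$ by $\phi(t_{k-1})$ via the constancy. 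You instead observe that the quantifier in \eqref{eq:tksequence} ranges over all $s\in(t_k,t]$, so attainment of the supremum gives the inclusion $\allN(\z(u))\subseteq\allN(\z(t_{k+1}))$ on the \emph{entire} half-open interval $(t_k,t_{k+1}]$ — strictly more than \eqref{eq:rhok1tk} — and this lets you apply condition~3 directly from $s=t_k$ to $t=t_{k+1}$, so neither Lemma~\ref{lem:psiconstrho} nor the auxiliary time $s_{k-1}$ is needed. Your flagged "bookkeeping" point is precisely the implicit step the paper also uses when it asserts \eqref{eq:rhok1tk} from \eqref{eq:tksequence}; you make it explicit, and your upper-semicontinuity argument for it is sound. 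One small remark: your base case multiplies by $\proj_{\z(t_1)}^\alpha$, which is fine but unnecessary — the paper just leaves $\phi(t_1)=\phi(S)$ and notes $\proj_{\z(t_1)}^\alpha\phi(S)=\phi(S)$ since $\phi(S)\in\hyper_{\z(S)}$ (with $\proj_x^\alpha$ read as the identity when $\z(t_1)\in G^\circ$, consistent with the Remark after Lemma~\ref{lem:projx}). Overall your argument is a legitimate and more economical alternative.
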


\begin{proof}
	Since $t_K=T$, in order to prove \eqref{eq:phiprojtKt1phi0} it suffices to show that 
	\be\phi(t_k)=\proj_{\z(t_k)}^\alpha\phi(t_{k-1}),\qquad k=2,\dots,K.\ee
	Let $2\leq k\leq K$. By \eqref{eq:rhoksequence}, $\allN(\z(t))\subseteq\allN(t_{k-1})$ for all $t\in[t_{k-1},\rho_{k-1})$. Therefore, by Lemma \ref{lem:psiconstrho} and the fact that $\psi$ is constant on $[S,T]$, $(\phi,\eta)$ is constant on $[t_{k-1},\rho_{k-1})$. By \eqref{eq:tksequence},
	\be\label{eq:rhok1tk}\allN(\z(t))\subseteq\allN(\z(t_k))\qquad\text{for all }t\in[\rho_{k-1},t_k].\ee
	Due to the continuity of $\z$ and the upper semicontinuity of $\allN(\cdot)$ there exists $s_{k-1}<\rho_{k-1}$ such that $\allN(\z(t))\subseteq\allN(\z(\rho_{k-1}))$ for all $t\in[s_{k-1},\rho_{k-1}]$, which along with \eqref{eq:rhok1tk} implies
	\be\label{eq:sk1tk}\allN(\z(t))\subseteq\allN(\z(t_k))\qquad\text{for all }t\in[s_{k-1},t_k].\ee
	Now by condition 2 of the derivative problem, we have $\phi(t_k)\in \hyper_{\z(t_k)}$. Condition 1 of the derivative problem, the fact that $\psi$ is constant on $[S,T]$, condition 3 of the derivative problem and \eqref{eq:sk1tk} together yield
	$$\phi(t_k)-\phi(s_{k-1})=\eta(t_k)-\eta(s_{k-1})\in\spaan\lsb\cup_{u\in(s_{k-1},t_k]}d(\alpha,\z(u))\rsb\subseteq\allN(\z(t_k)).$$
	Thus, due to the uniqueness of the derivative projection operators stated in Lemma \ref{lem:projx} and the fact that $\phi$ is constant on $[t_{k-1},\rho_{k-1})$, we see that
	$$\phi(t_k)=\proj_{\z(t_k)}^\alpha\phi(s_{k-1})=\proj_{\z(t_k)}^\alpha\phi(t_{k-1}),$$
	which proves \eqref{eq:rhok1tk}.
\end{proof}

\begin{proof}[Proof of Proposition \ref{prop:dpcontract}]
	Define $K\in\N$ and $\{t_k\}_{k=1,\dots,K}$ as in \eqref{eq:rhoksequence}--\eqref{eq:tksequence}. 
	Define $\psi_1,\psi_2\in\cts(\R^J)$, for $t\geq0$, by 
	\begin{align}\label{eq:psi1}
	\psi_1(t)&:=\phi(S),\\ \label{eq:psi2}
	\psi_2(t)&:=\psi(S+t)-\psi(S).
	\end{align}
	Define $\z^S\in\cts(G)$, $\phi^S\in\cts(\R^J)$ and $\psi^S\in\dr(\R^J)$ as in \eqref{eq:zS}, \eqref{eq:phis} and \eqref{eq:psis}, respectively. 
	Then $\psi^S=\psi_1+\psi_2$ and by the time-shift property of the derivative problem shown in Lemma \ref{lem:dptimeshift}, $\phi^S=\dm_{\z^S}^\alpha(\psi^S)$. 
	Let $\phi_1:=\dm_{\z^S}^\alpha(\psi_1)$ and $\phi_2:=\dm_{\z^S}^\alpha(\psi_2)$. 
	The linearity of the derivative map shown in Lemma \ref{lem:dmlinear} implies that
		\be\label{eq:phisphi1phi2}\phi^S=\phi_1+\phi_2.\ee
	Since $\psi_1$ is a constant function, by  Lemma \ref{lem:phiTpsiconst}, we have 
		\be\label{eq:normphi1B}\norm{\phi_1(T-S)}_{B^\alpha}=\norm{\proj_{\z(t_K)}^\alpha\cdots\proj_{\z(t_1)}^\alpha\phi_1(0)}_{B^\alpha}=\norm{\proj_{\z(t_K)}^\alpha\cdots\proj_{\z(t_1)}^\alpha\phi(S)}_{B^\alpha}.\ee 
	By the Lipschitz continuity of the derivative map and the definition of $\psi_2$ in \eqref{eq:psi2},
	\begin{align}\label{eq:phi2bound}
	\norm{\phi_2(T-S)}_{B^\alpha}&\leq\lip_\dm(\alpha)\sup_{0\leq u\leq T-S}\norm{\psi_2(u)}_{B^\alpha}\leq\lip_\dm(\alpha)\sup_{S\leq u\leq T}\norm{\psi(u)-\psi(S)}_{B^\alpha}.
	\end{align}
	The definition of $\phi^S$ in \eqref{eq:phis} along with \eqref{eq:phisphi1phi2}--\eqref{eq:phi2bound} yields 
		\be\label{eq:phitBboundproj}\norm{\phi(T)}_{B^\alpha}\leq\norm{\proj_{\z(t_K)}^\alpha\cdots\proj_{\z(t_1)}^\alpha\phi(S)}_{B^\alpha}+\lip_\dm(\alpha)\sup_{S\leq u\leq T}\norm{\psi(u)-\psi(S)}_{B^\alpha}.\ee
	
	Suppose
		\be\label{eq:cupSTallN}\bigcup_{t\in[S,T]}\allN(\z(t))=\allN.\ee
	Define the sequences $\{t_k\}_{k=1,\dots,K}$ and $\{\rho_k\}_{k=1,\dots,K}$ as in \eqref{eq:rhoksequence}--\eqref{eq:tksequence}. 
	We show that $\cup_{k=1,\dots,K}\allN(\z(t_k))=\allN$. 
	Set $\rho_0:= S$, $\rho_K:= T$ and observe that \eqref{eq:rhoksequence} and \eqref{eq:tksequence} together imply that for each $1\leq k< K$,
		$$\bigcup_{t\in[\rho_{k-1},\rho_k)}\allN(\z(t))\subseteq\allN(\z(t_k)).$$
	Along with the assumption \eqref{eq:cupSTallN} and the fact that $t_K=\rho_K=T$, this implies
		$$\bigcup_{k=1,\dots,K}\allN(\z(t_k))=\bigcup_{t\in[S,T]}\allN(\z(t))=\allN.$$
	Then \eqref{eq:phitBboundproj}, the contraction property of the derivative projection operators (Lemma \ref{lem:delta}) and the fact that the operator norms of the derivative projection operators are bounded by one (Lemma \ref{lem:projx}) together prove \eqref{eq:dmcontract}.
\end{proof}

\section{Feller Markov property of the joint process}\label{sec:Feller}

In this section we prove Theorem \ref{thm:jointmarkov}. 
The proof is given at the end of the section after we establish some preliminary lemmas. 
Throughout this section we assume the data $\{(d_i(\cdot),n_i),i\in\allN\}$ satisfies Assumptions \ref{ass:independent}, \ref{ass:setB} and \ref{ass:projection}, and the coefficients $b(\cdot)$, $\sigma(\cdot)$ and $R(\cdot)$ satisfy Assumption \ref{ass:holder}, and we fix $\alpha\in U$. 
Recall the definition of $\state$ given in \eqref{eq:state}.
In addition, for $\xi=(x,y)\in\state$, recall that $\Z^x$ denotes the RBM starting at $x$, $\phib^\xi$ denotes the derivative process along $\Z^x$ starting at $y$ and $\Xi^\xi=(\Z^x,\phib^\xi)$ denotes the joint process starting at $\xi$ and taking values in $\state$.
We begin by defining a measurable map 
	$$\Pi:\dom(\Pi)\mapsto\cts(G)\times\dr(\R^J)$$ 
such that $\dom(\Pi)\subset\state\times\cts_0(\R^J)$ and for each $\xi\in\state$, almost surely $(\xi,\bm)\in\dom(\Pi)$, $\Pi$ is continuous at $(\xi,\bm)$ and $\Pi(\xi,\bm)=\Xib^\xi$. 
With this in mind, given $\xi=(x,y)\in\state$ and $w\in\cts_0(\R^J)$, define $\x\in\cts_G(\R^J)$ by
	\be\label{eq:xdef}\x(t):= x+bt+\sigma w(t),\qquad t\geq0,\ee
let $(\z,\y)$ denote the solution to the SP for $\x$, and define $\ell\in\cts(\R^J)$ by $\ell(\cdot):= R^{-1}\y(\cdot)$. In addition, define $\psi\in\cts(\R^{J})$ by
	\be\label{eq:Psidef}\psi(t):= y+b't+\sigma'w(t)+R'\ell(t),\qquad t\geq0.\ee
By the continuity of the SM (Proposition \ref{prop:SP}) and the definition of $\ell$, the mapping $(\xi,w)\mapsto\ell$ from $\state\times\cts_0(\R^J)$ to $\cts(\R^J)$ is continuous. 
Thus, by \eqref{eq:xdef} and \eqref{eq:Psidef}, the mapping $(\xi,w)\mapsto(\x,\psi)$ from $\state\times\cts_0(\R^J)$ to $\cts(\R^J)\times\cts(\R^{J})$ is continuous. 
Define
	\be\label{eq:domPi}\dom(\Pi):=\lcb(\xi,w)\in\state\times\cts_0(\R^J):(\z,\y)\text{ satisfies the boundary jitter property}\rcb.\ee
By Proposition \ref{prop:dpexistence}, if $(\xi,w)\in\dom(\Pi)$ then the derivative map $\dm_\z$ is well defined.
In this case we define the function $\phi\in\dr(\R^{J})$ by $\phi:=\dm_\z(\psi)$ and set 
	\be\label{eq:Pi}\Pi(\xi,w):=(\z,\phi),\qquad\Pi_t(\xi,w):=(\z(t),\phi(t)),\; t\geq0.\ee
By the definition of $\Pi$, the continuity of $(\xi,w)\mapsto(\x,\psi)$ shown above, the continuity of the SM and the continuity property of the derivative map shown in Proposition \ref{prop:dmcontinuous}, $\Pi$ is continuous on $\dom(\Pi)$.
	
In the following lemmas we prove a time-shift (or semi-group) property for $\Pi$ and express the joint process in terms of $\Pi$. 

\begin{lem}\label{lem:Pitimeshift}
	For $(\xi,w)\in\dom(\Pi)$ and $s,t\ge0$ such that $\z(s)\in G^\circ$, it holds that $(\Pi_s(\xi,w),\Theta_sw)\in\dom(\Pi)$ and
	\be \Pi_{t+s}(\xi,w)=\Pi_t(\Pi_s(\xi,w),\Theta_sw).\ee
\end{lem}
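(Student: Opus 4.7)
The plan is as follows. Set $(\z,\phi) := \Pi(\xi,w)$ and define the shifted initial data $\tilde\xi := (\z(s),\phi(s)) = \Pi_s(\xi,w)$ and shifted driving path $\tilde w := \Theta_s w$. The task breaks into two parts: (i) identifying $\Pi(\tilde\xi,\tilde w)$ as $(\z^s,\phi^s) := (\z(s+\cdot),\phi(s+\cdot))$, and (ii) verifying $(\tilde\xi,\tilde w) \in \dom(\Pi)$, i.e., that $(\z^s,\y^s)$ satisfies the boundary jitter property. The identity will then follow immediately from (i) on evaluating at $t$.

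For (i), I would compute the $\x$ and $\psi$ associated to $(\tilde\xi,\tilde w)$ via the formulas introduced at the start of Section \ref{sec:Feller}. Writing these as $\tilde\x$ and $\tilde\psi$, a direct calculation using $(\Theta_s w)(t) = w(s+t) - w(s)$ yields
$$\tilde\x(t) = \z(s) + bt + \sigma(w(s+t)-w(s)) = \z(s) + (\Theta_s\x)(t) = \x^s(t),$$
where $\x^s$ is as in Lemma \ref{lem:SPtimeshift}. Thus that lemma identifies the SP solution for $\tilde\x$ as exactly $(\z^s,\y^s) = (\z(s+\cdot),\Theta_s\y)$, so $\tilde\ell = R^{-1}\y^s = \Theta_s\ell$. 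Substituting gives $\tilde\psi(t) = \phi(s) + (\Theta_s\psi)(t) = \psi^s(t)$ in the notation of Lemma \ref{lem:dptimeshift}, whence that lemma yields $\dm_{\z^s}^\alpha(\psi^s) = \phi^s$. Given (ii), this establishes $\Pi(\tilde\xi,\tilde w) = (\z^s,\phi^s)$, and evaluating at $t$ gives $\Pi_t(\tilde\xi,\tilde w) = (\z(s+t),\phi(s+t)) = \Pi_{s+t}(\xi,w)$.

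For (ii), I would check each condition of Definition \ref{def:jitter} for $(\z^s,\y^s)$ by the change of variable $u \leftrightarrow s+u$. Condition 2' transfers via $\int_0^\infty 1\{\z^s(u)\in\partial G\}du \le \int_0^\infty 1\{\z(v)\in\partial G\}dv = 0$. Conditions 1 and 3 follow by translating the relevant open intervals back into $[s,\infty)$ and invoking the corresponding property of $(\z,\y)$, using that $s\ge 0$ so that $s+t_1^+,s+t_2\ge 0$ in condition 1, and that $t>0$ implies $s+t>0$ in condition 3. The hypothesis $\z(s) \in G^\circ$ is used precisely to dispose of condition 4: since $\z^s(0) = \z(s) \in G^\circ \subset G \setminus \U$, condition 4 is vacuous.

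The main obstacle is essentially careful bookkeeping in translating each jitter condition through the shift. Condition 4 is the only condition that would not automatically transfer from $(\z,\y)$ under an arbitrary shift, which is precisely why the interior-point hypothesis $\z(s) \in G^\circ$ appears in the statement; together with $\z(s)\in G^\circ$ implying $\hyper_{\z(s)} = \R^J$, this makes the initial condition $\phi(s)$ automatically admissible for a derivative problem started at $\z(s)$, so no additional compatibility between $\phi(s)$ and $\z(s)$ need be verified.
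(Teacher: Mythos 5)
Your proof is correct and follows the same route as the paper: reduce to the time-shift lemmas for the SP (Lemma \ref{lem:SPtimeshift}) and derivative problem (Lemma \ref{lem:dptimeshift}) by computing that $\tilde\x=\x^s$ and $\tilde\psi=\psi^s$, and observe that the shifted pair inherits the boundary jitter property, with $\z(s)\in G^\circ$ exactly neutralizing condition~4. The paper states the jitter-preservation step without verification, so your condition-by-condition check is a welcome (and correct) filling-in of what the paper leaves implicit.
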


\begin{proof}
	Define $\z^S$, $\y^S$, $\x^S$, $\phi^S$ and $\psi^S$ as in \eqref{eq:zS}, \eqref{eq:yS}, \eqref{eq:xS}, \eqref{eq:phis} and \eqref{eq:psis}, respectively.
	The fact that $(\z,\y)$ satisfies the boundary jitter property and $\z(s)\in G^\circ$ implies that $(\z^S,\y^S)$ satisfies the boundary jitter property and so $(\Pi_s(\xi,w),\Theta_sw)\in\dom(\Pi)$.
	By \eqref{eq:Pi}, \eqref{eq:zS}, \eqref{eq:phis}, \eqref{eq:xS}, \eqref{eq:psis}, the time-shift properties of the SP (Lemma \ref{lem:SPtimeshift}) and the derivative problem (Lemma \ref{lem:dptimeshift}),	
	\begin{align*}
		\Pi_{t+s}(\xi,w)&=(\z^S(t),\phi^S(t))=(\sm(\x^S),\dm_{\z^S}(\psi^S))=\Pi_t(\Pi_s(\xi,w),\Theta_sw).
	\end{align*}
\end{proof}

\begin{lem}\label{lem:Pi}
For each $\xi\in\state$, almost surely the following hold: $(\xi,\bm)\in\dom(\Pi)$ and $\Xib^\xi=\Pi(\xi,\bm)$.
Consequently, $\Pi$ is continuous at $(\xi,\bm)$.
\end{lem}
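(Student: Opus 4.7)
The plan is to verify the three assertions in turn by identifying the deterministic map $\Pi$ evaluated at $(\xi,\bm)$ with the joint process $\Xib^\xi$ on an almost sure event, and then invoking the continuity of $\Pi$ on $\dom(\Pi)$ that was already asserted just before the statement of Lemma \ref{lem:Pitimeshift}. Fix $\xi=(x,y)\in\state$. Set $\X^\alpha(t):=x+b(\alpha)t+\sigma(\alpha)\bm(t)$, which agrees almost surely with the function $\x$ from \eqref{eq:xdef} upon taking $w=\bm$.

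First I would establish that $(\xi,\bm)\in\dom(\Pi)$ almost surely. By Remark \ref{rmk:ZsmX} the pair $(\Z^{\alpha,x},\Y^{\alpha,x})$ is a.s.\ a solution to the SP $\{(d_i(\alpha),n_i),i\in\allN\}$ for $\X^\alpha$, and by the uniqueness asserted in Proposition \ref{prop:SP} it is \emph{the} solution; hence a.s.\ $(\z,\y)=(\Z^{\alpha,x},\Y^{\alpha,x})$. Proposition \ref{prop:jitter} says this pair satisfies the boundary jitter property almost surely, so by \eqref{eq:domPi} we get $(\xi,\bm)\in\dom(\Pi)$ a.s., as desired.

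Next I would identify the two components of $\Pi(\xi,\bm)$ with those of $\Xib^\xi=(\Z^{\alpha,x},\phib^{\alpha,\xi})$. The first component is immediate from the paragraph above, since $\z=\sm^\alpha(\x)=\Z^{\alpha,x}$ a.s. For the second component, note that with $w=\bm$ the auxiliary path $\psi$ defined in \eqref{eq:Psidef} becomes
\[
\psi(t)=y+b'(\alpha)t+\sigma'(\alpha)\bm(t)+R'(\alpha)\ell(t),
\]
where $\ell=R(\alpha)^{-1}\y=R(\alpha)^{-1}\Y^{\alpha,x}=\L^{\alpha,x}$ by \eqref{eq:Lalpha}. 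Comparing with the definition of $\psib^{\alpha,\xi}$ in \eqref{eq:psib} (and using $\phib^{\alpha,\xi}(0)=y$), we conclude $\psi=\psib^{\alpha,\xi}$ a.s. By Remark \ref{rmk:derivativeprocessbeta}, $(\phib^{\alpha,\xi},\etab^{\alpha,\xi})$ is a.s.\ a solution to the derivative problem along $\z$ for $\psi$. On the full-measure event from the previous paragraph, $(\z,\y)$ satisfies boundary jitter, so Proposition \ref{prop:dpexistence} yields a \emph{unique} solution to this derivative problem; since $\phi:=\dm_\z^\alpha(\psi)$ is by construction also such a solution, $\phib^{\alpha,\xi}=\phi$ a.s. Combining, $\Xib^\xi=(\z,\phi)=\Pi(\xi,\bm)$ a.s.

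The final claim is then automatic: the paragraph immediately preceding Lemma \ref{lem:Pitimeshift} records that $\Pi$ is continuous at every point of $\dom(\Pi)$ (this is where Proposition \ref{prop:dmcontinuous} is used, noting that its hypothesis requires the limit path to satisfy boundary jitter, which is exactly what membership in $\dom(\Pi)$ encodes). Since $(\xi,\bm)\in\dom(\Pi)$ a.s., continuity of $\Pi$ at $(\xi,\bm)$ holds a.s. No step is a genuine obstacle here; the only point requiring care is the bookkeeping that matches $\psi$ with $\psib^{\alpha,\xi}$ so that the uniqueness clause of the derivative problem can be legitimately invoked.
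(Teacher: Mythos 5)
Your proof is correct and follows essentially the same approach as the paper's: both identify $(\Z^{\alpha,x},\Y^{\alpha,x})$ as the SP solution for $\X^\alpha$, invoke Proposition \ref{prop:jitter} to get the boundary jitter property (hence membership in $\dom(\Pi)$), match $\psi$ from \eqref{eq:Psidef} with $\psib^{\alpha,\xi}$ from \eqref{eq:psib}, use Remark \ref{rmk:derivativeprocessbeta} together with uniqueness from Proposition \ref{prop:dpexistence} to conclude $\phib^{\alpha,\xi}=\dm_{\Z^{\alpha,x}}(\psib^{\alpha,\xi})$, and finally cite the continuity of $\Pi$ on $\dom(\Pi)$.
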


\begin{proof}
Let $\xi=(x,y)\in\state$ and $\Xib^\xi=(\Z^x,\phib^\xi)$ denote the joint process starting at $\xi$.
Let $\Y^x$ be as in Definition \ref{def:rbm} and $\etab^\xi$ be as in Definition \ref{def:de}. 
By Remark \ref{rmk:ZsmX}, a.s.\ $(\Z^x,\Y^x)$ is the solution to the SP for $\X^x(\cdot):=x+b\iota(\cdot)+\sigma\bm(\cdot)$; that is, a.s.\ $\Z^x=\sm(\X^x)$. 
Furthermore, by Proposition \ref{prop:jitter}, a.s.\ $(\Z^x,\Y^x)$ satisfies the boundary jitter property.
Thus, from \eqref{eq:domPi} we see that a.s.\ $(\xi,\bm)\in\dom(\Pi)$.
Moreover, by Remark \ref{rmk:derivativeprocessbeta}, a.s.\ $(\phib^\xi,\etab^\xi)$ is the solution to the derivative problem along $\Z^x$ for $\psib^\xi$, where $\psib^\xi(\cdot):=y+b'\iota(\cdot)+\sigma'\bm(\cdot)+R'\L^x(\cdot)$ and $\L^x(\cdot)=R^{-1}\Y^x(\cdot)$.
In other words, a.s.\ $\phib^\xi=\dm_{\Z^x}(\psib^\xi)=\dm_{\sm(\X^x)}(\psib^\xi)$.
Thus, by the construction of $\Pi$, a.s.\ $\Xib^\xi=\Pi(\xi,\bm)$.
The continuity of $\Pi$ at $(\xi,\bm)$ then follows from the fact that $\Pi$ is continuous on $\dom(\Pi)$. 
\end{proof}

\begin{lem}\label{lem:Pit}
For each $s,t>0$ and $\xi\in\state$, almost surely the following hold: $(\Xi(s),\Theta_s\bm)\in\dom(\Pi)$, ${\color{blue} \Pi_t}$ is continuous at $(\xi,\bm)$ and
	\be\label{eq:Pitcts}\Xib(s+t)=\Pi_t(\Xib(s),\Theta_s\bm).\ee
\end{lem}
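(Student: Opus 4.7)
The plan is to build on Lemma \ref{lem:Pi}, which already identifies $\Xib^\xi=\Pi(\xi,\bm)$ almost surely, and then use the time-shift properties of the SP and the derivative problem (Lemmas \ref{lem:SPtimeshift} and \ref{lem:dptimeshift}) to transfer this identification to the shifted point $(\Xib(s),\Theta_s\bm)$. Three things must be checked separately: (a) the shifted point lies in $\dom(\Pi)$; (b) the identity $\Xib(s+t)=\Pi_t(\Xib(s),\Theta_s\bm)$; (c) continuity of the projection $\Pi_t$ at the prescribed point.

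For (a), I would invoke the strong Markov property of the RBM (Theorem \ref{thm:rbm}): given $\F_s$, the process $\{\Z^x(s+u),u\ge0\}$ is an RBM starting from $\Z^x(s)$ with driving Brownian motion $\Theta_s\bm$, and, by the time-shift property of the SP, its constraining process is $(\Y^x)^s=\Theta_s\Y^x$. Applying Proposition \ref{prop:jitter} to this shifted RBM yields that almost surely $((\Z^x)^s,(\Y^x)^s)$ satisfies the boundary jitter property, which by \eqref{eq:domPi} is precisely the condition $(\Xib(s),\Theta_s\bm)\in\dom(\Pi)$. This sidesteps the interior-point hypothesis needed in Lemma \ref{lem:Pitimeshift}.

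For (b), I would compute $\Pi(\Xib(s),\Theta_s\bm)$ directly from its definition in \eqref{eq:xdef}--\eqref{eq:Pi} and match the components with the time-shifted original processes. The unique solution to the SP for $\tilde\x(t):=\Z^x(s)+b(\alpha)t+\sigma(\alpha)(\Theta_s\bm)(t)$ is $((\Z^x)^s,(\Y^x)^s)$ by Lemma \ref{lem:SPtimeshift} and uniqueness (Proposition \ref{prop:SP}); the associated local-time function is then $R^{-1}(\Y^x)^s=\Theta_s\L^x$. The input $\tilde\psi$ to the derivative map is therefore precisely $(\psib^\xi)^s$, and by Lemma \ref{lem:dptimeshift} together with uniqueness (Proposition \ref{prop:dpexistence}, applicable by (a)) the corresponding derivative is $(\phib^\xi)^s$. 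Thus $\Pi(\Xib(s),\Theta_s\bm)=((\Z^x)^s,(\phib^\xi)^s)$, and evaluation at time $t$ gives the desired identity.

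For (c), I would deduce continuity of $\Pi_t$ from continuity of $\Pi$ on $\dom(\Pi)$, which was observed in the paragraph following \eqref{eq:Pi} as a consequence of the continuity of the SM and of the derivative map (Proposition \ref{prop:dmcontinuous}). Since the target space $\dr(\R^J)$ carries the Skorokhod $J_1$-topology, continuity of the evaluation functional $\Pi_t$ at $(\xi,\bm)$ (equivalently at $(\Xib(s),\Theta_s\bm)$) reduces to the claim that, almost surely, the limiting path $\phib^\xi$ is continuous at the point $t$ (respectively $s+t$). By Theorem \ref{thm:pathwise}(ii), the derivative process is continuous at any time $r$ with $\Z^x(r)\in G^\circ\cup\U$, so it suffices to show $\P(\Z^x(r)\in\partial G\setminus\U)=0$ for a fixed $r>0$. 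This is the main obstacle: jumps of $\phib^\xi$ are driven by visits of the RBM to the smooth boundary faces, and I need to rule out such a visit at the single deterministic time $r$. I would argue this using non-degeneracy of $\sigma(\alpha)$ — e.g.\ by conditioning on $\F_{r/2}$, noting that on the event $\{\Z^x(r/2)\in G^\circ\}$ the solution $\Z^x(r)$ is absolutely continuous with respect to Lebesgue measure on $G$ in a neighborhood of $r$ (and hence assigns zero mass to any lower-dimensional face), and iterating; the event $\{\Z^x(r/2)\in G^\circ\}$ itself has positive probability arbitrarily close to $1$ by the same reasoning applied to smaller times. Combining (a)--(c) with the identification from Lemma \ref{lem:Pi} completes the proof.
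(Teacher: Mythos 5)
Your handling of (a) and (b) is valid and in fact a bit different from the paper's: the paper does not invoke the strong Markov property or reapply Proposition \ref{prop:jitter} to the shifted process. Instead, it first observes that almost surely $\Z^x(s)\in G^\circ$ at each fixed $s>0$ (citing \cite[Lemma 5.7]{Budhiraja2007}), which makes the interior-point hypothesis of Lemma \ref{lem:Pitimeshift} available; (a) and (b) are then immediate from that single lemma together with Lemma \ref{lem:Pi}. Your route is correct and avoids the interior-point hypothesis, but it amounts to re-deriving the content of Lemma \ref{lem:Pitimeshift} in the special case at hand, so it is somewhat longer than necessary.

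The genuine gap is in (c). You correctly reduce continuity of $\Pi_t$ at $(\xi,\bm)$ to continuity of $\phib^\xi$ at the fixed time $t$, and then, via Theorem \ref{thm:pathwise}(ii), to showing $\P(\Z^x(t)\in\partial G\setminus\U)=0$. But your proposed argument for this fact does not hold up as stated: conditioning on $\F_{r/2}$ and $\{\Z^x(r/2)\in G^\circ\}$ does \emph{not} make the conditional law of $\Z^x(r)$ absolutely continuous by inspection, because the reflected process may reach $\partial G$ at some (random) time in $(r/2,r)$, after which the dynamics include the singular local-time term and $\Z^x(r)$ is no longer a simple Gaussian shift of $\Z^x(r/2)$. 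Absolute continuity of the marginal law of an obliquely reflected Brownian motion at fixed times is a true but nontrivial fact, and your sketch of "iterating" does not close the argument. The paper avoids this entirely by appealing to \cite[Lemma 5.7]{Budhiraja2007}, which gives the stronger statement that a.s.\ $\Z^x(t)\in G^\circ$ at each deterministic $t>0$; that single citation both powers the interior-point hypothesis of Lemma \ref{lem:Pitimeshift} (for (a)/(b)) and supplies the continuity needed in (c). Replacing your hand-wavy density argument with that citation would repair the proof and also streamline (a) and (b).
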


\begin{proof}
	Let $s,t>0$ and $\xi=(x,y)\in\state$.
	Recall that $\Z^x$ is the RBM starting at $x$. 
	Then a.s.\ $\Z^x(\cdot)$ is continuous and $\Z^x(s)\in G^\circ$ (see, e.g., \cite[Lemma 5.7]{Budhiraja2007}). 
	The fact that \eqref{eq:Pitcts} holds follows from Lemma \ref{lem:Pitimeshift}.
	Next, we prove that a.s.\ $\Pi_t$ is continuous at $(\xi,\bm)$. 
	Since a.s.\ $\Pi$ is continuous at $(\xi,\bm)$ by Lemma \ref{lem:Pi},  it suffices to show that a.s.\ $\Pi(\xi,\bm)(\cdot)$ is continuous at $t$. We have a.s.\ $\Pi(\xi,\bm)=\Xib^\xi=(\Z^x,\phib^\A)$. Since $\Z^x(\cdot)$ is continuous and $\phib^\A(\cdot)$ is continuous at $t$ if $\Z^x(t)\in G^\circ$ (part (ii) of Theorem \ref{thm:pathwise}), it follows that a.s.\ $\Pi_t$ is continuous at $(\xi,\bm)$.
\end{proof}

We can now prove Theorem \ref{thm:jointmarkov}.

\begin{proof}[Proof of Theorem \ref{thm:jointmarkov}]
Let $s,t>0$. By Lemma \ref{lem:Pit} and the facts that $\Xib(s)$ is $\F_s$-measurable and $\Theta_s\bm$ is independent of $\F_s$, we have, for every bounded measurable function $\zeta:\state\mapsto\R$,
\begin{align*}
	\E[\zeta(\Xib(s+t))|\F_s]&=\E[(\zeta\circ \Pi_t)(\Xib(s),\Theta_s\bm)|\F_s]\\
	&=\E[(\zeta\circ\Pi_t)(\Xib(s),\Theta_s\bm)|\Xib(s)]\\
	&=\E[\zeta(\Xib(s+t))|\Xib(s)],
\end{align*}
which shows that $\{\Xib(t),\F_t,t\geq0\}$ satisfies the Markov property.
 
To see that Feller continuity holds, recall that $\{P_t\}$ denotes the family of transition functions on $\state$ defined in \eqref{eq:Pt} and let $\zeta:\state\mapsto\R$ be a bounded continuous function. By Lemma \ref{lem:Pit}, a.s.\ $\Pi_t(\cdot,\bm)$ is continuous at $\xi$. Thus, given a sequence $\{\xi_n\}_{n\in\N}$ in $\state$ converging to $\xi$, it follows from bounded convergence that
\begin{align*}
	\lim_{n\to\infty}(P_t \zeta)(\xi_n)&=\lim_{n\to\infty}\E[(\zeta\circ \Pi_t)(\xi_n,\bm)]\\
	&=\E[(\zeta\circ\Pi_t)(\xi,\bm)]\\
	&=(P_t \zeta)(\xi).
\end{align*}
Combining the above, we see that the family of transition functions $\{P_t\}$ is Markovian and Feller continuous.
\end{proof}

\section{Estimates for the probability an RBM visits every face}
\label{sec:rbmestimates}

In this section we obtain lower bounds on the probability that an RBM visits every face of the polyhedral cone $G$ in a compact time interval, which imply that the RBM almost surely repeatedly visits every face of $G$.
These results are used to prove stability of the joint process in Section \ref{sec:stable} and uniqueness of the stationary distribution for the joint process in Section \ref{sec:jointunique}.
Throughout this section we assume the data $\{(d_i(\cdot),n_i),i\in\allN\}$ satisfies Assumptions \ref{ass:independent}, \ref{ass:setB} and \ref{ass:projection}, and the coefficients $b(\cdot)$, $\sigma(\cdot)$ and $R(\cdot)$ satisfy Assumptions \ref{ass:holder} and \ref{ass:stable}. 

\subsection{Lower bounds for the probability an RBM visits every face in an interval}
\label{sec:lowerbound}

Define $G_\infty := G$  and for $K\in(0,\infty)$ define
	$$G_K:=\{x\in G:\abs{x}\le K\}.$$ 
Also, for $K \in (0,\infty]$, $\alpha\in U$ and $x\in G$ define the sequence of $\{\F_t\}$-stopping times $\{\tau_K^{\alpha,x}(j)\}_{j\in\N}$ as follows: 
set $\tau_K^{\alpha,x}(0):=0$ and for $j\in\N$ such that $\tau_K^{\alpha,x}(j-1)<\infty$ recursively define
\be
	\label{eq:tauj}
	\tau_K^{\alpha,x}(j):=\inf\lcb t>\tau_K^{\alpha,x}(j-1):\bigcup_{s\in[\tau_K^{\alpha,x}(j-1),t]}\allN(\Z^{\alpha,x}(s))=\allN,\;\Z^{\alpha,x}(t)\in G_K\rcb.
\ee
If $\tau_K^{\alpha,x}(j)=\infty$ for some $j\in\N$, then set $\tau_K^{\alpha,x}(k):=\infty$ for all $k>j$.
When $K=\infty$ we drop the subscript $K$ and write $\tau^{\alpha,x}(j)$ for $\tau_\infty^{\alpha,x}(j)$.

The following is the main result of this section.

\begin{prop}\label{prop:taujfinite}
	Given a compact subset $U_0$ of $U$ and $K,T\in(0,\infty)$, there exists $\gamma_0\in(0,1)$ such that for all $\alpha\in U_0$ and $x\in G_K$,
	\be\label{eq:tau1T}\P(\tau_K^{\alpha,x}(1)\le T)\ge\gamma_0.\ee
	Consequently, for each $\alpha\in U$ and $x\in G$, almost surely $\tau_K^{\alpha,x}(j)<\infty$ for all $j\in\N$.
\end{prop}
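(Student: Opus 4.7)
The plan is a support-theorem (Stroock--Varadhan type) argument combined with compactness and, for the final claim, the strong Markov property. Fix $(\alpha_0, x_0) \in U_0 \times G_K$. First I would construct a deterministic piecewise-linear path $w_0 \in \cts_0(\R^J)$ on $[0,T]$ such that the input $\x_0(\cdot) := x_0 + b(\alpha_0)\iota(\cdot) + \sigma(\alpha_0) w_0(\cdot)$ and the corresponding SP solution $\z_0 := \sm^{\alpha_0}(\x_0)$ enjoy: (i) there exist disjoint subintervals $I_1, \ldots, I_J \subset [0, T/2]$ and $\delta > 0$ with $\sup_{t \in I_i}\ip{\x_0(t), n_i} \le -\delta$ for each $i$, forcing $\z_0(t) \in F_i$ throughout $I_i$; and (ii) $\z_0(T)$ lies strictly inside $G_K$ at distance at least $\delta$ from $\partial G_K$. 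Such a $w_0$ can be assembled as a concatenation of linear segments that first drive $\z_0$ near the origin, then push the free path across each hyperplane $\{y : \ip{y, n_i} = 0\}$ in turn, and finally bring $\z_0$ to a designated interior point of $G_K$.

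Next, by Proposition \ref{prop:SP}, there exists $\ve > 0$ such that whenever $\sup_{t \in [0,T]}|\tilde w(t) - w_0(t)| < \ve$, the RBM obtained using $\tilde w$ in place of $\bm$ still satisfies both the face-visiting property and the containment requirement, so $\tau_K^{\alpha_0, x_0}(1) \le T$ on that event. Since $\bm$ assigns positive mass to every sup-norm tube around a continuous function, this yields $\P(\tau_K^{\alpha_0, x_0}(1) \le T) > 0$. To lift pointwise positivity to a uniform bound over $U_0 \times G_K$, I would exploit local stability: if $(\alpha, x)$ is near $(\alpha_0, x_0)$, then for $\tilde w$ in the $\ve/2$-tube around $w_0$, the free paths $x + b(\alpha)\iota + \sigma(\alpha)\tilde w$ and $\x_0$ differ by an amount controlled by $|x-x_0|$, $|b(\alpha)-b(\alpha_0)|$, $\norm{\sigma(\alpha)-\sigma(\alpha_0)}$, and $\ve$; together with the continuity of $\lip_\sm(\cdot)$ furnished by Lemma \ref{lem:lipcont}, this keeps the reflected paths within the robustness tube of $\z_0$. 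Hence the same Brownian estimate gives a uniform local lower bound on $\P(\tau_K^{\alpha, x}(1) \le T)$, and compactness of $U_0 \times G_K$ together with a finite subcover yields the single $\gamma_0 > 0$ asserted in \eqref{eq:tau1T}.

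The consequence $\tau_K^{\alpha, x}(j) < \infty$ a.s.\ follows by induction on $j$. Assuming $\tau_K^{\alpha, x}(j-1) < \infty$ a.s., set $\xi_{j-1} := \Z^{\alpha, x}(\tau_K^{\alpha, x}(j-1)) \in G_K$. By the strong Markov property (Theorem \ref{thm:rbm}) at $\tau_K^{\alpha, x}(j-1)$ and \eqref{eq:tau1T} applied to the fresh RBM starting from $\xi_{j-1}$, we have
\[
   \P\bigl(\tau_K^{\alpha, x}(j) - \tau_K^{\alpha, x}(j-1) \le T \bigm| \F_{\tau_K^{\alpha, x}(j-1)}\bigr) \ge \gamma_0 \quad \text{a.s.},
\]
so iterating this inequality (or, equivalently, a Borel--Cantelli argument over successive blocks of length $T$) gives $\tau_K^{\alpha, x}(j) < \infty$ a.s. The main obstacle will be engineering $w_0$ so that the visits to each face are genuinely robust: each must be a \emph{strict} crossing of the corresponding hyperplane, not a tangential touch, so that small sup-norm perturbations of the driving path preserve the face-visiting property, with extra care near the nonsmooth set $\U$. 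Once this is arranged, the Stroock--Varadhan and compactness steps are relatively standard.
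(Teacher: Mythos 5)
Your Part~1 is conceptually in line with the paper: a Stroock--Varadhan support-theorem lower bound combined with the Lipschitz continuity of the Skorokhod map and compactness of $U_0$. The main organizational difference is that the paper does not build one mega-path; instead it proves a single-face lemma (Lemma~\ref{lem:Pcup0T1}) showing that from $G_K^\ve$ the RBM hits face $F_i$, hits no other face, and lands in $G_{\wt K}^\ve$ in time $T/(J+1)$ with probability at least $\wt\gamma_0$, and then chains these $J$ events (plus a final return to $G_K$) using the strong Markov property over $J+1$ subintervals. This modular decomposition is what makes the uniformity over $U_0\times G_K$ tractable.

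There is, however, a real soft spot in your assertion that $\sup_{t\in I_i}\ip{\x_0(t),n_i}\le-\delta$ ``forces $\z_0(t)\in F_i$ throughout $I_i$.'' For oblique reflection this is not automatic: the constraint $\ip{\z_0,n_i}\ge 0$ is maintained by the cumulative push $\y_0=\sum_j \ell_j d_j$, and if $\ell_j>0$ for some $j\ne i$ with $\ip{d_j,n_i}>0$, then $\ip{\z_0,n_i}$ can stay nonnegative without $\ell_i$ ever increasing, i.e.\ without $\z_0$ touching $F_i$. Worse, pushing the free path across $\{y:\ip{y,n_i}<0\}$ triggers reflection in direction $d_i$, which may have a negative component along $n_j$ and drive $\z_0$ into another face. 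The paper's construction of $v_i$ in \eqref{eq:vi} includes exactly the correction $\sum_{j\ne i}(\ip{d_i(\alpha_0),n_j})^-\wt n_j$ to compensate for this and guarantee (Lemma~\ref{lem:11}) that $\ip{\z_i^x(t),n_j}\ge\ve$ for all $j\ne i$ throughout $[0,T]$. You flag this as ``the main obstacle,'' but the proposal does not actually supply the correction, and without it the forcing claim fails in general.

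The second part has a more serious gap. Your induction reduces (correctly, via strong Markov) to showing $\tau_K^{\alpha,y}(1)<\infty$ a.s.\ for $y\in G_K$, and you then say ``iterating'' \eqref{eq:tau1T} or a ``Borel--Cantelli argument over successive blocks of length $T$'' finishes it. That iteration does not close: if the first block of length $T$ fails, the RBM at time $T$ need not be in $G_K$, so \eqref{eq:tau1T} does not apply to the next block, and the failure probabilities do not multiply. The paper resolves this by invoking positive recurrence of the RBM (Theorem~\ref{thm:RBMstable}, which is precisely why Assumption~\ref{ass:stable} is in force in this section): it defines stopping times $\xi^{\alpha,x}(j):=\inf\{t\ge\xi^{\alpha,x}(j-1)+T:\Z^{\alpha,x}(t)\in G_K\}$, uses positive recurrence to guarantee all $\xi^{\alpha,x}(j)<\infty$ a.s., and then applies \eqref{eq:tau1T} at these return times to bound $\P(\tau_K^{\alpha,x}(1)=\infty)\le\lim_j(1-\gamma_0)^j=0$. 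Without the positive recurrence step your argument cannot conclude.
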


\begin{remark}\label{rmk:tau}
Let $\alpha\in U$ and $x\in G$.
Since $\tau^{\alpha,x}(j)\le\tau_K^{\alpha,x}(j)$ for all $K\in(0,\infty)$ and $j\in\N$ by definition, it follows that almost surely $\tau^{\alpha,x}(j)<\infty$ for all $j\in\N$. 
\end{remark}

The remaining subsections are devoted to the proof of Proposition \ref{prop:taujfinite}.
Throughout these subsections we fix a compact subset $U_0$ in $U$.

\subsection{Lower bounds for the probability an RBM visits a single face in an interval}
\label{sec:RBMestimates}

The following lemma provides a uniform lower bound on the probability that an RBM hits the $i$th face (and does not hit any other face) in a finite time interval.
For $K\in(0,\infty)$ and $\ve>0$, define
	$$G_K^\ve:=\{x\in G_K:\rho(x,\partial G)\geq\ve\},$$
where $\rho(\cdot,\cdot)$ denotes the Euclidean metric on $\R^J$ and $G_K$ is the set defined in Section \ref{sec:lowerbound}.

\begin{lem}\label{lem:Pcup0T1}
	Given $K,T\in(0,\infty)$ and $\ve\in(0,K/2)$, there exist $\wt K\ge K$ and $\wt\gamma_0\in(0,1)$ such that for all $\alpha\in U_0$, $x\in G_K^\ve$ and $i\in\allN$,
		\be\label{eq:wtgamma0}\P\lb\bigcup_{s\in[0,T]}\allN(\Z^{\alpha,x}(s))=\{i\},\;\Z^{\alpha,x}(T)\in G_{\wt K}^\ve\rb\geq\wt\gamma_0.\ee
\end{lem}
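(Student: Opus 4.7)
The plan is to combine the uniform Lipschitz continuity of the Skorokhod map (Proposition \ref{prop:SP} and Lemma \ref{lem:lipcont}) with the full topological support of Brownian motion in the uniform norm, to show that with probability bounded below uniformly in the parameters the driving Brownian motion $\bm$ steers $\Z^{\alpha,x}$ along a target deterministic trajectory that touches only the face $F_i$ and lands in $G_{\wt K}^\ve$ at time $T$.

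\textbf{Target trajectory.} Since $G$ is a simple cone, for each $i\in\allN$ I fix a point $x_i^\star\in F_i\cap G_K$ whose distance from $\bigcup_{j\neq i}F_j$ is at least some $\delta=\delta(K)>0$, together with an interior anchor $x^\circ\in G_K^{2\ve}$. For each $(\alpha,x,i)\in U_0\times G_K^\ve\times\allN$ I would then construct a piecewise affine driving control $w^\star=w^\star_{\alpha,x,i}\in\cts_0(\R^J)$, with Lipschitz norm bounded uniformly in $(\alpha,x,i)$, so that the unconstrained input
\[
  \x^\star(t):=x+b(\alpha)t+\sigma(\alpha)w^\star(t),\qquad t\in[0,T],
\]
has Skorokhod solution $\z^\star=\sm^\alpha(\x^\star)$ with the following properties: (a) $\z^\star$ passes through $G^\circ$ from $x$ to a point just above $x_i^\star$ on $[0,T/3]$; (b) on $[T/3,2T/3]$ the input satisfies $\ip{\x^\star(s),n_i}\leq-c$ for a fixed $c=c(T)>0$ while $\ip{\x^\star(s),n_j}\geq\delta$ for $j\neq i$, so that $\z^\star(s)\in F_i$ with $\allN(\z^\star(s))=\{i\}$ throughout; (c) $\z^\star$ returns through $G^\circ$ to $x^\circ$ on $[2T/3,T]$. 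Inverting $\sigma(\alpha)$ (possible since $\sigma(\alpha)\in\M^{J\times J}$) and canceling $b(\alpha)$ makes this construction explicit and allows $w^\star$ to depend continuously on $(\alpha,x)$.

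\textbf{Lipschitz transfer.} Let $\lip_0:=\sup_{\alpha\in U_0}\lip_\sm(\alpha)<\infty$ by Lemma \ref{lem:lipcont}, and choose $\eta>0$ small enough that $\lip_0\eta<\delta/4\wedge\ve/2$ and $\eta<c/2$. On the event
\[
  \Omega_0:=\lcb\sup_{t\in[0,T]}\abs{\sigma(\alpha)(\bm(t)-w^\star(t))}\leq\eta\rcb,
\]
Proposition \ref{prop:SP} yields $\sup_{t\in[0,T]}\abs{\Z^{\alpha,x}(t)-\z^\star(t)}\leq\lip_0\eta$ together with a matching bound on the constraining terms. This forces: (i) $\ip{\Z^{\alpha,x}(t),n_j}\geq\delta/2$ for all $t\in[0,T]$ and $j\neq i$, so no face other than $F_i$ is ever touched; (ii) since the perturbed input still satisfies $\ip{\x(s),n_i}\leq-c+\eta<0$ on $[T/3,2T/3]$, the local time $\L^{\alpha,x,i}(T)$ is strictly positive, and Lemma \ref{lem:L} then yields some $s\in[0,T]$ with $\Z^{\alpha,x}(s)\in F_i$; and (iii) $|\Z^{\alpha,x}(T)-x^\circ|\leq\lip_0\eta<\ve/2$, so $\Z^{\alpha,x}(T)\in G_{\wt K}^\ve$ with $\wt K:=K+\ve/2$, independently of $(\alpha,x)$.

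\textbf{Support bound and uniformity.} Since $w^\star$ is absolutely continuous with Cameron--Martin norm bounded uniformly over the parameters, the support theorem for Brownian motion gives $\P(\Omega_0)>0$. The main obstacle will be establishing uniformity over $(\alpha,x,i)\in U_0\times G_K^\ve\times\allN$; I would handle this by arranging $w^\star$ to depend affinely on $x$ with coefficients continuous in $\alpha$, so that $(\alpha,x)\mapsto\P(\Omega_0)$ is continuous and strictly positive on the compact set $U_0\times G_K^\ve$. Taking its infimum over this set and then over the finite set $\allN$ produces the desired $\wt\gamma_0\in(0,1)$.
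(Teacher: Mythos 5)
Your overall strategy matches the paper's: build a deterministic driving control so the Skorokhod image touches only $F_i$, use the support theorem for Brownian motion to stay near it with positive probability, and transfer through the Lipschitz continuity of the Skorokhod map, with compactness giving uniformity. However, there is a genuine gap in the construction of the target trajectory in step (b).

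You assert that maintaining $\ip{\x^\star(s),n_i}\le -c$ while keeping $\ip{\x^\star(s),n_j}\ge\delta$ for $j\ne i$ implies $\allN(\z^\star(s))=\{i\}$. This implication does not follow: the constraining term along $F_i$ is $\ell^i(s)\,d_i(\alpha)$, and if $\ip{d_i(\alpha),n_j}<0$ for some $j\neq i$, then the oblique reflection off $F_i$ actively pushes the constrained path \emph{toward} $F_j$. Concretely, if only $F_i$ is active then $\ip{\z^\star(s),n_j}=\ip{\x^\star(s),n_j}+\ell^i(s)\ip{d_i(\alpha),n_j}$, and the second term can be negative and large in magnitude, so $\ip{\x^\star,n_j}\ge\delta$ alone does not keep $\z^\star$ off $F_j$. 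The paper's proof resolves exactly this by building the compensating terms $(\ip{d_i(\alpha_0),n_j})^-\,\wt n_j$ into the velocity vector $v_i$ in \eqref{eq:vi}: as the unconstrained input is driven toward $F_i$, it is simultaneously driven \emph{away} from each $F_j$ by precisely the amount that the reflection could push it back, and this is verified quantitatively in \eqref{eq:zixtnj}. Your construction must incorporate a similar compensation (or at least carefully relate $\delta$, $c$, and $\sup_{j,\alpha}|\ip{d_i(\alpha),n_j}|$) for the claim in (b) to hold; as written, it fails for reflection matrices with negative off-diagonal entries in $N^T R(\alpha)$, which are the relevant nontrivial case.

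A secondary (non-fatal) difference worth noting: the paper fixes a single control $w_i$ depending only on $i$ and a reference parameter $\alpha_0$, and shows it works simultaneously for all $x\in G_K^\ve$; this makes the uniform-in-$x$ lower bound immediate from $\P(A_i)>0$ and only requires a compactness argument in $\alpha$. Your control $w^\star_{\alpha,x,i}$ depends on $x$, which forces a continuity-plus-compactness argument for $(\alpha,x)\mapsto\P(\Omega_0)$. This can be made to work (e.g., via the Cameron--Martin inequality with a uniformly bounded Cameron--Martin norm), but it is an avoidable complication compared with the paper's construction.
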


Fix $K,T\in(0,\infty)$ and $\ve\in(0,K/2)$.
Given $\alpha_0\in U_0$ we construct, for each $i\in\allN$, a continuous path $w_i$ on $[0,T]$ (for some $T<\infty$) taking values in $\R^J$ such that if the driving Brownian motion $\bm$ remains within a certain specified neighborhood of $w_i$ on $[0,T]$, then for any $\alpha$ in a neighborhood $U_{\alpha_0}$ of $\alpha_0$ and $x\in G_K^\ve$, the RBM $\Z^{\alpha,x}$ will hit face $F_i$ in the interval $[0,T]$, will not hit any other face in the interval, and will lie in the set $G_{\wt K}^\ve$ at the end of the interval, for some $\wt K\ge K$, which may depend on $\alpha_0$.
Since $\bm$ remains in the specified neighborhood of $w_i$ on $[0,T]$ with positive probability and $U_0$ is compact, this will complete the proof of the lemma.

Before proving Lemma \ref{lem:Pcup0T1}, we define the path $w_i$ and summarize relevant properties.
Since the vectors $\{n_j,j\in\allN\}$ are linearly independent by Assumption \ref{ass:independent}, for each $i\in\allN$ there is a unique vector $\wt n_i\in\partial G$ such that 
		\be\label{eq:wtni}\ip{\wt n_i,n_i}=1,\qquad\ip{\wt n_i,n_j}=0\quad\forall\; j\neq i.\ee
	Define $u\in G^\circ$ by $u:=\sum_{i=1}^J\wt n_i$ so that
		\be\label{eq:u}\ip{u,n_i}=1,\qquad i\in\allN.\ee 
	For each $i\in\allN$ define $v_i\in\R^J$ by
		\be\label{eq:vi}v_i:=-\wt n_i+\sum_{j\neq i}(\ip{d_i(\alpha_0),n_j})^-\wt n_j,\ee
	and the continuous function $w_i:[0,T]\mapsto\R^J$ by
		\be\label{eq:wi}
	w_i(t):=(\sigma(\alpha_0))^{-1}\lb \bbr_1v_i\lb t\wedge\frac{T}{2}\rb+\bbr_2u\lb t-\frac{T}{2}\rb^+-b(\alpha_0)t\rb,\qquad t\in[0,T],
	\ee
	where
		\be\label{eq:r1r2}\bbr_1:=\frac{2(K+\ve)}{T},\qquad \bbr_2:= \frac{4\ve}{T}.\ee
	For $x\in G_K^\ve$ and $i\in\allN$ define $\x_i^x\in C([0,T],\R^J)$, for $t\in[0,T]$, by
	\begin{align}
		\label{eq:xx}\x_i^x(t):&= x+b(\alpha_0)t+\sigma(\alpha_0)w_i(t)\\
		\label{eq:xixcases}&=x+\bbr_1v_i\lb t\wedge\frac{T}{2}\rb+\bbr_2u\lb t-\frac{T}{2}\rb^+.
                	\end{align}
	Define $\y_i^x,\z_i^x\in\cts([0,T],\R^J)$, for $t\in[0,T]$, by 
	\begin{align}
		\label{eq:yx}\y_i^x(t)&:=\lb\ip{x,n_i}-\bbr_1\lb t\wedge\frac{T}{2}\rb\rb^- d_i(\alpha_0),\\
		\label{eq:zx}\z_i^x(t)&:=\x_i^x(t)+\y_i^x(t).
	\end{align}
Finally, also define 
	\be\label{eq:wtK}\wt K:=\lip_\sm(\alpha_0)\lb K+(K+\ve)\max_{j\in\allN}|v_j|+2\ve|u|\rb+\ve,\ee
where $\lip_\sm(\alpha_0)<\infty$ denotes the Lipschitz constant of the SM associated with the data $\{(d_j(\alpha_0),n_j),j\in\allN\}$; see Proposition \ref{prop:SP}.

\begin{lem}	\label{lem:11}
	The pair $(\z_i^x,\y_i^x)$ is the solution to the SP $\{(d_j(\alpha_0),n_j),j\in\allN\}$ for $\x_i^x$ on $[0,T]$ and the following inequalities hold:
	\begin{align}
	\label{eq:zixTwtKdelta}\sup_{s\in[0,T]}|\z_i^x(s)|+\sup_{s\in[0,T]}|\y_i^x(s)|&\le\wt K-\ve,
	\end{align} 
	and
	\begin{align}
	\label{eq:ipzixnj}\ip{\z_i^x(t),n_j}&\ge\ve,&& j\neq i,\quad t\in[0,T],\\
	\label{eq:ipziTnj}\ip{\z_i^x(T),n_j}&\ge2\ve,&& j\in\allN,\\
	\label{eq:ellixiT}|\y_i^x(T)|&\ge\ve.
	\end{align}
\end{lem}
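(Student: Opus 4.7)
The plan is direct verification of an explicit candidate: each clause of the SP and each of the four displayed inequalities will be checked by straightforward computation, separately on $[0, T/2]$ and on $[T/2, T]$, where $\x_i^x$ has simple affine behavior. The engine of the proof will be three algebraic identities extracted from \eqref{eq:wtni}--\eqref{eq:vi} and \eqref{eq:u}, namely
\begin{equation*}
\ip{v_i, n_i} = -1, \qquad \ip{v_i, n_j} = (\ip{d_i(\alpha_0), n_j})^- \text{ for } j \neq i, \qquad \ip{u, n_j} = 1 \text{ for all } j \in \allN,
\end{equation*}
combined with $\ip{d_i(\alpha_0), n_i} = 1$.

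I would first verify that $(\z_i^x, \y_i^x)$ solves the SP for $\x_i^x$. Writing $\y_i^x(t) = \mu(t) d_i(\alpha_0)$ with $\mu(t) := (\ip{x, n_i} - \bbr_1(t \wedge T/2))^-$, the function $\mu$ is manifestly nondecreasing, so $\y_i^x(t) - \y_i^x(s) \in \conv(\{d_i(\alpha_0)\})$ for $s < t$; condition 3 of Definition \ref{def:SP} then reduces to showing $\mu$ grows only when $\z_i^x \in F_i$. Using the first identity above and the elementary fact $a + a^- = a^+$, one computes $\ip{\z_i^x(t), n_i} = (\ip{x, n_i} - \bbr_1 t)^+$ on $[0, T/2]$ (and $\mu$ is constant on $[T/2, T]$), so this property holds. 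To confirm $\z_i^x(t) \in G$, I would then compute $\ip{\z_i^x(t), n_j}$ for each $j \neq i$; on $[0, T/2]$ this becomes a sign analysis of $\ip{d_i(\alpha_0), n_j}$ together with $\ip{x, n_j} \geq \ve$ (since $x \in G_K^\ve$), giving $\ip{\z_i^x(t), n_j} \geq \ve$, and on $[T/2, T]$ the third identity makes all inner products nondecreasing, which also immediately gives \eqref{eq:ipzixnj}.

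For the endpoint inequalities, because $|x| \leq K$ implies $\ip{x, n_i} \leq K < K + \ve = \bbr_1 T/2$, we get $\ip{\z_i^x(T/2), n_i} = 0$, hence $\ip{\z_i^x(T), n_i} = \bbr_2 T/2 = 2\ve$; for $j \neq i$, $\ip{\z_i^x(T), n_j} \geq \ve + 2\ve \geq 2\ve$, establishing \eqref{eq:ipziTnj}. For \eqref{eq:ellixiT}, $|\y_i^x(T)| = (K + \ve - \ip{x, n_i}) |d_i(\alpha_0)| \geq \ve$, since $\ip{x, n_i} \leq K$ and $|d_i(\alpha_0)| \geq \ip{d_i(\alpha_0), n_i} = 1$ by Cauchy--Schwarz.

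Finally, \eqref{eq:zixTwtKdelta} will follow from the Lipschitz continuity of the SM in Proposition \ref{prop:SP} applied to the pair $(\z_i^x, \y_i^x)$ just constructed and the trivial SP solution $(x, 0)$ for the constant path $t \mapsto x \in G$:
\begin{equation*}
\sup_{s \in [0, T]} |\z_i^x(s) - x| + \sup_{s \in [0, T]} |\y_i^x(s)| \leq \lip_\sm(\alpha_0) \sup_{s \in [0, T]} |\x_i^x(s) - x|.
\end{equation*}
From \eqref{eq:xixcases} the right-hand supremum is at most $(K + \ve)|v_i| + 2\ve |u| \leq (K + \ve) \max_j |v_j| + 2\ve |u|$, and combining with $|x| \leq K$ and $\lip_\sm(\alpha_0) \geq 1$ gives the bound $\wt K - \ve$ via \eqref{eq:wtK}. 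The only mildly delicate step is the sign analysis for $j \neq i$ on $[0, T/2]$, where the contribution of $\bbr_1 t v_i$ to $\ip{\x_i^x(t), n_j}$ and the reflective contribution of $\mu(t) d_i(\alpha_0)$ must jointly be shown to maintain $\ip{\z_i^x(t), n_j} \geq \ve$; the case split on the sign of $\ip{d_i(\alpha_0), n_j}$ and a further split in the negative case on whether $\bbr_1 t \leq \ip{x, n_i}$ handle this routinely using the algebraic identities above.
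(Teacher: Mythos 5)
Your proposal is correct and follows essentially the same route as the paper's proof: verify the SP conditions via the explicit inner-product computations (the paper's \eqref{eq:zixtni}--\eqref{eq:zixtnj} are exactly the identities you derive from $\ip{v_i,n_j}$ and $\ip{u,n_j}$), establish \eqref{eq:ipzixnj}--\eqref{eq:ellixiT} by the same sign analysis, and obtain \eqref{eq:zixTwtKdelta} from the Lipschitz continuity of the SM. The only cosmetic difference is that you compare against the reference solution $(x,0)$ for the constant input $x$ rather than $(0,0)$ for input $0$, which requires the additional (true, and standard) observation that $\lip_\sm(\alpha_0)\ge 1$.
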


We  first use this result to} prove Lemma \ref{lem:Pcup0T1}, and then present the proof of Lemma \ref{lem:11}.

\begin{proof}[Proof of Lemma \ref{lem:Pcup0T1}]
	For $i\in\allN$ define $A_i\subset\Omega$   by
	\be\label{eq:Ai}A_i:=\lcb\sup_{s\in[0,T]}|\bm(s)-w_i(s)|<\frac{\ve}{4(2\lip_\sm(\alpha_0)+1)(\norm{\sigma(\alpha_0)}+1)}\rcb.\ee
	Since Wiener measure assigns positive probability to nonempty open sets in $C([0,T],\R^J)$ (see, e.g., \cite{stroock1972support}) it follows that $\P(A_i)>0$. 
	By \eqref{eq:wi}, for $i\in\allN$,
		\be\label{eq:sups0Twi}\sup_{s\in[0,T]}|w_i(s)|\leq\wh K:=\norm{(\sigma(\alpha_0))^{-1}}\lb \frac{\bbr_1\max_{i\in\allN}|v_i|+\bbr_2|u|}{2}+|b(\alpha_0)|\rb T.\ee
Recall the definition of $\wt K$ from \eqref{eq:wtK}, let $c\in(0,1)$ satisfy
		\be\label{eq:c}0<c<\min\lb1,\frac{1}{2\lip_\sm(\alpha_0)}, \frac{\ve}{4(2\lip_\sm(\alpha_0)+1)(T+\wh K+\wt K)}\rb,\ee
and choose a neighborhood $U_{\alpha_0}$ of $\alpha_0$ in $U$ such that for all $\alpha\in U_0$,
	\begin{align}\label{eq:wtve}
		|b(\alpha)-b(\alpha_0)|+\norm{\sigma(\alpha)-\sigma(\alpha_0)}+\norm{(R(\alpha_0))^{-1}}\norm{R(\alpha)-R(\alpha_0)}<c.
	\end{align}
	We are left to show that for all $\alpha\in U_{\alpha_0}$, $x\in G_K^\ve$ and $i\in\allN$,
	\be\label{eq:Aisubset}A_i\subset\lcb\bigcup_{s\in[0,T]}\allN(\Z^{\alpha,x}(s))=\{i\},\Z^{\alpha,x}(T)\in G_{\wt K}^\ve\rcb.\ee

	We show  that on the set $A_i$, for any $\alpha\in U_{\alpha_0}$,
		\be\label{eq:ZYlip}\sup_{s\in[0,T]}|\Z^{\alpha,x}(s)-\z_i^x(s)|+\sup_{s\in[0,T]}|\Y^{\alpha,x}(s)-\y_i^x(s)|<\ve.\ee
	        It then follows from \eqref{eq:zixTwtKdelta} and \eqref{eq:ipzixnj}--\eqref{eq:ellixiT}
                of Lemma \ref{lem:11} that the RBM $\Z^{\alpha,x}$ does not hit the faces $F_j$, $j\ne i$, on the interval $[0,T]$, $\Z^{\alpha,x}(T)\in G_{\wt K}^\ve$, and $|\Y^{\alpha,x}(T)|>0$.
	Since $\Y^{\alpha,x}$ can only increase when $\Z^{\alpha,x}$ lies in $\partial G$, this will imply that, on the set $A_i$,
		\be\label{eq:allNZx}\bigcup_{s\in[0,T]}\allN(\Z^{\alpha,x}(s))=\{i\},\qquad\Z^{\alpha,x}(T)\in G_{\wt K}^\ve.\ee
 
	Define the $J$-dimensional $\{\F_t\}$-adapted processes $\wt\Y^{\alpha,x}$ and $\wt\X^{\alpha,x}$ by 
		\be\label{eq:wtYvey}\wt\Y^{\alpha,x}(t):= R(\alpha_0)\L^{\alpha,x}(t)=\Y^{\alpha,x}(t)+(R(\alpha_0)-R(\alpha))\L^{\alpha,x}(t),\qquad t\ge0.\ee
	and
	\be\label{eq:wtXvey}\wt\X^{\alpha,x}(t):= x+b(\alpha)t+\sigma(\alpha)\bm(t)-\wt\Y^{\alpha,x}(t)+\Y^{\alpha,x}(t),\qquad t\geq 0.\ee
        
	Then, due to the definition of $R(\alpha_0)$ in \eqref{eq:R} and the fact that the $i$th component of $\L^{\alpha,x}$ is nondecreasing and can only increase when $\Z^{\alpha,x}$ lies in face $F_i$ by Lemma \ref{lem:L}, it follows from the definition of the SP (Definition \ref{def:SP}) that $(\Z^{\alpha,x},\wt\Y^{\alpha,x})$ is the solution to the SP $\{(d_i(\alpha_0),n_i),i\in\allN\}$ for $\wt\X^{\alpha,x}$. 	
	Using the definition of $\wt\Y^{\alpha,x}$, the fact that $\Y^{\alpha,x}(\cdot)=R(\alpha)\L^{\alpha,x}(\cdot)$, the bound \eqref{eq:wtve},  the fact that $(g_i^x, h_i^x)$ solves the SP $\{(d_i(\alpha_0), n_i), i \in \allN\}$ for $f_i^x$ by Lemma \ref{lem:11}, the Lipschitz continuity of the SM, the bound on $g_i^x$ in \eqref{eq:zixTwtKdelta}, and our choice of $c>0$ in \eqref{eq:c},
          and the choice of neighborhood $U_{\alpha_0}$, which guarantees  \eqref{eq:wtve}, 
        we have, for $t\in[0,T]$,
	\begin{align}
	\label{eq:YYtilde}
		\sup_{s\in[0,t]}|\wt\Y^{\alpha,x}(s)-\Y^{\alpha,x}(s)|&\le\norm{R(\alpha_0)-R(\alpha)}\norm{(R(\alpha_0))^{-1}}\sup_{s\in[0,t]}|\wt\Y^{\alpha,x}(s)|\\
		\notag
		&\le c\lb\sup_{s\in[0,t]}|\wt\Y^{\alpha,x}(s)-\y_i^x(s)|+\sup_{s\in[0,t]}|\y_i^x(s)|\rb\\
		\notag
		&\le c\lb\lip_\sm(\alpha_0)\sup_{s\in[0,t]}|\wt\X^{\alpha,x}(s)-\x_i^x(s)|+\wt K\rb\\
		\notag
		&<\frac{1}{2}\sup_{s\in[0,t]}|\wt\X^{\alpha,x}(s)-\x_i^x(s)|+c\wt K.
	\end{align}
	By the definition of $\wt\X^{\alpha,x}$ in \eqref{eq:wtXvey}, the definition of $f_i^x$ in \eqref{eq:xx}, the bound \eqref{eq:wtve}, the bound on $w_i$ in \eqref{eq:sups0Twi} and the last display, for $t\in[0,T]$,
	\begin{align*}
		\sup_{s\in[0,t]}|\wt\X^{\alpha,x}(s)-f_i^x(s)|&\le\abs{b(\alpha)-b(\alpha_0)}t+(\norm{\sigma(\alpha_0)}+\norm{\sigma(\alpha)-\sigma(\alpha_0)})\sup_{s\in[0,t]}\abs{\bm(s)-w_i(s)}\\
		\notag
		&\qquad+\norm{\sigma(\alpha)-\sigma(\alpha_0)}\sup_{s\in[0,t]}\abs{w_i(s)}+\sup_{s\in[0,t]}|\wt\Y^{\alpha,x}(s)-\Y^{\alpha,x}(s)|\\
		\notag
		&< c(T+\wh K+\wt K)+(\norm{\sigma(\alpha_0)}+c)\sup_{s\in[0,T]}\abs{\bm(s)-w_i(s)}\\
		&\qquad+\frac{1}{2}\sup_{s\in[0,t]}|\wt\X^{\alpha,x}(s)-\x_i^x(s)|.
	\end{align*}
	Rearranging the last display and using the bound on $c$ in \eqref{eq:c}, we see that for $t\in[0,T]$,
	\begin{align*}
		\sup_{s\in[0,t]}|\wt\X^{\alpha,x}(s)-f_i^x(s)|&<\frac{\ve}{2(2\lip_\sm(\alpha_0)+1)}+2(\norm{\sigma(\alpha_0)}+1)\sup_{s\in[0,T]}\abs{\bm(s)-w_i(s)}.
	\end{align*}
	Therefore, on the set $A_i$ defined in \eqref{eq:Ai}, for $t\in[0,T]$, 
	\begin{align*}
		\sup_{s\in[0,t]}|\wt\X^{\alpha,x}(s)-f_i^x(s)|&<\frac{\ve}{2\lip_\sm(\alpha_0)+1}.
	\end{align*}
	Since $(\Z^{\alpha,x},\wt Y^{\alpha,x})$ and $(\z_i^x,\y_i^x)$ are the respective solutions to the SP $\{(d_j(\alpha_0),n_j),j\in\allN\}$ on $[0,T]$ for $\wt\X^{\alpha,x}$ and $\x_i^x$ (see Lemma \ref{lem:11}), using 
          the Lipschitz continuity of the SM (Proposition \ref{prop:SP}), the bound
        \eqref{eq:YYtilde} and the fact that \eqref{eq:c} implies $c\wt K < \ve/2$, 
          we have, for $t\in[0,T]$, 
	\begin{align*}
	&\sup_{s\in[0,t]}|\Z^{\alpha,x}(s)-\z_i^x(s)|+\sup_{s\in[0,t]}|\Y^{\alpha,x}(s)-\y_i^x(s)|\\
	&\qquad\leq\sup_{s\in[0,t]}|\Z^{\alpha,x}(s)-\z_i^x(s)|+\sup_{s\in[0,t]}|\wt\Y^{\alpha,x}(s)-\y_i^x(s)|+\sup_{s\in[0,t]}|\Y^{\alpha,x}(s)-\wt\Y^{\alpha,x}(s)|\\
	&\qquad\le\lb\lip_\sm(\alpha_0)+\frac{1}{2}\rb\sup_{s\in[0,t]}|\wt\X^{\alpha,x}(s)-\x_i^x(s)|+c\wt K \\
	&\qquad<\ve.
	\end{align*}
	We conclude that for $\alpha\in U_{\alpha_0}$, \eqref{eq:ZYlip} holds on the set $A_i$.
	Since $\P(A_i)>0$, this proves there exists $\wt\gamma_0>0$ such that \eqref{eq:wtgamma0} holds for all $\alpha\in U_{\alpha_0}$.  
	The extension to a uniform lower bound for all of $\alpha \in U_0$ then follows from the compactness of $U_0$.
\end{proof}

\begin{proof}[Proof of Lemma \ref{lem:11}]
	We first verify that $(\z_i^x,\y_i^x)$ is the solution to the SP $\{(d_j(\alpha_0),n_j),j\in\allN\}$ for $\x_i^x$ on $[0,T]$. 
	Since $\langle x, n_i \rangle \geq 0$ for $x \in G$, clearly $\z(0)=\x(0)$ and condition 1 of the SP holds.
	Next, by \eqref{eq:zixtni} and \eqref{eq:zixtnj}, we see that $\z_i^x(t)\in G$ for all $t\in[0,T]$, so condition 2 of the SP holds.
	Finally, by \eqref{eq:yx}, $\y_i^x$ is constant on $[T/2,T]$ and $\y_i^x(t)-\y_i^x(s)\ne0$ for $0\le s<t<T/2$ only if
	$$\ip{x,n_i}-\bbr_1t<0,$$
	in which case $\y_i^x(t)-\y_i^x(s)=\theta d_i$ for some $\theta \geq 0$
        and, by \eqref{eq:zx} and \eqref{eq:xx}, \eqref{eq:vi} and \eqref{eq:wtni}, 
	$$\ip{\z_x^i(t),n_i}=\ip{x,n_i}-\bbr_1t+\lb\ip{x,n_i}-\bbr_1t\rb^-=0,$$
	which implies $i\in\cup_{u\in(s,t]}\allN(\z_i^x(u))$.
	Thus, condition 3 of the SP holds.
	This concludes the verification.

	Next we prove \eqref{eq:zixTwtKdelta} holds.
	By \eqref{eq:xixcases}, \eqref{eq:r1r2}, the fact that $x\in G_K^\ve$, and \eqref{eq:wtK},
	\begin{align*}
	\sup_{t\in[0,T]}|\x_i^x(t)|&\leq|x|+(K+\ve)|v_i|+2\ve|u|\leq\frac{\wt K-\ve}{\lip_\sm(\alpha_0)}.
	\end{align*}
	Then by the Lipschitz continuity of the SM, the fact that $(0,0)$ is a solution to the SP $\{(d_j(\alpha_0),n_j),j\in\allN\}$ with input 0 (here $0$ denotes the $J$-dimensional function on $[0,\infty)$ that is identically zero), the definition of $\wt K$ in \eqref{eq:wtK}, we see that \eqref{eq:zixTwtKdelta} holds.

	Finally we prove \eqref{eq:ipzixnj}--\eqref{eq:ellixiT} hold.
	By \eqref{eq:xixcases}--\eqref{eq:zx}, \eqref{eq:wtni}--\eqref{eq:vi} and \eqref{eq:r1r2}, for $t\in[0,T]$,
	\begin{align}\label{eq:zixtni}
	\ip{\z_i^x(t),n_i}&=\ip{x,n_i}-\bbr_1\lb t\wedge\frac{T}{2}\rb+\bbr_2\lb t-\frac{T}{2}\rb^++\lb\ip{x,n_i}-\bbr_1\lb t\wedge\frac{T}{2}\rb\rb^-\\ \notag
	&\ge \bbr_2\lb t-\frac{T}{2}\rb^+,
	\end{align}
	and, for $j\neq i$ and $t\in[0,T]$,
	\begin{align}\label{eq:zixtnj}
	\ip{\z_i^x(t),n_j}&=\ip{x,n_j}+\bbr_1(\ip{d_i(\alpha_0),n_j})^-\lb t\wedge\frac{T}{2}\rb+\bbr_2\lb t-\frac{T}{2}\rb^+\\ \notag
	&\qquad+\lb\ip{x,n_i}-\bbr_1\lb t\wedge\frac{T}{2}\rb\rb^-\ip{d_i(\alpha_0),n_j}\\ \notag
	&\ge\ip{x,n_j}+\bbr_2\left(t-\frac{T}{2}\right)^-\\ \notag
	&\ge\ve+\frac{4\ve}{T}\lb t-\frac{T}{2}\rb^+,
	\end{align}
	where we have used $\bbr_2=4 \varepsilon/T$ and the fact that   $x\in G_K^\ve$ implies $\ip{x,n_j}\ge\ve$.
	It follows from the last two displays that \eqref{eq:ipzixnj} and \eqref{eq:ipziTnj} hold.
	Finally, by \eqref{eq:yx}, the fact that $x\in G_K^\ve$, \eqref{eq:r1r2} and the fact that $|d_i(\alpha)|\ge1$,
	\begin{align*}
	|\y_i^x(T)|&=\lb\ip{x,n_i}-K-\ve\rb^-|d_i(\alpha)| \ge\ve,
	\end{align*}
	so \eqref{eq:ellixiT} holds.
\end{proof}

\subsection{Proof of Proposition \ref{prop:taujfinite}}
\label{sec:lowerboundproof}

\begin{proof}[Proof of Proposition \ref{prop:taujfinite}]
	Fix $U_0$ and $K,T\in(0,\infty)$ as in the statement of the proposition.
	Let $S_i:=\frac{iT}{J+1}$ for $i\in\{0\}\cup\allN$. 
	By $J$ applications of Lemma \ref{lem:Pcup0T1}, there exist $K:= K_1\leq K_2\leq\cdots\leq K_J\leq K_{J+1}<\infty$ and $\gamma_1,\dots,\gamma_J>0$ such that for all $\alpha\in U_0$, $x\in G_{K_i}^\ve$ and $i\in\allN = \{1, \ldots, J\}$, 
		$$\P\lb\bigcup_{s\in[0,S_1]}\allN(\Z^{\alpha,x}(s))=\{i\},\;\Z^{\alpha,x}(S_1)\in G_{K_{i+1}}^\ve\rb\geq\gamma_i.$$
	Set $\gamma_0:=\gamma_1\times\cdots\times \gamma_J\times\gamma_{J+1}>0$, where
		$$\gamma_{J+1}:=\inf_{\alpha\in U_0}\inf_{z\in G_{K_{J+1}}^\ve}\P(\Z^{\alpha,z}(S_1)\in G_K)>0$$
	is positive because $(\alpha,z)\mapsto\Z^{\alpha,z}$ is continuous on the compact set $U_0\times G_{K_{J+1}}$ (see, e.g., \cite[Lemma 2.17]{Lipshutz2017}) and $\P(\Z^{\alpha,z}(S_1)\in G_K)>0$ for each $\alpha\in U_0$ and $z\in G_{K_{J+1}}$.
	By the strong Markov property for $\Z^{\alpha,x}$ and the last two displays, we have, for $\alpha\in U_0$ and $x\in G_K^\ve$,  
	\begin{align*}
	\P\lb\tau_K^{\alpha,x}(1)\le T\rb&\geq\P\lb\bigcup_{s\in[S_{i-1},S_i]}\allN(\Z^{\alpha,x}(s))=\{i\}\;\forall\;i\in\allN,\;\Z^{\alpha,x}(T)\in G_K\rb\\
	&\geq\prod_{i=1}^J\inf_{z\in G_{K_i}^\ve}\P\lb
	\Z^{\alpha,z}(S_1)\in G_{K_{i+1}}^\ve,\bigcup_{s\in[0,S_1]}\allN(\Z^{\alpha,z}(s))=\{i\}\rb\\
	&\qquad\times\inf_{z\in G_{K_{J+1}}^\ve}\P\lb\Z^{\alpha,z}(S_1)\in G_K\rb\\
	&\geq\gamma_0.
	\end{align*}
This proves \eqref{eq:tau1T} holds for all $\alpha\in U_0$ and $x\in G_K$.

	Let $\alpha\in U$ and $x\in G$.
	By the strong Markov property for $\Z^{\alpha,x}$, in order to prove that almost surely $\tau^{\alpha,x}(j)<\infty$ for all $j\in\N$, it suffices to show that almost surely $\tau^{\alpha,x}(1)<\infty$.
	Choose $K\in(0,\infty)$ such that $x\in G_K$ and a compact set $U_0$ in $U$ such that $\alpha\in U$.
	Let $T\in(0,\infty)$ be arbitrary.
	Define the sequence of stopping times $\{\xi^{\alpha,x}(j),j\in\N\}$ by $\xi^{\alpha,x}(0):=0$ and recursively set
		$$\xi^{\alpha,x}(j):=\inf\{t\ge\xi^{\alpha,x}(j-1)+T:\Z^{\alpha,x}(t)\in G_K\},\qquad j\in\N.$$
	Since $\Z^{\alpha,x}$ is positive recurrent (Theorem \ref{thm:RBMstable}), almost surely $\xi^{\alpha,x}(j)<\infty$ for each $j\in\N$.
	Thus, by the strong Markov property and \eqref{eq:tau1T},
	\begin{align*}
	\P\lb\tau_K^{\alpha,x}(1)=\infty\rb&\le\P\lb\xi^{\alpha,x}(j)<\infty,\;\tau_K^{\alpha,x}(1)\ge\xi^{\alpha,x}(j)+T,\;j\in\N\rb\\
	&\le\prod_{j=1}^\infty\P(\tau_K^{\alpha,x}(1)\ge\xi^{\alpha,x}(j)+T|\xi^{\alpha,x}(j)<\infty)\\
	&\le\lim_{j\to\infty}(1-\gamma)^j=0.
	\end{align*}
	This completes the proof.
\end{proof}

\section{Uniform stability of the joint processes}\label{sec:stable}

In this section we prove stability properties of the joint process.
The proof involves constructing a Lyapunov function for a discrete skeleton of the joint process.
Throughout this section we assume the data $\{(d_i(\cdot),n_i),i\in\allN\}$ satisfies Assumptions \ref{ass:independent}, \ref{ass:setB} and \ref{ass:projection}, and the coefficients $b(\cdot)$, $\sigma(\cdot)$ and $R(\cdot)$ satisfy Assumptions \ref{ass:holder} and \ref{ass:stable}. 
Also, as usual, $W$ is a $J$-dimensional $\{{\mathcal F}_t\}$-adapted Brownian motion on $(\Omega, {\mathcal F}, \mathbb{P})$.

\subsection{Lyapunov function for the RBM}

We first recall a Lyapunov function for the RBM that was introduced in \cite{Atar2001}.
For $\alpha\in U$ and $x\in G$, define $\x^{\alpha,x}\in\cts_G(\R^J)$ by 
	$$\x^{\alpha,x}(t):= x+b(\alpha)t,\qquad t\ge0,$$
and let $(\z^{\alpha,x},\y^{\alpha,x})$ denote the solution to the SP $\{(d_i(\alpha),n_i),i\in\allN\}$ for $\x^{\alpha,x}$.
Define
	\be\label{eq:RBMlyap}\lyap^\alpha(x):=\inf\{t\ge0:\z^{\alpha,x}(t)=0\}.\ee
In \cite{Atar2001} the function $\lyap^\alpha$ is used as a Lyapunov function to prove ergodicity of the RBM, and in \cite{Budhiraja2007} the exponential function $e^{r\lyap^\alpha}$, for some $r>0$, is used to prove geometric ergodicity of the RBM.
In the following lemma we recall some useful properties of $\lyap^\alpha$.
Let $\partial\C^\alpha$ denote the boundary of the cone $\C^\alpha$ defined in \eqref{eq:Calpha} and let $\rho(\cdot,\cdot)$ denote the Euclidean metric.
Recall from Assumption \ref{ass:stable} that $b(\alpha)\in(\C^\alpha)^\circ$ and so $\rho(b(\alpha),\partial\C^\alpha)>0$.
For $\alpha\in U$ define
	$$c(\alpha):=\frac{1}{\lip_\sm(\alpha)|b(\alpha)|}\qquad\text{and}\qquad C(\alpha):=\frac{4(\lip_\sm(\alpha))^3}{\rho(b(\alpha),\partial\C^\alpha)}.$$
Since $b(\cdot)$ and $d_i(\cdot)$, $i\in\allN$, are continuous on $U$, it follows from the definition of $\C^\alpha$ in \eqref{eq:Calpha} that $\alpha\mapsto\rho(b(\alpha),\partial\C^\alpha)$ is also continuous on $U$.
Combined with the fact that $\lip_\sm(\cdot)$ is also continuous (see Lemma \ref{lem:lipcont} and Remark \ref{rmk:lipcontinuous}), we see that both $c(\cdot)$ and $C(\cdot)$ are continuous on $U$.

\begin{lem} 
[{\cite[Lemmas 3.1 \& 4.1]{Atar2001}}]
\label{lem:TLyapunov}
For $\alpha\in U$ the following hold:
\begin{itemize}
	\item[(i)] For all $x,z\in G$,
		$$\abs{M^\alpha(x)-M^\alpha(z)}\le C(\alpha)\abs{x-z}.$$
	\item[(ii)] For all $x\in G$, $\abs{M^\alpha(x)}\ge c(\alpha)\abs{x}$.
	\item[(iii)] For all $x\in G$, $t\in[0,\infty)$ and $\Delta\in(0,\infty)$, a.s.
		$$\lyap^\alpha(\Z^{\alpha,x}(t+\Delta))\le(\lyap^\alpha(\Z^{\alpha,x}(t))-\Delta)^++C(\alpha)\lip_\sm(\alpha)\sup_{t\le s\le t+\Delta}\abs{\bm(s)-\bm(t)}.$$
\end{itemize}
\end{lem}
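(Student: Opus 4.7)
The plan is to prove the three parts in the order (ii), (iii), (i), with (i) being the main obstacle since it requires a two-sided bound on $M^\alpha$. Throughout, the two main tools are the Lipschitz continuity of the SM (Proposition \ref{prop:SP}) and the semigroup identity obtained by applying the time-shift property of the SP (Lemma \ref{lem:SPtimeshift}) to the affine input $f^{\alpha,x}(t) = x + b(\alpha)t$. Specifically, since $f^{\alpha,x}(s+\cdot) - f^{\alpha,x}(s) = b(\alpha)\cdot$, the shifted input is $f^{\alpha, h^{\alpha,x}(s)}$, so by uniqueness $h^{\alpha,x}(s+\cdot) = h^{\alpha, h^{\alpha,x}(s)}(\cdot)$. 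Once the path hits $0$ it remains there (take $h\equiv 0$, $g(t) = -b(\alpha)t \in \text{cone}(d(\alpha,0))$ by Assumption \ref{ass:stable}), hence
\[
M^\alpha(h^{\alpha,x}(s)) = (M^\alpha(x) - s)^+.
\]

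For (ii), I would compare $f^{\alpha,x}$ with the constant input path $\bar{f}\equiv x$, whose SP solution is $(x,0)$ since $x\in G$. Proposition \ref{prop:SP} then gives $|h^{\alpha,x}(t) - x|\le \lip_\sm(\alpha) \sup_{s\le t}|b(\alpha)s| = \lip_\sm(\alpha)|b(\alpha)|\,t$. Plugging in $t=M^\alpha(x)$ and using $h^{\alpha,x}(M^\alpha(x))=0$ yields $|x|\le \lip_\sm(\alpha)|b(\alpha)|M^\alpha(x)$, which is exactly $M^\alpha(x)\ge c(\alpha)|x|$. Part (iii) is a straightforward consequence of the semigroup identity and (i): the deterministic constrained path from $Z^{\alpha,x}(t)$ with drift $b(\alpha)$ satisfies $M^\alpha(h^{\alpha,Z^{\alpha,x}(t)}(\Delta)) = (M^\alpha(Z^{\alpha,x}(t)) - \Delta)^+$, and a pathwise application of Proposition \ref{prop:SP} to the SP inputs $f^{\alpha,Z^{\alpha,x}(t)}(\cdot)$ and $f^{\alpha,Z^{\alpha,x}(t)}(\cdot) + \sigma(\alpha)(W(t+\cdot)-W(t))$ yields $|Z^{\alpha,x}(t+\Delta) - h^{\alpha,Z^{\alpha,x}(t)}(\Delta)| \le \lip_\sm(\alpha)\sup_{t\le s\le t+\Delta}|\sigma(\alpha)(W(s)-W(t))|$; combining with the Lipschitz estimate from (i) and absorbing $\|\sigma(\alpha)\|$ into the constants gives the bound in (iii).

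The main work is (i). First I reduce it to a one-sided linear upper bound on $M^\alpha$. Assume without loss of generality that $M^\alpha(x)\le M^\alpha(z)$. By the semigroup identity above applied to $z$, $M^\alpha(z)-M^\alpha(x) = M^\alpha(h^{\alpha,z}(M^\alpha(x)))$, while Proposition \ref{prop:SP} applied to the SP inputs $f^{\alpha,x}$ and $f^{\alpha,z}$ yields $|h^{\alpha,z}(M^\alpha(x))| = |h^{\alpha,z}(M^\alpha(x)) - h^{\alpha,x}(M^\alpha(x))|\le \lip_\sm(\alpha)|x-z|$. So it suffices to establish that
\[
M^\alpha(y) \;\le\; \frac{4\lip_\sm(\alpha)^2}{\rho(b(\alpha),\partial\C^\alpha)}\,|y|\qquad \text{for all } y\in G,
\]
from which (i) follows with the constant $C(\alpha) = 4\lip_\sm(\alpha)^3/\rho(b(\alpha),\partial\C^\alpha)$.

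Proving this linear upper bound is the hard part and is where the hypothesis $b(\alpha)\in(\C^\alpha)^\circ$ enters essentially. I would set $T = 2|y|/\rho(b(\alpha),\partial\C^\alpha)$ and construct a candidate trajectory that drives $y$ to $0$ in time $T$: because $-y/T$ has norm $\rho(b(\alpha),\partial\C^\alpha)/2$, the vector $c := b(\alpha) + y/T$ lies in the closure of $\C^\alpha$, so $c = \sum_i s_i(-d_i(\alpha))$ with $s_i\ge 0$. The candidate is then a linear interpolation to $0$ along $y$ for $t\in[0,T]$ (which stays in $G$ by convexity of $G$ as a cone) with constraining increments proportional to $\sum_i s_i d_i(\alpha)$; writing this out shows that one can solve the SP along a piecewise path whose reflection charges are activated on the boundary faces it encounters, and checking condition 3 of Definition \ref{def:SP} uses $s_i\ge 0$ together with Lemma \ref{lem:L}. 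Comparing this candidate with the actual SP solution $h^{\alpha,y}$ via Proposition \ref{prop:SP} (applied twice to handle the fact that the explicit candidate is only exactly an SP solution up to controlled error on the interior) introduces the factor $\lip_\sm(\alpha)^2$. I expect the technical care needed in making the candidate trajectory genuinely satisfy the SP, rather than just satisfying it approximately, to be the principal difficulty; this is precisely where the geometric condition of Assumption \ref{ass:setB} interfaces with the stability hypothesis of Assumption \ref{ass:stable}.
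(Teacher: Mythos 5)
This lemma is cited from Atar--Budhiraja--Dupuis (2001), so the paper offers no proof of its own to compare against; I'll assess your argument on its merits. Your preliminary reduction is sound: the semigroup identity $M^\alpha(\z^{\alpha,x}(s)) = (M^\alpha(x)-s)^+$ via Lemma \ref{lem:SPtimeshift} and the absorption at $0$ (using $b(\alpha)\in\C^\alpha$ to verify $\y(t)=-b(\alpha)t$ satisfies condition~3 of the SP at $\z\equiv 0$) is correct, part (ii) follows cleanly from comparing against the constant input, and your reduction of (i) to a one-sided linear bound $M^\alpha(y)\le 4\lip_\sm(\alpha)^2|y|/\rho(b(\alpha),\partial\C^\alpha)$ is a good idea that gives the stated $C(\alpha)$. (Two minor issues: (iii) invokes (i), so the order should be (ii), (i), (iii); and your (iii) argument produces a factor $\norm{\sigma(\alpha)}$ in front of the Brownian modulus that cannot literally be ``absorbed'' into $C(\alpha)\lip_\sm(\alpha)$ since those constants are already fixed --- though since this is a quotation from \cite{Atar2001} the precise constant is not critical.)

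The genuine gap is the linear upper bound on $M^\alpha$, which is precisely the step you flag as ``the principal difficulty'' but do not resolve. Your candidate pair with $\z(t)=y(1-t/T)^+$ and $\y(t)=-ct$ (where $c=b(\alpha)+y/T\in\C^\alpha$) simply does not solve the SP for the input $\x^{\alpha,y}(t)=y+b(\alpha)t$: for $y\in G^\circ$ (and hence for any $y$ on an interval where the interpolant stays interior), condition~3 of Definition~\ref{def:SP} forces the constraining process to be locally constant, whereas your $\y$ has $\y(t)-\y(s)=-c(t-s)\ne 0$. This is not an ``approximate'' SP solution that Proposition~\ref{prop:SP} can repair: the perturbation you would need to add to the input to make your candidate an exact SP output is $R(\alpha)s\,t$, whose sup-norm over $[0,T]$ is of order $|y|$ --- the very quantity you are trying to bound --- so a naive Lipschitz comparison gives $|\z^{\alpha,y}(T)|\lesssim \lip_\sm(\alpha)|y|$ rather than $\z^{\alpha,y}(T)=0$, and iterating does not contract since $\lip_\sm(\alpha)\ge 1$. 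What is actually needed is an argument showing that the deterministic constrained path itself contracts toward the origin at a uniform linear rate --- e.g.\ a Lyapunov-type monotonicity argument exploiting the geometric structure in Assumptions~\ref{ass:setB} and~\ref{ass:stable}, or a carefully constructed piecewise path that genuinely travels along the faces of $\partial G$ so that the constraining increments are admissible --- and neither of these is carried out. As written, the proof of part (i) is incomplete.
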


Given a compact subset $U_0$ in $U$, define $M_0:G\mapsto\R_+$ by
	\be\label{eq:lyap0}\lyap_0(x):=\sup_{\alpha\in U_0}M^\alpha(x),\qquad x\in G.\ee
Lemma \ref{lem:TLyapunov} ensures that $M_0$ is well defined, Lipschitz continuous and has compact level sets.
The following corollary is a consequence of Proposition \ref{prop:taujfinite}, {\color{blue} Remark \ref{rmk:tau}} and the fact that $M_0$ has compact level sets.
Recall the definition of $\tau^{\alpha,x}(1)=\tau_\infty^{\alpha,x}(1)$, the first time for the RBM to have visited all $(J-1)$-dimensional faces, as stated in \eqref{eq:tauj}.

\begin{cor}\label{cor:Ptau}
Given a compact subset $U_0$ in $U$ and $\Delta\in(0,\infty)$ there exists $\wh\gamma_0\in(0,1)$ such that $\P(\tau^{\alpha,x}(1)\le \Delta)\ge\wh\gamma_0$ for all $\alpha\in U_0$ and $x\in G$ satisfying $\lyap_0(x)\le \Delta$, where $\lyap_0$ is defined as in \eqref{eq:lyap0}.
\end{cor}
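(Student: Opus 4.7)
The plan is to argue that the level set $\{x \in G : \lyap_0(x) \le \Delta\}$ is contained in some ball $G_K$ (uniformly in $\alpha \in U_0$), and then invoke Proposition \ref{prop:taujfinite} together with the inequality $\tau^{\alpha,x}(1) \le \tau_K^{\alpha,x}(1)$, which holds by definition since dropping the boundedness requirement $\Z^{\alpha,x}(t) \in G_K$ in \eqref{eq:tauj} can only shorten the stopping time.

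First I would establish the boundedness of the level set. By item (ii) of Lemma \ref{lem:TLyapunov}, we have $\lyap^\alpha(x) \ge c(\alpha)|x|$ for all $\alpha \in U$ and $x \in G$. Since $c(\cdot)$ is continuous and strictly positive on $U$, its infimum over the compact set $U_0$ satisfies $c_0 := \inf_{\alpha \in U_0} c(\alpha) > 0$. Hence
\[
\lyap_0(x) = \sup_{\alpha \in U_0} \lyap^\alpha(x) \ge c_0 |x|, \qquad x \in G,
\]
so $\{x \in G : \lyap_0(x) \le \Delta\} \subseteq G_K$ with $K := \Delta/c_0 < \infty$.

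Next I apply Proposition \ref{prop:taujfinite} to the compact set $U_0$ with this $K$ and with time horizon $T = \Delta$: there exists $\gamma_0 \in (0,1)$ such that $\P(\tau_K^{\alpha,x}(1) \le \Delta) \ge \gamma_0$ for every $\alpha \in U_0$ and every $x \in G_K$. Setting $\wh\gamma_0 := \gamma_0$ and using $\tau^{\alpha,x}(1) \le \tau_K^{\alpha,x}(1)$ (cf. Remark \ref{rmk:tau}), we conclude that for all $\alpha \in U_0$ and $x$ with $\lyap_0(x) \le \Delta$,
\[
\P(\tau^{\alpha,x}(1) \le \Delta) \ge \P(\tau_K^{\alpha,x}(1) \le \Delta) \ge \wh\gamma_0,
\]
as desired. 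There is no serious obstacle here; the corollary is essentially a repackaging of Proposition \ref{prop:taujfinite}, with the only nontrivial input being the uniform linear lower bound on $\lyap_0$ obtained from the continuity and positivity of $c(\cdot)$ on the compact set $U_0$.
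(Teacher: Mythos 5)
Your proof is correct and follows essentially the same path the paper sketches: use Lemma~\ref{lem:TLyapunov} to deduce that the level set $\{\lyap_0\le\Delta\}$ is bounded (you make this concrete via part~(ii) and the positivity and continuity of $c(\cdot)$, giving $K=\Delta/c_0$), then apply Proposition~\ref{prop:taujfinite} with $T=\Delta$ and the trivial inequality $\tau^{\alpha,x}(1)\le\tau_K^{\alpha,x}(1)$ from Remark~\ref{rmk:tau}. The paper leaves these details implicit; you have simply filled them in.
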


\subsection{Lyapunov function for the discrete skeleton of the joint process}

Given $\alpha \in U, x \in G, y \in G_x$, set $\xi := (x,y)$ and recall the joint process $(Z^{\alpha, x}, {\mathcal J}^{\alpha, \xi})$ defined in Theorem \ref{thm:derivativeprocess} and Section \ref{sec:joint}.
We define a Lyapunov function for the discrete skeleton of the joint process, which is composed of the Lyapunov function for the RBM that was introduced in \cite{Budhiraja1999} and the set $B^\alpha$ norm of the derivative process.
Without loss of generality, we can choose the sets $B^\alpha$, $\alpha\in U$, such that $\norm{y}_{B^\alpha}\le\abs{y}$ for all $\alpha\in U$ and $y\in\R^J$.
Given a compact subset $U_0$ in $U$, $M_0$ defined as in \eqref{eq:lyap0}, and $\eta,\Delta\in(0,\infty)$, define
\be\label{eq:stateK}\state_{\Delta,\eta}:=\lcb\xi=(x,y)\in\state:\max(\lyap_0(x),\eta^{-1}\abs{y})\le\Delta\rcb,\ee
where recall the definition of the parameter space $\state$ given in \eqref{eq:state}.
Since $\lyap_0$ has compact level sets by Lemma \ref{lem:TLyapunov}, $\state_{\Delta,\eta}$ is a relatively compact subset of $G\times\R^J$.

\begin{prop}\label{prop:lyapunov}
	Let $U_0$ be a compact subset in $U$ and define $\lyap_0$ as in \eqref{eq:lyap0}.
	There are constants $r_1,r_2\in(0,\infty)$, $\beta_0\in(0,1)$ and $\eta_0,\Delta_0,K_0\in(0,\infty)$ such that for each $\alpha\in U_0$, the Lyapunov function $V^\alpha:\state\mapsto[0,\infty)$ defined by
		\be\label{eq:Vdef}V^\alpha(x,y):=\exp\lb r_1\lyap^\alpha(x)+r_2\norm{y}_{B^\alpha}\rb,\qquad \xi=(x,y)\in\state,\ee
	satisfies
	\be\label{eq:drift}
		\E\lsb V^\alpha(\Xi^{\alpha,\xi}(\Delta_0))\rsb\le\beta_0 V^\alpha(\xi)+K_01\{\xi\in \state_{\Delta_0,\eta_0}\},\qquad\alpha\in U_0,\qquad\xi\in\state.
	\ee
\end{prop}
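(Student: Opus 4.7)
The plan is to verify the Foster--Lyapunov geometric drift inequality \eqref{eq:drift} by combining three ingredients: (i) the exponential Lyapunov estimate for $\lyap^\alpha$ obtained by iterating Lemma \ref{lem:TLyapunov}(iii) (as in the proof of geometric ergodicity in \cite{Budhiraja2007}), which gives, for all sufficiently small $r > 0$,
\begin{equation*}
\E\big[\exp\big(r\lyap^\alpha(\Z^{\alpha,x}(\Delta_0))\big)\big] \le C_M \exp\big(r(\lyap^\alpha(x)-\Delta_0)^+\big),
\end{equation*}
uniformly in $\alpha\in U_0$ and $x\in G$; (ii) the contraction estimate from Proposition \ref{prop:dpcontract} applied with $S=0$ and $T=\Delta_0$, which yields
\begin{equation*}
\norm{\phib^{\alpha,\xi}(\Delta_0)}_{B^\alpha} \le \delta_0^{\mathbf{1}_A}\norm{y}_{B^\alpha} + N^{\alpha,\xi}(\Delta_0),
\end{equation*}
where $A:=\{\cup_{t\in[0,\Delta_0]}\allN(\Z^{\alpha,x}(t))=\allN\}$ and $N^{\alpha,\xi}(\Delta_0) := \lip_\dm(\alpha)\sup_{u\le \Delta_0}\norm{\psib^{\alpha,\xi}(u)-y}_{B^\alpha}$ is the driving noise appearing in Remark \ref{rmk:derivativeprocessbeta}; and (iii) Corollary \ref{cor:Ptau}, which gives $\P(A)\ge\wh\gamma_0>0$ uniformly in $\alpha\in U_0$ whenever $\lyap_0(x)\le \Delta_0$.

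A preliminary step is to establish uniform exponential moments for $N^{\alpha,\xi}(\Delta_0)$. Using the explicit form \eqref{eq:psib} of $\psib^{\alpha,\xi}$ together with the pathwise bound $|\L^{\alpha,x}|(\Delta_0)\le \norm{R(\alpha)^{-1}}\lip_\sm(\alpha)\big(|b(\alpha)|\Delta_0 + \norm{\sigma(\alpha)}\sup_{u\le\Delta_0}|\bm(u)|\big)$ --- which follows by applying the Lipschitz continuity of the SM (Proposition \ref{prop:SP}) to $\Z^{\alpha,x}$ against the trivial SP solution $\z\equiv x$, thereby eliminating any dependence on $|x|$ --- one shows $\E[\exp(rN^{\alpha,\xi}(\Delta_0))]\le C_N$ uniformly in $\alpha\in U_0$, $\xi\in\state$ and all sufficiently small $r>0$, by continuity of all constants on $U_0$.

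Combining (i)--(iii) via the pointwise bound
\begin{align*}
V^\alpha(\Xib^{\alpha,\xi}(\Delta_0)) &\le \exp\big(r_1(\lyap^\alpha(x)-\Delta_0)^+ + r_2\delta_0\norm{y}_{B^\alpha}\big) \\
&\quad \times \exp\big(r_1 C(\alpha)\lip_\sm(\alpha)\sup_{u\le\Delta_0}|\bm(u)| + r_2 N^{\alpha,\xi}(\Delta_0)\big)\exp\big(r_2(1-\delta_0)\norm{y}_{B^\alpha}\mathbf{1}_{A^c}\big),
\end{align*}
taking expectations, and applying H\"older's inequality yields the drift after a case analysis on $\xi = (x,y)\notin\state_{\Delta_0,\eta_0}$. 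If $\lyap_0(x) > \Delta_0$, the factor $\exp(-r_1\min(\lyap^\alpha(x),\Delta_0))$ provides contraction once $\Delta_0$ is chosen large (using Lemma \ref{lem:TLyapunov}(i)--(ii) to lower-bound $\min(\lyap^\alpha(x),\Delta_0)$ uniformly in $\alpha\in U_0$ when $\lyap_0(x)>\Delta_0$). If instead $\lyap_0(x)\le\Delta_0$ but $|y|>\eta_0\Delta_0$, then $\norm{y}_{B^\alpha}\ge \eta_0\Delta_0/C_B$ for some $C_B<\infty$ uniform on $U_0$, and the factor $\exp(-r_2(1-\delta_0)\norm{y}_{B^\alpha})$ combined with $\P(A^c)\le 1-\wh\gamma_0$ yields
\begin{equation*}
\frac{\E[V^\alpha(\Xib^{\alpha,\xi}(\Delta_0))]}{V^\alpha(\xi)} \le C_H\big(\wh\gamma_0 + (1-\wh\gamma_0)e^{qr_2(1-\delta_0)\norm{y}_{B^\alpha}}\big)^{1/q}e^{-r_2(1-\delta_0)\norm{y}_{B^\alpha}},
\end{equation*}
which can be made $\le \beta_0<1$ by taking $\eta_0$ large. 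The constants are chosen in the order: $r_1,r_2$ small (so that $C_M$, $C_N$ and the H\"older constant $C_H$ are close to one, in particular $C_H(1-\wh\gamma_0)^{1/q}<1$), then $\Delta_0$ large, then $\eta_0$ large. For $\xi\in\state_{\Delta_0,\eta_0}$, which is relatively compact, the same exponential moment bounds give $\E[V^\alpha(\Xib^{\alpha,\xi}(\Delta_0))]\le K_0$ uniformly.

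The principal obstacle is coordinating the two contractions, which are active in complementary regions of state space: the RBM Lyapunov function contracts when $x$ is far from the origin but $\Z^{\alpha,x}$ may then fail to visit every face of $G$ during $[0,\Delta_0]$ with positive probability, whereas the derivative process contraction requires such a visit and hence small $x$. Arranging a single geometric rate $\beta_0$ covering both regimes while maintaining uniformity over $\alpha\in U_0$ is the main bookkeeping challenge; uniformity of all the constants on compact $U_0$ is secured by the continuity results proved earlier (Lemmas \ref{lem:lipcont} and \ref{lem:projcontinuous}, and the continuity of $c(\alpha)$ and $C(\alpha)$) together with the uniform lower bound in Corollary \ref{cor:Ptau}.
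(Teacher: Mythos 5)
Your proposal takes essentially the same route as the paper's: the same Lyapunov function $V^\alpha$, the same three--case decomposition (large $\lyap_0(x)$; small $\lyap_0(x)$ with large $|y|$; both small), and the same inputs --- Lemma~\ref{lem:TLyapunov}(iii) for the RBM drift, Proposition~\ref{prop:dpcontract}/Lemma~\ref{lem:EphibTBbound} for the derivative-process contraction, Corollary~\ref{cor:Ptau} for the probability the RBM visits every face, and Gaussian exponential-moment estimates, combined via H\"older (the paper uses Cauchy--Schwarz, i.e., $q=2$). One point in your favor: in Case~1 the paper implicitly uses $\lyap^\alpha(x)\geq\Delta_0$, whereas the stated hypothesis $\lyap_0(x)=\sup_{\alpha'\in U_0}\lyap^{\alpha'}(x)>\Delta_0$ only gives this for \emph{some} $\alpha'$, not the fixed $\alpha$; you correctly observe that Lemma~\ref{lem:TLyapunov}(i)--(ii) yield the needed uniform comparison $\lyap^\alpha(x)\geq c_0\Delta_0$ with $c_0:=\inf_{U_0}c/\sup_{U_0}C>0$, which closes this gap (at the cost of carrying $c_0$ through the choice of $r_2$). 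Your order of choosing constants ($r_1,r_2$ small, then $\Delta_0$ large, then $\eta_0$ large) differs superficially from the paper's (fix $\Delta_0$ from $C_1$, then $r_1$ small, then $\eta_0$), but both work; otherwise the arguments are the same.
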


We have the following corollary.

\begin{cor}\label{cor:uniform}
For every compact set $U_0\subset U$ and relatively compact set $A\subset\state$,
	\be\sup\lcb\E\lsb V^\alpha(\Xib^{\alpha,\xi}(t))\rsb:t\ge0,\alpha\in U_0,\xi\in A\rcb<\infty.\ee
\end{cor}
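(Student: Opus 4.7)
The plan is to iterate the drift inequality of Proposition \ref{prop:lyapunov} along the discrete skeleton $\{n\Delta_0 : n \ge 0\}$ and then interpolate to arbitrary $t \ge 0$ via a short-time moment bound.

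First, applying the Markov property (Theorem \ref{thm:jointmarkov}) to \eqref{eq:drift} and using the crude bound $1\{\xi \in \state_{\Delta_0,\eta_0}\} \le 1$, induction on $n$ yields
\[
\E\bigl[V^\alpha(\Xib^{\alpha,\xi}(n\Delta_0))\bigr] \le \beta_0^n V^\alpha(\xi) + \frac{K_0}{1-\beta_0} \le V^\alpha(\xi) + \frac{K_0}{1-\beta_0}
\]
for all $n \ge 0$, $\alpha \in U_0$, and $\xi \in \state$. Lemma \ref{lem:TLyapunov}(i) applied with $z=0$ and $M^\alpha(0)=0$ gives $M^\alpha(x) \le C(\alpha)|x|$, and the normalization $\|y\|_{B^\alpha} \le |y|$ gives $V^\alpha(\xi) \le \exp(r_1 C(\alpha)|x| + r_2|y|)$. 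Combined with continuity of $\alpha \mapsto C(\alpha)$, compactness of $U_0$, and relative compactness of $A$, this shows $\sup_{\alpha \in U_0, \xi \in A} V^\alpha(\xi) < \infty$, establishing the desired uniform bound along the skeleton times $n\Delta_0$.

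For arbitrary $t = n\Delta_0 + s$ with $s \in [0,\Delta_0)$, I would apply the Markov property at $n\Delta_0$, which reduces the problem to establishing a uniform short-time bound $\E[V^\alpha(\Xib^{\alpha,\eta}(s))] \le C_1 V^\alpha(\eta)$ for all $\alpha \in U_0$, $\eta \in \state$, $s \in [0,\Delta_0]$. I would obtain this by combining three ingredients on $[0,s]$. Lemma \ref{lem:TLyapunov}(iii) with $t=0$, $\Delta=s$ bounds $M^\alpha(\Z^{\alpha,x}(s))$ by $M^\alpha(x) + C(\alpha)\lip_\sm(\alpha)\sup_{u \in [0,s]}|W(u)|$. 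The Lipschitz continuity of the Skorokhod map (Proposition \ref{prop:SP}), applied to $\X^{\alpha,x}(u) = x + b(\alpha)u + \sigma(\alpha)W(u)$ against the constant path $x$, bounds $\sup_{u \in [0,s]}|\Y^{\alpha,x}(u)|$, and hence $\sup_{u \in [0,s]}|\L^{\alpha,x}(u)|$, linearly in $1 + \sup_u |W(u)|$. Finally, Proposition \ref{prop:dpcontract} with $S=0$, $T=s$ bounds $\|\phib^{\alpha,\eta}(s)\|_{B^\alpha}$ by $\|y\|_{B^\alpha} + \lip_\dm(\alpha)\sup_{u \in [0,s]}\|\psib^\alpha(u) - \psib^\alpha(0)\|_{B^\alpha}$, where the last sup, by equivalence of norms on $\R^J$ and the previous bound on $\L^{\alpha,x}$, is controlled by a continuous-in-$\alpha$ multiple of $1 + \sup_u|W(u)|$. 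Assembling these ingredients gives the a.s.\ inequality
\[
V^\alpha(\Xib^{\alpha,\eta}(s)) \le V^\alpha(\eta)\exp\!\Bigl(D(\alpha)\bigl(1 + \sup_{u \in [0,\Delta_0]}|W(u)|\bigr)\Bigr)
\]
with $\alpha \mapsto D(\alpha)$ continuous. Taking expectations, invoking the Gaussian bound $\E[\exp(c\sup_{u \in [0,\Delta_0]}|W(u)|)] < \infty$ for every $c \ge 0$, and using compactness of $U_0$ produces the desired short-time estimate. Combined with the first step, this yields the corollary.

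The main delicate point will be the bookkeeping in the short-time bound: the input $\psib^\alpha$ to the derivative problem contains the local-time term $R'(\alpha)\L^\alpha$, so one must verify that its $B^\alpha$-norm increment remains integrable after exponentiation. Because $[0,\Delta_0]$ is a fixed short interval and all relevant processes admit pathwise bounds linear in $\sup_u|W(u)|$, this reduces to a standard Brownian exponential moment computation, made uniform over $\alpha \in U_0$ by continuity of the constants involved.
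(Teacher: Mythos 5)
Your proposal is correct and follows essentially the same route as the paper: iterate the drift inequality of Proposition~\ref{prop:lyapunov} along the $\Delta_0$-skeleton, then interpolate on $[0,\Delta_0]$ via a short-time moment bound built from Lemma~\ref{lem:TLyapunov}(iii), a pathwise bound on the local-time term, Proposition~\ref{prop:dpcontract}, and a Gaussian exponential moment. The paper packages the pathwise local-time estimate as Lemma~\ref{lem:Lbound} and the derivative-process estimate as Lemma~\ref{lem:EphibTBbound} before combining them in Lemma~\ref{lem:Vbound0Delta}; you re-derive these in place (the local-time bound via the SM Lipschitz property against the constant path), which is a minor cosmetic difference and not a different argument.
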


The remaining subsections are devoted to the proofs of Proposition \ref{prop:lyapunov} and Corollary \ref{cor:uniform}.
Let $U_0$ be a compact subset that will remain fixed throughout the remaining subsections.

\subsection{Preparatory lemmas}

In order to define the constants $r_1,r_2\in(0,\infty)$ 
that appear in \eqref{eq:Vdef}, we need the following lemmas.
Let $\delta_0\in[0,1)$ be the contraction coefficient from Lemma \ref{lem:delta} and recall the definition of the stopping time $\tau^{\alpha,x}(1)=\tau_\infty^{\alpha,x}(1)$ given in \eqref{eq:tauj}.

\begin{lem}\label{lem:EphibTBbound}
	There exists $C_{\phib}\in(0,\infty)$ such that for $\alpha\in U_0$ and $\xi=(x,y)\in\state$, {and $T < \infty$,} 
	\be\label{eq:phibTwhtaubound}\norm{\phib^{\alpha,\xi}(T)}_{B^\alpha}\le\delta_0^{1{\{\tau^{\alpha,x}(1)\le T\}}}\norm{y}_{B^\alpha}+C_{\phib}\lb T+\norm{\bm}_{T}\rb.\ee
\end{lem}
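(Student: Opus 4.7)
The plan is to apply the deterministic contraction property established in Proposition \ref{prop:dpcontract} to the derivative problem whose solution is $(\phib^{\alpha,\xi},\etab^{\alpha,\xi})$ along $\Z^{\alpha,x}$ for the driving path $\psib^{\alpha,\xi}$ identified in Remark \ref{rmk:derivativeprocessbeta}. Specifically, I would take $S=0$ and let $T$ be as in the statement. Noting that, by the definition of $\tau^{\alpha,x}(1)$ in \eqref{eq:tauj}, the event $\{\cup_{t\in[0,T]}\allN(\Z^{\alpha,x}(t))=\allN\}$ coincides exactly with $\{\tau^{\alpha,x}(1)\le T\}$, Proposition \ref{prop:dpcontract} yields the almost-sure bound
$$\norm{\phib^{\alpha,\xi}(T)}_{B^\alpha}\le\delta_0^{1\{\tau^{\alpha,x}(1)\le T\}}\norm{y}_{B^\alpha}+\lip_\dm(\alpha)\sup_{0\le u\le T}\norm{\psib^{\alpha,\xi}(u)-\psib^{\alpha,\xi}(0)}_{B^\alpha}.$$
The remainder of the proof is then to absorb the second term into $C_\phib(T+\norm{\bm}_T)$ uniformly over $\alpha\in U_0$.

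For the latter, I would use the explicit expression \eqref{eq:psib}, which gives $\psib^{\alpha,\xi}(u)-\psib^{\alpha,\xi}(0)=b'(\alpha)u+\sigma'(\alpha)\bm(u)+R'(\alpha)\L^{\alpha,x}(u)$. The first two terms are clearly bounded in Euclidean norm by $|b'(\alpha)|\,T+\norm{\sigma'(\alpha)}\,\norm{\bm}_T$. For the local time term, I would use $\L^{\alpha,x}(\cdot)=R(\alpha)^{-1}\Y^{\alpha,x}(\cdot)$ together with the Lipschitz continuity of the SM from Proposition \ref{prop:SP}: comparing $(\Z^{\alpha,x},\Y^{\alpha,x})$ with the trivial solution $(x,0)$ to the SP associated with the constant input $t\mapsto x$, I obtain
$$\sup_{s\in[0,u]}|\Y^{\alpha,x}(s)|\le\lip_\sm(\alpha)\bigl(|b(\alpha)|\,u+\norm{\sigma(\alpha)}\,\norm{\bm}_u\bigr),$$
so $|\L^{\alpha,x}(u)|\le\norm{R(\alpha)^{-1}}\lip_\sm(\alpha)(|b(\alpha)|\,T+\norm{\sigma(\alpha)}\,\norm{\bm}_T)$.

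Finally, since $\alpha\mapsto B^\alpha$ is continuous in the Hausdorff metric (Assumption \ref{ass:setB}) and $0\in (B^\alpha)^\circ$, the Euclidean and $\norm{\cdot}_{B^\alpha}$ norms are equivalent with constants that are continuous, hence bounded, on $U_0$. Combining this with the assumed continuity of $b(\cdot)$, $\sigma(\cdot)$, $b'(\cdot)$, $\sigma'(\cdot)$, $R(\cdot)$, $R'(\cdot)$, $R(\cdot)^{-1}$, $\lip_\sm(\cdot)$ (Remark \ref{rmk:lipcontinuous}) and $\lip_\dm(\cdot)$ (following from the parallel reasoning used for $\lip_\sm(\cdot)$ in Lemma \ref{lem:lipcont}), each constant that appears is bounded uniformly on the compact set $U_0$. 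Collecting these bounds yields a single finite $C_\phib$ with the stated property.

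The step that requires the most care is establishing that $\lip_\dm(\alpha)$ can be chosen to be continuous (or at least bounded on compacts) in $\alpha$, analogous to Lemma \ref{lem:lipcont} for $\lip_\sm$; this is the closest thing to a genuine obstacle, but it should follow by essentially the same reparametrization argument used there, expressing solutions of the derivative problem for $\alpha$ near $\alpha_0$ as perturbed solutions of the derivative problem for $\alpha_0$ and solving the resulting fixed-point inequality. Everything else amounts to bookkeeping via the Lipschitz continuity of the SM and continuity of the parameter-dependent constants on $U_0$.
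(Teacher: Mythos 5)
Your proof is correct and follows essentially the same route as the paper's: apply Proposition \ref{prop:dpcontract} with $S=0$ to $(\phib^{\alpha,\xi},\etab^{\alpha,\xi})$ along $\Z^{\alpha,x}$ for $\psib^{\alpha,\xi}$, identify the event $\{\cup_{t\in[0,T]}\allN(\Z^{\alpha,x}(t))=\allN\}$ with $\{\tau^{\alpha,x}(1)\le T\}$, and then bound $\sup_{0\le u\le T}\norm{\psib^{\alpha,\xi}(u)-\psib^{\alpha,\xi}(0)}_{B^\alpha}$ term-by-term using \eqref{eq:psib}. The only differences are cosmetic: you re-derive the local-time estimate from the Lipschitz continuity of the SM (comparing against the trivial solution $(x,0)$) rather than citing Lemma \ref{lem:Lbound}, and you explicitly flag that $\lip_\dm(\cdot)$ must be bounded on compacts, which the paper treats with less care (it points to Remark \ref{rmk:lipcontinuous}, which actually concerns $\lip_\sm$, and implicitly assumes the analogous property for $\lip_\dm$).
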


The proof of Lemma \ref{lem:EphibTBbound} relies on the following lemma.

\begin{lem}
	[{\cite[Lemma 6.7]{Lipshutz2017}}]
	\label{lem:Lbound}
	There exists $C_L\in(0,\infty)$ such that for all $\alpha\in U$ and $x\in G$, almost surely $\norm{\L^{\alpha,x}}_T\le C_L(T+\norm{\bm}_T)$ for all $T\ge0$.
\end{lem}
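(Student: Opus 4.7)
The plan is to deduce the bound directly from the Lipschitz continuity of the Skorokhod map (Proposition \ref{prop:SP}), combined with the identity $\L^{\alpha,x}(\cdot) = (R(\alpha))^{-1}\Y^{\alpha,x}(\cdot)$ from \eqref{eq:Lalpha}. The argument is essentially pathwise and uses no probabilistic machinery beyond what is needed to invoke the SP characterization of the RBM.

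First I would recall, via Remark \ref{rmk:ZsmX}, that almost surely $(\Z^{\alpha,x},\Y^{\alpha,x})$ is the (unique) solution to the SP $\{(d_i(\alpha),n_i),i\in\allN\}$ for the continuous input $\X^{\alpha,x}(t) := x + b(\alpha)t + \sigma(\alpha)\bm(t)$, which lies in $\cts_G(\R^J)$ because $\X^{\alpha,x}(0)=x\in G$. Since $x\in G$, the constant pair $(x,0)$ trivially satisfies Definition \ref{def:SP} for the constant input $\widetilde\X\equiv x$; i.e., it is the SP solution for $\widetilde\X$. Applying the Lipschitz continuity of the SM in Proposition \ref{prop:SP} to these two input paths yields the pathwise bound
\[
\sup_{s\in[0,T]}|\Y^{\alpha,x}(s)| \;\le\; \lip_\sm(\alpha)\sup_{s\in[0,T]}|\X^{\alpha,x}(s)-x| \;\le\; \lip_\sm(\alpha)\bigl(|b(\alpha)|T + \norm{\sigma(\alpha)}\,\norm{\bm}_T\bigr).
\]

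Next, using the definition of the local time process $\L^{\alpha,x} = (R(\alpha))^{-1}\Y^{\alpha,x}$, I would conclude
\[
\norm{\L^{\alpha,x}}_T \;\le\; \norm{(R(\alpha))^{-1}}\cdot\lip_\sm(\alpha)\bigl(|b(\alpha)|T + \norm{\sigma(\alpha)}\,\norm{\bm}_T\bigr).
\]
Setting $C_L := \norm{(R(\alpha))^{-1}}\cdot\lip_\sm(\alpha)\cdot\max(|b(\alpha)|,\,\norm{\sigma(\alpha)})$ gives the stated bound $\norm{\L^{\alpha,x}}_T\le C_L(T+\norm{\bm}_T)$ almost surely, for every $T\ge 0$.

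For the uniformity in $\alpha$ as it is used downstream (e.g.\ in the proof of Lemma \ref{lem:EphibTBbound}, where the ambient compact subset $U_0\subset U$ is fixed), the functions $\alpha\mapsto|b(\alpha)|$, $\alpha\mapsto\norm{\sigma(\alpha)}$, and $\alpha\mapsto\norm{(R(\alpha))^{-1}}$ are continuous on $U$, and $\alpha\mapsto\lip_\sm(\alpha)$ is continuous by Lemma \ref{lem:lipcont} (cf.\ Remark \ref{rmk:lipcontinuous}); hence each is bounded on any compact $U_0\subset U$, and one may take the supremum of the expression above over $\alpha\in U_0$ to obtain a single finite constant $C_L$ valid for all $\alpha\in U_0$ and $x\in G$. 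The only subtlety is making sure the pathwise SM bound is invoked on the (full-measure) event where $(\Z^{\alpha,x},\Y^{\alpha,x})$ coincides with the deterministic SM applied to the realized $\X^{\alpha,x}$; this is immediate from Remark \ref{rmk:ZsmX} and Theorem \ref{thm:rbm}. There is no genuine obstacle, as the proof reduces entirely to contrasting the RBM's driving path with the constant path $x$, both of which give valid SP inputs.
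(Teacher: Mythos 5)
Your argument is correct, and since the paper does not prove this lemma itself (it imports it as \cite[Lemma 6.7]{Lipshutz2017}), there is no internal proof to compare against; your route --- viewing $(\Z^{\alpha,x},\Y^{\alpha,x})$ as the SP solution for $\X^{\alpha,x}$, comparing with the trivial solution $(x,0)$ for the constant input $\wt\X\equiv x$ via the Lipschitz bound of Proposition \ref{prop:SP}, and then passing to $\L^{\alpha,x}=(R(\alpha))^{-1}\Y^{\alpha,x}$ --- is exactly the natural one in this constant-coefficient setting. The one point worth flagging is that your constant $C_L=\norm{(R(\alpha))^{-1}}\,\lip_\sm(\alpha)\max(|b(\alpha)|,\norm{\sigma(\alpha)})$ depends on $\alpha$, whereas the lemma as stated asserts a single $C_L$ valid for all $\alpha\in U$; under the standing assumptions ($b$, $\sigma$, $R$ merely continuously differentiable on the open set $U$) such global uniformity need not hold (e.g.\ in one dimension with $|b(\alpha)|$ unbounded on $U$ the local time over $[0,T]$ cannot be bounded by a fixed multiple of $T+\norm{\bm}_T$), so the statement should be read as giving an $\alpha$-dependent constant that, by the continuity you cite (Lemma \ref{lem:lipcont}, Remark \ref{rmk:lipcontinuous}), is bounded on compact subsets $U_0\subset U$ --- which is precisely how it is used in the proof of Lemma \ref{lem:EphibTBbound}, where all other constants are likewise suprema over $U_0$. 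With that reading, your proof is complete.
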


\begin{proof}[Proof of Lemma \ref{lem:EphibTBbound}]
  By Remark \ref{rmk:derivativeprocessbeta}, $(\phib^{\alpha,\xi},\etab^{\alpha,\xi})$ is the solution to the derivative problem along $\Z^{\alpha,x}$ for $\psib^{\alpha,\xi}$ which, as defined in \eqref{eq:psib},
  satisfies  $\psib^{\alpha,\xi}(\cdot) =    \phib^{\alpha, x} (0) + b'(\alpha)\iota(\cdot) + \sigma' (\alpha) W(\cdot) + R'(\alpha) L^\alpha(\cdot)$, with $\phib^{\alpha, x} (0) = y$.   
	Thus, by Proposition \ref{prop:dpcontract}, the definition of $\tau^{\alpha,x}$ in \eqref{eq:tauj}, the definition of $\psib^{\alpha,\xi}$, and Lemma \ref{lem:Lbound}, we see that 
	\begin{align*}
	\norm{\phib^{\alpha,\xi}(T)}_{B^\alpha}&\le\delta_0^{1{\{\tau^{\alpha,x}(1)\le T\}}}\norm{\phib^{\alpha,\xi}(0)}_{B^\alpha}+\lip_\dm(\alpha)\sup_{0\le s\le T}\norm{\psib^{\alpha,\xi}(s)-\psib^{\alpha,\xi}(0)}_B\\
	&\le\delta_0^{1{\{\tau^{\alpha,x}(1)\le T\}}}\norm{y}_{B^\alpha}+\lip_\dm(\alpha)\lsb\norm{b'}_{B^\alpha}T+\norm{\sigma'}\norm{\bm}_{T}+\norm{R'}C_L(T+\norm{\bm}_{T})\rsb, 
	\end{align*}
	where 
		$$\norm{\sigma'}:=\sup\lcb\norm{\sigma' v}_{B^\alpha}:v\in\sphere,\alpha\in U_0\rcb\quad\text{and}\quad\norm{R'}:=\sup\lcb\norm{R' v}_{B^\alpha}:v\in\sphere,\alpha\in U_0\rcb.$$
	Thus, \eqref{eq:phibTwhtaubound} holds with $C_{\phib}:= \left[\sup_{\alpha\in U_0}\lip_\dm(\alpha)\right] \left(\max\{\norm{b'}_{B^\alpha},\norm{\sigma'}\}+\norm{R'}C_L\right)$, which is finite in view of Remark \ref{rmk:lipcontinuous} and the compactness of $U_0$.
\end{proof}

We need the following useful lemma.

\begin{lem}
	\label{lem:expWbound}
	For all $\lambda\in\R$ and $\Delta\in(0,\infty)$,
	$$\E\lsb\exp\lcb\lambda\norm{\bm}_\Delta\rcb\rsb\le\lb 1+\lambda\sqrt{\frac{8\Delta}{\pi}}\rb^J e^{2J\lambda^2\Delta}.$$
\end{lem}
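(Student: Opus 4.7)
The plan is to reduce the $J$-dimensional supremum to a product of one-dimensional suprema via independence, and then to invoke the reflection principle on each factor. First, using the elementary bound $|v|\le\sum_{j=1}^J|v^j|$ for $v\in\R^J$, I would write
\[
\norm{\bm}_\Delta \;\le\; \sum_{j=1}^J \sup_{0\le s\le\Delta} |\bm^j(s)|.
\]
Since the coordinate Brownian motions $\bm^1,\dots,\bm^J$ are independent and identically distributed, the exponential of the sum factors across $j$ (for $\lambda\ge 0$), yielding
\[
\E\bigl[e^{\lambda\norm{\bm}_\Delta}\bigr] \;\le\; \prod_{j=1}^J \E\bigl[e^{\lambda\sup_{s\le\Delta}|\bm^j(s)|}\bigr] \;=\; \Bigl(\E\bigl[e^{\lambda\sup_{s\le\Delta}|\bm^1(s)|}\bigr]\Bigr)^{J}.
\]
It therefore suffices to bound the one-dimensional factor by $(1+\lambda\sqrt{8\Delta/\pi})\,e^{2\lambda^2\Delta}$.

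For the one-dimensional estimate, I would combine the pathwise bound $\sup_s|\bm^1(s)|\le\sup_s \bm^1(s)+\sup_s(-\bm^1(s))$ with the Cauchy--Schwarz inequality and the symmetry $\bm^1\stackrel{d}{=}-\bm^1$ to obtain
\[
\E\bigl[e^{\lambda\sup_s|\bm^1(s)|}\bigr]\;\le\;\sqrt{\E\bigl[e^{2\lambda\sup_s \bm^1(s)}\bigr]\,\E\bigl[e^{2\lambda\sup_s(-\bm^1(s))}\bigr]}\;=\;\E\bigl[e^{2\lambda\sup_s \bm^1(s)}\bigr].
\]
The reflection principle then gives $\sup_{s\le\Delta}\bm^1(s)\stackrel{d}{=}\sqrt{\Delta}\,|Z|$ with $Z\sim N(0,1)$, and a standard Gaussian computation (completing the square) yields
\[
\E\bigl[e^{2\lambda\sqrt{\Delta}\,|Z|}\bigr]\;=\;2\,e^{2\lambda^2\Delta}\,\Phi(2\lambda\sqrt{\Delta}),
\]
where $\Phi$ denotes the standard normal CDF.

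To conclude, I would use the elementary estimate $\Phi(x)-\tfrac{1}{2}\le x\,\phi(0)=x/\sqrt{2\pi}$ for $x\ge 0$, which follows from the fact that the standard normal density attains its maximum $1/\sqrt{2\pi}$ at the origin; this gives
\[
2\,\Phi(2\lambda\sqrt{\Delta})\;\le\; 1+\lambda\sqrt{\tfrac{8\Delta}{\pi}}.
\]
Combining this with the previous display bounds each one-dimensional factor by $(1+\lambda\sqrt{8\Delta/\pi})\,e^{2\lambda^2\Delta}$, and raising to the $J$-th power yields the claim. The argument is essentially routine and I do not anticipate a genuine obstacle; the only subtle point is to observe that the inequality is only informative for $\lambda\ge 0$, which is the range in which the lemma is subsequently used within the Lyapunov drift estimate for the joint process.
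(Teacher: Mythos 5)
Your proposal is correct and follows essentially the same argument as the paper: reduce to a single coordinate via $|v|\le\sum_j|v^j|$ and independence, dominate $\sup_s|\bm^j(s)|$ by $\sup_s\bm^j(s)+\sup_s(-\bm^j(s))$ and apply Cauchy--Schwarz with symmetry, invoke the reflection principle $\sup_{s\le\Delta}\bm^j(s)\stackrel{d}{=}\sqrt{\Delta}\,|Z|$, compute the Gaussian moment generating function by completing the square, and finish with $2\Phi(x)-1\le x\sqrt{2/\pi}$. The only cosmetic difference is the order of presentation (the paper establishes the one-dimensional bound before the product factorization), and your concluding remark that the estimate is meaningful only for $\lambda\ge 0$ is an accurate observation that applies equally to the paper's statement.
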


\begin{proof}
  For each $j=1,\dots,J$, using the fact that $\sup_{0\le s\le 1}\bm^j(s)\buildrel{d}\over{=}|\bm(1)|$, and letting $\Phi$ denote the cumulative distribution function of the standard normal distribution, we have
\begin{align*}
\E\lsb\exp\lcb\lambda\sup_{0\le s\le 1}\bm^j(s)\rcb\rsb&=\E\lsb\exp\lcb\lambda\abs{\bm^j(1)}\rcb\rsb\\
&=2e^{\frac{\lambda^2}{2}}\lsb 1-\Phi\lb-\lambda\rb\rsb\\
&\le\lb 1+\sqrt{\frac{2}{\pi}}\lambda\rb e^{\frac{\lambda^2}{2}}
\end{align*}
where the last line  uses the well known inequality $2( 1- \Phi(-\lambda)) \leq (1 + \frac{2}{\pi} \lambda)$. 
Due to the fact that $|v|\le|v^1|+\cdots+|v^J|$ for all $v\in\R^J$, the Cauchy-Schwarz inequality, the facts that $\{\bm^1,\dots,\bm^J\}$ are independent and $\bm^j$ and $-\bm^j$ are equal in distribution for each $j$, we have
\begin{align*}
  \E\lsb\exp\lcb\lambda\sup_{0\le s\le 1}\abs{\bm(s)}\rcb\rsb &= \E\lsb \exp \lcb \lambda \sup_{0 \leq s \le 1} \abs{\bm^j(s)} \rcb \rsb^J \\
  &\le\E\lsb \exp\lcb\lambda\sup_{{0\le s\le 1}}\bm^j(s)+\lambda\sup_{{ 0\le s\le 1}}(-\bm^j(s))\rcb\rsb^J\\
&\le\E\lsb \exp\lcb2\lambda\sup_{{ 0\le s\le 1}}\bm^j(s)\rcb\rsb^J\\
&\le\lb 1+\sqrt{\frac{8}{\pi}}\lambda\rb^J e^{2J\lambda^2}.
\end{align*}
The lemma then follows from scaling properties of Brownian motion.
\end{proof}

\subsection{Proof of Proposition \ref{prop:lyapunov}}

\begin{proof}[Proof of Proposition \ref{prop:lyapunov}]
First, note that $V^\alpha$ is continuous and has compact level sets due to \eqref{eq:Vdef} and Lemma \ref{lem:TLyapunov}.
Next, define
\begin{equation}
  \label{def:C1}
        C_1 :=\sup\{C(\alpha)\lip_\sm(\alpha):\alpha\in U_0\}+\frac{1}{4}<\infty,\end{equation}
which is finite by the continuity of $C(\cdot)$ and $\lip_\sm(\cdot)$ on the compact set $U_0$ (see the discussion prior to Lemma \ref{lem:TLyapunov}). 
Let
\begin{equation}
  \label{def-delta0}
  \Delta_0:=\frac{2^7J^2C_1^2}{\pi}.
 \end{equation}
Let $\wh\gamma_0\in(0,1)$ be as in Corollary \ref{cor:Ptau}, and let $\delta_0\in(0,1)$ be the contraction
coefficient from Lemma \ref{lem:delta} (and hence, Lemma \ref{lem:EphibTBbound}).
Choose
	\be\label{eq:c1c2}r_1\in\lb0,\frac{1}{4JC_1^2}\rb\ee
sufficiently small so that
	\be\label{eq:r1}\exp\lcb\frac{5r_1\Delta_0}{4} \rcb\lb 1+r_1C_1\sqrt{\frac{32\Delta_0}{\pi}}\rb^{\frac{J}{2}}\le\lb1-\wh\gamma_0\rb^{-\frac{1}{8}}.\ee
        Taking a Taylor expansion of the exponential function and substituting in the definition of $\Delta_0$ from \eqref{def-delta0}  shows that 
	\be\label{eq:Tdef}1+r_1C_1\sqrt{\frac{8\Delta_0}{\pi}}\le\exp\lcb r_1C_1\sqrt{\frac{8\Delta_0}{\pi}}\rcb=\exp\lcb\frac{r_1\Delta_0}{4J}\rcb.\ee
Set
	$$r_2:=\frac{r_1}{4C_{\phib}}.$$
Choose $\eta_0\in(0,\infty)$ sufficiently large so that
	\be\label{eq:eta0}\lb1-\wh\gamma_0\lsb1-\exp\lb-2r_2\eta_0(1-\delta_0)\rb\rsb\rb^{\frac{1}{4}}\le(1-\wh\gamma_0)^{\frac{1}{8}}.\ee
Let
	\be\label{eq:beta}\beta_0:=\max\lcb \exp\lb-\frac{r_1\Delta_0}{4}\rb,\lb1-\wh\gamma_0\rb^{\frac{1}{8}}\rcb\in(0,1).\ee

Given $\xi = (x,y)\in\state$, by the definition of $V^\alpha$ in \eqref{eq:Vdef}, the bound on $\lyap^\alpha(\Z^x(\Delta_0))$ given in part (iii) of Lemma \ref{lem:TLyapunov}, and the bound on $\norm{\phib^\xi(\Delta_0)}_{B^\alpha}$ given in Lemma \ref{lem:EphibTBbound},  \eqref{def:C1} and the definition of $r_2$,  we have 
\begin{align}
	\label{eq:EVT}
	V^\alpha(\Xib^\xi(\Delta_0))&=\exp\lb r_1\lyap^\alpha(\Z^{\alpha,x}(\Delta_0))+r_2\norm{\phib^{\alpha,\xi}(\Delta_0)}_{B^\alpha}\rb\\
	\notag
	&\le\exp\lcb r_1(\lyap^\alpha(x)-\Delta_0)^++r_2\delta_0^{1{\{\tau^{\alpha,x}(1)\le\Delta_0\}}}\norm{y}_{B^\alpha}+\frac{r_1}{4}\Delta_0+r_1C_1\norm{\bm}_{\Delta_0})\rcb.
\end{align}
We treat the following three mutually exclusive and exhaustive cases separately.

\textit{Case 1}: $\lyap_0(x)>\Delta_0$. \\
The following inequalities are explained below:
\begin{align*}
	\E\lsb V^\alpha(\Xib^\xi(\Delta_0))\rsb&\le V^\alpha(\xi)\exp\lcb-\frac{3r_1\Delta_0}{4}\rcb\E\lsb\exp\lcb r_1C_1\norm{\bm}_{\Delta_0}\rcb\rsb\\
	&\le V^\alpha(\xi)\exp\lcb \lb-\frac{3}{4}+2Jr_1C_1^2\rb r_1\Delta_0\rcb\lb 1+r_1C_1\sqrt{\frac{8\Delta_0}{\pi}}\rb^J\\
	&\le V^\alpha(\xi)\exp\lb{-\frac{r_1\Delta_0}{2}}\rb\lb 1+r_1C_1\sqrt{\frac{8\Delta_0}{\pi}}\rb^J\\
	&\le V^\alpha(\xi)\exp\lb{-\frac{r_1\Delta_0}{4}}\rb\\
	&\le\beta_0 V^\alpha(\xi).
\end{align*}
The first inequality is due to \eqref{eq:EVT}, the definition of $V^\alpha(\xi)$ in \eqref{eq:Vdef} and the
inequality $\delta_0 < 1$.
The second inequality follows from Lemma \ref{lem:expWbound} with $\lambda=r_1C_1$ and $\Delta = \Delta_0$.
The third holds because of the  restriction on  $r_1$ in \eqref{eq:c1c2}.
The fourth inequality is due to \eqref{eq:Tdef}. 
The final inequality follows from the definition of $\beta_0$ in \eqref{eq:beta}. 

\textit{Case 2}: $\lyap_0(x)\le \Delta_0$ and $\norm{y}_{B^\alpha}>\eta_0\Delta_0$. \\
The following inequalities are explained below:
\begin{align*}
	&\E\lsb V^\alpha(\Xib^\xi(\Delta_0))\rsb\\
	&\qquad\le V^\alpha(\xi)\exp\lcb\frac{r_1\Delta_0}{4}\rcb\E\lsb\exp\lcb-r_2\eta_0 \Delta_0\lb1-\delta_0^{1\{\tau^{\alpha,x}(1)\le\Delta_0\}}\rb+r_1C_1\norm{\bm}_{\Delta_0}\rcb\rsb\\
	&\qquad\le V^\alpha(\xi)\exp\lcb\frac{r_1\Delta_0}{4}\rcb\E\lsb\exp\lcb 2r_1C_1\norm{\bm}_{\Delta_0}\rcb\rsb^{\frac{1}{2}}\E\lsb\exp\lcb-2r_2 \eta_0 \Delta_0\lb1-\delta_0^{1{\{\tau_1(x)\le \Delta_0\}}}\rb\rcb\rsb^{\frac{1}{2}}\\
	&\qquad\le V^\alpha(\xi)\exp\lcb \frac{r_1\Delta_0}{4}+4Jr_1^2C_1^2\Delta_0 \rcb\lb 1+2r_1C_1\sqrt{\frac{8\Delta_0}{\pi}}\rb^{\frac{J}{2}}\lsb1-\wh\gamma_0\lb1-e^{-2r_2 \eta_0 \Delta_0(1-\delta_0)}\rb\rsb^{\frac{1}{2}}\\
	&\qquad\le V^\alpha(\xi)\exp\lcb\frac{5r_1\Delta_0}{4} \rcb\lb 1+r_1C_1\sqrt{\frac{32\Delta_0}{\pi}}\rb^{\frac{J}{2}}\lb1-\wh\gamma_0\rb^{\frac{1}{4}}\\
	&\qquad\le \beta_0 V^\alpha(\xi).
\end{align*}
The first inequality is due to \eqref{eq:EVT}, the definition of $V^\alpha(\xi)$ in \eqref{eq:Vdef} and the fact that $\norm{y}_{B^\alpha}\ge {\eta_0} \Delta_0$.
The second inequality follows from the Cauchy-Schwarz inequality.
The third inequality is due to Lemma \ref{lem:expWbound} with $\lambda=2r_1C_1$ {and $\Delta = \Delta_0$}, and Corollary \ref{cor:Ptau}.
The fourth inequality is due to our choice of $\eta_0\in(0,\infty)$ so that \eqref{eq:eta0} holds, and the restriction on $r_1$ in \eqref{eq:c1c2}.  
The final inequality follows from the inequality \eqref{eq:r1} and the definition of $\beta_0$ in \eqref{eq:beta}.

\textit{Case 3}: $M_0(x) \leq \Delta_0$ and $\norm{y}_{B^\alpha} \leq \eta_0 \Delta_0$ \\
       {Using the fact that the norms $\norm{y}_{B^\alpha}$ and  $|y|$ are equivalent for every $\alpha$ and the  
         map  $\alpha \mapsto B^\alpha$ is continuous 
  in the Hausdorff metric, as assumed in Assumption \ref{ass:setB}, it follows that there exists  $\bar{\eta}(x) < \infty$ such that
  $\sup\{|y|: y \in \hyper_x, \norm{y}_{B^\alpha}  \leq \eta_0 \mbox{ for some } \alpha \in U_0 \} \leq \bar{\eta} (x)$, where the  homogeneity of the norm functional and the fact that the supremum is being taken over  a  hyperplane to
  justify the finiteness of $\bar{\eta}(x)$.
  Since there are only a finite number of hyperplanes $\hyper_x$ as $x$ ranges over $G$, we see that
  $\bar{\eta} := \max_{x \in G} \bar{\eta}(x)$ is also finite. 
  Thus, the condition of this  case implies that we always have 
  $\xi := (x,y) \in \state_{\Delta_0, \bar{\eta}}$,  where 
$\state_{\Delta_0, \bar{\eta}}$ is as  defined in \eqref{eq:stateK}. }  
Then, using \eqref{eq:EVT} and the definition of $V^\alpha(\xi)$ in \eqref{eq:Vdef}, applying Lemma \ref{lem:expWbound} with $\lambda=r_1C_1$ and $\Delta = \Delta_0$,  noting the definition of $r_1$ in \eqref{eq:c1c2} and the fact that  $\delta_0 \leq  1$, we obtain 
\begin{align*}
	\E\lsb V^\alpha(\Xib^\xi(\Delta_0))\rsb&\le V^\alpha(\xi){\E}\lsb\exp\lcb \frac{r_1\Delta_0}{4}+r_1 C_1\norm{\bm}_{\Delta_0}\rcb\rsb\le K_0,
\end{align*} 
where 
$$K_0:=\exp\lcb\frac{r_1\Delta_0}{4}+2Jr_1^2C_1^2\Delta_0\rcb\lb 1+r_1C_1\sqrt{\frac{8\Delta_0}{\pi}}\rb^J\sup_{\alpha\in U_0}\sup_{\wt\xi\in\state_{\Delta_0,\bar{\eta}}}V^\alpha(\wt\xi)$$
{is finite due to the continuity of $V^\alpha$ and the compactness of $U_0$ and relative compactness of} $\state_{\Delta_0, \bar{\eta}_0}$.    
	
Combining the three cases yields \eqref{eq:drift}.  
\end{proof}

\subsection{Proof of Corollary \ref{cor:uniform}}
\label{sec:stableproof}

We first prove a useful lemma.

\begin{lem}
\label{lem:Vbound0Delta}
Let $\Delta\in(0,\infty)$.
For any $\alpha\in U$ and $\xi\in\state$,
$$\sup_{t\in[0,\Delta]}\E\lsb V^\alpha(\Xib^{\alpha,\xi}(t))\rsb\le V^\alpha(\xi)\lb1+C_V\sqrt{\frac{8\Delta}{\pi}}\rb^J\exp\lcb\lb 2JC_V^2+r_2C_{\phib}\rb \Delta\rcb,$$  
where $C_V:=r_1C(\alpha)\lip_\sm(\alpha)+r_2C_{\phib}$.
\end{lem}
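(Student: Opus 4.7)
The plan is to bound $V^\alpha(\Xib^{\alpha,\xi}(t))$ pointwise (in $\omega$) by a quantity depending only on $V^\alpha(\xi)$, the time $t$, and $\|W\|_t$, then take expectations and invoke Lemma \ref{lem:expWbound}. The two key ingredients are already available: Lemma \ref{lem:TLyapunov}(iii) controls the growth of $M^\alpha(\Z^{\alpha,x}(t))$, and Lemma \ref{lem:EphibTBbound} (crudely, by dropping the contraction factor $\delta_0^{1\{\cdots\}} \le 1$) controls the growth of $\|\phib^{\alpha,\xi}(t)\|_{B^\alpha}$.

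Concretely, first I would apply Lemma \ref{lem:TLyapunov}(iii) with $t=0$, $\Delta$ replaced by $t$, to obtain a.s.
\[
M^\alpha(\Z^{\alpha,x}(t)) \le (M^\alpha(x)-t)^+ + C(\alpha)\lip_\sm(\alpha)\,\|W\|_t \le M^\alpha(x) + C(\alpha)\lip_\sm(\alpha)\,\|W\|_t,
\]
and apply Lemma \ref{lem:EphibTBbound} (using $\delta_0<1$ and $\xi=(x,y)$) to obtain a.s.
\[
\|\phib^{\alpha,\xi}(t)\|_{B^\alpha} \le \|y\|_{B^\alpha} + C_{\phib}\bigl(t + \|W\|_t\bigr).
\]
Plugging these two bounds into the definition \eqref{eq:Vdef} of $V^\alpha$ and collecting the coefficient of $\|W\|_t$, which equals exactly $C_V=r_1 C(\alpha)\lip_\sm(\alpha)+r_2 C_{\phib}$, yields a.s.\ for every $t\in[0,\Delta]$,
\[
V^\alpha(\Xib^{\alpha,\xi}(t)) \le V^\alpha(\xi)\,\exp\bigl\{C_V\,\|W\|_t + r_2 C_{\phib}\,t\bigr\} \le V^\alpha(\xi)\,\exp\bigl\{r_2 C_{\phib}\Delta\bigr\}\,\exp\bigl\{C_V\,\|W\|_\Delta\bigr\},
\]
where the monotonicity $\|W\|_t \le \|W\|_\Delta$ is used in the last step.

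Now taking expectations and applying Lemma \ref{lem:expWbound} with $\lambda=C_V$ gives
\[
\E\bigl[V^\alpha(\Xib^{\alpha,\xi}(t))\bigr] \le V^\alpha(\xi)\,e^{r_2 C_{\phib}\Delta}\Bigl(1+C_V\sqrt{\tfrac{8\Delta}{\pi}}\Bigr)^{J}\,e^{2JC_V^2\Delta},
\]
and combining the exponential factors yields the stated bound uniformly in $t\in[0,\Delta]$. Since the right-hand side is independent of $t$, taking the supremum over $t\in[0,\Delta]$ is free.

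There is essentially no serious obstacle here: the lemma is a deterministic-plus-Gaussian-tail calculation assembled from results already proved. The only thing to be careful about is to drop the contraction factor $\delta_0^{1\{\tau^{\alpha,x}(1)\le t\}}$ in Lemma \ref{lem:EphibTBbound} (since we want a bound valid pointwise in $\omega$ without conditioning on $\tau^{\alpha,x}(1)$), and to track the coefficient of $\|W\|_t$ carefully so that it coincides with the stated constant $C_V$.
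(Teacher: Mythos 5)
Your proposal is correct and follows essentially the same route as the paper: both apply Lemma \ref{lem:TLyapunov}(iii) with $t=0$, Lemma \ref{lem:EphibTBbound} with the contraction factor dropped (since $\delta_0\le 1$), and Lemma \ref{lem:expWbound} with $\lambda=C_V$, the only cosmetic difference being that you first pass to $\|\bm\|_\Delta$ and $r_2C_{\phib}\Delta$ before taking expectations, whereas the paper keeps the argument in terms of $t$ and takes the supremum over $t\in[0,\Delta]$ at the end by monotonicity. Both orderings give the stated bound with the identical constant $C_V$.
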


\begin{proof}
Recalling the definition of $V^\alpha$ in \eqref{eq:Vdef}, and applying Lemma \ref{lem:TLyapunov}(iii) with $t=0$ and $\Delta = t$, Lemma \ref{lem:EphibTBbound} with $T = t$, and Lemma \ref{lem:expWbound} with $\lambda = C_V$ and  $\Delta =t$, we see that for $t\in[0,\Delta]$, 
\begin{align*}
	\E\lsb V^\alpha(\Xib^{\alpha,\xi}(t))\rsb&=\E\lsb\exp\lcb r_1\lyap_0(\Z^{\alpha,x}(t))+r_2\norm{\phib^{\alpha,\xi}(t)}_{B^\alpha}\rcb\rsb\\
	&\le V^\alpha(\xi)\E\lsb\exp\lcb C_V\norm{\bm}_t+r_2C_{\phib}t\rcb\rsb\\
	&\le V^\alpha(\xi)\lb1+C_V\sqrt{\frac{8t}{\pi}}\rb^J\exp\lcb (2J C_V^2+r_2C_{\phib})t\rcb.
\end{align*} 
Taking the supremum  over $t\in[0,\Delta]$ on both sides completes the proof of the lemma.
\end{proof}

\begin{proof}[Proof of Corollary \ref{cor:uniform}]
	Fix a compact set $U_0$ in $U$ and a relatively compact set $A\subset\state$, and let $r_1,r_2\in(0,\infty)$, $\beta_0\in(0,1)$ and $\Delta_0,\eta_0,K_0\in(0,\infty)$ be as in Proposition \ref{prop:lyapunov}.
	Let $\alpha\in U_0$ and $\xi\in A$.  
By the Markov property for $\Xib^{\alpha,\xi}$ ({see Theorem \ref{thm:jointmarkov}}) and Proposition \ref{prop:lyapunov}, we have, for each $n\in\N$,
\begin{align*}
	\E\lsb V^\alpha(\Xib^{\alpha,\xi}(n\Delta_0))\rsb\le\beta_0\E\lsb V^\alpha(\Xib^{\alpha,\xi}((n-1)\Delta_0))\rsb+K_0.
\end{align*}
Recursively applying the last display yields, for $n\in\N$,
\begin{align*}
	\E\lsb V^\alpha(\Xib^{\alpha,\xi}(n\Delta_0))\rsb\le\beta_0^n V^\alpha(\xi)+\sum_{k=1}^{n-1}\beta_0^kK_0\le V^\alpha(\xi)+\frac{K_0}{1-\beta_0}.
\end{align*}
Another application of the Markov property for $\Xib^{\alpha,\xi}$, when combined with  Lemma \ref{lem:Vbound0Delta}, shows that for all $t\in[n\Delta_0,(n+1)\Delta_0]$,
\begin{align*}
	\E\lsb V^\alpha(\Xib^{\alpha,\xi}(t))\rsb&\le \E\lsb V^\alpha(\Xib^{\alpha,\xi}(n\Delta_0))\rsb\lb1+C_V\sqrt{\frac{8\Delta_0}{\pi}}\rb^J\exp\lcb\lb 2JC_V^2+r_2C_{\phib}\rb\Delta_0\rcb\\
	&\le\lb V^\alpha(\xi)+\frac{K_0}{1-\beta_0}\rb\lb1+C_V\sqrt{\frac{8\Delta_0}{\pi}}\rb^J\exp\lcb\lb 2J C_V^2+r_2C_{\phib}\rb \Delta_0\rcb.
\end{align*}
To complete the proof, we 
take the supremum over $t\in[n\Delta_0,(n+1)\Delta_0]$, $n\in\N_0$, $\alpha\in U_0$ and $\xi\in A$ on both sides of the previous inequality, and invoke the continuity of the map $(\alpha, \xi) \mapsto V^{\alpha}(\xi)$ and the compactness of
$U_0$ and $A$ to conclude the  finiteness of the right-hand side. 
\end{proof}

\section{Ergodicity of the joint process}
\label{sec:ergodic}

Throughout this section we assume the data $\{(d_i(\cdot),n_i),i\in\allN\}$ satisfies Assumptions \ref{ass:independent}, \ref{ass:setB} and \ref{ass:projection}, and the coefficients $b(\cdot)$, $\sigma(\cdot)$ and $R(\cdot)$ satisfy Assumptions \ref{ass:holder} and \ref{ass:stable}. 

\subsection{Uniqueness of the stationary distribution}
\label{sec:jointunique}

In this section we prove there is at most one stationary distribution for the joint process. 
In the next section we prove existence of a stationary distribution.
The proof of uniqueness is nonstandard due to degeneracy of the joint process, which is a $2J$-dimensional process driven by a $J$-dimensional Brownian motion. 
We use the asymptotic coupling method formalized by Hairer, Mattingly and Scheutzow in \cite{Hairer2011}.

In order to describe this method, we first need some notation.
Let ${\bf X}$ be a separable metric space with metric $\text{dist}(\cdot,\cdot)$.
Let $\mathcal{M}(\dr({\bf X}))$ and $\mathcal{M}(\dr({\bf X})\times\dr({\bf X}))$ denote the set of probability measures on $(\dr({\bf X}),\B(\dr({\bf X})))$ and $(\dr({\bf X})\times\dr({\bf X}),\B(\dr({\bf X}))\otimes\B(\dr({\bf X})))$, respectively.
For $m_1,m_2\in\mathcal{M}(\dr({\bf X}))$ let $\mathcal{C}(m_1,m_2)$ denote the set of \emph{couplings} of $m_1$ and $m_2$; that is,
	$$\mathcal{C}(m_1,m_2):=\lcb\Upsilon\in\mathcal{M}(\dr({\bf X})\times\dr({\bf X})):\Upsilon(\cdot\times\dr({\bf X}))=m_1(\cdot),\;\Upsilon(\dr({\bf X})\times\cdot)=m_2(\cdot)\rcb.$$
Define the \emph{diagonal at infinity} by
	$$\mathcal{D}:=\lcb (\zeta_1,\zeta_2)\in\dr({\bf X})\times\dr({\bf X}):\lim_{t\to\infty}\text{dist}(\zeta_1(t),\zeta_2(t))=0\rcb.$$
We say $\Upsilon\in\mathcal{C}(m_1,m_2)$ is an \emph{asymptotic coupling} of $m_1$ and $m_2$ if $\Upsilon(\mathcal{D})=1$.
The following theorem is a continuous version of \cite[Theorem 1.1]{Hairer2011} (see, for example, \cite[Proposition 5.1]{AghRam19a} and \cite[Appendix C]{AghRam19b}), where we have also relaxed the requirement that ${\bf X}$ be \emph{complete}.
A careful examination of the proof of \cite[Theorem 1.1]{Hairer2011} reveals that the result still holds even if the metric space is not complete.
Also note that \cite[Theorem 1.1]{Hairer2011} proves a stronger result related to \emph{equivalent} asymptotic couplings, which we do not need here.
The version of the method we use is closely related to the \emph{asymptotic flatness} condition for the stochastic flow of a diffusion introduced by Basak and Bhattacharya \cite{Basak1992} to prove uniqueness of the stationary distribution for a degenerate diffusion.
Indeed, in the terminology of \cite{Basak1992}, we use a coupling construction to 
show that the stochastic flow of the (degenerate) joint process is almost surely asymptotically flat;  
see \eqref{eq:asymcoupled} in the proof of Theorem \ref{thm:unique} below. 

\begin{theorem}\label{thm:coupling}
Let $\{\mathcal{P}_t\}=\{\mathcal{P}_t,t\ge0\}$ be a Markov transition semigroup on a separable metric space ${\bf X}$ admitting two stationary distributions $\mu_1$ and $\mu_2$.
For $i=1,2$ let $P_{\mu_i}$ denote the distribution of the Markov process with initial distribution $\mu_i$ and transition semigroup $\{\mathcal{P}_t\}$ on $(\dr({\bf X}),\B(\dr({\bf X})))$.
Suppose there is an asymptotic coupling of $P_{\mu_1}$ and $P_{\mu_2}$.
Then $\mu_1=\mu_2$.
\end{theorem}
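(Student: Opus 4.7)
My plan is to proceed by contradiction: assume $\mu_1\neq\mu_2$ and use the asymptotic coupling $\Upsilon$ to derive $\mu_1(f)=\mu_2(f)$ for enough test functions $f$ to force $\mu_1=\mu_2$. The starting observation is that stationarity of each $\mu_i$ implies that the one-dimensional marginal of $P_{\mu_i}$ at each time $t\ge 0$ equals $\mu_i$, so that for every bounded continuous $f:{\bf X}\to\R$ and every $t\ge 0$,
\[
\mu_1(f)-\mu_2(f)=\E_{P_{\mu_1}}[f(\zeta(t))]-\E_{P_{\mu_2}}[f(\zeta(t))]=\E_\Upsilon\bigl[f(\zeta_1(t))-f(\zeta_2(t))\bigr],
\]
where the last equality uses that $\Upsilon$ is a coupling of $P_{\mu_1}$ and $P_{\mu_2}$ (with $(\zeta_1,\zeta_2)$ denoting the coordinate maps on $\dr({\bf X})\times\dr({\bf X})$).

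Next I would restrict attention to bounded Lipschitz test functions $f$ with respect to $\mathrm{dist}$. On a separable metric space, the bounded Lipschitz functions form a separating family for Borel probability measures; this can be seen directly from functions of the form $x\mapsto(1-\mathrm{dist}(x,A)/\delta)^+$ for closed sets $A$ and $\delta>0$, which are bounded and $1/\delta$-Lipschitz. So if $\mu_1\neq\mu_2$, then $\mu_1(f)\neq\mu_2(f)$ for some such bounded $L$-Lipschitz $f$. The key quantitative input is then the Lipschitz bound
\[
|f(\zeta_1(t))-f(\zeta_2(t))|\le L\,\mathrm{dist}(\zeta_1(t),\zeta_2(t)),
\]
and by the definition of the diagonal at infinity $\mathcal{D}$, the right-hand side tends to $0$ as $t\to\infty$ on $\mathcal{D}$.

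Finally, since $\Upsilon(\mathcal{D})=1$ by the asymptotic coupling hypothesis and the integrand is uniformly dominated by the constant $2\norm{f}_\infty$, the dominated convergence theorem gives
\[
\lim_{t\to\infty}\E_\Upsilon\bigl[f(\zeta_1(t))-f(\zeta_2(t))\bigr]=0.
\]
Combining this with the first display, in which the left-hand side is the \emph{constant} $\mu_1(f)-\mu_2(f)$ independent of $t$, we conclude $\mu_1(f)=\mu_2(f)$ for every bounded Lipschitz $f$, contradicting the choice of $f$. The only subtlety worth flagging is the availability of a separating family of bounded Lipschitz functions without assuming completeness of $\bf X$; this is exactly what allows the Polish hypothesis of \cite{Hairer2011} to be relaxed, and it is handled by the explicit truncated-distance construction above, which uses only the metric (not completeness) on $\bf X$.
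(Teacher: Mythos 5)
Your proof is correct, and it takes a genuinely different (and considerably more elementary) route than the one the paper leans on. The paper gives no self-contained proof of Theorem \ref{thm:coupling}, instead citing \cite[Theorem 1.1]{Hairer2011} (and its continuous-time adaptations) together with the remark that a careful reading of that proof shows completeness of ${\bf X}$ is dispensable. The Hairer--Mattingly--Scheutzow theorem is strictly stronger than the statement here: it allows \emph{equivalent} asymptotic couplings, requires only $\Upsilon(\mathcal{D})>0$, and its proof goes through ergodic decomposition, mutual singularity of distinct ergodic measures, and an argument about generic points. You observe, correctly, that the version as stated --- a coupling of the actual path laws $P_{\mu_1}$ and $P_{\mu_2}$ with $\Upsilon(\mathcal{D})=1$ --- admits a one-paragraph direct proof: stationarity forces the time-$t$ marginals of the two factors to be $\mu_1$ and $\mu_2$ for every $t$, so $\mu_1(f)-\mu_2(f)=\E_\Upsilon[f(\zeta_1(t))-f(\zeta_2(t))]$ is constant in $t$, yet for bounded Lipschitz $f$ it tends to $0$ by bounded convergence along $\mathcal{D}$; since such $f$ separate Borel probability measures on a metric space (via the truncated-distance functions $x\mapsto(1-\mathrm{dist}(x,C)/\delta)^+$ for closed $C$), $\mu_1=\mu_2$. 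Your argument also makes the paper's parenthetical remark about dropping completeness essentially self-evident, since the only structure used is the metric. What you give up relative to the cited machinery is the ability to work with merely equivalent couplings and $\Upsilon(\mathcal{D})>0$; since the paper's application (Theorem \ref{thm:unique}) constructs an exact coupling with $\Upsilon(\mathcal{D})=1$, nothing is lost. One small point worth noting explicitly, though it is also left implicit in the paper: the map $(\zeta_1,\zeta_2)\mapsto(\zeta_1(t),\zeta_2(t))$ must be Borel on $\dr({\bf X})\times\dr({\bf X})$ so that $\E_\Upsilon[f(\zeta_1(t))-f(\zeta_2(t))]$ makes sense; this is a standard fact about point evaluations under the Skorokhod $J_1$-topology.
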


With Theorem \ref{thm:coupling} in hand, we state and prove that there is at most one stationary distribution for the joint process.  

\begin{theorem}
\label{thm:unique}
	For each $\alpha\in U$ there is at most one stationary distribution for the joint process $\Xib^\alpha$.
\end{theorem}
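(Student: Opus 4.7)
\emph{Proof plan.} The plan is to invoke the asymptotic coupling method of Theorem \ref{thm:coupling}. Suppose $\mu_1^\alpha$ and $\mu_2^\alpha$ are two stationary distributions of $\Xib^\alpha$. A crucial initial observation is that the marginal of each $\mu_i^\alpha$ on the first coordinate is invariant under the RBM transition semigroup, and by Theorem \ref{thm:RBMstable} must therefore equal the unique stationary distribution $\pi^\alpha$ of the RBM. Hence after disintegration we may write $\mu_i^\alpha(dx,dy) = \pi^\alpha(dx)\,\rho_i(x,dy)$ for probability kernels $\rho_i(x,\cdot)$ supported on $\hyper_x$.

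On a sufficiently rich probability space, realize a random variable $x \sim \pi^\alpha$, conditionally independent random variables $y_1,y_2$ with respective distributions $\rho_1(x,\cdot),\rho_2(x,\cdot)$, and independently a $J$-dimensional $\{\F_t\}$-Brownian motion $\bm$. Setting $\xi_i := (x,y_i)\sim \mu_i^\alpha$, run the joint processes $\Xib^{\alpha,\xi_i} = (\Z^{\alpha,x}, \phib^{\alpha,\xi_i})$ driven by the common $\bm$. By pathwise uniqueness of the RBM (Theorem \ref{thm:rbm}), both first coordinates coincide with the single process $\Z^{\alpha,x}$, and the joint law of $(\Xib^{\alpha,\xi_1},\Xib^{\alpha,\xi_2})$ on $\dr(\state)\times\dr(\state)$ defines a coupling $\Upsilon$ of $P_{\mu_1^\alpha}$ and $P_{\mu_2^\alpha}$. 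It suffices to show $\Upsilon(\mathcal{D})=1$, which reduces to $|\phib^{\alpha,\xi_1}(t)-\phib^{\alpha,\xi_2}(t)|\to 0$ a.s.\ as $t\to\infty$.

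Both derivative processes evolve along the same RBM with the same local time $\L^{\alpha,x}$, so by Remark \ref{rmk:derivativeprocessbeta} the driving paths from \eqref{eq:psib} satisfy $\psib^{\alpha,\xi_1}(t)-\psib^{\alpha,\xi_2}(t) = y_1-y_2$ for every $t\geq 0$. The linearity of the derivative map (Lemma \ref{lem:dmlinear}) then identifies $\phib^{\alpha,\xi_1}-\phib^{\alpha,\xi_2}$ as the unique solution of the derivative problem along $\Z^{\alpha,x}$ with the constant driving $\psi\equiv y_1-y_2$. By Proposition \ref{prop:taujfinite} the face-visiting times $\{\tau^{\alpha,x}(j)\}_{j\in\N}$ defined in \eqref{eq:tauj} are almost surely all finite. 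Applying Proposition \ref{prop:dpcontract} on each interval $[\tau^{\alpha,x}(j-1),\tau^{\alpha,x}(j)]$, the oscillation term vanishes since $\psi$ is constant, yielding
\[
\|\phib^{\alpha,\xi_1}(\tau^{\alpha,x}(j))-\phib^{\alpha,\xi_2}(\tau^{\alpha,x}(j))\|_{B^\alpha}\ \le\ \delta_0\,\|\phib^{\alpha,\xi_1}(\tau^{\alpha,x}(j-1))-\phib^{\alpha,\xi_2}(\tau^{\alpha,x}(j-1))\|_{B^\alpha}.
\]
A further application of Proposition \ref{prop:dpcontract} on intervals $[\tau^{\alpha,x}(j),t]$, where the exponent of $\delta_0$ is merely $0$ but the oscillation term is still zero, shows $\|\phib^{\alpha,\xi_1}(\cdot)-\phib^{\alpha,\xi_2}(\cdot)\|_{B^\alpha}$ is non-increasing between consecutive face-visiting times. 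Iterating one obtains $\|\phib^{\alpha,\xi_1}(t)-\phib^{\alpha,\xi_2}(t)\|_{B^\alpha}\le \delta_0^{j(t)}\|y_1-y_2\|_{B^\alpha}$ with $j(t):=\max\{j:\tau^{\alpha,x}(j)\le t\}\to\infty$ a.s., and by the equivalence of $\|\cdot\|_{B^\alpha}$ and $|\cdot|$ on $\R^J$ the desired convergence follows. Theorem \ref{thm:coupling} then delivers $\mu_1^\alpha=\mu_2^\alpha$.

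The main obstacle, responsible for the ``somewhat tricky'' character of the proof alluded to in the introduction, is the degeneracy of the $2J$-dimensional joint process driven by only a $J$-dimensional Brownian motion, which rules out any hypoellipticity-style uniqueness argument. The strategy above side-steps this degeneracy by first exploiting the already-established uniqueness of the RBM stationary distribution (Theorem \ref{thm:RBMstable}) to align the $x$-coordinates in the coupling, and then exploiting the linearity of the derivative map to recast the pointwise difference $\phib^{\alpha,\xi_1}-\phib^{\alpha,\xi_2}$ as a solution of a derivative problem with \emph{constant} driving, for which the face-visiting contraction of Proposition \ref{prop:dpcontract} supplies exponential decay.
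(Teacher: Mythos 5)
Your proof is correct and follows essentially the same strategy as the paper's: match the RBM components via pathwise uniqueness and the uniqueness of the RBM's stationary distribution, observe that the drivers $\psib^{\alpha,\xi_1}-\psib^{\alpha,\xi_2}$ are constant, use linearity of the derivative map, and then contract via the face-visiting times and Proposition \ref{prop:dpcontract} to exhibit an asymptotic coupling. The only cosmetic difference is that you spell out a specific disintegration $\mu_i^\alpha(dx,dy)=\pi^\alpha(dx)\,\rho_i(x,dy)$ with conditionally independent $y_1,y_2$, whereas the paper simply asserts a coupling with $\Z_1(0)=\Z_2(0)$ a.s.; both choices work equally well since only the alignment of the $x$-coordinates matters.
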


\begin{proof}
Throughout this proof we fix $\alpha\in U$ and suppress the $\alpha$ dependence.
Suppose there are two stationary distributions $\mu_1$ and $\mu_2$ for the joint process. 
For $i=1,2,$ let $P_{\mu_i}$ denote the distribution of the joint process with initial distribution $\mu_i$.
We construct the asymptotic coupling of $P_{\mu_1}$ and $P_{\mu_2}$ as follows.
Due to the uniqueness of the stationary distribution of the RBM stated in Theorem \ref{thm:RBMstable}, the first marginals of $\mu_1$ and $\mu_2$ must be equal in the sense that
	\be\label{eq:c1c2umarignal}\mu_1((A\times\R^J)\cap\state)=\mu_2((A\times\R^J)\cap\state),\qquad A\in\B(G).\ee
Let $\Xib_1=(\Z_1,\phib_1)$ and $\Xib_2=(\Z_2,\phib_2)$ denote the joint processes with respective initial distributions $\mu_1$ and $\mu_2$, and common driving Brownian motion $\bm$ such that $\Z_1(0)$ and $\Z_2(0)$ are independent of $\bm$.
Then $P_{\mu_i}(\cdot)=\P(\Xib_i\in\cdot)$ for $i=1,2$.
In view of \eqref{eq:c1c2umarignal}, we can assume that $\Xib_1$ and $\Xib_2$ are built on the common probability space $(\Omega,\F,\P)$ such that a.s.\ $\Z_1(0)=\Z_2(0)$.
Let $\Y_1$ and $\Y_2$ denote the respective constraining processes and set $\L_1(\cdot)=R^{-1}\Y_1(\cdot)$ and $\L_2(\cdot)=R^{-1}\Y_2(\cdot)$ as in \eqref{eq:Lalpha}.
By the pathwise uniqueness of RBMs (Theorem \ref{thm:rbm}) and Remark \ref{rmk:pathwiseunique}, a.s.\ $\Z_1=\Z_2$ and $\L_1=\L_2$.
Define the coupling $\Upsilon$ on $(\dr(\state)\times\dr(\state),\B(\dr(\state))\otimes\B(\dr(\state)))$ of the probability measures $P_{\mu_1}$ and $P_{\mu_2}$ on $(\dr(\state),\B(\dr(\state)))$ by
	$$\Upsilon\left(\mathcal{A}_1\times \mathcal{A}_2\right)=\P\left((\Xib_1,\Xib_2)\in \mathcal{A}_1\times \mathcal{A}_2\right),\qquad \mathcal{A}_1, \mathcal{A}_2\in\B(\dr(\state)).$$
For $i=1,2$ define $\psib_i$ as in \eqref{eq:psib}, but with $\psib_i$ and $\phib_i$ in place of $\psib$ and $\phib$, respectively.
Since $\psib_1$ and $\psib_2$ are driven by the same  Brownian motion $W$ and a.s.\ $\L_1=\L_2$, it follows from \eqref{eq:psib} that 
\be\label{eq:psibdiffphiinfdiff}
	\psib_1(t)-\psib_2(t)=\phib_1(0)-\phib_2(0),\qquad t\ge0.
\ee
By Remark \ref{rmk:derivativeprocessbeta}, Definition \ref{def:dp}, the fact that a.s.\ $\Z_1=\Z_2$, and the linearity of the derivative map (Lemma \ref{lem:dmlinear}), a.s.
\be\label{eq:phibdiffdmZpsibdiff}
	\phib_1-\phib_2=\dm_{\Z_1}(\psib_1)-\dm_{\Z_1}(\psib_2)=\dm_{\Z_1}(\psib_1-\psib_2).
\ee
Due to \eqref{eq:phibdiffdmZpsibdiff}, \eqref{eq:psibdiffphiinfdiff}, the definition of $\tau_j(x)$ in \eqref{eq:tauj},   
a repeated application of the bound in Proposition \ref{prop:dpcontract} with $S = \min(t,\tau_j(Z_1(0)))$ and $T = \min(t,\tau_{j+1}(Z_1(0)))$,   Proposition \ref{prop:taujfinite} and Remark \ref{rmk:tau}  show that  we have a.s. 
\begin{align}\label{eq:phicoupling} 
	\lim_{t\to\infty}\norm{\phib_1(t)-\phib_2(t)}_B&\leq\norm{\phib_1(0)-\phib_2(0)}_B\prod_{j=1}^\infty\delta_0^{1{\lcb\tau_j(\Z_1(0))<\infty\rcb}}=0, 
\end{align}
where $\delta_0 \in (0,1)$ is the contraction coefficient from  Lemma \ref{lem:delta}. 
Let
	$$\mathcal{D}:=\lcb(\zeta_1,\zeta_2)\in\dr(\state)\times\dr(\state):\lim_{t\to\infty}\norm{\zeta_1(t)-\zeta_2(t)}=0\rcb.$$
Since a.s.\ $\Z_1=\Z_2$, \eqref{eq:phicoupling} implies that
	\be\label{eq:asymcoupled}\Upsilon(\mathcal{D})=\P\left(\lim_{t\to\infty}\norm{\Xib_1(t)-\Xib_2(t)}=0\right)=\P\left(\lim_{t\to\infty}\norm{\phib_1(t)-\phib_2(t)}_{B^\alpha}=0\right)=1.\ee
Therefore, $\Upsilon$ is an asymptotic coupling of $P_{\mu_1}$ and $P_{\mu_2}$, and Theorem \ref{thm:coupling} implies $\mu_1=\mu_2$.
\end{proof}

\subsection{Proof of Theorem \ref{thm:stable}}
\label{sec:proofstable}

Given $\alpha\in U$, $\xi\in\state$ and $t>0$, define the probability measure $Q_t^{\alpha,\xi}$ on $\state$ by
	$$Q_t^{\alpha,\xi}(A):=\frac{1}{t}\int_0^tP_s^\alpha(\xi,A)ds,\qquad A\in\B(\state),$$
where $P_s^\alpha(\xi,A)$ is the transition function defined in \eqref{eq:Pt}.
With Corollary \ref{cor:uniform} in hand, the proof of existence of a stationary distribution follows a standard argument (see, e.g., the proof of \cite[Theorem 1.2]{Billingsley1999}), with the main difference being that the state space for the joint process $\state$ is not complete.

\begin{proof}[Proof of Theorem \ref{thm:stable}]
Fix $\xi_0\in\state$.
By Corollary \ref{cor:uniform},
	\be\label{eq:mdef}m:=\sup_{t\ge0}\E\lsb V^\alpha(\Xib^{\alpha,\xi_0}(t))\rsb<\infty.\ee
Thus, by Markov's inequality, for all $t\ge0$ and $K<\infty$,
	$$Q_t^{\alpha,\xi_0}\lb\lcb\xi\in\state:V^\alpha(\xi)\ge K\rcb\rb\le\frac{m}{K}.$$
Since $V^\alpha$ has compact level sets, it follows that the family of probability measures $\{Q_t^{\alpha,\xi_0}\}_{t\ge0}$ on the Polish space $G\times\R^J$ is tight.
Let $\mu$ denote any weak limit point.
By Theorem \ref{thm:RBMstable} and the fact that the renormalized occupation measures of the RBM converge to its unique stationary distribution (see, e.g., \cite[Chapter 4, Theorem 9.3]{EK2009}),
the first marginal of $\mu$ is the unique stationary distribution for the RBM, which, by \cite[Theorem 2]{Kang2014a}, is supported on $G^\circ$. 
Thus, $\mu$ is supported on $G^\circ\times\R^J\subset\state$. 
Let $s>0$ and $g:\state\mapsto\R$ be a bounded and continuous function.
Let $\ve>0$.
Since $Q_t^{\alpha,\xi_0}$ converges to $\mu$ in the weak topology and $(P_s^\alpha g):\state\mapsto\R$ is a bounded and continuous function by the Feller continuity shown in Theorem \ref{thm:jointmarkov}, we can choose $t\ge 2s\norm{g}_\infty/\ve$ sufficiently large so that
	$$\abs{\int_\state (P_s^\alpha g)(\xi)\mu(d\xi)-\int_\state (P_s^\alpha g)(\xi)Q_t^{\alpha,\xi_0}(d\xi)}+\abs{\int_\state g(\xi)Q_t^{\alpha,\xi_0}(d\xi)-\int_\state g(\xi)\mu(d\xi)}<\ve.$$
For such $t\ge 2s\norm{g}_\infty/\ve$, we have
\begin{align*}
	\abs{(\mu P_s^\alpha)(g)-\mu(g)}&\le\abs{\int_\state g(\xi)(\mu P_s^\alpha)(d\xi)-\int_\state g(\xi)(Q_t^{\alpha,\xi_0}P_s^\alpha)(d\xi)}\\
	&\qquad+\abs{\int_\state g(\xi)(Q_t^{\alpha,\xi_0}P_s^\alpha)(d\xi)-\int_\state g(\xi)Q_t^{\alpha,\xi_0}(d\xi)}\\
	&\qquad+\abs{\int_\state g(\xi)Q_t^{\alpha,\xi_0}(d\xi)-\int_\state g(\xi)\mu(d\xi)}\\
	&\le\abs{\int_\state (P_s^\alpha g)(\xi)\mu(d\xi)-\int_\state (P_s^\alpha g)(\xi)Q_t^{\alpha,\xi_0}(d\xi)}\\
	&\qquad+\frac{1}{t}\abs{\int_t^{t+s} (P_u^\alpha g)(\xi)du-\int_0^s (P_u^\alpha g)(\xi)du}\\
	&\qquad+\abs{\int_\state g(\xi)Q_t^{\alpha,\xi_0}(d\xi)-\int_\state g(\xi)\mu(d\xi)}\\
	&\le 2\ve.
\end{align*} 
Since $\ve>0$ was arbitrary, it follows that $\mu$ is a stationary distribution for the joint process, which is unique by Theorem \ref{thm:unique}.
\end{proof}

\section{Sensitivities of the stationary distribution of an RBM}\label{sec:interchange}

In this section we prove Theorem \ref{thm:sensitivity}. 
Throughout this section we assume the data $\{(d_i(\cdot),n_i),i\in\allN\}$ satisfies Assumptions \ref{ass:independent}, \ref{ass:setB} and \ref{ass:projection}, and the coefficients $b(\cdot)$, $\sigma(\cdot)$ and $R(\cdot)$ satisfy Assumptions \ref{ass:holder} and \ref{ass:stable}. 
Fix a continuous differentiable function $f:G\mapsto\R$ with bounded and continuous Jacobian $f':G\mapsto\R^{1\times J}$.
Let $x\in G$ and $\xi=(x,0)\in\state$.
For each $t>0$ define the function $\theta_t:U\mapsto\R$ by
	$$\theta_t(\alpha):=\frac{1}{t}\int_0^t\E\lsb f(\Z^{\alpha,x}(s))\rsb ds,\qquad\alpha\in U.$$
By Corollary \ref{cor:pathwise}, for each $t\ge0$, $\theta_t(\cdot)$ is differentiable on $U$ with
	$$\theta_t'(\alpha)=\frac{1}{t}\int_0^t\E\lsb f'(\Z^{\alpha,x}(s))\phib^{\alpha,\xi}(s)\rsb ds,\qquad\alpha\in U.$$
Then by Theorem \ref{thm:stable} and Corollary \ref{cor:uniform},
\begin{align}
\label{eq:stable}
	\lim_{t\to\infty}\theta_t(\alpha)=\E\lsb f(\Z^{\alpha,x}(\infty))\rsb\quad\text{and}\quad\lim_{t\to\infty}\theta_t'(\alpha)=\E\lsb f'(\Z^{\alpha,x}(\infty))\phib^{\alpha,\xi}(\infty)\rsb.
\end{align}

\begin{lem}
	\label{lem:barh}
	There exists a locally integrable function $\bar\theta:U\to[0,\infty)$ such that $|\theta_t'(\alpha)|\le\bar\theta(\alpha)$ for all $t\ge0$ and $\alpha\in U$.
\end{lem}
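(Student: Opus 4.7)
The plan is to reduce the claim to a locally uniform bound on $\E[|\phib^{\alpha,\xi}(s)|]$, and then obtain that bound from the exponential Lyapunov moment estimate of Corollary \ref{cor:uniform}. Since $f'$ is bounded, say by $C_f$, the triangle inequality gives
\[
|\theta_t'(\alpha)| \le \frac{C_f}{t}\int_0^t \E\!\left[|\phib^{\alpha,\xi}(s)|\right]ds,
\]
so it suffices to show that $\sup_{s\ge0}\E[|\phib^{\alpha,\xi}(s)|]$ is locally bounded in $\alpha$, which is certainly enough for local integrability of $\bar\theta$.

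Next, I would relate $|\phib^{\alpha,\xi}(s)|$ to $V^\alpha(\Xib^{\alpha,\xi}(s))$. Recall $V^\alpha(x,y)=\exp(r_1 M^\alpha(x)+r_2\|y\|_{B^\alpha})$, so the elementary inequality $r_2 z\le e^{r_2 z}$ for $z\ge 0$ yields $\|\phib^{\alpha,\xi}(s)\|_{B^\alpha}\le r_2^{-1} V^\alpha(\Xib^{\alpha,\xi}(s))$. The norms $|\cdot|$ and $\|\cdot\|_{B^\alpha}$ are equivalent on $\R^J$, and since $\alpha\mapsto B^\alpha$ is continuous in the Hausdorff metric (Assumption \ref{ass:setB}), there exists a constant $C(\alpha)$, continuous in $\alpha$, such that $|y|\le C(\alpha)\|y\|_{B^\alpha}$ for every $y\in\R^J$. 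Combining these,
\[
\E\!\left[|\phib^{\alpha,\xi}(s)|\right]\le \frac{C(\alpha)}{r_2}\,\E\!\left[V^\alpha(\Xib^{\alpha,\xi}(s))\right].
\]

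To close the argument, fix any compact $U_0\subset U$. The Lyapunov constants $r_1,r_2$ supplied by Proposition \ref{prop:lyapunov} depend only on $U_0$; since the initial condition $\xi=(x,0)$ is a single point (and therefore $\{\xi\}$ is relatively compact in $\state$), Corollary \ref{cor:uniform} gives
\[
m(U_0):=\sup\!\left\{\E[V^{\alpha}(\Xib^{\alpha,\xi}(s))] : s\ge 0,\ \alpha\in U_0\right\}<\infty.
\]
Since $C(\cdot)$ is continuous, $\sup_{\alpha\in U_0} C(\alpha)<\infty$, and the previous display yields a constant $K(U_0)<\infty$ such that $\E[|\phib^{\alpha,\xi}(s)|]\le K(U_0)$ for all $s\ge 0$ and $\alpha\in U_0$. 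Therefore $|\theta_t'(\alpha)|\le C_f K(U_0)$ for all $t\ge 0$ and $\alpha\in U_0$. Exhausting $U$ by an increasing sequence of compact sets $U_n\uparrow U$ and setting $\bar\theta$ equal to the resulting piecewise-constant bound produces a locally bounded, hence locally integrable, dominating function, completing the proof.

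The only mildly delicate point is the dependence of $r_2$ (and the implicit constants in $V^\alpha$) on $U_0$, but this only means the constant $K(U_0)$ depends on $U_0$, which is harmless for a local bound.
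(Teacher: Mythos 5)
Your proof is correct and follows essentially the same route as the paper: bound $|\theta_t'(\alpha)|$ by $\|f'\|_\infty\sup_{s\ge0}\E[|\phib^{\alpha,\xi}(s)|]$ and control the latter locally uniformly in $\alpha$ via Corollary \ref{cor:uniform}. The one place where you add genuine value is that the paper invokes Corollary \ref{cor:uniform} and immediately asserts $\sup_{\alpha\in U_0}\sup_{s\ge0}\E[|\phib^{\alpha,\xi}(s)|]<\infty$ without spelling out the passage from a bound on $\E[V^\alpha(\Xib^{\alpha,\xi}(s))]$ to a bound on $\E[|\phib^{\alpha,\xi}(s)|]$; you make this explicit with the elementary inequality $r_2 z\le e^{r_2 z}$, the nonnegativity of $M^\alpha$, and the ($\alpha$-locally uniform) equivalence of $|\cdot|$ and $\|\cdot\|_{B^\alpha}$ coming from the Hausdorff continuity of $\alpha\mapsto B^\alpha$. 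You also correctly flag and dispose of the dependence of $r_1,r_2$ (hence of $V^\alpha$) on the chosen compact $U_0$, which is a real but harmless subtlety that the paper leaves implicit.
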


\begin{proof}
	Define $\bar{\theta}:U\mapsto[0,\infty]$ by
		$$\bar{\theta}(\alpha):=\sup_{t\ge0}|\theta_t'(\alpha)|,\qquad\alpha\in U.$$
	Let $U_0$ be a compact subset of $U$.
	By Corollary \ref{cor:uniform},
		$$\sup_{\alpha\in U_0}\bar{\theta}(\alpha)\le\norm{f'}_\infty\sup_{\alpha\in U_0}\sup_{s\ge0}\E\lsb\abs{\phib^{\alpha,\xi}(s)}\rsb ds<\infty,$$
	where $\norm{f'}_\infty:=\sup_{x\in G}|f'(x)|<\infty$ since $f\in C_b^1(G)$.
	This proves that $\bar\theta$ is locally bounded, and hence, locally integrable.
\end{proof}

Theorem \ref{thm:sensitivity} is now a simple consequence of this lemma.  

\begin{proof}[Proof of Theorem \ref{thm:sensitivity}]
	Let $-\infty<\alpha_1<\alpha_2<\infty$ be such that $[\alpha_1,\alpha_2]\subset U$.
	By the Fundamental Theorem of Calculus,
		$$\theta_t(\alpha_2)=\theta_t(\alpha_1)+\int_{\alpha_1}^{\alpha_2}\theta_t'(\alpha)d\alpha.$$	
	Letting $t\to\infty$ in the last display and using \eqref{eq:stable}, along with Lemma \ref{lem:barh} and the Lebesgue Dominated Convergence Theorem to interchange the limit and the integral, we obtain
	\begin{align*}
		F(\alpha_2)&=F(\alpha_1)+\lim_{t\to\infty}\int_{\alpha_1}^{\alpha_2}\theta_t'(\alpha) d\alpha\\
		&=F(\alpha_1)+\int_{\alpha_1}^{\alpha_2}\E\lsb f'(\Z^\alpha(\infty))\phib^\alpha(\infty)\rsb d\alpha.
	\end{align*}
	In particular, this implies that for almost every\ $\alpha\in[\alpha_1,\alpha_2]$, $F(\cdot)$ is differentiable at $\alpha$ and its derivative satisfies \eqref{eq:sensitivity}.
	Since $[\alpha_1,\alpha_2]\subset U$ was arbitrary, this completes the proof.
\end{proof}

\appendix

\section{Proof of Lemma \ref{lem-setB}}

Recall that ${\bf 0}$ (resp.\;{\bf 1}) denotes the vector in $\R^J$ with 0 (resp.\ 1) in each component.
In the following proof vector inequalities are interpreted component-wise.

\begin{proof}[Proof of Lemma \ref{lem-setB}]
Due to the fact that $\varrho(Q(\alpha))<1$ and the Perron-Frobenius theorem, it is readily seen that $N^TR(\alpha)=E_J-Q(\alpha)$ is invertible.
Since $N$ is also invertible by assumption, Assumption \ref{ass:independent} holds.

Next, for each $\alpha \in U$, by \cite[Theorem 1 and condition $N_{40}$]{Ple77} and the fact that the diagonal elements of
$N^T R(\alpha)$ are identically $1$, there exists a diagonal matrix $D(\alpha)$, with strictly positive diagonal elements, such that 
$(D(\alpha))^{-1}Q (\alpha) D(\alpha)$ is strictly substochastic, that is, 
	\be\label{eq:malpha}m(\alpha):=(D(\alpha))^{-1}Q (\alpha) D(\alpha){\bf 1}<{\bf 1}.\ee
From the definition of $Q(\alpha)$, this is easily seen to be  equivalent to the statement that there exists a vector $v(\alpha)\in\R_+^J$ with positive elements such that 
	$$N^TR(\alpha)v(\alpha)>{\bf 0}.$$
[The equivalence can be seen by taking $D(\alpha)$ to be the diagonal matrix with diagonal elements equal to the components of $v(\alpha)$ so that $D(\alpha){\bf 1}=v(\alpha)$.]
Moreover, since the map $\alpha\mapsto R(\alpha)$ is continuous, we can choose the map $\alpha\mapsto v(\alpha)$, and thus the maps $\alpha\mapsto D(\alpha)$ and $\alpha\mapsto m(\alpha)$, to be continuous.
It follows from \eqref{eq:malpha} and \cite[Section 2.4]{DupRam99a} that
 	$$B^\alpha:=R(\alpha)H^\alpha=\lcb R(\alpha)x:x\in H^\alpha\rcb,$$
where $H^\alpha:=\{y\in\R^J:|y^i|\le m^i(\alpha)\;\forall\;i\in\allN\}$, is a convex, compact, symmetric set with $0\in(B^\alpha)^\circ$ and satisfies \eqref{eq:setB}.
The continuity of the maps $\alpha \mapsto R(\alpha)$ and $\alpha \mapsto m(\alpha)$ ensures that $\alpha\mapsto B^\alpha$ is continuous in the Hausdorff metric, so Assumption \ref{ass:setB} is satisfied.

We now turn to the  verification of  Assumption \ref{ass:projection}.
Given $x\in\R^J$, the condition that $\pi^\alpha(x) \in G$ is equivalent to the condition that $z:=N^T \pi^\alpha(x)\in\R_+^J$, and, since $N^TR(\alpha)$ and $N$ are non-singular, 
  the condition that $\pi^\alpha(x) -  x \in d(\pi^\alpha(x))$ is equivalent to saying that
  $\pi^\alpha(x) -x = R(\alpha) w$ for some $w\in\R_+^J$ that satisfies 
  $\langle z, w \rangle  = 0$.   
  In other words, $(w,z)$ is a solution to the linear complementarity
  problem associated with the matrix $N^T R(\alpha)$ and input $N^T x \in \mathbb{R}^J$.
  It is well known that a sufficient condition  for this is that $N^T R(\alpha)$ be
  an $\mathcal{M}$-matrix (see, e.g., \cite[Corollary 4]{cotvei72}), which is precisely the condition
  specified in the lemma.   This concludes the proof of the lemma.
\end{proof}

\end{document}